\numberwithin{equation}{section}
\numberwithin{table}{chapter}
\numberwithin{figure}{chapter}
\newtheorem{prp}{Proposition}[section]
\newtheorem*{prpnn}{Proposition}
\newtheorem{cor}[prp]{Corollary}
\newtheorem{defi}[prp]{Definition}
\newtheorem{exe}[prp]{Example}
\newtheorem{lem}[prp]{Lemma}
\newtheorem*{lemnn}{Lemma}
\newtheorem{rem}[prp]{Remark}
\newtheorem*{remnn}{Remark}
\newtheorem{thm}[prp]{Theorem}
\newtheorem*{thmnn}{Theorem}
\def\C{\mathbb{C}}
\def\N{\mathbb{N}}
\def\R{\mathbb{R}}
\def\Z{\mathbb{Z}}
\def\cO{\mathcal{O}}
\def\gl{\widetilde{Gl_n(\R)}^{\pm}}
\def\ddel{D\bar{\partial}}
\def\klk{K_{\ell,k}}
\def\ulk{U_{\ell,k}}
\def\cllkr{\mathcal{C}\ell_{\ell,k}^\otimes}
\def\ulkr{\mathcal{U}_{\ell,k}^\otimes}
\def\cllks{\mathcal{C}\ell_{\ell,k}^\bullet}
\def\ulks{\mathcal{U}_{\ell,k}^\bullet}
\def\qklk{Q_{\ell,k}}
\def\qcllkr{\mathcal{QC}\ell_{\ell,k}^\otimes}
\def\qulkr{\mathcal{QU}_{\ell,k}^\otimes}
\begin{document}






\pagenumbering{roman}

\thispagestyle{empty}
\let\cleardoublepage\clearpage

\newpage
\null
\vskip 2em
\begin{center}
{\LARGE\bfseries \centering \openup\medskipamount Source Spaces and Perturbations for Cluster Complexes \par}
\vskip 1.5em
{\large \centering \mdseries\lineskip .5em François Charest \par}%
\vskip 1em
{\large \par}
\end{center}\par
\vskip 1.5em

\let\thefootnote\relax\footnotetext{{\em Date:} November 2012 }

\begin{center}
{\bfseries Abstract \vspace{.5em}}
\end{center}

{\listparindent 1.5em \itemindent    \listparindent \rightmargin   \leftmargin
We define objects made of marked complex disks connected by metric line segments and construct nonsymmetric and symmetric moduli spaces of these objects. This allows choices of coherent perturbations over the corresponding versions of the Floer trajectories proposed by Cornea and Lalonde (\cite{CL}). These perturbations are intended to lead to an alternative description of the (obstructed) $A_\infty$-structures studied by Fukaya, Oh, Ohta and Ono (\cite{FOOO2}, \cite{FOOO}).

Given a $Pin_{\pm}$ monotone lagrangian submanifold $L \subset (M,\omega)$ with minimal Maslov number $N_L \geq 2$, we define an $A_\infty$-algebra (resp. differential graded algebra) structure from the critical points of a generic Morse function on $L$. It is written as a cochain (resp. chain) complex extending the pearl complex introduced by Oh (\cite{Oh}) and further explicited by Biran and Cornea (\cite{BC}), equipped with its quantum product. We verify that the construction is homotopy invariant, defining a functor from a homotopy category of $Pin_{\pm}$ monotone lagrangian submanifolds $h\mathcal{L}^{mono, \pm}(M,\omega)$ to the homotopy category of cochain (resp. chain) complexes $hK(\Lambda \text{-mod})$ where $\Lambda$ is a Novikov ring with coefficients in $\mathbb{Z}$.

\vfill
{\bf Acknowledgements: } The author is greatly indebted to his advisors, F. Lalonde and O. Cornea, for their guidance and encouragement. This research was partially supported by an NSERC Canada Graduate Scholarship.
}

\tableofcontents				

%
%
%
%


\NoChapterPageNumber 
\pagenumbering{arabic}


\chapter*{Introduction}

\section*{Background}

One of the basic results of differential topology is the computation of the (singular) cohomology $H^*(L,\Z)$ of a manifold $L$, say compact and without boundary, by looking at a smooth function defined on it. We very briefly recall this construction, called Morse cohomology, using notations that will be used in the main text.

One first sets a Morse-Smale pair $(f,g)$, that is,
\begin{itemize}
\item $f:L \rightarrow \R$ is smooth with nondegenerate hessian over the critical locus (the critical points),
\item $g$ is a smooth metric on $L$ such that the negative gradient flow of $f$ with respect to $g$ has transverse stable and unstable manifolds.
\end{itemize}
This Morse-Smale condition is in fact generically satisfied, meaning that one can pick $f$ and $g$ in a countable intersection of open dense subsets.

Now let
\[
C^*(L,f,g) = crit(f) \otimes \Z
\]
be the free $\Z$-module over the set of critical points of $f$. For $x\in crit(f)$, denote by $\mu(x)=\mu^+(x)$ the number of positive eigenvalues of the hessian matrix of $f$ at $x$, and pick an orientation $\mathcal{O}_x$ of the stable manifold $W^s_x(f)$ of the negative gradient flow of $(f,g)$ to $x$.

One then looks for maps $u:\overline{\R} \rightarrow L$ with $u(-\infty)=y, u(\infty)=x \in crit(f)$ satisfying the negative gradient flow equation $du(-\frac{\partial}{\partial t}(t))= -\nabla_g f \circ u (t)$, that is, $u$ is a negative gradient flow line from $x$ to $y$. Set the index of such a gradient trajectory to be $\mu(u)= \mu(y) - \mu(x)$ ($= |x| - |y|$, where $|x| = n- \mu^+(x)$ is the usual (homological) Morse index).

Then one defines the Morse differential $\delta = \delta(L,f,g): C^*(L,f,g) \rightarrow C^*(L,f,g)$ as
\[
\delta(x) = \underset{ \substack{y \in crit(f) \\ \mu(u)=\mu(y) - \mu(x)=1 } }{\sum} \, <\mathcal{O}_u \# \mathcal{O}_x, \mathcal{O}_y>  y
\]
where $\mathcal{O}_u \# \mathcal{O}_x$ is the orientation on $W^s_y(f)$ induced by $\mathcal{O}_x$ plus the choice of the orientation $+ \frac{\partial}{\partial t}$ over $u$ and $<\_,\_>=1$ (resp. $-1$) if the entries coincide (resp. are opposite).

\begin{thmnn}
For $\delta$ as defined above, we have $\delta \circ \delta = 0$.
\end{thmnn}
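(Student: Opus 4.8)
The plan is to read off the matrix coefficient of $z$ in $\delta(\delta(x))$ as the signed count of the boundary of a compact oriented $1$-manifold, which is therefore $0$. Fix $x,z\in crit(f)$; for degree reasons this coefficient vanishes unless $\mu(z)=\mu(x)+2$, so assume that, and let $\mathcal{M}(x,z)$ be the space of gradient trajectories from $x$ to $z$ of index $\mu=2$, taken modulo the $\R$-action of translation in $t$. The first step is transversality: by the Morse--Smale hypothesis the space of such trajectories is realised as a transverse intersection of a stable and an unstable manifold and has dimension equal to its index, so $\mathcal{M}(x,z)$ is a smooth $1$-manifold.

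The second step is compactness together with a description of the ends. The compactness theorem for gradient flow lines --- which uses only the uniform energy bound $|f(x)-f(z)|$ together with the Morse--Smale condition to exclude any degeneration other than breaking at an intermediate critical point --- provides a compactification $\overline{\mathcal{M}}(x,z)$, a compact $1$-manifold with boundary. Combined with the gluing construction, which near a broken pair $(u_1,u_2)$ produces a half-open arc of $\mathcal{M}(x,z)$ limiting onto it, this yields the identification
\[
\partial\,\overline{\mathcal{M}}(x,z)\;=\;\coprod_{\substack{y\in crit(f)\\ \mu(y)=\mu(x)+1}}\mathcal{M}(x,y)\times\mathcal{M}(y,z),
\]
a finite set, each factor $\mathcal{M}(x,y)$, $\mathcal{M}(y,z)$ being one of the $0$-dimensional moduli spaces that enter the definition of $\delta$.

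The third step is to match orientations. Orienting $\mathcal{M}(x,z)$ by the same recipe used to define $\delta$ --- from the chosen orientations $\mathcal{O}_x,\mathcal{O}_z$ of the stable manifolds together with the direction $+\frac{\partial}{\partial t}$ along the trajectory --- one checks that, under the identification above, the boundary orientation of $\overline{\mathcal{M}}(x,z)$ at the point corresponding to $(u_1,u_2)$ equals, up to a single sign depending only on the degrees and hence the same for every boundary point, the product $\langle\mathcal{O}_{u_1}\#\mathcal{O}_x,\mathcal{O}_y\rangle\,\langle\mathcal{O}_{u_2}\#\mathcal{O}_y,\mathcal{O}_z\rangle$. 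Granting this, the signed count of $\partial\,\overline{\mathcal{M}}(x,z)$ is on one hand $0$ and on the other (this universal sign times) the coefficient of $z$ in $\delta(\delta(x))$; since $x$ and $z$ were arbitrary, $\delta\circ\delta=0$.

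I expect the orientation comparison to be the only genuine obstacle. The analytic ingredients --- transversality, and the compactness/gluing description of $\overline{\mathcal{M}}(x,z)$ --- are standard and may be quoted. The delicate part is checking that gluing is orientation-preserving up to a controlled sign, which amounts to a coherence (associativity) property of the operation $\#$ on the determinant lines of the stable manifolds once the flow parameter is inserted; it is here that careful sign bookkeeping is required.
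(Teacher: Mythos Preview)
Your proposal is correct and matches the paper's approach exactly. The paper does not give a full proof of this background result; it only remarks that ``$\delta \circ \delta(x)$ counts exactly the (oriented) boundary components of the $1$-parameter families (i.e.\ of index $2$) of negative gradient trajectories of $f$ starting from $x$,'' which is precisely the transversality/compactness/gluing/orientation argument you have outlined.
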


The proof is based on the fact that $\delta \circ \delta(x)$ counts exactly the (oriented) boundary components of the $1$-parameter families (i.e. of index $2$) of negative gradient trajectories of $f$ starting from $x$. Moreover,

\begin{thmnn}
For $\delta$ as defined above, we have $H^*(C^*(L,f,g), \delta(L,f,g)) \cong H^*(L,\Z)$.
\end{thmnn}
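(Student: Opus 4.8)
The plan is to realize $L$, up to homotopy equivalence, as a finite CW complex built from the Morse data $(f,g)$, to identify the Morse cochain complex $(C^*(L,f,g),\delta)$ with the cellular cochain complex of that model, and then to conclude via the standard comparison of cellular and singular cohomology. Concretely I would proceed in four steps: (1) produce the CW model; (2) match the underlying graded groups; (3) match the differentials; (4) assemble.

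For step (1) I would invoke the classical structure theorem of Morse theory. As a regular value $a$ of $f$ is pushed down from $+\infty$ to $-\infty$, the superlevel set $f^{-1}([a,\infty))$ (equivalently, the sublevel sets of $-f$) changes up to homotopy equivalence only as $a$ crosses a critical value, and then by attaching a single cell whose dimension equals the number of positive eigenvalues of the Hessian, i.e. $\mu^+(x)$ for the critical point $x$ in question. Ordering the critical values, this produces a finite CW complex $X$, homotopy equivalent to $L$, with exactly one cell $e_x$ of dimension $\mu^+(x)$ for each $x\in crit(f)$; the open cell $e_x$ is a homotopy model for the unstable manifold of $x$, meeting $W^s_x$ transversally in the single point $x$, so the chosen orientation $\mathcal{O}_x$ of $W^s_x$ fixes an orientation of $e_x$ (via the splitting $T_xL=T_xW^s_x\oplus T_xe_x$). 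This is step (2): $C^k(L,f,g)$ is canonically the group of cellular $k$-cochains of $X$, with the generator $x$ (in degree $\mu^+(x)$) matched to the oriented cell $e_x$.

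Step (3) is the heart of the matter. The cellular coboundary $\delta^{\mathrm{CW}}$ is the transpose of the cellular boundary; for $\mu^+(y)=\mu^+(x)+1$ the coefficient $[e_y:e_x]$ is the degree of the attaching map of $e_y$ followed by the collapse of the $\mu^+(x)$‑skeleton onto the sphere $e_x/\partial e_x\cong S^{\mu^+(x)}$. Using the Morse--Smale transversality of $(f,g)$ and the compactness of the zero‑ and one‑dimensional moduli spaces of gradient trajectories, one shows this degree is computed trajectory by trajectory: a small neighbourhood in the attaching sphere of each rigid trajectory $u$ from $y$ to $x$ (those with $\mu(u)=\mu(y)-\mu(x)=1$) is carried diffeomorphically onto a neighbourhood of a point of $S^{\mu^+(x)}$, with local degree $\pm1$; unwinding how $\mathcal{O}_x$ and $\mathcal{O}_y$ orient $e_x,e_y$ and how the direction $+\tfrac{\partial}{\partial t}$ of the trajectory splits off, this sign is exactly $\langle\mathcal{O}_u\#\mathcal{O}_x,\mathcal{O}_y\rangle$. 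Summing, $[e_y:e_x]=\sum_u\langle\mathcal{O}_u\#\mathcal{O}_x,\mathcal{O}_y\rangle$, which is the coefficient of $y$ in $\delta(x)$; hence $\delta=\delta^{\mathrm{CW}}$. Step (4) then reads
\[
H^*(C^*(L,f,g),\delta)\;\cong\;H^*_{\mathrm{CW}}(X)\;\cong\;H^*(X,\Z)\;\cong\;H^*(L,\Z),
\]
the last isomorphisms being the cellular--singular comparison and homotopy invariance of singular cohomology; note that no orientability hypothesis on $L$ is needed.

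I expect the sign bookkeeping in step (3) to be the only genuinely delicate point: the Morse--Smale genericity, the transversality, and the compactness of the low‑dimensional trajectory spaces are classical, and the existence of the CW model $X$ is a standard consequence of handle‑attachment Morse theory, but verifying that the recipe $\mathcal{O}_u\#\mathcal{O}_x$ reproduces the incidence numbers on the nose --- rather than up to a sign that might vary with $x$ --- requires a globally consistent choice of conventions for orienting stable manifolds, inserting the trajectory direction, and orienting the attached cells. An alternative, closer to the moduli‑space language used later in the text, is to define directly a cochain map $x\mapsto\varphi_x\in C^*(L,\Z)$, where $\varphi_x$ counts with $\mathcal{O}_x$‑signs the transverse intersections of a chain with the compactified stable manifold $\overline{W^s_x}$, to prove $d\varphi_x=\varphi_{\delta(x)}$ from the identification of the codimension‑one boundary stratum of $\overline{W^s_x}$ with $\bigsqcup_{\mu^+(y)=\mu^+(x)+1}(\text{rigid }u)\times W^s_y$, and then to show $\varphi$ is a quasi‑isomorphism by filtering both complexes by the values of $f$ and comparing $E_1$‑pages, where each critical point contributes a single copy of $\Z$ in its degree on both sides; the sign analysis is the same, with the extra task of controlling the codimension $\ge 2$ strata of $\overline{W^s_x}$.
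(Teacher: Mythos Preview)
The paper does not actually prove this statement: it appears in the introductory ``Background'' section as a classical fact about Morse cohomology, stated without argument and immediately followed by further background material (the Fukaya--Morse $A_\infty$ products, Floer's theorem, etc.). So there is no ``paper's own proof'' to compare against.

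Your sketch is a correct outline of one of the standard proofs of this classical result. The CW model via handle attachment, the identification of generators with cells, and the identification of the cellular incidence numbers with signed counts of index-one trajectories are all well known; your caution about the sign bookkeeping in step (3) is appropriate, and the alternative via compactified stable manifolds and a filtration/spectral-sequence argument is equally standard. Either route would serve as a proof; since the paper treats this as established background, no more is expected here.
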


It has been noticed (see \cite{Fu}) that under this isomorphism, the singular cohomology cup product and its higher order Massey products correspond to counting trajectories $u$ made of trees of gradient flow lines between critical points of a collection of (mutually) Morse-Smale pairs. The right source spaces for $u$ are then planar trees in which every interior edge is given a length. These planar metric trees are well known to form a polytopal moduli identifiable to Stasheff associahedra (\cite{Sta}) (see figure \ref{fig32}).

\begin{figure}[h] 
        \centering 
	\includegraphics[width=110mm]{./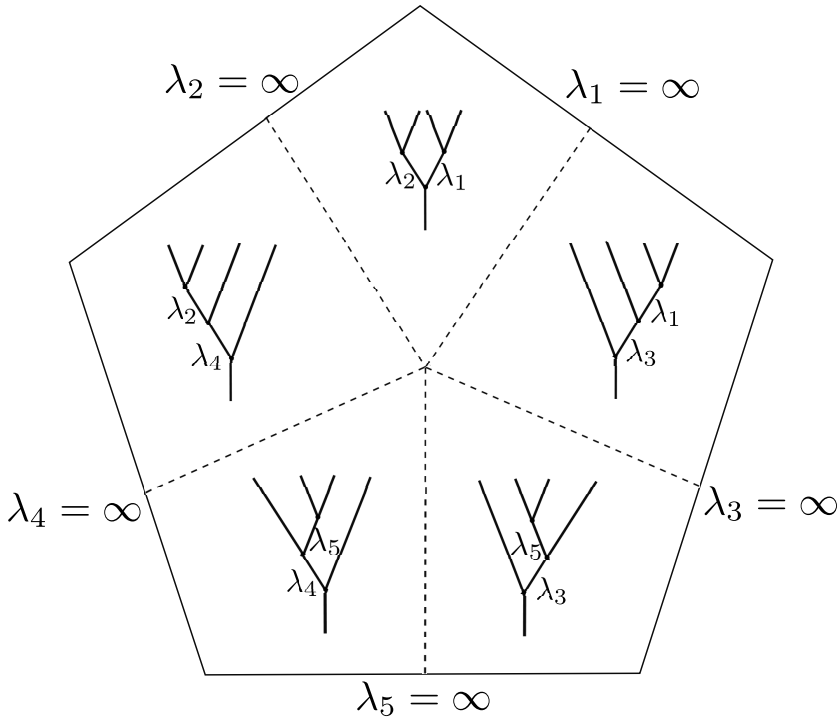}
        \caption{Stasheff associahedron $K_4$ as planar metric trees} \label{fig32}
\end{figure}

These Morse theoretical constructions can be seen as a special case of what is called lagrangian intersection Floer theory: To every Morse function $f$ on the $n$-dimensional manifold $L$ one can associate the graph $L_f$ of its associated $1$-form $df$ in $T^*L$. It is easy to see that $L_f$ is a lagrangian submanifold of $(T^*L, \omega_{st})$ (i.e. it is $n$-dimensional and $\omega_{st}$ vanishes on it). We also identify $L$ with the $0$-section in $T^*L$. Then the points of $L_f \bigcap L$ correspond to critical points of $f$ and Floer (\cite{F}) defined, as in the Morse case, a cochain complex $(CF^*(L_f,L), \delta^{CF}(L_f,L))$ by counting pseudoholomorphic strip trajectories, one side of the strip being mapped to $L_f$, the other to $L$ and the ends converging to some points of $L_f \bigcap L$. By pseudoholomorphic map to a symplectic manifold $(M,\omega)$ we will mean the following: For a $\omega$-tamed almost complex structure $J: TM \rightarrow TM$, that is $J^2 = -Id$ and $\omega(v,Jv) > 0$ whenever $v \neq 0$, a map $u: (\Sigma,j) \rightarrow (M,J)$ from a complex surface $(\Sigma,j)$ is said to be ($J$-)pseudoholomorphic, or simply ($J$-)holomorphic, if $du \circ j = J \circ du$.

Floer showed that the cohomology $HF^*(L_f,L) = H^*(CF^*(L_f,L), \delta^{CF}(L_f,L))$ is again $H^*(L,\Z)$, the moduli of gradient trajectories corresponding to those made of Floer pseudoholomorphic trajectories, then proving (partially) a celebrated conjecture of Arnold.

\begin{thmnn}
$HF^*(L_f,L) \cong H^*(L,\Z)$.
\end{thmnn}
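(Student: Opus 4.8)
The plan is to reduce this to the Morse-theoretic isomorphism $H^*(C^*(L,f,g),\delta)\cong H^*(L,\Z)$ recalled above, by arranging that Floer's complex $(CF^*(L_f,L),\delta^{CF}(L_f,L))$ coincides on the nose with the Morse complex $(C^*(L,f,g),\delta(L,f,g))$. The first step is a reduction to a $C^2$-small graph: for $\epsilon\in(0,1]$ let $L_{\epsilon f}\subset(T^*L,\omega_{st})$ be the graph of $\epsilon\,df$; each $L_{\epsilon f}$ is a compact exact Lagrangian, $L_{\epsilon f}\cap L=crit(f)$, and the pair $(L_{\epsilon f},L)$ is transverse precisely because $(f,g)$ is Morse--Smale. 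Since $\{L_{\epsilon f}\}_{\epsilon}$ is a Hamiltonian isotopy, a continuation argument yields $HF^*(L_f,L)\cong HF^*(L_{\epsilon f},L)$ for all such $\epsilon$; the only subtlety is the non-compactness of $T^*L$, handled by noting that exactness bounds the energy of all strips and excludes disk and sphere bubbling, while a maximum-principle / monotonicity estimate confines the relevant strips to a fixed compact neighbourhood of the zero section, so the compact theory applies verbatim.

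Next I would run the adiabatic limit $\epsilon\to0$, using on $T^*L$ the almost complex structure $J_g$ induced by $g$ (for which $J_g$ exchanges the horizontal and vertical distributions). Decomposing a $J_g$-holomorphic strip into its base path and its fibre part and rescaling the $s$-variable by $\epsilon$, the Floer equation for $(L_{\epsilon f},L)$ becomes an $O(\epsilon)$-perturbation of the negative gradient flow equation $\dot\gamma=-\nabla_g f(\gamma)$, the fibre component having size $O(\epsilon)$. Following \cite{F} and \cite{Oh} one then establishes: (i) uniform-in-$\epsilon$ $C^1$ estimates forcing each finite-energy index-$1$ strip to be $C^1$-close to a broken negative gradient trajectory; and (ii) a gluing theorem producing, for $\epsilon$ small, exactly one transversely cut out Floer strip of $(L_{\epsilon f},L)$ from each rigid gradient line of $(f,g)$ of index $1$. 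Hence the index-$1$ moduli spaces of $(L_{\epsilon f},L)$ and of $(f,g)$ are canonically identified.

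It then remains to match gradings and signs. The Maslov/Conley--Zehnder index at an intersection point of $L_{\epsilon f}$ and $L$ recovers the homological Morse index $|x|=n-\mu^+(x)$, so $\delta^{CF}$ raises degree by one, and the coherent orientations of the Floer moduli built from the $Pin_\pm$ structure restrict, under the correspondence above, to the sign $\langle\mathcal{O}_u\#\mathcal{O}_x,\mathcal{O}_y\rangle$ defining $\delta$. Therefore $\delta^{CF}(L_{\epsilon f},L)=\delta(L,f,g)$ on $CF^*(L_{\epsilon f},L)=C^*(L,f,g)$, and, chaining the isomorphisms, $HF^*(L_f,L)\cong HF^*(L_{\epsilon f},L)=H^*(C^*(L,f,g),\delta)\cong H^*(L,\Z)$.

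The main obstacle is the $\epsilon\to0$ analysis: the uniform a priori estimate keeping the strips near the zero section (resting on exactness together with an energy/monotonicity inequality to rule out both escape to infinity and bubbling), and, above all, the surjectivity half of the gluing correspondence --- that no spurious index-$1$ strips persist as $\epsilon\to0$ --- which requires a quadratic estimate and an implicit function theorem made uniform in the degenerating parameter. By contrast the grading and sign bookkeeping, though lengthy, is essentially formal once the orientation conventions are fixed.
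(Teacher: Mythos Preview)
The paper does not give its own proof of this statement: it appears in the background section of the Introduction as a known result attributed to Floer \cite{F}, stated without argument beyond the one-line remark that ``the moduli of gradient trajectories correspond to those made of Floer pseudoholomorphic trajectories.'' There is therefore nothing in the paper to compare your proposal against.

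That said, your outline is the standard route to this theorem and is correct in broad strokes. A few remarks on emphasis. The Hamiltonian-isotopy invariance step reducing $L_f$ to $L_{\epsilon f}$ is not really needed: one usually works directly with $L_{\epsilon f}$ for $\epsilon$ small, since $HF^*$ is only defined up to such isotopy anyway. The adiabatic limit you describe is exactly Floer's original argument, later sharpened by Fukaya--Oh \cite{FO} for trees; you have correctly identified the genuine analytic content (uniform estimates and bijectivity of the gluing map) as the crux. One small caution: the sign/orientation discussion invoking $Pin_\pm$ structures is anachronistic for this particular cotangent-bundle statement over $\Z$ --- in $T^*L$ with the zero section and a graph, the relevant index bundles are canonically trivialised by the vertical Lagrangian distribution, so the orientation bookkeeping reduces to the Morse-theoretic choice of orientations of stable manifolds, without appeal to a $Pin$ structure on $L$.
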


Fukaya and Oh (\cite{FO}) extended this correspondence, gradient tree trajectories then corresponding to pseudoholomorphic polygons with boundary in lagrangian graphs (see figure \ref{fig34}).

\begin{figure}[h] 
        \centering 
	\includegraphics[width=110mm]{./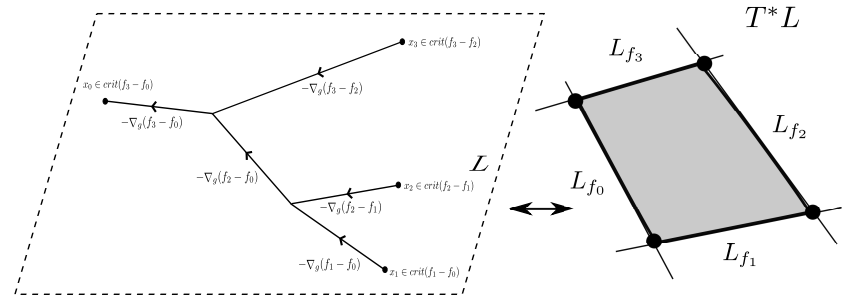}
        \caption{Morse and Floer product trajectories} \label{fig34}
\end{figure}

Thus, in $T^*L$, Morse cohomology of $L$ corresponds to counting pseudoholomorphic strips between intersection points of two lagrangian sections, and taking products corresponds to counting pseudoholomorphic polygons between intersection points of many lagrangian sections. We point out that source spaces for these polygonal Floer trajectories are complex disks with boundary punctures (or markings), again known to form a moduli identifiable to Stasheff associahedra (see figure \ref{fig33} or \cite{MW}).

\begin{figure}[h] 
        \centering 
	\includegraphics[width=110mm]{./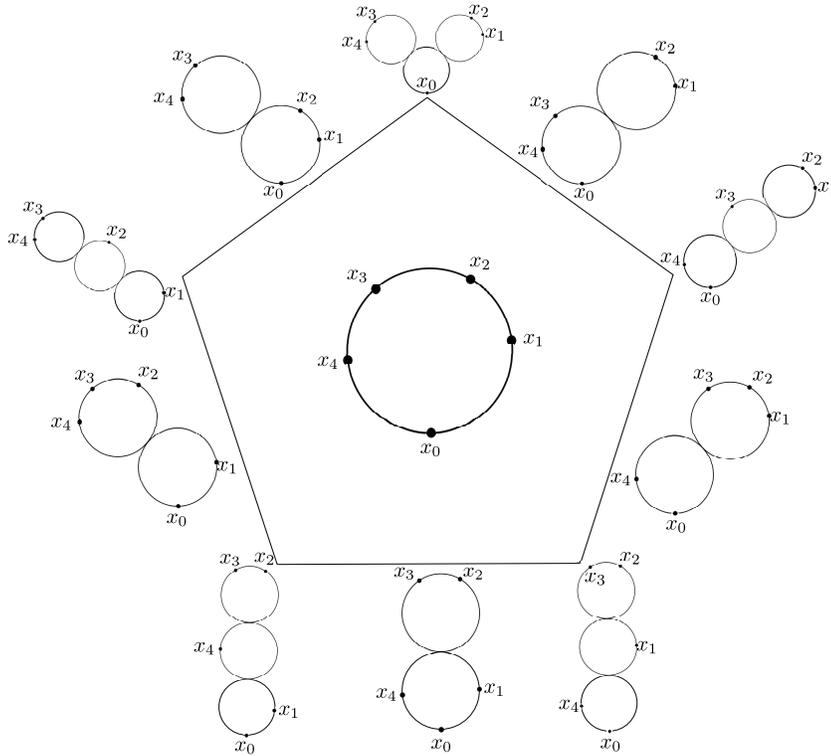}
        \caption{Stasheff associahedron $K_4$ as complex marked disks} \label{fig33}
\end{figure}

Generalizing this construction to a more general symplectic manifold $(M, \omega)$ leads to the definition of what is called a Fukaya category $\mathcal{F}(M, \omega)$ of $(M,\omega)$. Its objects are made of lagrangian submanifolds of a certain class, its morphisms are intersection points between them and composition of morphisms is again given by counting pseudoholomorphic polygons connecting intersection points. Fukaya categories have attracted much attention in the recent years due, in part, to the homological mirror symmetry conjecture made by Kontsevich (\cite{KS}) motivated by high energy physics theories.

A main difference between this more general situation and the cotangent one is the possible presence of new "quantum" trajectories, having no Morse theory counterparts. For a fixed lagrangian submanifold $L$, we can recover this information in the Morse setting by adding pseudoholomorphic disks (with boundary on $L$) to the gradient trajectories (see figure \ref{fig35}), as introduced by Oh (\cite{Oh}). Under rather strong restrictions on $L$ (i.e. $L$ will be said to be monotone), Morse cohomology trajectories are then replaced by the so-called pearl trajectories made of linear gradient trees in which the interior vertices are replaced by pseudoholomorphic disks. These trajectories and the associated binary quantum products on this pearl complex have been studied by Biran and Cornea (\cite{BC}), where two incident gradient lines might now meet on the boundary of a pseudoholomorphic disk (see figure \ref{fig35}).

\begin{figure}[h] 
        \centering 
	\includegraphics[width=110mm]{./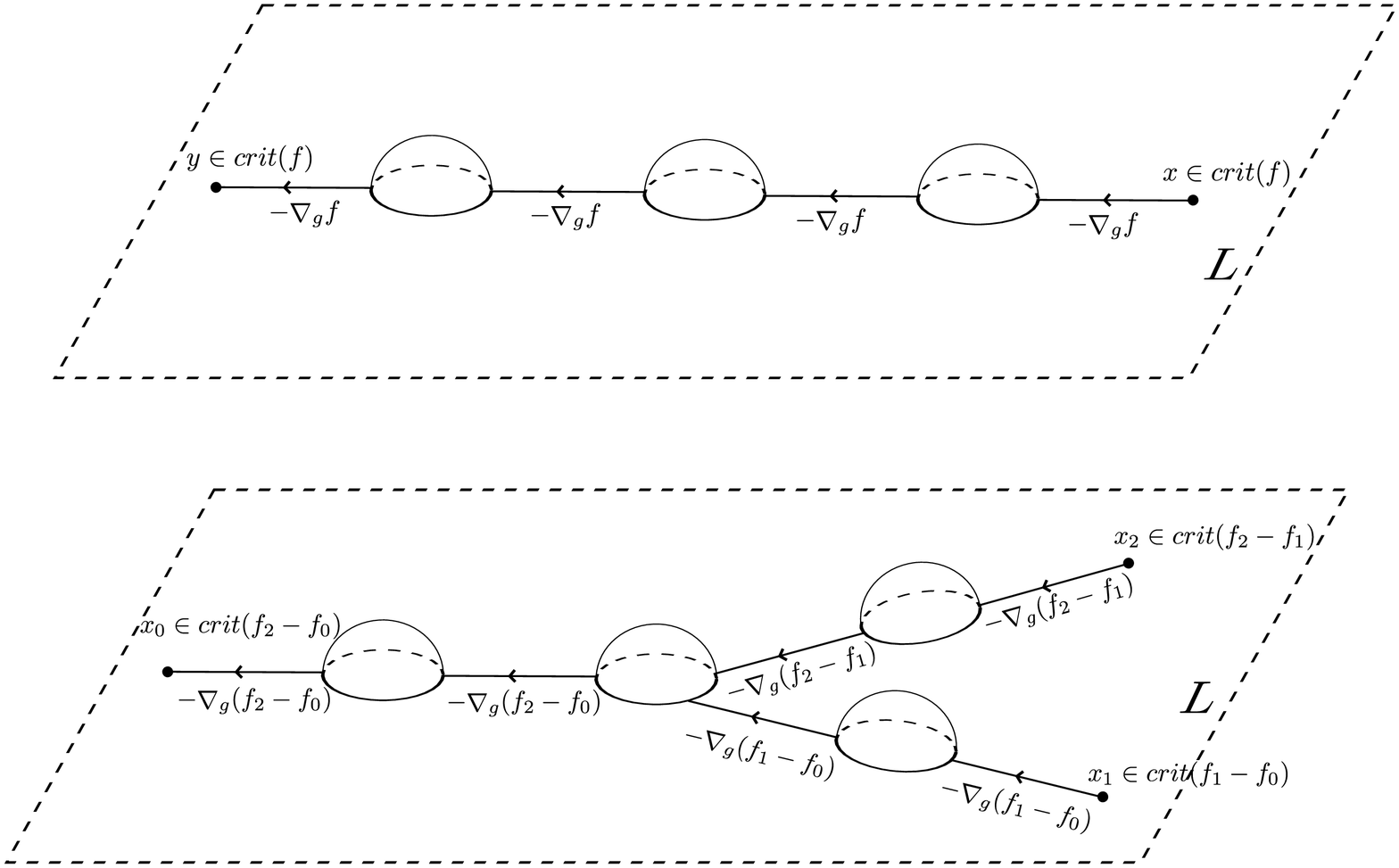}
        \caption{A pearl trajectory and an associated quantum product trajectory} \label{fig35}
\end{figure}

These quantum product configurations tend to be easier to compute than their equivalent pseudoholomorphic triangles bounded by perturbations of $L$ or their hamiltonian variants described by Seidel (\cite{Sei}). Their study have led to a new definition of relative enumerative invariants (see \cite{BC}) and have found other important applications. The extension to higher order products is therefore an interesting problem. This would result in a new geometric description of the part of the Fukaya category associated with $L$, that is, the morphisms from $L$ to itself. Algebraically, this can be viewed either as an $A_\infty$-algebra or, from a dual point of view, as a differential graded algebra (DGA) associated with a monotone $L$.

Further generalizing the above "trees with disks" quantum product configurations should extend the construction to a (almost) general bounding lagrangian (generating an obstructed $A_\infty$-algebra or DGA). Such an approach was proposed by Cornea and Lalonde (\cite{CL}): cluster (co)homology. More generally, it was claimed that the obstruction to the composition of morphisms between lagrangians of the Fukaya category could be encoded by these cluster product configurations on each of the bounding lagrangians. However, analytically, pseudoholomorphic trajectories with general bounding lagrangian are much more delicate to deal with than those in the former monotone case: One needs to use perturbations of the defining equation of these trajectories.

\section*{Strategy}

After the works of Gromov (\cite{G}) and Floer (\cite{F}), pseudoholomorphic maps have become a very widely used tool in the study of symplectic manifolds. When considering spaces of pseudoholomorphic maps with lagrangian boundary conditions, we expect to find codimension one strata corresponding to nodal maps, commonly referred as (relative) bubbling, whose smooth components are of arbitrary index. Furthermore, it is known that in general, the regularity of the spaces of these maps depends upon the ability to manage perturbations of their defining Cauchy-Riemann equation.

Regarding the codimension one relative bubbling, Cornea and Lalonde (\cite{CL}) proposed a geometrical way of encoding pseudoholomorphic disk bubbling of arbitrary index using the flow lines of Morse functions on the bounding lagrangians. This can be seen as extending the pearl complex construction of Oh (\cite{Oh}) later studied by Biran and Cornea (\cite{BC}) and should lead to a geometric (non-hamiltonian) description of the obstructed Fukaya category of $(M,\omega)$. In this setting, adding a flow line of a Morse-Smale pair $(f:L \rightarrow \R, g)$ connecting the boundaries of the disks generates a family of pseudoholomorphic maps complementary to the usual glued family of pseudoholomorphic disks (see figure \ref{fig28}).

\begin{figure}[h] 
        \centering 
	\includegraphics[width=110mm]{./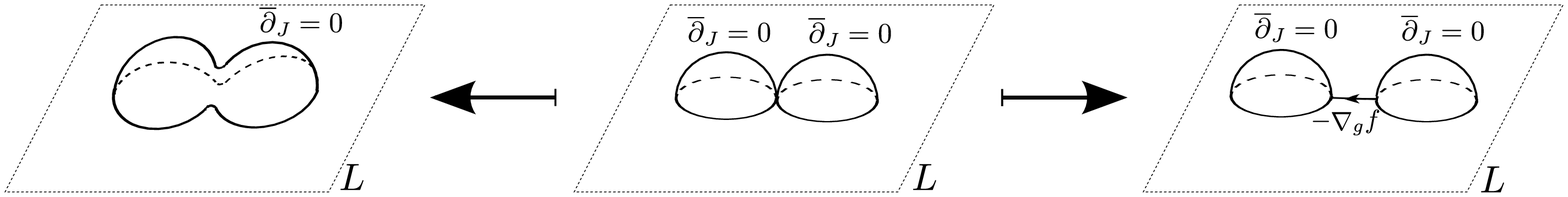}
        \caption{} \label{fig28}
\end{figure}

One way to deal with the associated perturbation problem is given by \\ Cieliebak and Mohnke (\cite{CM}), using Donaldson's (\cite{D}) construction of symplectic hypersurfaces $Z$ that are Poincaré dual to $D[\omega] \in H^2(M,\Z)$, for an integer $D \gg 1$. Since holomorphic curves must have a certain minimal $\omega$-area, or energy, they all must intersect $Z$ several times. Then, they choose coherent perturbation data over moduli spaces of marked source spaces (i.e. marked Riemann surfaces) and then, given a map $u: \Sigma \rightarrow M$, use the perturbed Cauchy-Riemann equation associated with $(\Sigma, u^{-1}(u(\Sigma) \bigcap Z))$. In the more restrictive setting that allows this construction to be performed, it is a geometric alternative to the use of the polyfold theory of Hofer, Wysocki and Zehnder (\cite{HWZ}) being considered in the cluster setting by Li and Wehrheim (\cite{LW}), or to the framework of Kuranishi structures used by Fukaya, Oh, Ohta and Ono (\cite{FOOO}).

The aim of this work is to present the first steps towards an adaptation of the Cieliebak-Mohnke method to manage perturbations over symmetric and nonsymmetric versions of the cluster pseudoholomorphic trajectories of Cornea-Lalonde. As shown by Auroux, Gayet and Mohsen (\cite{AGM}), there are symplectic hypersurfaces in the complement of a given lagrangian submanifold. Therefore, we describe appropriate moduli of marked source objects built from a moduli of marked disks, called marked clusters, over which we will choose coherent perturbation data. In fact, these spaces will be isomorphic to spaces of disks with both boundary and interior markings, making them generalizations of Stasheff associahedra (\cite{Sta}) seen as spaces of disks with only boundary markings (as in \cite{MW}, for example). We note that such an approach has been considered independently by Sheridan (\cite{She1},\cite{She2}) to study homological mirror symmetry for Calabi-Yau hypersurfaces in projective spaces of dimension greater than four.

In the case when a monotone lagrangian submanifold $L \subset (M,\omega)$ with minimal Maslov number $N_L \geq 2$ bounds the cluster trajectories, using a constant almost complex structure $J \in \mathcal{J}(M,\omega)$ in the Cauchy-Riemann equation over the disks is possible so that no symplectic hypersurface is needed. Therefore, a (source) perturbation datum $P$ of the Morse-Smale pair $(f,g)$ will be enough to achieve regularity. This will be shown using structural results for pseudoholomorphic disks of Lazzarini (\cite{L}) and ideas of Biran and Cornea (\cite{BC}). In the nonsymmetric (and symmetric) case, this results in the definition of a cluster cochain (or chain) complex \\ $(\mathcal{C}\ell^\otimes(M,\omega,L,J,P,f,g), \delta^\otimes(M,\omega,L,J,P,f,g))$. The configurations counted by the differential of this cluster complex could be thought of as geometric limits of the relative hamiltonian orbit products described in \cite{Sei}. They again generate an $A_\infty$-algebra that sits in the Fukaya category of $(M,\omega)$ as the compositions of the morphisms from $L$ to itself, or as a DGA encoding the same information.

Furthermore, the functoriality of the construction will be verified in the nonsymmetric case, using moduli spaces of objects called quilted clusters, built from spaces of quilted disks being slight generalizations of those described by Ma'u and Woodward (\cite{MW}) as realizing Stasheff multiplihedra.


\section*{Overview}

Since the main part of the present document is of rather technical nature, we first give an outline of its content, trying to emphasize on its differences and similarities with previous works.

Essentially, a cluster $C$ will be a tree with marked complex disks as vertices and boundary connecting metric segments as edges. A cluster Floer trajectory will be a map $u: (C, \partial C) \rightarrow (M,L)$ being pseudoholomorphic over the disks and satisfying gradient equations over the lines, $\partial C$ being the complement of the interior of the disks. In a family of Floer trajectories, when two incidence points of gradient segments tend to the same incidence point on the boundary of a pseudoholomorphic disk (a phenomenon being of codimension one in the source moduli, and thus in the space of Floer trajectories), one wants to pursue this family to avoid having the latter singular configuration as a boundary point in the space of Floer trajectories.

As proposed by Cornea and Lalonde (\cite{CL}), to make the above singular trajectory an interior point in the space of Floer trajectories, one might consider a new Floer family in which the order of the nearby incidence points (on the boundary of the incidence disk) has been switched (see figure \ref{fig31}). Otherwise, one might add a new gradient segment connecting the incidence disk and the point where the two incoming lines meet (see figure \ref{fig31}), as it is done, for example, for Morse $A_\infty$ products (\cite{Fu}) or to define the quantum product on the pearl complex (\cite{Oh}, \cite{BC}). The latter will be referred to as the nonsymmetric, or $\otimes$, case while the former will be called the symmetric, or $\bullet$, case.

\begin{figure}[h] 
        \centering 
	\includegraphics[width=110mm]{./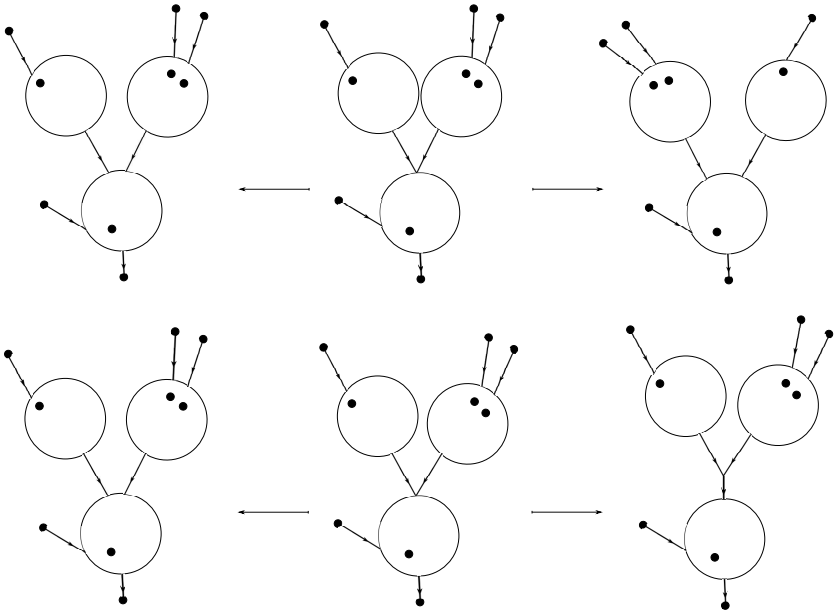}
        \caption{Above: A $\bullet$ -cluster Floer family \hspace{0.2cm} Below: A $\otimes$-cluster Floer family} \label{fig31}
\end{figure}

\subsection*{The $\otimes$ (nonsymmetric) case}

In the $\otimes$ case, in which there will not be switches on the planar structure of the clusters, the space of stable $\otimes$-clusters $\cllkr$ will be built by extending the spaces of stable disks with ordered boundary markings, plus some interior markings.

\textbf{Source spaces.} First, we look at the spaces of stable complex disks with $\ell + 1$ counterclockwise ordered boundary markings and $k$ interior markings. These allow compactifications $K_{\ell,k}$ having the structure of real $\ell -2 +2k$ dimensional orientable manifolds with embedded corners. One could see these as generalizations of the Stasheff associahedra (\cite{Sta}): The spaces $K_{\ell,0}$ is a well-known realization of the Stasheff associahedral polytopes. A $m$-corner of $K_{\ell,k}$ is of the form $K_{\ell^{(1)},k^{(1)}} \times \ldots \times K_{\ell^{(m+1)},k^{(m+1)}} \in C_m(K_{\ell,k})$ and corresponds to a family of nodal disks with $m$ real nodes, the structure on every smooth component varying independently.

Given a pair of disks $(D^{(1)},D^{(2)}) \in K_{\ell^{(1)},k^{(1)}}\times K_{\ell^{(2)},k^{(2)}} \in C_1(K_{\ell,k})$ related by a node, we add a complementary family of objects made of the same disks, but in which the nodal point is replaced with a line of length $\lambda \in \overline{\R_+}$. Letting $(D^{(1)},D^{(2)})$ vary, this procedure corresponds, on the disk moduli, to adding a collar neighborhood on the corresponding $1$-corner $K_{\ell^{(1)},k^{(1)}}\times K_{\ell^{(2)},k^{(2)}} \in C_1(K_{\ell,k})$. The resulting space can be considered as being a manifold with embedded corners isomorphic to $\klk$, so we can iterate this procedure over $C_1(K_{\ell,k})$ (see figure \ref{fig29}). Denote by $\cllkr$ the result, the space of stable $\otimes$-clusters (although in fact, we will define $\cllkr$ combinatorially and show that it can be given the same structure as $\klk$).

\begin{figure}[h] 
        \centering 
	\includegraphics[width=110mm]{./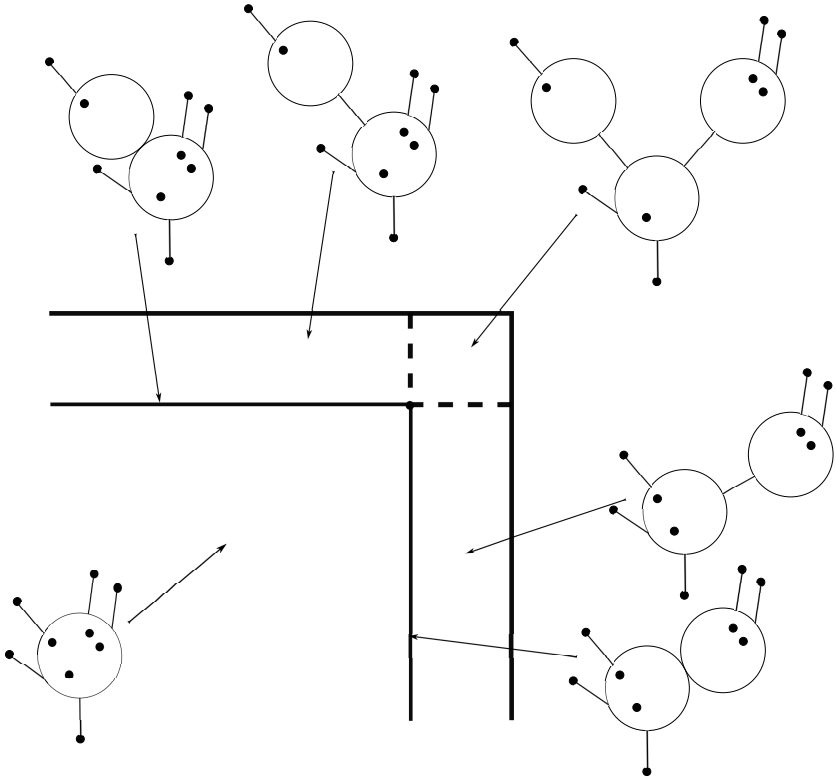}
        \caption{Adding lines between disks near a $2$-corner of $\klk$} \label{fig29}
\end{figure}

\begin{lemnn}
The space of stable $\otimes$-clusters $\cllkr$ is an orientable manifold with embedded corners isomorphic to $\klk$.
\end{lemnn}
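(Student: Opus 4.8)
The plan is to make the iterative ``adding a collar near each $1$-corner'' construction precise and to verify that it yields a manifold with embedded corners that is isomorphic (as such) to $\klk$ itself. First I would recall the stratified structure of $\klk$: its interior is the open moduli of disks with $\ell+1$ ordered boundary markings and $k$ interior markings, and its codimension-$m$ strata are indexed by stable planar trees with $m$ interior edges, each vertex $v$ contributing a factor $K_{\ell^{(v)},k^{(v)}}$, so that an $m$-corner has the product form $K_{\ell^{(1)},k^{(1)}} \times \cdots \times K_{\ell^{(m+1)},k^{(m+1)}}$. This combinatorial bookkeeping is exactly what survives the construction: I would define $\cllkr$ combinatorially as the set of ``broken'' configurations consisting of such a tree together with, for each interior edge, a length parameter $\lambda \in \overline{\R_+}$, and for each vertex a point of the corresponding $K_{\ell^{(v)},k^{(v)}}$, modulo the obvious identifications when some $\lambda = 0$ (the edge is contracted, two vertices merge into a node) or $\lambda = \infty$ (the configuration becomes fully broken). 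The key point is that adjoining a parameter $\lambda$ ranging over $\overline{\R_+}$ to a codimension-one stratum is, locally, precisely adjoining a collar $[0,\infty]$, which does not change the diffeomorphism type of a manifold with corners.

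The heart of the argument is to produce the isomorphism $\cllkr \cong \klk$ by induction on the depth of the corner stratification (equivalently on $\ell$ and $k$, or on the number of interior edges of the indexing trees). For the base case, over the top stratum there is nothing to add and the two spaces agree. For the inductive step, I would fix a $1$-corner $K_{\ell^{(1)},k^{(1)}} \times K_{\ell^{(2)},k^{(2)}}$ of $\klk$; a neighborhood of it in $\klk$ is modeled on $\left(\,\text{that product}\,\right) \times [0,\epsilon)$ via a gluing/plumbing coordinate, while the corresponding piece of $\cllkr$ replaces the node by a segment of length $\lambda \in \overline{\R_+}$, giving the model $\left(\,\text{that product}\,\right) \times [0,\infty]$. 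Choosing an orientation-preserving diffeomorphism $[0,\infty] \cong [0,\epsilon]$ that is the identity near $0$ (so it matches the already-constructed identification on the boundary stratum) extends the inductive isomorphism across this collar; doing this simultaneously and compatibly over all $1$-corners — using that distinct $1$-corners of $\klk$ meet only along higher corners where the collar identifications have been arranged to agree — glues to a global isomorphism. Orientability, and the fact that the corners are embedded (as opposed to merely immersed), are inherited from $\klk$ because the construction is local near each stratum and the collar factor is a manifold with boundary; I would record the orientation convention on $\overline{\R_+}$ so that the resulting orientation on $\cllkr$ matches the standard one on $\klk$ under the isomorphism.

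The main obstacle I expect is \emph{coherence}: ensuring that the collar identifications chosen over the various $1$-corners are mutually compatible over the deeper corners where several gradient segments appear at once, so that the local isomorphisms actually patch into a well-defined global one. Concretely, near an $m$-corner the $m$ length parameters $(\lambda_1,\dots,\lambda_m) \in (\overline{\R_+})^m$ must be matched with the $m$ gluing parameters $(t_1,\dots,t_m) \in [0,\epsilon)^m$ in a way that is symmetric under the relevant relabelings and restricts correctly on each face $\{\lambda_i = 0\}$ and $\{\lambda_i = \infty\}$; this forces the collar diffeomorphism to be chosen once and for all as a fixed function of a single variable and applied coordinatewise, and one must check that the combinatorial gluing maps of $\klk$ respect this product structure. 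Once this is set up the verification is routine, reducing to the already-known manifold-with-corners structure of $\klk = K_{\ell,k}$ and the elementary fact that $[0,\infty] \cong [0,1]$ as manifolds with boundary. I would also remark that the isomorphism is not canonical (it depends on the chosen collar function), but any two choices are isotopic, which is all that is needed for the later perturbation-theoretic arguments.
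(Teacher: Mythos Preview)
Your proposal is correct and follows essentially the same route as the paper: iteratively attach a collar $F \times [0,1]$ along each $1$-corner $F$ of $\klk$, observe that each such attachment leaves the MWEC isomorphism type unchanged, and check that near an $m$-corner the collars assemble into the product cell $K_{\ell,k,\geq T} \times X^{[0,1]}(T)$ matching the combinatorial definition of $col(\klk)$. Your identification of coherence over deeper corners as the only real issue, and your resolution by applying a single fixed reparametrization $[0,\infty]\cong[0,1]$ coordinatewise, is exactly what the paper does implicitly (and makes explicit in an alternative chart-by-chart argument in an appendix).
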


We emphasize that families of $\otimes$-clusters do not behave like those in \cite{CL}: The above $\otimes$-cluster families carry an ordering on the boundary markings reminescent of the one over the disks of $\klk$ (see figure \ref{fig31}).

To each point of $C \in \cllkr$, we associate a metric tree $C$ (the interior edges having a length in $\overline{\R_+}$) in which the vertices are replaced with stable marked complex disks using the definition of $\cllkr$. Also, lines of infinite length will be replaced by broken lines (see figure \ref{fig30}). The resulting object will be referred to as a cluster.

\begin{figure}[h] 
        \centering 
	\includegraphics[width=110mm]{./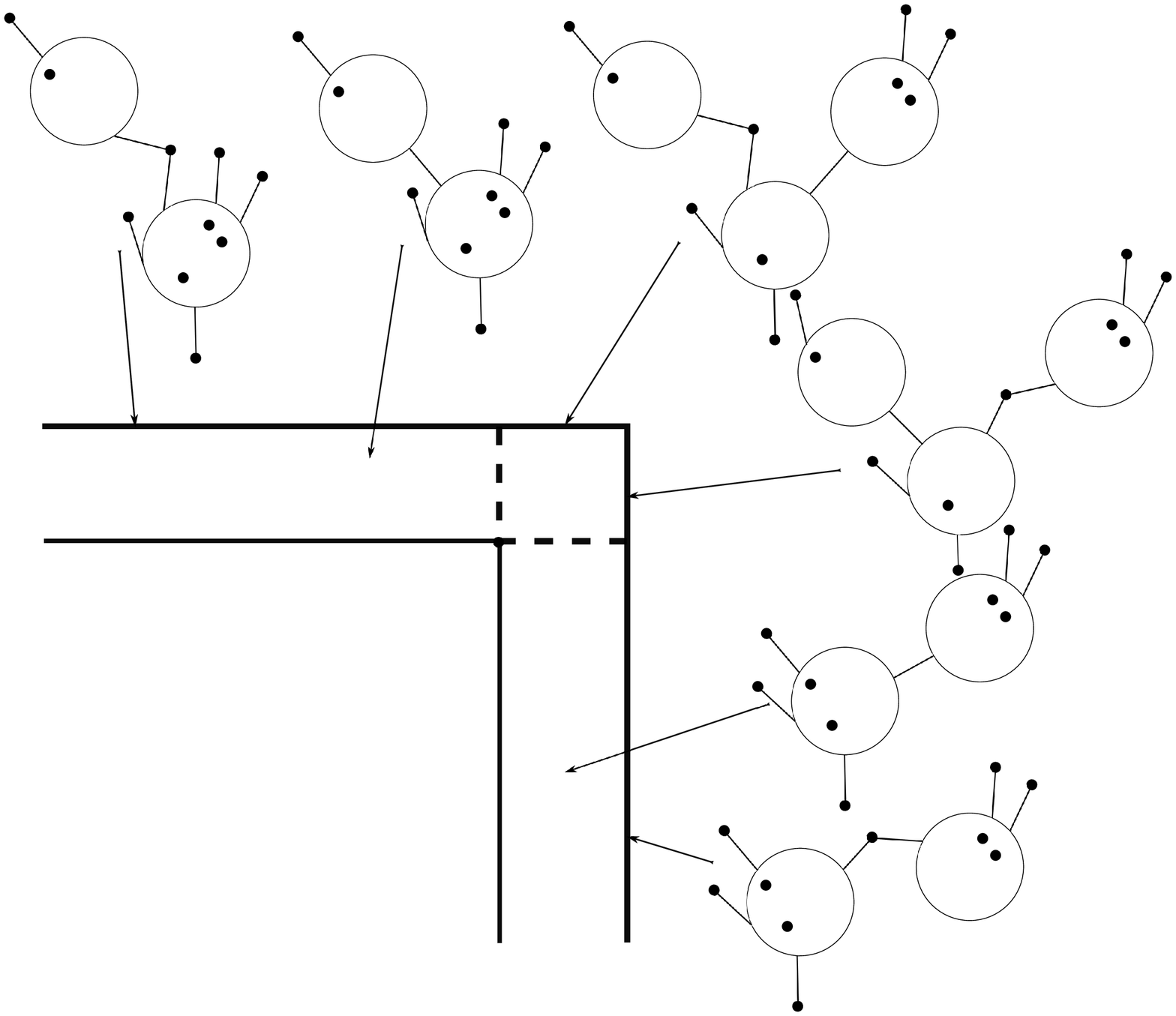}
        \caption{} \label{fig30}
\end{figure}

\begin{remnn}
We note that besides being source spaces for $\otimes$-cluster complexes, the $\otimes$-clusters are also the right source spaces for the more general lagrangian intersection fine Floer homology proposed in \cite{CL}.
\end{remnn}

\textbf{Perturbations.} Since the ghost disks (those having no interior markings) will be mapped to points when considering spaces of Floer trajectories, using a single function $f$ over every line segment may result in degeneracies of the defining equation above these disks (see \cite{CL}). We therefore want to choose coherent systems of perturbation of $f$ over the connecting lines of the clusters and combine them with perturbations of an almost complex structure $J$ over the disks, like those of \cite{CM}. Another solution to the ghost degeneracy problem would be to use different Morse functions over segments that might touch, as it is done in \cite{Fu} or \cite{BC}. However, this strategy fits in the algebraic framework of pre-$A_\infty$ algebras (see \cite{KS}), so we might prefer to use the $\cllkr$ spaces to allow the generation of genuine $A_\infty$ algebras or differential graded algebras (as in \cite{CL}).

To ensure that the perturbed trajectories will break on critical points of $f$, we specify a coherent system of neighborhoods of the endpoints (root, leaves and breakings) where only $f$ will be chosen (a similar strategy has been used in previous works, for example, \cite{W1} and \cite{Sei}).

\textbf{Orientations.} To manage orientations over spaces of trajectories from clusters, we proceed as Welschinger (\cite{W1},\cite{W2}). For maps $u^{(i)}: C^{(i)} \in {\mathcal{C}\ell_{\ell^{(i)},k^{(i)}}} \rightarrow (M,L)$, $i = 1,2$, satisfying pseudoholomorphic and gradient equations, we can associate a linear Fredholm operator $D\overline{\partial}_{u^{(i)}}$. Observe that if $Ind(D\overline{\partial}_{u^{(i)}})= -(\ell^{(i)} -2 +2 k^{(i)})$, then it is possible to show that for generic perturbation data $Ker(D\overline{\partial}_{u^{(i)}}) = \{0\}$ and $Coker(D\overline{\partial}_{u^{(i)}}) = T_{C^{(i)}} \mathcal{C}\ell_{\ell^{(i)},k^{(i)}}^\otimes$. Thus, the orientation problem for the differential $\delta^\otimes$ will amount to a comparison of a product reference orientation $\mathcal{O}^\otimes_{\ell^{(1)},k^{(1)}} \wedge \mathcal{O}^\otimes_{\ell^{(2)},k^{(2)}}$ on a $1$-corner of $\cllkr$ (the source spaces of $\delta^\otimes \circ \delta^\otimes$) and the orientation induced on it by a reference orientation $\mathcal{O}^\otimes_{\ell,k}$ on $\cllkr$ (the source spaces of the glued Floer family).

\begin{lemnn}
        Let $\ell^{(1)} \geq 1$, $\ell^{(2)} \geq 0$ and $k^{(1)}, k^{(2)} \geq 0$ such that $(\ell^{(2)}, k^{(2)}) \neq (0,0)$. Then let $\ell +1 = \ell^{(1)} + \ell^{(2)}$, $k = k^{(1)} + k^{(2)}$, $C^{(1)} \in \mathcal{C}\ell_{\ell^{(1)},k^{(1)}}^\otimes$, $C^{(2)} \in \mathcal{C}\ell_{\ell^{(2)},k^{(2)}}^\otimes$ smooth and define $C =  C^{(1)} \bigsqcup C^{(2)} \big{/}_{v^{(1)}_j \sim v^{(2)}_0}$, that is, $C$ is the concatenation of $C^{(1)}$ and $C^{(2)}$ on the $j^{th}$ leaf of $C^{(1)}$. Then
\[
        \partial_{\ddel_C} \mathcal{O}^\otimes_{\ell,k} = (-1)^{(\ell^{(1)} -j)\ell^{(2)} + (j-1)} \mathcal{O}^\otimes_{\ell^{(1)},k^{(1)}} \wedge
\mathcal{O}^\otimes_{\ell^{(2)},k^{(2)}}.
\]
\end{lemnn}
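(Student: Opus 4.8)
The plan is to reduce the orientation comparison to a linear‑gluing computation for the operators $\ddel$, in the spirit of the sign bookkeeping used by Welschinger for moduli of real curves. First I would dispose of the interior markings: each interior marking contributes to the cokernels of $\ddel_{u^{(1)}}$, $\ddel_{u^{(2)}}$ and of the glued operator a two‑real‑dimensional, complex‑oriented summand, and these enter the reference orientations $\mathcal{O}^\otimes_{\ell,k}$, $\mathcal{O}^\otimes_{\ell^{(1)},k^{(1)}}$, $\mathcal{O}^\otimes_{\ell^{(2)},k^{(2)}}$ in blocks of even real dimension. Reordering even‑dimensional blocks in an exterior product costs no sign, and for the same reason the distribution $k=k^{(1)}+k^{(2)}$ among the two components is irrelevant. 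So the sign depends only on $\ell^{(1)},\ell^{(2)},j$, and we may assume $k^{(1)}=k^{(2)}=0$, where (up to the collar identification of the previous Lemma) $\cllkr$ is the associahedron $\klk$ with $k=0$ and $\mathcal{O}^\otimes_{\ell,0}$ its reference orientation. Since $\dim\mathcal{C}\ell_{\ell^{(i)},k^{(i)}}^\otimes=\ell^{(i)}-2+2k^{(i)}\equiv\ell^{(i)}\pmod 2$, every reordering sign below sees only $\ell^{(i)}\bmod 2$, matching the shape of the asserted formula.

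For the core step, recall that $C$ is a smooth point of a $1$‑corner of $\cllkr$, so there is a collar coordinate $\tau\in[0,\varepsilon)$ transverse to the corner with $T_C\cllkr\cong T_{C^{(1)}}\mathcal{C}\ell_{\ell^{(1)},0}^\otimes\oplus T_{C^{(2)}}\mathcal{C}\ell_{\ell^{(2)},0}^\otimes\oplus\R\,\partial_\tau$. For a representative $u$ glued from $u^{(1)},u^{(2)}$ with $Ind(\ddel_{u^{(i)}})=-(\ell^{(i)}-2)$, the operators $\ddel_{u^{(i)}}$ are injective with cokernel $T_{C^{(i)}}\mathcal{C}\ell_{\ell^{(i)},0}^\otimes$, and the glued operator has cokernel $T_C\cllkr$, so (as in the Orientations paragraph) each reference orientation is exactly a trivialization of the corresponding determinant line. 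The linear gluing lemma for Cauchy--Riemann operators along a boundary node — here, the connecting segment — gives a canonical isomorphism $\det\ddel_u\cong\det\ddel_{u^{(1)}}\otimes\det\ddel_{u^{(2)}}\otimes\det(\text{gluing line})$, the last factor being precisely the normal direction $\R\,\partial_\tau$. Transporting the reference trivializations through this isomorphism and using the outward‑normal‑first convention implicit in $\partial_{\ddel_C}$ already yields $\partial_{\ddel_C}\mathcal{O}^\otimes_{\ell,k}=\pm\,\mathcal{O}^\otimes_{\ell^{(1)},k^{(1)}}\wedge\mathcal{O}^\otimes_{\ell^{(2)},k^{(2)}}$; only the sign is left.

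To identify the sign, note that concatenation inserts the $\ell^{(2)}$ leaves of $C^{(2)}$ between $v^{(1)}_1,\dots,v^{(1)}_{j-1}$ and $v^{(1)}_{j+1},\dots,v^{(1)}_{\ell^{(1)}}$. Comparing the two sides then amounts to (i) moving the block of local coordinates carried by $C^{(2)}$, of real dimension $\equiv\ell^{(2)}\pmod 2$, past the block carried by the leaves $v^{(1)}_{j+1},\dots,v^{(1)}_{\ell^{(1)}}$, of real dimension $\equiv\ell^{(1)}-j\pmod 2$, which costs $(-1)^{(\ell^{(1)}-j)\ell^{(2)}}$; and (ii) moving the outward‑normal coordinate $\partial_\tau$ into the position dictated by the boundary convention, past the coordinates attached to $v^{(1)}_1,\dots,v^{(1)}_{j-1}$, which costs $(-1)^{j-1}$. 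Together these give $(-1)^{(\ell^{(1)}-j)\ell^{(2)}+(j-1)}$. I expect the real obstacle to be bookkeeping rather than analysis: one must fix, and then use consistently, (a) the precise order of the local coordinates/determinant factors inside $\mathcal{O}^\otimes_{\ell,k}$ relative to the cyclic order of the markings, (b) the sign in the linear gluing isomorphism for $\ddel$, and (c) the convention encoded in $\partial_{\ddel_C}$. Once these are pinned down, a short induction on $\ell^{(1)}+\ell^{(2)}$, with base cases in which one factor is a point or an interval checked by hand, lets the coherence of the reference orientations carry the induction; the hypotheses $\ell^{(1)}\geq 1$ and $(\ell^{(2)},k^{(2)})\neq(0,0)$ are exactly what make $C$ a genuine codimension‑one corner with both factors stable, so that the dimension counts above are valid.
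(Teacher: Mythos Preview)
Your high-level picture --- orientations as determinant lines of $\ddel$, comparison via linear gluing, sign from reordering wedge factors --- is the same as the paper's. But the reduction step and the sign bookkeeping in (ii) both slip in ways that matter.

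\textbf{The reduction goes the wrong way.} In the paper's Definition of $\mathcal{O}^\otimes_{\ell,k}$, the first interior marking $z_1$ is \emph{not} a free $2$-block in the orientation: it is used to kill the residual $2$-dimensional automorphism group of the half-plane (after $x_0=\infty$), so the reference form is $\mathcal{O}_\ell\wedge\mathcal{O}_{z_2}\wedge\cdots\wedge\mathcal{O}_{z_k}$, starting at $z_2$. Stripping even-dimensional $\mathcal{O}_{z_h}$ blocks therefore reduces to $k^{(1)}=k^{(2)}=1$, not to $0$; and indeed for $k=0,\ \ell\ge 2$ the paper does not even define $\mathcal{O}^\otimes_{\ell,0}$. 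The paper's main computation is carried out precisely in the $k^{(1)},k^{(2)}\ge 1$ regime, and the remaining cases are exactly those where one factor is the line $\overline{\R}$ ($\ell=1$, $k=0$), treated separately via the translation field $\partial/\partial t$ and the exact sequence set up just before the lemma.

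\textbf{The source of $(-1)^{j-1}$.} With $k^{(2)}\ge 1$ the crucial geometric input (which your abstract gluing-line argument hides) is the coordinate identification
\[
\frac{\partial}{\partial n_F}\ \longleftrightarrow\ -\,\frac{\partial}{\partial\, \mathrm{Im}(z_1^{(2)})},\qquad
\frac{\partial}{\partial x_j^{(1)}}\ \longleftrightarrow\ \frac{\partial}{\partial\, \mathrm{Re}(z_1^{(2)})}.
\]
That is, after gluing, the outward normal and the position of the $j$-th leaf of $C^{(1)}$ become the two real coordinates of the newly freed marking $z_1^{(2)}$. The outward normal is already in front by convention; the $(-1)^{j-1}$ does \emph{not} come from moving $\partial_\tau$ as you wrote, but from moving $\partial/\partial x_j^{(1)}=\partial/\partial\,\mathrm{Re}(z_1^{(2)})$ leftward past $\partial/\partial x_1,\ldots,\partial/\partial x_{j-1}$, together with the minus sign in the normal identification and one swap to form $\mathcal{O}_{z_1^{(2)}}=\partial_{\mathrm{Re}}\wedge\partial_{\mathrm{Im}}$. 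Your step (i) matches the paper; step (ii) gets the right exponent by accident rather than by the right mechanism.

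Finally, no induction is used or needed: once the above identification is in hand, the paper's argument is a single direct wedge-product computation (plus three short separate checks for the line cases). If you rewrite your sketch with the reduction to $k^{(i)}=1$ and this concrete coordinate identification in place of the abstract gluing-line, it becomes the paper's proof.
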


\begin{remnn}
The resulting signs in the differential $\delta^\otimes$ will agree with those of Getzler and Jones (\cite{GJ}, \cite{KS}), which are widely used.
\end{remnn}

\textbf{Cochain complexes.} We now resrict the ambient $((M,\omega),L)$ setting to the case of a $\gl$ (so it is not necessarily orientable) monotone lagrangian $L$ in the sense of \cite{BC} so we can use uniform $J$ over the disks and forget about the interior markings. The use of the most general perturbation setting is deferred to a different paper.

We observe that regularity is obtained by standard arguments over strata of simple Floer trajectories (\cite{MS}). In the above setting, using decomposition results of Lazzarini (\cite{L}), we achieve regularity in chapter \ref{tran_floe} as in \cite{BC} by performing a reduction procedure on nonsimple trajectories, making them simple but with index dropped by at least $2$ and therefore nongeneric.

Regarding the Morse-Smale functions, we choose $f$, the function that will be perturbed, and (possibly) a generic collection $(f_0, \ldots, f_c)$ that will not be perturbed. The considered cluster trajectories will go from a properly ordered collection of critical points of $f$ and of the $f_{j_2} - f_{j_1}$, $j_1 < j_2$ to a contracted collection following the same contraction pattern as in \cite{Fu}: For every disk, the sum (with $f$ counting as $0$) of the incident line functions' equals the outgoing line function's (see figure \ref{fig10}). Forgetting the perturbed trajectories (those with $f$ over some lines) would result in the quantized Morse products (\cite{Fu},\cite{BC}) of order up to $c$. 
This would result in the pre-$A_\infty$ algebraic setting introduced by Kontsevich and Soibelman (see \cite{KS}). If using only $f$, the considered product trajectories are reminescent of the ones defining the $A_\infty$ algebra associated with $L$ as described by Seidel (\cite{Sei}) using a fixed hamiltonian perturbation over the strip-like ends instead of a fixed function over linear ends. The two are expected to coincide through a PSS (see \cite{PSS}) type comparison morphism (see \cite{CL}). 

Using the source setting above, we define complexes in a form similar to the usual bar complex of an $A_\infty$ algebra (see \cite{GJ}). In fact, it can be seen as a bar complex viewed on the unsuspended algebra (see \cite{K}) over a Novikov ring $\Lambda$. 
The codifferential then has the form
\[
        \delta^\otimes = \delta^\otimes(M,\omega,J,L,P,f,f_0, \dots, f_c,g)= \underset{q \geq 1}{\sum} \underset{1 \leq j \leq q}{\sum} \, \underset{ \ell \geq 1}{\sum} (-1)^{(q-j)\ell + (j-1)} Id^{j-1} \otimes m_\ell \otimes Id^{q-j}
\]
where $m_\ell$ counts rigid (i.e. of index $2-\ell$) Floer trajectories having $\ell$ inputs and $1$ output with orientation $\mathcal{O}_{\ell,k}^\otimes$, as explained above. In this notation, the signs, coming for the combinatorial structure of $\klk$, are independent of the source and target critical points (but when applied to a collection of critical points, additional signs appear according to the usual Koszul sign rule $f \otimes g(x_1 \otimes x_2) = (-1)^{\mu(g) \mu(x_1)} f(x_1) \otimes g(x_2)$). From the above considerations, we get that

\begin{thmnn}
        For $\delta^\otimes$ defined as above, we have $\delta^\otimes \circ \delta^\otimes = 0$ so that $(\mathcal{C}\ell^\otimes,\delta^\otimes)$ is a cochain complex.
\end{thmnn}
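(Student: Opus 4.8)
The plan is to show that $\delta^\otimes\circ\delta^\otimes=0$ by the standard cobordism argument: interpret the composite as a signed count of the boundary points of the $1$-dimensional moduli spaces of rigid-plus-one-parameter cluster Floer trajectories, and show that every such boundary point is cancelled by exactly one other with opposite sign. First I would fix a collection of generic source perturbation data $P$ (and the Morse--Smale data $(f,g)$, together with the unperturbed $(f_0,\dots,f_c)$ if present), chosen coherently over the spaces $\cllkr$ so that the regularity results invoked in the excerpt hold — that is, over strata of simple trajectories regularity is classical (\cite{MS}), and over nonsimple trajectories the Lazzarini decomposition (\cite{L}) together with the Biran--Cornea-style reduction makes the nongeneric ones drop index by at least $2$. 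Granting this, for each pair of target and source critical-point collections with total index difference $1$, the relevant moduli space $\mathcal{M}$ of cluster trajectories is a compact $1$-manifold with boundary, and $\partial\mathcal{M}$ consists precisely of the once-broken configurations, i.e. concatenations $C^{(1)}\bigsqcup C^{(2)}/_{v^{(1)}_j\sim v^{(2)}_0}$ with each factor rigid (index $2-\ell^{(i)}$).

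Next I would match the algebra to the geometry. A boundary point of $\mathcal{M}$ is a rigid trajectory on $C^{(1)}\in\mathcal{C}\ell^\otimes_{\ell^{(1)},k^{(1)}}$ feeding its $j$-th leaf into a rigid trajectory on $C^{(2)}\in\mathcal{C}\ell^\otimes_{\ell^{(2)},k^{(2)}}$; summing over all ways of splitting the inputs and choosing $j$ reproduces exactly the terms $Id^{j-1}\otimes m_{\ell^{(2)}}\otimes Id^{\cdot}$ composed with $m_{\ell^{(1)}}$ appearing in $\delta^\otimes\circ\delta^\otimes$, when one expands the double sum defining $\delta^\otimes$ and collects the compositions $m_{\ell^{(1)}}\circ(Id\otimes\cdots\otimes m_{\ell^{(2)}}\otimes\cdots\otimes Id)$. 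Since each component of $\mathcal{M}$ has two ends, these boundary points come in pairs; the algebraic assertion $\delta^\otimes\circ\delta^\otimes=0$ is then equivalent to the statement that the two ends of each interval carry opposite induced orientations. Here is where I would use the orientation lemma stated above: the sign $(-1)^{(\ell^{(1)}-j)\ell^{(2)}+(j-1)}$ with which $\mathcal{O}^\otimes_{\ell^{(1)},k^{(1)}}\wedge\mathcal{O}^\otimes_{\ell^{(2)},k^{(2)}}$ sits in the boundary of $\mathcal{O}^\otimes_{\ell,k}$ is, under the identification $Coker(D\bar\partial_{u^{(i)}})\cong T_{C^{(i)}}\mathcal{C}\ell^\otimes$, precisely the sign attached to the corresponding term in $\delta^\otimes$. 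Because the source space $\cllkr$ is (by the first lemma) a connected oriented manifold with embedded corners isomorphic to $\klk$, a single reference orientation $\mathcal{O}^\otimes_{\ell,k}$ propagates consistently, and a given once-broken configuration arises as a codimension-one face of $\cllkr$ in exactly two ways — either as a genuine $1$-corner of $\cllkr$ or from the collapse of a connecting line of length $\lambda\to 0$ versus $\lambda$ finite — the two contributing with opposite boundary orientation. Equivalently, and more in the spirit of the Getzler--Jones/Koszul bookkeeping, one checks directly that the combinatorial signs satisfy the quadratic $A_\infty$ relation: for fixed total $\ell$ and each interior edge one gets two subdivisions whose signs differ by $(-1)$, which is the classical identity $\sum(-1)^{\cdots}=0$ for the Stasheff associahedra, here carried over verbatim because the signs are declared to be independent of the source and target critical points.

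Finally I would assemble the pieces: compactness plus the regularity/index-drop input guarantee that no other strata (disk bubbling of higher codimension, spheres, nonsimple trajectories) contribute to $\partial\mathcal{M}$; the combinatorial identification above shows $\partial\mathcal{M}$ enumerates exactly $(\delta^\otimes\circ\delta^\otimes)$ applied to the given generators; and the orientation lemma shows the signed count is zero. Summing over all pairs of critical-point collections and invoking the Koszul sign rule to pass from $m_\ell$ to the tensor-algebra operators $Id^{j-1}\otimes m_\ell\otimes Id^{q-j}$ yields $\delta^\otimes\circ\delta^\otimes=0$, so $(\mathcal{C}\ell^\otimes,\delta^\otimes)$ is a cochain complex. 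The main obstacle is not the algebra — that is the usual associahedral sign computation — but controlling the boundary of the $1$-dimensional moduli: one must be sure that the coherent perturbation data $P$ can be chosen so that all nongeneric (in particular nonsimple, and ghost-disk) configurations are cut out with the expected codimension, and that the Lazzarini-type reduction genuinely forces an index drop of at least $2$ in the monotone $\gl$ setting; this is the step that requires the structural curve-decomposition results and is where the real work lies.
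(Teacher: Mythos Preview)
Your overall strategy matches the paper's --- transversality via the Lazzarini/Biran--Cornea reduction, Gromov compactness, gluing, and the orientation comparison (the paper's Lemma~\ref{out}) --- but there is one genuine gap and one point of confusion.

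The gap: when you expand $\delta^\otimes\circ\delta^\otimes$, a composite $(Id^{a}\otimes m_{\ell^{(1)}}\otimes Id^{b})\circ(Id^{c}\otimes m_{\ell^{(2)}}\otimes Id^{d})$ is of one of two kinds. Either the two ranges are \emph{nested} (the output of $m_{\ell^{(2)}}$ is among the inputs of $m_{\ell^{(1)}}$), which corresponds to a once-broken trajectory and is handled by your cobordism argument; or the two ranges are \emph{disjoint}, in which case the term counts an unordered pair of independent rigid trajectories --- no breaking, no one-parameter family behind it. These disjoint terms do not arise as ends of any one-dimensional moduli space; they cancel in pairs for purely algebraic reasons (the same pair is counted once with $m_{\ell^{(1)}}$ applied first and once with $m_{\ell^{(2)}}$ applied first), and this requires its own sign check. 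The paper isolates this as a separate ``nonbroken'' case (figure~\ref{fig12}) and verifies the cancellation directly. Your sentence ``the algebraic assertion $\delta^\otimes\circ\delta^\otimes=0$ is then equivalent to the statement that the two ends of each interval carry opposite induced orientations'' is therefore not correct as stated: that equivalence holds only after the disjoint terms have been cancelled. Your closing remark about ``invoking the Koszul sign rule to pass from $m_\ell$ to the tensor-algebra operators'' may be where you intend to absorb this, but it does not actually carry out the argument.

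The confusion: your claim that ``a given once-broken configuration arises as a codimension-one face of $\cllkr$ in exactly two ways --- either as a genuine $1$-corner of $\cllkr$ or from the collapse of a connecting line of length $\lambda\to 0$'' misreads the source moduli. In $\cllkr$ the $1$-corners are precisely the broken clusters ($\lambda\to\infty$); the nodal-disk limit $\lambda\to 0$ is an \emph{interior} point (recall $\cllkr\cong\klk$, with the original corners of $\klk$ absorbed into the interior by the collar). The pairing of boundary contributions in the broken case comes from the two ends of a connected one-dimensional component of the \emph{trajectory} moduli, not from two realizations of the same face of the source space.
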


Notice that the classical Morse products trajectories ($\ell \geq 2$, $k=0$ so that every disk is ghost) are part of this differential, unlike the differential of \cite{CL}. Having $\delta^\otimes \circ \delta^\otimes = 0$ is equivalent to the fact that the $m_\ell$ form a $A_\infty$ algebra (\cite{K}). It is known that, unlike in the latter symmetric case, the resulting cohomology groups will be trivial. Moreover, to recover the differential graded algebra setting of \cite{CL}, one has to reverse the differential trajectories, take the suspension and complete the generator set. 

\textbf{Source setting for morphisms.} The next step is to see that the construction is homotopy invariant, that is, a hamiltonian isotopy of monotone lagrangians and a homotopy of the intermediate data give rise to a complex (iso)morphism. It is known that quilted disks, that is, complex marked disks together with an inner circle tangent to the boundary at the root marking (the seam) (see figure \ref{fig14}), are appropriate source spaces for morphisms in lagrangian intersection Floer theory (see \cite{MW}, \cite{W1}).

Starting from the moduli spaces of quilted marked disks $\qklk$, we define moduli spaces of quilted clusters $\qcllkr$ as in the non-quilted case, adding collar components. For $k=0$, this amounts to enlarge Stasheff multiplihedra, seen as a space of quilted disks, while for higher $k$, one works with proper real loci of the complexification of the multiplihedra introduced by Ma'u and Woodward (\cite{MW}). However, in every case, these spaces are toric singular manifolds with embedded corners so the collar procedure is more delicate.

\begin{lemnn}
The space of stable quilted $\otimes$-clusters $\qcllkr$ is an orientable toric singular manifold with embedded corners isomorphic to $\qklk$.
\end{lemnn}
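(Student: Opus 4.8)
The plan is to mimic the proof of the non-quilted statement ($\cllkr\cong\klk$), adapting each step to the presence of the seam and of the toric singularities. We may use the following inputs: $\qklk$ is a compact orientable toric singular manifold with embedded corners whose codimension-one faces $C_1(\qklk)$ form a disjoint union of products $Q_{\ell',k'}\times K_{\ell'',k''}$ and $K_{\ell',k'}\times Q_{\ell'',k''}$, according to whether the corresponding node lies above, below, or on the seam (cf.\ \cite{MW}); and its singular locus — the points lying on strictly more incident facets than their codimension — sits in codimension $\geq 2$, so that every $F\in C_1(\qklk)$ is a smooth face of $\qklk$. As in the non-quilted case, $\qcllkr$ will first be defined combinatorially and then shown to carry the same structure as $\qklk$.

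First I would set up the combinatorics. A stable quilted $\otimes$-cluster is a stable planar tree whose vertices are stable quilted or unquilted marked disks carrying, distributed among them, the $\ell+1$ counterclockwise-ordered boundary markings and the $k$ interior markings, whose interior edges carry lengths $\lambda_e\in\overline{\R_+}$, and which carries a single seam (inner tangent circle) on a distinguished component; an edge with $\lambda_e=\infty$ is realized by a broken line (cf.\ figure \ref{fig30}). The point is to identify the face poset of $\qcllkr$ with that of $\qklk$: an edge with $\lambda_e\in(0,\infty)$ is a collar coordinate transverse to a facet of $\qklk$, the node-smoothing coordinates of $\qklk$ reappear as the limits $\lambda_e\to 0$, and the strata with some $\lambda_e=\infty$ match the boundary strata of $\qklk$. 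Tracking the seam — including deciding how it degenerates along a segment that replaces a seam node — only records which of the above products a given face is, and does not change the poset.

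The substance is the collar construction and its interaction with the toric singularities. Over each $F\in C_1(\qklk)$, $\qklk$ has a one-sided collar neighborhood $[0,\epsilon)\times F$ with $[0,\epsilon)$ the node-smoothing parameter; one attaches the family in which the node is replaced by a segment of length $\lambda\in\overline{\R_+}$, i.e.\ glues $\overline{\R_+}\times F$ to $[0,\epsilon)\times F$ along $\{0\}\times F$, so that $F$ becomes interior and a new facet (broken lines, $\lambda=\infty$) appears (figure \ref{fig29}). Because $F$ is a smooth face this is the ordinary collar move; iterating it over $C_1(\qklk)$ is harmless away from codimension $\geq 2$. The real work is to carry it out coherently near the codimension $\geq 2$ strata, in particular near the toric-singular points, where $\qklk$ is locally modeled on $\R^{d-m}\times X^{\geq 0}$ with $X^{\geq 0}$ the nonnegative real locus of an $m$-dimensional affine toric variety (a cone on a non-simple polytope face) and the $m$ incident facets its torus-invariant divisors; the collar move must simultaneously extend all $m$ of these ``node'' directions from $[0,\epsilon)$ to $\overline{\R_+}$. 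I expect the main obstacle to be exactly here: checking that the result is again locally $\R^{d-m}\times X^{\geq 0}$ for the \emph{same} fan — that extending the moment-type coordinates is compatible with the embedded-corner and toric structure — and that these reparametrizations patch over the singular strata. This should reduce to a symmetry of the combinatorics of quilted stable disks and clusters, namely that swapping a smoothing parameter with a segment length on the tree is an automorphism of the relevant face poset, which one reads off from the description of $C_1(\qklk)$ above; granting it, the local toric charts assemble into an isomorphism $\Phi\colon\qcllkr\to\qklk$.

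Finally, since $\Phi$ is built by transport of local toric charts it is by construction an isomorphism of toric singular manifolds with embedded corners, and pulling back along $\Phi$ an orientation of $\qklk$ — which is compatible with the collar move, collars being products with an interval — orients $\qcllkr$. One then records, exactly as in the non-quilted case, the cluster associated with a point of $\qcllkr$: the metric tree with its marked-disk vertices, its seam, and its broken lines over the infinite edges. This completes the statement.
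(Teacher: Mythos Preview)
Your diagnosis of where the difficulty lies—the toric-singular corners—is correct, but the iterative collar-over-$C_1(\qklk)$ strategy is precisely what the paper sets aside: after Lemma \ref{smoo_lemm} it remarks that another approach will be necessary in the quilted case, and the proof is carried out in Appendix \ref{appe_B}. The gap in your argument is the sentence ``this should reduce to a symmetry of the combinatorics \ldots\ granting it''. Face-poset symmetry does not suffice; what must be preserved is the \emph{balancing} relation among the gluing parameters on a colored tree (the product of labels along the path from any colored vertex down to a fixed lower vertex is independent of the colored vertex), and this relation is exactly what encodes the fan at a singular corner. At an upper facet $K_{q,k^{(q+1)}}\times Q_{\ell^{(1)},k^{(1)}}\times\cdots\times Q_{\ell^{(q)},k^{(q)}}$ the $q$ node directions are not independent $[0,\epsilon)$-coordinates, and a facet-by-facet collar gives no mechanism for respecting their coupling.

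The paper's fix is to build the collar out of balanced labelings from the start: by definition $col(\qklk)=\bigsqcup_T Q_{\ell,k,\geq T}\times X^{[0,1]}(T)/{\sim}$ with $X^{[0,1]}(T)$ the \emph{balanced} $[0,1]$-labelings of the colored tree $T$, so each collar cell already carries the correct toric structure rather than a product of intervals. The identification with $\qklk$ is then produced near each maximal corner by an explicit map $\chi$ on balanced labelings (Lemma \ref{lemm_retr}) whose well-definedness amounts to checking that $\chi$ preserves balancedness and lands in the complementary cell $V_{T_0}$; this verification is the technical heart of the proof and is not a formality. You never mention balanced labelings, and without them there is no concrete mechanism to carry the singular toric model through the collar construction. (A smaller point: a quilted cluster need not have a single seamed component—on the upper facets there are $q$ quilted pieces—so your combinatorial description of the objects also needs adjustment.)
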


Families in $\qcllkr$ behave much like in the non-quilted case, generating isolated ghost disks when incident lines collide. However, some disks will contain the seam of the quilt and thus there will be two types of codimension one boundary components, as in the case of Stasheff multiplihedra (see \cite{MW}), depending on whether the breaking happens over the quilted components, resulting in a $\mathcal{QC}\ell_{\ell^{(1)},k^{(1)}}^\otimes \times \mathcal{C}\ell_{\ell^{(2)},k^{(2)}}^\otimes$ $1$-corner, or breakings happen under the quilted components, resulting in a $\mathcal{C}\ell_{q,k}^\otimes \times \mathcal{QC}\ell_{\ell^{(1)},k^{(1)}}^\otimes \times \ldots \times \mathcal{QC}\ell_{\ell^{(q)},k^{(q)}}^\otimes$ $1$-corner (see figure \ref{fig14}).

The quilted $\otimes$-clusters will play the same role as the quilted disks in Floer theory: Given complexes $(\mathcal{C}\ell^\otimes(M,\omega,L^{(i)},J^{(i)},P^{(i)},f^{(i)},f_0^{(i)}, \dots, f_c^{(i)},g^{(i)}), \\ \delta^\otimes(M,\omega,L^{(i)},J^{(i)},P^{(i)},f^{(i)},f_0^{(i)}, \dots, f_c^{(i)},g^{(i)}) )$, $i = 0,1$ and a homotopy between their construction parameters, we interpolate the $(0)$ and $(1)$ data over the seamed disks of the quilted clusters, use the $(1)$ data above the seamed disks and the $(0)$ data below. This allows to define morphism $H$ as
\[
        H = \underset{ \sum_i \ell^{(i)} \geq 0}{\sum} \, \underset{q \geq 1}{\sum} (-1)^{\sum^q_{i=1} (q -i)(\ell^{(i)} - 1)}  h_{\ell^{(1)}} \otimes \dots \otimes h_{\ell^{(q)}}. 
\]
where $h_{\ell^{(i)}}$ counts quilted Floer trajectories of index $1-\ell^{(i)}$. The sign again comes from the combinatorial structure of the source moduli $\qcllkr \cong \qklk$.

\begin{prpnn}
        $H$ is a cochain map from $((\mathcal{C}\ell^\otimes)^{(1)},(\delta^\otimes)^{(1)})$ to \\
$((\mathcal{C}\ell^\otimes)^{(0)},(\delta^\otimes)^{(0)})$. That is, we have $H \circ (\delta^\otimes)^{(1)} = (\delta^\otimes)^{(0)} \circ H$.
\end{prpnn}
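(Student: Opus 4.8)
The plan is to run the standard ``count the boundary of a one-dimensional moduli space'' argument, now over the moduli of quilted $\otimes$-cluster Floer trajectories. Fix a generator $x$ of $(\mathcal{C}\ell^\otimes)^{(0)}$ and a tuple $\vec y = (y_1,\dots,y_\ell)$ of generators of $(\mathcal{C}\ell^\otimes)^{(1)}$, and let $\mathcal{M}(\vec y, x)$ denote the space of quilted Floer trajectories with inputs $\vec y$, output $x$, and index $2-\ell$, that is, one unit above the rigid index $1-\ell$ counted by $h_\ell$. First I would establish that, for a generic choice of the interpolating perturbation data, $\mathcal{M}(\vec y, x)$ is a $1$-dimensional manifold: the transversality argument over strata of simple trajectories carries over verbatim from the non-quilted case, and nonsimple trajectories are handled by the same Lazzarini-type reduction (the disk components still carry the uniform $J$), so the obstructed strata have index dropped by at least $2$ and cannot occur in a $1$-dimensional moduli. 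The coherent neighbourhoods of the root, the leaves and the breakings, over which only the unperturbed functions $f^{(i)}$ are used, guarantee that limiting broken configurations break precisely at critical points of the relevant $f^{(i)}$.

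Next I would invoke Gromov-type compactness in the quilted cluster setting to compactify $\mathcal{M}(\vec y, x)$ to a compact $1$-manifold with boundary $\overline{\mathcal{M}}(\vec y, x)$, and identify its boundary. Using the description of the codimension-one strata of the source moduli $\mathcal{QC}\ell^\otimes_{\ell,k}\cong \qklk$ recalled in the lemma above, together with the monotonicity hypothesis $N_L\ge 2$ and the index count (which, exactly as in the proof that $\delta^\otimes\circ\delta^\otimes=0$, rules out any sphere or disk bubbling not already encoded in the cluster combinatorics, and makes ghost-disk collisions interior points by construction of $\mathcal{QC}\ell^\otimes$), every boundary point of $\overline{\mathcal{M}}(\vec y, x)$ is of exactly one of two kinds. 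A point on a $\mathcal{QC}\ell^\otimes_{\ell^{(1)},k^{(1)}}\times \mathcal{C}\ell^\otimes_{\ell^{(2)},k^{(2)}}$ corner --- a breaking over the quilt --- is a rigid pure $(1)$-cluster trajectory (contributing $m^{(1)}_{\ell^{(2)}}$) concatenated at one leaf of a rigid quilted trajectory (contributing $h_{\ell^{(1)}}$); summing these over all $(x,\vec y)$ produces exactly $H\circ(\delta^\otimes)^{(1)}$. A point on a $\mathcal{C}\ell^\otimes_{q,k}\times\mathcal{QC}\ell^\otimes_{\ell^{(1)},k^{(1)}}\times\cdots\times\mathcal{QC}\ell^\otimes_{\ell^{(q)},k^{(q)}}$ corner --- a breaking under the quilt --- is given by $q$ rigid quilted trajectories (contributing $h_{\ell^{(1)}}\otimes\cdots\otimes h_{\ell^{(q)}}$) whose outputs feed a rigid pure $(0)$-cluster trajectory (contributing $m^{(0)}_q$); summing these produces exactly $(\delta^\otimes)^{(0)}\circ H$. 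Since a compact $1$-manifold has a signed-to-zero number of boundary points, this yields $H\circ(\delta^\otimes)^{(1)} = (\delta^\otimes)^{(0)}\circ H$ once signs are accounted for.

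The main obstacle is the sign bookkeeping. One needs an orientation-comparison lemma for $\mathcal{QC}\ell^\otimes$ playing the role of the lemma stated above for $\mathcal{C}\ell^\otimes$: for each of the two types of $1$-corner of $\mathcal{QC}\ell^\otimes_{\ell,k}$, it must express the boundary orientation $\partial_{\ddel_C}\mathcal{O}^\otimes$ in terms of the product of the chosen reference orientations on the quilted factor(s) and on the $\mathcal{C}\ell^\otimes$ factor, with an explicit combinatorial sign forced by the cell structure of $\qklk$. Feeding this, the orientation conventions already fixed on $\mathcal{C}\ell^\otimes$, and the Koszul rule $f\otimes g(x_1\otimes x_2)=(-1)^{\mu(g)\mu(x_1)}f(x_1)\otimes g(x_2)$ into the signed boundary sum should reproduce precisely the prefactor $(-1)^{\sum_{i=1}^q(q-i)(\ell^{(i)}-1)}$ in the definition of $H$ and confirm that the two composites coincide on the nose (and, as for the earlier lemmas, match the Getzler--Jones conventions). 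A secondary, more routine technical point is checking the compactness statement in full --- that no other codimension-one degenerations occur --- but this is essentially the quilted analogue of arguments already carried out for $\delta^\otimes\circ\delta^\otimes=0$ and for the non-quilted homotopy invariance.
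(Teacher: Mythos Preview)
Your proposal is correct and follows essentially the same route as the paper: transversality via the reduction-to-simple argument of section~\ref{tran_floe}, Gromov compactness, identification of the boundary of the one-dimensional quilted moduli with the two types of $1$-corners of $\mathcal{QC}\ell^\otimes_{\ell,k}$, and a sign verification. The orientation-comparison lemmas you anticipate needing are exactly Lemmas~\ref{qout1} and~\ref{qout2} in the paper, and the explicit computation there shows that both types of broken trajectories induce $(-1)^\Phi$ times the outward normal orientation on their glued families for a common $\Phi$ constant along the homotopy class; the paper also invokes a reduction (lemma~1.9 of \cite{W2}) to cardinality-one targets, which is implicit in your choice to work with a single output generator $x$.
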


\begin{remnn}
It is worth mentioning that besides being source spaces for morphisms between the $\otimes$-cluster complexes themselves, the quilted $\otimes$-clusters are also expected to be the right source spaces for other comparison morphisms like PSS morphisms (\cite{PSS}) or morphisms of lagrangian intersection fine Floer complexes (\cite{CL}).
\end{remnn}

Then, we show that homotopic interpolations between the $(0)$ and $(1)$ data give rise to homotopic cochain maps. The source moduli $[0,1] \times \qcllkr$ are enough for this need. We define a cochain homotopy $K$ on elements of cardinality $\sum_{i=1}^{q} \ell^{(i)} = q'$ having the form
\[
        K = \underset{ q \geq 1}{\sum} (-1)^q \, \underset{ q' }{\sum} (-1)^{\sum^q_{i=1} (q -i)(\ell^{(i)} - 1)} \, \overset{q}{\underset{ p= 1}{\sum}} (-1)^{\sum^{p-1}_{i=1} (\ell^{(i)} - 1)} \underset{ t\in [0,1]}{\sum} h^{(t)}_{\ell^{(1)}} \otimes \dots \otimes k^{(t)}_{\ell^{(p)}} \otimes \dots \otimes h^{(t)}_{\ell^{(q)}}. 
\]

\begin{prpnn} 
        For $K$, defined as above, is a cochain homotopy between $H^{(1)}$ and $H^{(0)}$. That is, we have $H^{(1)} - H^{(0)} =  K \circ (\delta^\otimes)^{(1)} + (\delta^\otimes)^{(0)} \circ K$.
\end{prpnn}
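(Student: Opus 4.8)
The plan is to run the moduli-theoretic scheme of the preceding proposition over the one-parameter family of source spaces $[0,1]\times\qcllkr$, the new interval parametrizing a homotopy between the two interpolating data sets whose cochain maps are $H^{(0)}$ and $H^{(1)}$. Fix such a homotopy-of-homotopies, extend it to a coherent choice of perturbation data over $[0,1]\times\qcllkr$ that restricts to the already chosen data at $t=0,1$ and to the unperturbed Morse--Smale function in the coherent neighborhoods of the root, leaves and breakings, exactly as for $\delta^\otimes$ and $H$; this defines the interval-parametrized counts $k^{(t)}_{\ell}$ of index-$(-\ell)$ quilted cluster Floer trajectories, and hence $K$ by the displayed formula. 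Transversality over all relevant strata is obtained as in the proof of $\delta^\otimes\circ\delta^\otimes=0$, via the Lazzarini reduction on nonsimple trajectories together with the standard arguments over simple strata; Gromov compactness then makes the rigid moduli finite and the next-dimensional moduli over $[0,1]\times\qcllkr$ compact $1$-manifolds with boundary, the relative disk bubbling being already absorbed into the collar (cluster) structure of $\qcllkr$.

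The identity is read off the signed count of $\partial$ of these $1$-dimensional moduli, whose strata fall into four families. Two come from $(\partial[0,1])\times\qcllkr$: the face $t=1$ contributes $H^{(1)}$ and the face $t=0$ contributes $-H^{(0)}$, once $[0,1]$ is oriented by $+\partial/\partial t$. The other two come from $[0,1]\times\partial\qcllkr$, and by the description of $\partial\qcllkr$ recalled above they are, first, the corners $[0,1]\times\bigl(\mathcal{QC}\ell_{\ell^{(1)},k^{(1)}}^\otimes\times\mathcal{C}\ell_{\ell^{(2)},k^{(2)}}^\otimes\bigr)$, where the breaking occurs over the seam: the $\mathcal{C}\ell$ factor lies above the seam, carries the $(1)$ data, and contributes an $m_\ell$ of $(\delta^\otimes)^{(1)}$, while the $[0,1]\times\mathcal{QC}\ell$ factor --- still carrying the single quilt --- contributes one $k^{(t)}$ together with the remaining $h^{(t)}$'s, so that summing yields $K\circ(\delta^\otimes)^{(1)}$; and second, the corners $[0,1]\times\bigl(\mathcal{C}\ell_{q,k}^\otimes\times\mathcal{QC}\ell_{\ell^{(1)},k^{(1)}}^\otimes\times\dots\times\mathcal{QC}\ell_{\ell^{(q)},k^{(q)}}^\otimes\bigr)$, where the breaking occurs under the seam: the $\mathcal{C}\ell_{q,k}$ factor lies below the seam, carries the $(0)$ data and contributes an $m_q$ of $(\delta^\otimes)^{(0)}$, while one distinguished quilted factor $\mathcal{QC}\ell_{\ell^{(p)},k^{(p)}}^\otimes$ takes up the remaining (interval) modulus and contributes the $k^{(t)}$, the others contributing $h^{(t)}$'s, so that summing over the position $p$ yields $(\delta^\otimes)^{(0)}\circ K$. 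Since the total signed boundary count vanishes, rearranging gives $H^{(1)}-H^{(0)}=K\circ(\delta^\otimes)^{(1)}+(\delta^\otimes)^{(0)}\circ K$.

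The real content --- and the step I expect to be the main obstacle --- is the sign bookkeeping: checking that the four families above reassemble into precisely the displayed $K$, with its prefactors $(-1)^q$, $(-1)^{\sum_{i=1}^q(q-i)(\ell^{(i)}-1)}$ and $(-1)^{\sum_{i=1}^{p-1}(\ell^{(i)}-1)}$. This requires, at each of the two types of quilted codimension-one corner, a comparison between the product reference orientation on the corner and the boundary orientation induced by the reference orientation on $[0,1]\times\qcllkr$ --- the quilted, interval-stabilized analogue of the orientation lemma stated above for $\partial_{\ddel_C}\mathcal{O}_{\ell,k}^\otimes$ --- in which the factor $(-1)^q$ records commuting $\partial/\partial t$ past the $q$ tensor slots of a quilted cluster and $(-1)^{\sum_{i=1}^{p-1}(\ell^{(i)}-1)}$ records the position of the distinguished $k$-carrying factor. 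Once this orientation comparison is in place --- a bookkeeping extension of the non-quilted case and of the cochain-map proposition, using the Getzler--Jones and Welschinger conventions already fixed in the text --- the Koszul sign rule reproduces the formula for $K$ and completes the proof.
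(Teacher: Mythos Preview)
Your proposal is correct and follows essentially the same approach as the paper: both set up the $1$-dimensional moduli over $[0,1]\times\qcllkr$, identify the four boundary families ($t=0,1$ contributing $-H^{(0)}$ and $H^{(1)}$, and the two quilted corner types contributing $K\circ(\delta^\otimes)^{(1)}$ and $(\delta^\otimes)^{(0)}\circ K$), and reduce the statement to an orientation comparison at each corner. The paper carries out the sign computation you anticipate by invoking the two quilted orientation lemmas (\ref{qout1} and \ref{qout2}) directly, after first reducing via \cite{W2} to the case of cardinality-one targets; your interpretation of the prefactors $(-1)^q$ and $(-1)^{\sum_{i=1}^{p-1}(\ell^{(i)}-1)}$ as arising from commuting $\partial/\partial t$ through the tensor slots is exactly what the explicit calculation in the paper confirms.
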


To complete the proof of the functoriality property of the $\otimes$-cluster construction, we will use quilted clusters with two seams built from the corresponding spaces of disks (\cite{MWW}):

\begin{prpnn}
        For $H^{(i)}$ a cochain map between $\big((\mathcal{C}\ell^\otimes)^{(i+1)},(\delta^\otimes)^{(i+1)}\big)$ and \\
$\big((\mathcal{C}\ell^\otimes)^{(i)},(\delta^\otimes)^{(i)}\big)$, $i=0,1$, defined as above , and $H^{(1) \circ (0)}$ a cochain map between \\ 
$\big((\mathcal{C}\ell^\otimes)^{(2)},(\delta^\otimes)^{(2)}\big)$ and $\big((\mathcal{C}\ell^\otimes)^{(0)},(\delta^\otimes)^{(0)}\big)$ built from the concatenation of their perturbation homotopies. Then $H^{(0)} \circ H^{(1)}$ and $H^{(1) \circ (0)}$ are homotopic.
\end{prpnn}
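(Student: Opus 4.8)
The plan is to run the classical "two seams" argument — the one showing that the composition of two continuation maps is homotopic to the continuation map of the concatenated homotopy — in the cluster setting. First I would introduce moduli spaces of \emph{twice-quilted $\otimes$-clusters}, denoted $\mathcal{QC}\ell^{\otimes,2}_{\ell,k}$, built from the spaces of twice-quilted disks of \cite{MWW} exactly as $\qcllkr$ was built from $\qklk$ in the preceding lemma: for $k=0$ one starts from the twice-quilted disk polytopes, and for $k\geq 1$ from the proper real loci of the complexified twice-multiplihedra, both carrying two nested inner circles (two seams); one then adds collar components along each codimension-one corner to make room for the connecting line segments. As before these are orientable toric singular manifolds with embedded corners isomorphic to the underlying twice-quilted disk moduli, and they carry a parameter $r\in\overline{\R_+}$ measuring the gap between the two seams.

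Next I would read off the codimension-one boundary of $\mathcal{QC}\ell^{\otimes,2}_{\ell,k}$. There are four types of faces: (i) a breaking over the region above both seams, giving a $\mathcal{C}\ell^\otimes$-factor carrying the $(2)$-data; (ii) a breaking between the two seams or below the inner one, giving factors with a single seam carrying either the $(1)\!\circ\!(0)$ homotopy or one of the separate homotopies; (iii) the limit $r\to 0$, in which the two seams coalesce into a single seam carrying the \emph{concatenated} perturbation homotopy — this face is a copy of $\qcllkr$ and will contribute $H^{(1)\circ(0)}$; (iv) the limit $r\to\infty$, in which the configuration degenerates into a once-quilted cluster for the $(1)$-homotopy whose boundary markings feed into once-quilted clusters for the $(0)$-homotopy — this face contributes $H^{(0)}\circ H^{(1)}$. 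As in the once-quilted case, whenever two incidence points of gradient segments collide one inserts an isolated ghost disk, and the usual degeneracies above ghost disks are managed by the neighborhood-of-endpoints prescription together with the coherent perturbation of $f$ and $J$.

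I would then choose coherent perturbation data on $\mathcal{QC}\ell^{\otimes,2}_{\ell,k}$, by downward induction on dimension, restricting on faces (i)--(ii) to the data already fixed for $(\delta^\otimes)^{(2)}$, $(\delta^\otimes)^{(0)}$, the $H^{(i)}$ and the cochain homotopies of the previous propositions, on face (iii) to the data defining $H^{(1)\circ(0)}$, and on face (iv) to the product of the data defining $H^{(0)}$ and $H^{(1)}$; these prescriptions agree along the lower strata where the relevant faces overlap, since all the input data were themselves built coherently. Regularity of the index $\leq 1$ twice-quilted cluster trajectories follows from the standard transversality argument over strata of simple trajectories (\cite{MW}) together with the Lazzarini decomposition and index-drop reduction already used for $\delta^\otimes$ (chapter \ref{tran_floe}). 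Counting rigid (index $0$) twice-quilted trajectories with the reference orientation inherited from $\mathcal{QC}\ell^{\otimes,2}_{\ell,k}$ defines an operator $K'$, and the signed count of boundary points of the one-dimensional (index $1$) moduli spaces yields
\[
H^{(1)\circ(0)} - H^{(0)}\circ H^{(1)} = K' \circ (\delta^\otimes)^{(2)} + (\delta^\otimes)^{(0)}\circ K',
\]
which is the asserted homotopy.

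The main obstacle I expect is bookkeeping rather than conceptual, and it is twofold. First, producing $\mathcal{QC}\ell^{\otimes,2}_{\ell,k}$ with the correct embedded-corner structure: the collaring procedure was already "more delicate" in the once-quilted toric singular case, and near a corner where an $r\to 0$ or $r\to\infty$ face meets a breaking face one must verify that the local model is still a manifold with corners. Second, orienting these spaces consistently and checking that the reference orientations induced on faces (iii) and (iv) match the previously fixed orientations for $H^{(1)\circ(0)}$ and for $H^{(0)}\otimes H^{(1)}$ up to the predicted Getzler--Jones/Koszul signs, so that the displayed identity comes out with the stated signs and no spurious ones. Everything else — compactness, gluing, the extension of perturbation data — runs parallel to the once-quilted constructions established in the preceding propositions.
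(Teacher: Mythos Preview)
Your approach is the same as the paper's: both use moduli of twice-quilted (``biquilted'') clusters built from the biquilted disk spaces of \cite{MWW}, read off the four codimension-one face types, and obtain the chain homotopy by counting rigid biquilted Floer trajectories (the paper calls the resulting operator $R$, you call it $K'$).

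One point to clean up: your description of face (ii) is garbled. A breaking \emph{below} both seams produces an unquilted root cluster carrying the $(0)$-data together with several \emph{biquilted} pieces above it --- this is the face contributing $(\delta^\otimes)^{(0)}\circ K'$. There is no separate ``breaking between the seams'' face distinct from your $r\to\infty$ face; the latter already is the degeneration in which the two seam levels separate into a once-quilted $H^{(0)}$-cluster at the root receiving once-quilted $H^{(1)}$-clusters at its leaves (you have the feeding direction reversed in (iv)). With (ii) corrected to ``breaking below both seams, giving an unquilted $(0)$-factor and biquilted factors above,'' your four faces match the paper's four types exactly and the displayed identity follows.
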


Now write $\Lambda$-mod for the (abelian) category of $\Lambda$-modules, $K(\Lambda \text{-mod})$ for the category of cochain complexes over $\Lambda$-mod and $hK(\Lambda \text{-mod})$ for the (triangulated) homotopy category of the latter. The above arguments resume to

\begin{thmnn}
\begin{diagram}
h\mathcal{L}^{mono, \pm}(M,\omega) & && \rTo^{\mathcal{C}\ell^\otimes} && & hK(\Lambda \text{-mod}) \\
(L,J,P,f,g) & && \rMapsto  && & (\mathcal{C}\ell^\otimes(M,\omega,L,J,P,f,g),\delta^\otimes(M,\omega,L,J,P,f,g))
\end{diagram}
is a contravariant functor.
\end{thmnn}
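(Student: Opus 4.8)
The plan is to assemble the three structural propositions above into the definition of $\mathcal{C}\ell^\otimes$ on morphisms and to verify the functor axioms in $hK(\Lambda\text{-mod})$. On objects, $\mathcal{C}\ell^\otimes$ sends $(L,J,P,f,g)$ to the cochain complex $(\mathcal{C}\ell^\otimes(M,\omega,J,L,P,f,g),\delta^\otimes)$, which is well defined by the theorem $\delta^\otimes\circ\delta^\otimes=0$. A morphism in $h\mathcal{L}^{mono,\pm}(M,\omega)$ from $(L^{(0)},\dots)$ to $(L^{(1)},\dots)$ is represented by a hamiltonian isotopy $L^{(0)}\rightsquigarrow L^{(1)}$ together with a homotopy of the auxiliary data $(J,P,f,g)$; to such a representative I associate, via the quilted $\otimes$-clusters $\mathcal{QC}\ell^\otimes$ and the interpolation scheme ($(1)$-data above the seams, $(0)$-data below), the cochain map $H:((\mathcal{C}\ell^\otimes)^{(1)},(\delta^\otimes)^{(1)})\to((\mathcal{C}\ell^\otimes)^{(0)},(\delta^\otimes)^{(0)})$, whose cochain-map property $H\circ(\delta^\otimes)^{(1)}=(\delta^\otimes)^{(0)}\circ H$ is the first of the three propositions. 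Contravariance is built in: a morphism out of the $(0)$-object produces a cochain map into the $(0)$-complex.

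\textbf{Well-definedness on morphisms.} Next I would show that the homotopy class $[H]$ in $hK(\Lambda\text{-mod})$ depends only on the morphism, not on the chosen representative. Two homotopies of construction parameters with the same endpoints that are themselves homotopic rel endpoints are compared by the moduli $[0,1]\times\mathcal{QC}\ell^\otimes$ and the explicit cochain homotopy $K$ of the second proposition, which yields $H^{(1)}-H^{(0)}=K\circ(\delta^\otimes)^{(1)}+(\delta^\otimes)^{(0)}\circ K$, hence $[H^{(0)}]=[H^{(1)}]$. Since the space of admissible auxiliary data over a fixed hamiltonian isotopy is contractible (an open, essentially convex subset of a Fréchet space of functions, metrics and perturbation data, the $Pin_\pm$ and monotonicity hypotheses being preserved throughout), any two representatives of a fixed morphism are joined by such a homotopy, so $[H]$ is well defined.

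\textbf{Functor axioms.} For the identity morphism of $(L,J,P,f,g)$, represented by the constant homotopy, I would argue that $H$ is chain homotopic to the identity: the quilted cluster trajectories with no disk component and all $\ell^{(i)}=1$ degenerate to constant-interpolation strips and, by the usual continuation argument, contribute $\mathrm{Id}$ to $h_1$, while the remaining terms strictly increase the $\omega$-energy and are thus nilpotent with respect to the Novikov filtration on $\Lambda$; hence $[H]=[\mathrm{Id}]$. For composition, given morphisms $(0)\to(1)$ and $(1)\to(2)$ with representatives chosen so that the second homotopy of data begins where the first ends, the concatenation represents the composite morphism, and the third proposition gives $H^{(0)}\circ H^{(1)}\simeq H^{(1)\circ(0)}$; passing to homotopy classes, $\mathcal{C}\ell^\otimes(\text{composite})=\mathcal{C}\ell^\otimes((1)\to(2))\circ\mathcal{C}\ell^\otimes((0)\to(1))$ in $hK(\Lambda\text{-mod})$, as required for a contravariant functor. (In particular, applying this to an isotopy followed by its reverse — which is homotopic rel endpoints to the constant one — shows each $[H]$ is an isomorphism, recovering the homotopy invariance claim.)

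\textbf{Main obstacle.} The genuinely hard input is not this categorical assembly but what feeds the three propositions: the transversality/regularity theory for the moduli of simply and doubly quilted cluster Floer trajectories — carried out, as for $\delta^\otimes$, by Lazzarini-type decomposition into simple trajectories and index reduction, now in the presence of the seam(s) and the nonsmooth toric corner structure of $\mathcal{QC}\ell^\otimes$ — together with the compactness and gluing of the $1$- and $2$-dimensional components whose boundary strata realize the relations $H\circ\delta=\delta\circ H$, $H^{(1)}-H^{(0)}=K\delta+\delta K$, and $H^{(0)}\circ H^{(1)}\simeq H^{(1)\circ(0)}$, and finally the orientation bookkeeping: one must check that the combinatorial signs produced by the embedded-corner structure of $\mathcal{QC}\ell^\otimes\cong\mathcal{QC}\ell_{\ell,k}$ (and of the two-seam moduli) are exactly those appearing in the formulas for $H$ and $K$, so that the boundary contributions cancel over $\mathbb{Z}$. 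The subtlety specific to functoriality — and the step where I expect to spend the most care — is the identity axiom, since it is the one identity not delivered by a clean "boundary of a moduli space" argument and instead needs the degeneration analysis of the trivial quilted cluster family near the stratum where the seam circle collapses onto the root marking.
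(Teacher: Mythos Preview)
Your overall assembly is exactly what the paper does: the theorem is stated there as a direct repackaging of the earlier results ($\delta^\otimes\circ\delta^\otimes=0$ together with the three propositions on $H$, $K$, and $H^{(0)}\circ H^{(1)}\simeq H^{(1)\circ(0)}$), with no further argument supplied.

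The one place your sketch needs correction is the identity axiom. The claim that the non-constant contributions to $H$ are ``nilpotent with respect to the Novikov filtration on $\Lambda$'' is not valid over $\Lambda=\mathbb{Z}[t,t^{-1}]$, which is not completed and in which $t$ is a unit; and even over a completed ring, filtration-decreasing would give invertibility of $H$, not $[H]=[\mathrm{Id}]$. Nor is a degeneration analysis near the seam-collapse stratum needed. The paper's argument (which appears in the proof of the corollary immediately preceding the categorical statement) is a straightforward index count: with the constant perturbation homotopy the defining equations are independent of the seam, so $h_\ell$ counts precisely the unquilted configurations of index $-(\ell-1)$ rather than the $-(\ell-2)$ used for $m_\ell$. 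By the regularity already established, these moduli are empty for $\ell\geq 2$ and consist only of constant gradient lines for $\ell=1$. Thus $H=\mathrm{Id}$ on the nose for the trivial representative --- stronger than what you were aiming for, and no chain homotopy is required.
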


Here, $h\mathcal{L}^{mono, \pm}(M,\omega)$ is a homotopy category of monotone $\gl$ (or, equivalently, $Pin_{\pm}$) lagrangian submanifolds of $(M,\omega)$ where the morphisms are built from hamiltonian isotopies and interpolations of the perturbation data.

\subsection*{The $\bullet$ (symmetric) case}

\textbf{Source setting.} Instead of being constructed from disks with $\ell +1$ ordered boundary markings, the symmetric $\bullet$-cluster moduli are constructed from a moduli of disks with varying order on their boundary markings. We look for these disks in the locus of real marked spheres having $\ell+1$ real markings and $k$ pairs of complex conjugate markings partially studied by Ceyhan (\cite{Cey}).

Let $\mathcal{M}_{\ell,k}$ be the Deligne-Mumford-Knudsen space of stable complex genus zero Riemann surfaces with $\ell +1 +2k$ markings denoted by $\{x_j\}_{0\leq j\leq \ell}$ and $\{z_h\}_{1\leq h \leq 2k}$. It has the structure of a compact complex $(\ell -2 +2k)$-manifold and has an antiholomorphic involution $\sigma_{\ell,k}$ defined as the composition of the natural complex conjugation with the transpositions $(z_h z_{h+k})$, $1\leq h \leq k$. The real locus $\R\mathcal{M}_{\ell.k} \equiv fix(\sigma_{\ell,k})$ is then a smooth real $(\ell -2 +2k)$-manifold, corresponding to curves with real $x_j$'s and complex conjugate pairs $(z_h, z_{h+k})$ (see \cite{Cey}).

We next restrict to $Im(\iota) \subset \R \mathcal{M}_{\ell,k}$, the real spheres where $\{z_1, \ldots, z_k\}$ lie in the same hemisphere that will be considered as a disk with $\ell+1$ real markings and $k$ interior markings. It will be seen as $l!$ copies of $\klk$ attached together on strata having ghost components, where the ordering of the real markings switch. The main point is that, in general, $Im(\iota)$ is singular precisely over the strata having at least one internal ghost sphere. For example, a neighborhood of the singular locus in the case $\ell=0$, $k=2$ is displayed in figure \ref{bul_fig01nn}. Next we give a local description of $Im(\iota) \subset \R \mathcal{M}_{\ell,k}$ near any point $S \in Im(\iota)$ lying in a codimension $m$ open stratum $\mathfrak{S} \subset \R \mathcal{M}_{\ell,k}$.

\begin{figure}[h] 
        \centering 
	\includegraphics[width=100mm]{./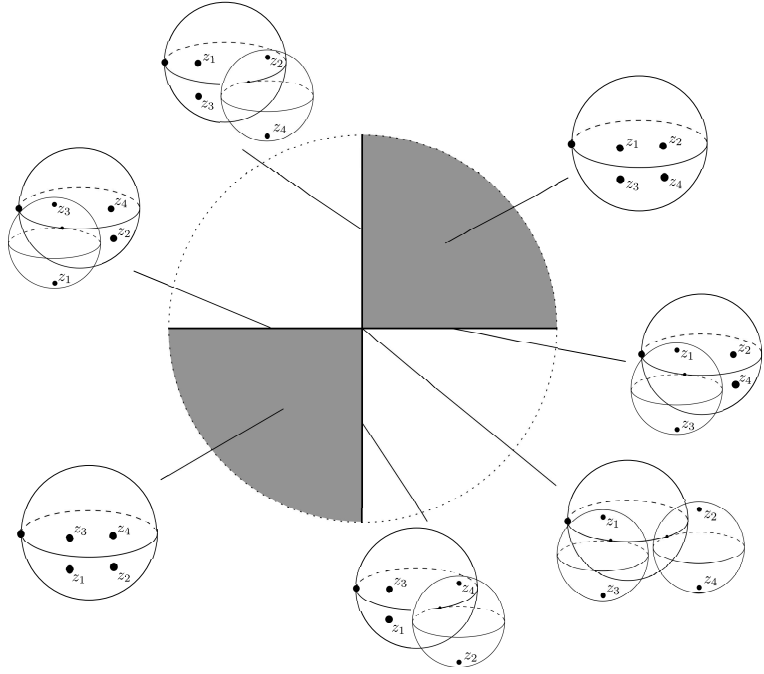}
        \caption{Singularity in $Im(\iota) \subset \R \mathcal{M}_{0,2}$} \label{bul_fig01nn}
\end{figure}

We can choose normal coordinates $(n_1, \ldots, n_m)$ to $\mathfrak{S}$ at $S$ corresponding to $m$ real gluing parameters, one for each real node of $S$ (see \cite{MW}, \cite{Liu}, \cite{MS}). Then one can orient the $n_i$ coordinates so that

\begin{lemnn}
	In the $(n_1, \ldots, n_m)$ normal coordinates to $\mathfrak{S}$ at $S$, $Im(\iota) = G \cdot \R_+^m$, where $G \equiv \underset{d \text{ ghost}}{\prod} \Z/2\Z$ and the $d$ factor generator acts by changing the signs of the coordinates corresponding to the nodes on $d$.
\end{lemnn}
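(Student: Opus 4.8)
The plan is to reduce the statement to an elementary computation on the dual tree of $S$, after passing from $Im(\iota)$ to its locus of smooth curves.

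First, by the deformation theory of real stable genus zero curves (cf.\ \cite{MW}, \cite{Liu}, \cite{MS}), a neighborhood of $S$ in $\R\mathcal{M}_{\ell,k}$ is modelled on $\R^m\times B$, where the $\R^m$ factor carries the real gluing parameters $(n_1,\dots,n_m)$ at the $m$ real nodes of $S$ and $B$ is a ball parametrizing deformations of the pointed components; a point $(n,b)$ is a smooth curve exactly when all $n_i\ne 0$. (If $S$ also carries complex nodes, each conjugate pair contributes only a smooth $\C$-factor — smoothing it keeps every marking on a fixed side of the real locus for all small values — so those directions are absorbed into $B$, which is why one may assume, as in the statement, that $\mathrm{codim}\,\mathfrak{S}$ equals the number of real nodes.) Since the condition defining $Im(\iota)$ — that $z_1,\dots,z_k$ lie in one component of the complement of the real locus — is insensitive to the $B$-coordinate and depends only on $\mathrm{sgn}(n)\in\{\pm1\}^m$, and since $Im(\iota)$ is closed while $G\cdot\R_+^m$ is a finite union of closed orthants, it suffices to determine which sign patterns $\epsilon\in\{\pm1\}^m$ yield a smooth curve in $Im(\iota)$, and then pass to closures.

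For that, I would track hemispheres along the dual tree $\Gamma$ of $S$ (vertices $=$ components, edges $=$ real nodes). Each $\sigma$-invariant component $v$ of $S$ has two \emph{hemispheres} $H_v^{+},H_v^{-}$, the two components of $v\setminus v^{\sigma}$, interchanged by $\sigma$. Because $S\in Im(\iota)$, all $z$-markings carried by a fixed component lie in one and the same hemisphere of it — otherwise no smoothing could reunite them, since at every edge the two hemispheres of $v$ are matched bijectively with the two hemispheres of the adjacent component — so one may label so that this hemisphere is $H_v^{+}$ for every non-ghost $v$. A local computation at a node ($xy=n_i$, $\sigma(x,y)=(\bar x,\bar y)$, whence $\mathrm{sgn}(\mathrm{Im}\,y)=-\mathrm{sgn}(n_i)\,\mathrm{sgn}(\mathrm{Im}\,x)$ on $xy=n_i$) shows that, after orienting $n_i$ suitably, smoothing the node $i=(v,w)$ glues $H_v^{a}$ to $H_w^{a\cdot\mathrm{sgn}(n_i)}$. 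As $\Gamma$ is a tree, smoothing with a sign pattern $\epsilon$ organizes all hemispheres into exactly two classes; fixing a non-ghost base vertex $v_0$ (possible when $k\ge 1$; for $k=0$ the claim just says $G$ permutes orthants transitively, which holds since on a tree the vertex flips generate all edge flips), the class of $H_{v_0}^{+}$ is $\{H_v^{\pi_\epsilon(v)}\}_v$ with $\pi_\epsilon(v):=\prod_{i\in\mathrm{path}(v_0,v)}\mathrm{sgn}(n_i)$. Hence the smoothed curve lies in $Im(\iota)$ iff all $z$'s lie in one class, i.e.\ iff $\pi_\epsilon$ is constant on the non-ghost components (equivalently, since $\pi_\epsilon(v_0)=+1$, identically $+1$ there).

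It remains to recognise $\{\epsilon:\pi_\epsilon\equiv +1\text{ on non-ghost components}\}$ as the $G$-orbit of $(+1,\dots,+1)$, where $g_d$ flips the $n_i$ at every node incident to the ghost $d$. If $\epsilon_i=\prod_{d\ \mathrm{ghost},\ i\text{ inc.\ }d}s(d)$ for some $s$, then $\pi_\epsilon(v)=\prod_{d\ \mathrm{ghost}}s(d)^{\#\{i\in\mathrm{path}(v_0,v):\,i\text{ inc.\ }d\}}$, and the exponent is even unless $d\in\{v_0,v\}$; with $v_0$ non-ghost this forces $\pi_\epsilon(v)=s(v)$ for ghost $v$ and $\pi_\epsilon(v)=+1$ for non-ghost $v$. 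Conversely, given $\epsilon$ with $\pi_\epsilon\equiv +1$ on non-ghosts, put $s(d):=\pi_\epsilon(d)$; for an edge $i=(v,w)$ one has $\epsilon_i=\pi_\epsilon(v)\pi_\epsilon(w)=\prod_{d\ \mathrm{ghost},\ i\text{ inc.\ }d}s(d)$, so $\epsilon$ lies in the orbit. Passing to closures yields $Im(\iota)=G\cdot\R_+^m$ near $S$. I expect the real work to be concentrated in the middle step — pinning down the sign rule ``$\mathrm{sgn}(n_i)$ decides which pair of hemispheres is glued'' and checking that conjugate-node smoothings are hemisphere-neutral; granting that, the rest is bookkeeping on a tree, and the reduction to open orthants plus closures is routine since $Im(\iota)$ is closed and of full dimension in $\R\mathcal{M}_{\ell,k}$.
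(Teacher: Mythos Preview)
The paper does not actually prove this lemma: it is stated immediately after the sentence ``Then one can see that'' and no argument is supplied. So there is nothing in the paper to compare your proof against.

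Your argument is correct and is the natural way to fill in the details. The geometric input --- that the sign of the real gluing parameter at a node decides which pair of hemispheres is identified after smoothing --- follows from the local model $xy=n_i$ as you indicate, and once the coordinates are oriented (as the paper does) so that $\R_+^m$ corresponds to the chosen tile $\iota(\klk^p)$, the rule becomes exactly ``$H_v^a$ glues to $H_w^{a\cdot\mathrm{sgn}(n_i)}$''. The combinatorial part is also clean: for an edge $i=(v,w)$ one has $\epsilon_i=\pi_\epsilon(v)\pi_\epsilon(w)$ since the two root-paths differ in exactly the edge $i$, and the identification of $\{\epsilon:\pi_\epsilon\equiv+1\text{ on non-ghosts}\}$ with the $G$-orbit of $(+1,\dots,+1)$ via $s(d)=\pi_\epsilon(d)$ goes through as you wrote. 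Your handling of complex (conjugate-pair) nodes --- absorbing them into the smooth $B$-factor because their smoothing does not move any marking across the real locus --- is also correct.

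One minor remark: your aside on the $k=0$ case is right (vertex flips on a tree with $V$ vertices surject onto $\{\pm1\}^{V-1}$ since the only relation is that the product of all flips is trivial), but the paper's $\bullet$-construction only uses this lemma for $k\geq1$, so that case is not strictly needed.
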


These singularities would not be problematic if we were to consider only disks, but will certainly be when adding the collar components corresponding to connecting metric segments as in the $\otimes$ case (see figure \ref{bul_fig01}). However, the above lemma ensures that we can perform blowups over the singular strata and get a manifold with embedded corners (MWEC). We proceed in three steps to construct an appropriate space of disks:

\begin{enumerate}
	\item Attach the $\klk$ tiles along the $1$-corners corresponding to transpositions of two real markings. The resulting space, $\klk^I$, is an orientable MWEC.
	\item Iteratively identify and blow up $\klk^I$ along its 2-corners corresponding to nodal spheres having one interior ghost component with one real marking separating two non-ghost components. The resulting space, $\klk^{II}$, is again an orientable MWEC.
	\item Iteratively identify and blow up $\klk^{II}$ along its 3-corners corresponding to three non-ghost disks related by a ghost disk with only three nodes. Call the resulting $\klk^{\bullet}$.
\end{enumerate}

\begin{lemnn}
	$\klk^\bullet$ is an orientable smooth MWEC with every $1$-corner having no ghost disks associated being isomorphic to a $K_{\ell^{(1)},k^{(1)}}^\bullet \times K_{\ell^{(2)},k^{(2)}}^\bullet$ product where $k^{(1)}, k^{(2)} \geq 1$ and $\ell^{(1)}+\ell^{(2)}-1 = \ell$.
\end{lemnn}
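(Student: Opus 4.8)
The plan is to treat the two assertions of the lemma separately: the combinatorial description of the ghost-free $1$-corners, and the statement that $\klk^\bullet$ is an orientable smooth MWEC, which is the geometric heart. I would read off the former from the construction and reduce the latter to a local blow-up computation at the codimension-three strata that step 3 modifies.

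For the $1$-corners: the blow-ups of steps 1--3 are all supported over the locus of stable real spheres carrying an interior ghost component, which by the local model lemma is exactly the singular locus of $Im(\iota)\subset\R\mathcal{M}_{\ell,k}$; away from it the spaces $\klk^I$, $\klk^{II}$, $\klk^\bullet$ all coincide with the smooth part of the tiles $\klk$. A codimension-one stratum of $\klk^\bullet$ with no ghost disk is therefore a single-real-node degeneration whose two components $C^{(1)}\in K_{\ell^{(1)},k^{(1)}}$ and $C^{(2)}\in K_{\ell^{(2)},k^{(2)}}$ must each carry an interior marking (otherwise one would be a ghost disk), so $k^{(1)},k^{(2)}\geq 1$; counting boundary markings across the node gives $\ell^{(1)}+\ell^{(2)}=\ell+1$ and counting interior markings gives $k^{(1)}+k^{(2)}=k$. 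That this corner is a product of the \emph{symmetric} moduli $K_{\ell^{(1)},k^{(1)}}^\bullet\times K_{\ell^{(2)},k^{(2)}}^\bullet$, rather than of the tiles, follows because in a neighbourhood of the corner the only further degenerations --- in particular the only ghost-carrying strata --- are those internal to $C^{(1)}$ or to $C^{(2)}$; hence steps 1--3, localized there, run the three-step construction independently on each factor and produce a product chart, and one matches the reference orientations of the factors with that of $\klk^\bullet$ on this face, which is precisely what the later sign computation for $\delta^\bullet$ needs.

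For the MWEC claim: by the previous lemma $\klk^{II}$ is an orientable MWEC, and step 3 modifies it only over the codimension-three strata $\mathfrak{S}$ of the stated type, namely a ghost disk $d$ with exactly three nodes joined to three non-ghost disks. The essential input is the local model lemma: near a generic $S\in\mathfrak{S}$ it provides normal coordinates $(n_1,n_2,n_3)$ to $\mathfrak{S}$, one per node of $d$ and all ``on $d$'', in which $Im(\iota)$ equals $(\R_+^3\cup-\R_+^3)\times\R^{\dim\mathfrak{S}}$, the two octants meeting along $\mathfrak{S}=\{0\}\times\R^{\dim\mathfrak{S}}$. Since steps 1--2 perform only codimension-one and codimension-two corner modifications along the singular faces this chart meets, $\klk^{II}$ near $S$ is the result of those lower-codimension resolutions applied to the chart, while the transverse model of the codimension-three centre $\mathfrak{S}$ is unchanged. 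Step 3's ``identify and blow up $\mathfrak{S}$'' then replaces that centre by $\Delta\times\R\times\R^{\dim\mathfrak{S}}$ --- the real oriented blow-up along $\mathfrak{S}$, whose strict transform is two disjoint copies of $\Delta\times[0,\infty)\times\R^{\dim\mathfrak{S}}$ with $\Delta=S^2\cap\R_+^3$ a $2$-simplex with corners, followed by the antipodal identification of the two exceptional faces $\Delta\times\{0\}$. This chart is a smooth MWEC of the correct dimension, locally of the form $\R_+^a\times\R^b$ (with $a\leq 2$) at each point; it carries the smooth structure of the polar change of coordinates, which agrees with that of $\klk^{II}$ off the exceptional set, and it is oriented by the product of the fixed orientation of $\mathfrak{S}$ with that of $\Delta\times\R$, matched against the orientation of $\klk^{II}$ on the overlap. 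Carrying this out iteratively over all such $\mathfrak{S}$ --- pairwise disjoint, or meeting only in their own lower corners, resolved by the same recipe --- yields $\klk^\bullet$ globally as an orientable smooth MWEC; each new face is exceptional and carries the ghost disk $d$, hence is not ghost-free, so the description of the ghost-free $1$-corners given above is complete.

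The step I expect to be the main obstacle is the last one: pinning down exactly what ``identify and blow up'' does at a three-node ghost stratum and checking that the outcome has \emph{embedded} corners, compatibly with the structure already present on $\klk^{II}$. In the codimension-two case of step 2 the transverse model is $(\R_+^2\cup-\R_+^2)\times\R^{d'}$, resolved by a single corner blow-up; here the $\Z/2\Z$-quotient flips three coordinates at once, and one must verify that one blow-up of $\mathfrak{S}$ simultaneously separates the two octants, produces embedded rather than merely immersed corners, and is compatible with the already-resolved codimension-one and codimension-two strata abutting $\mathfrak{S}$. Tracking orientability through the blow-up is delicate because the exceptional face is a simplex bundle over $\mathfrak{S}$ whose coorientation must be matched on every codimension-one face it meets, and one must also confirm that the iteration terminates --- that no three-node ghost stratum of the bad type survives inside an exceptional divisor.
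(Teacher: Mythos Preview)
Your treatment of the MWEC claim is essentially the paper's: use the local model lemma to obtain normal coordinates in which $Im(\iota)$ near a type $III$ stratum is $(\R_+^3\cup -\R_+^3)\times\R_+^{m-3}\times\mathfrak{S}$, blow up along $\{n_1=n_2=n_3=0\}$, and verify smoothness, embeddedness of the corners, and orientability in the resulting chart. The paper writes the affine blow-up chart $(n,\tfrac{n_1}{n_3},\tfrac{n_2}{n_3},n_4,\ldots,n_m)$ directly rather than phrasing the exceptional fibre as a simplex, and records that the local picture afterwards returns to the form $\R\times\big((\prod_{d\neq d_0}\Z/2\Z)\cdot\R_+^{m-1}\big)$ of the model lemma; this is precisely what settles your compatibility concern with the previously resolved lower-codimension strata and lets the iteration run. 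The orientability check in the paper is carried out by lifting $\tfrac{\partial}{\partial n}\wedge\mathcal{O}_F$ through the blow-up for a suitable $1$-corner $F$ meeting $T^{III}$, rather than by matching orientations on a simplex bundle, but the content is the same.

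Your argument for the product structure on ghost-free $1$-corners has a genuine gap. You assert that steps 1--3 ``run the three-step construction independently on each factor'', but the node $n$ joining $C^{(1)}$ to $C^{(2)}$ is a boundary \emph{marking} on each factor, and building $K_{\ell^{(i)},k^{(i)}}^\bullet$ requires type $I$ transpositions of $n$ with another marking of $C^{(i)}$ and type $II$ transpositions in which $n$ plays the role of the marking. None of these are type $I$ or type $II$ identifications on the total space, where $n$ is a node rather than a marking. The paper's mechanism is a type shift: a type $II$ stratum of the total space whose internal ghost carries the node $n$ restricts, on the $1$-corner, to a type $I$ stratum of the factor (ghost with two markings, one of them $n$); a type $III$ stratum carrying $n$ as one of its three nodes restricts to a type $II$ stratum of the factor; and the type $III$ strata not involving $n$ remain type $III$ on the factor. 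This cascade $II\to I$, $III\to II$ is what makes the total-space identifications, restricted to the corner, coincide with the factor identifications --- your premise that all nearby ghost strata are internal to one factor is correct, but the ``hence'' does not follow without this observation.
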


The above blowup procedure produces exceptional stata where the structure of the disks will not vary along the fibers. This will not cause degeneracy problems later as we will be allowed to choose nonconstant perturbation data along these fibers.

Now we add up connecting metric lines exactly as in the $\otimes$ case: $\cllks$ is defined by an iterative collar enlargement procedure of $\klk^\bullet$.

\begin{lemnn}
	$\cllks$ is an orientable smooth MWEC isomorphic to $\klk^\bullet$.
\end{lemnn}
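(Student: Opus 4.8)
The argument parallels the $\otimes$ case (the lemma asserting $\cllkr \cong \klk$), the only new feature being that the collar enlargement is now performed over the MWEC $\klk^\bullet$ and only over those of its $1$-corners that carry no associated ghost disk. By the preceding lemma these $1$-corners are precisely the products $K_{\ell^{(1)},k^{(1)}}^\bullet \times K_{\ell^{(2)},k^{(2)}}^\bullet$ with $k^{(1)}, k^{(2)} \geq 1$ and $\ell^{(1)} + \ell^{(2)} - 1 = \ell$, each factor being again an orientable smooth MWEC, so the local model of a single enlargement applies verbatim. That local model is the following: if $X$ is an orientable smooth MWEC and $F \subset C_1(X)$ is a boundary face which is a product of orientable smooth MWEC's, then replacing the real node along $F$ by a connecting segment of length $\lambda \in \overline{\R_+}$ amounts, on source moduli, to gluing the block $F \times \overline{\R_+}$ (with $\lambda = 0$ the node and $\lambda = \infty$ the broken line) onto $X$ along $F = F \times \{0\}$, the gluing being smoothed by a collar of $F$ in $X$. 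Since $\overline{\R_+} \cong [0,1]$ and attaching a collar to a boundary face of a manifold with embedded corners produces a manifold with embedded corners diffeomorphic to the original, the result $X'$ is again an orientable smooth MWEC, and the collar-collapse map that reparametrizes $\lambda$ and sends the face $\{\lambda = \infty\}$ back to $F$ is an isomorphism $X' \cong X$ of MWEC's; an orientation is transported through it, equivalently obtained by gluing the product orientation of $F \times \overline{\R_+}$ to that of $X$.

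I would then iterate this enlargement over all non-ghost $1$-corners of $\klk^\bullet$. There are finitely many of them, each enlargement is supported near a single face and outputs an MWEC isomorphic to its input, and one checks --- exactly as in the $\otimes$ case --- that the order of the enlargements is immaterial: after enlarging at $F_1$, any other such face $F_2$ becomes $F_2$ with a collar attached along $F_1 \cap F_2$, hence is still of the required product-of-MWEC's type, so the procedure may continue and terminates. The essential bookkeeping point is that a codimension-$m$ corner of $\klk^\bullet$ bearing $m' \leq m$ non-ghost real nodes acquires $m'$ mutually independent length coordinates in $\overline{\R_+}$; the faces of the resulting space $\cllks$ are then in canonical bijection with pairs consisting of a face of $\klk^\bullet$ together with a subset of its non-ghost nodes (those declared to sit at infinite length), the all-finite locus recovering the interior and the all-infinite loci recovering the original corner stratification. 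This bijection is realized by a diffeomorphism of MWEC's $\cllks \cong \klk^\bullet$ (the composite of the individual collar collapses), along which orientability and smoothness are transported; the orientation is fixed coherently across the $\ell!$ tiles of $\klk^\bullet$ by gluing the product orientations on the successive collars.

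The step I expect to be the main obstacle is this last verification of the global corner structure: one must confirm that after all the enlargements the corners remain \emph{embedded} rather than merely immersed, that the combinatorial-type stratification of $\cllks$ is exactly the one just described, and that the exceptional strata introduced by the blowups in the construction of $\klk^\bullet$ --- over which the disk configuration is constant along the fibers --- interact correctly with the newly added length coordinates. The last of these causes no trouble, since a collar enlargement only reparametrizes the direction normal to a face and leaves the fiber structure of the exceptional strata untouched; the embeddedness and the stratification claim are then established as in the $\otimes$ case, by induction on $\ell + k$ using the recursive product description of the corners. Once these are in place the lemma follows.
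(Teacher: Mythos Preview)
Your proposal is correct and follows essentially the same approach as the paper. In fact, the paper does not give a separate proof of this lemma at all: it simply writes ``As in the nonsymmetric case, one can then apply a collar neighborhood enlargement procedure to $\klk^\bullet$'' and defines $\cllks = col^{\text{n-g}}(\klk^\bullet)$, relying entirely on the iterative collar argument of Lemma~\ref{smoo_lemm}. Your write-up is a faithful and somewhat more explicit rendering of that same argument --- iterated collar attachment along the non-ghost $1$-corners of an orientable MWEC, each step producing an MWEC isomorphic to its input --- with the additional (and appropriate) remark that the exceptional blowup fibers are unaffected by enlargements in the normal direction.
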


The families of clusters of $\cllks$ behave as those in \cite{CL}, containing switches of the planar order over pairs of segments meeting the boundary of a trivalent ghost disk. We do not perform this switch over more than trivalent ghosts disks because they will later appear in codimension at least two by choosing perturbations not depending on the moduli over these disks.

\textbf{Orientations.} Next we will, as in the $\otimes$ case, compare a product reference orientation on a cluster $C$ with one breaking with that induced by the reference orientation of its glued family. 

The resulting formula will be
\[
	\partial_{\ddel_C} \mathcal{O}^\bullet_{\ell,k} = (-1)^{(\ell^{(1)} - p_{min})\ell^{(2)} + (p_{min} -1)} (-1)^{\sigma} \mathcal{O}^\bullet_{\ell^{(1)},k^{(1)}} \wedge
\mathcal{O}^\bullet_{\ell^{(2)},k^{(2)}}.
\]
where $\sigma$ and $p_{min}$ will depend on the boundary order of the leaves of $C$.

The resulting signs in the differential $\delta^\bullet$ will agree with those appearing in the literature (see \cite{Cho}).

\chapter{Moduli of $\otimes$-clusters}

We first describe the source spaces that will be used to define the cochain complex. They are planar trees of complex marked disks connected by metric lines and are chosen to form a moduli where the planar structure cannot vary. These sources can be seen as generalizations of the sources used in \cite{Fu}, \cite{Fu2}, \cite{Oh} and \cite{BC}. 

\section{Moduli of marked disks} \label{disks}

Let $\mathring{\klk}$ be the moduli space of stable complex disks with $k \geq 0$ distinct marked interior points $z_1,
z_2, \dots, z_k$ and $\ell+1 \geq 1$ distinct ordered marked points $x_0 < x_1 < \dots < x_\ell$ on its boundary, which is
chosen to be positively oriented with respect to the complex orientation (i.e. the outward normal times the latter equals the complex orientation). This space allows a Deligne-Mumford-Knudsen type compactification $\klk$ which admits an orientable \\ $(\ell -2 + 2k)$-dimensional manifold with embedded corner structure via cross-ratio coordinates (see figure \ref{fig01}, \cite{MS} and for a more general setting \cite{Liu}). Therefore, $\klk$ is a manifold with corners in the sense of \cite{M}, but we follow the conventions of \cite{J}. 

\begin{figure}[h] 
        \centering 
	\includegraphics[width=110mm]{./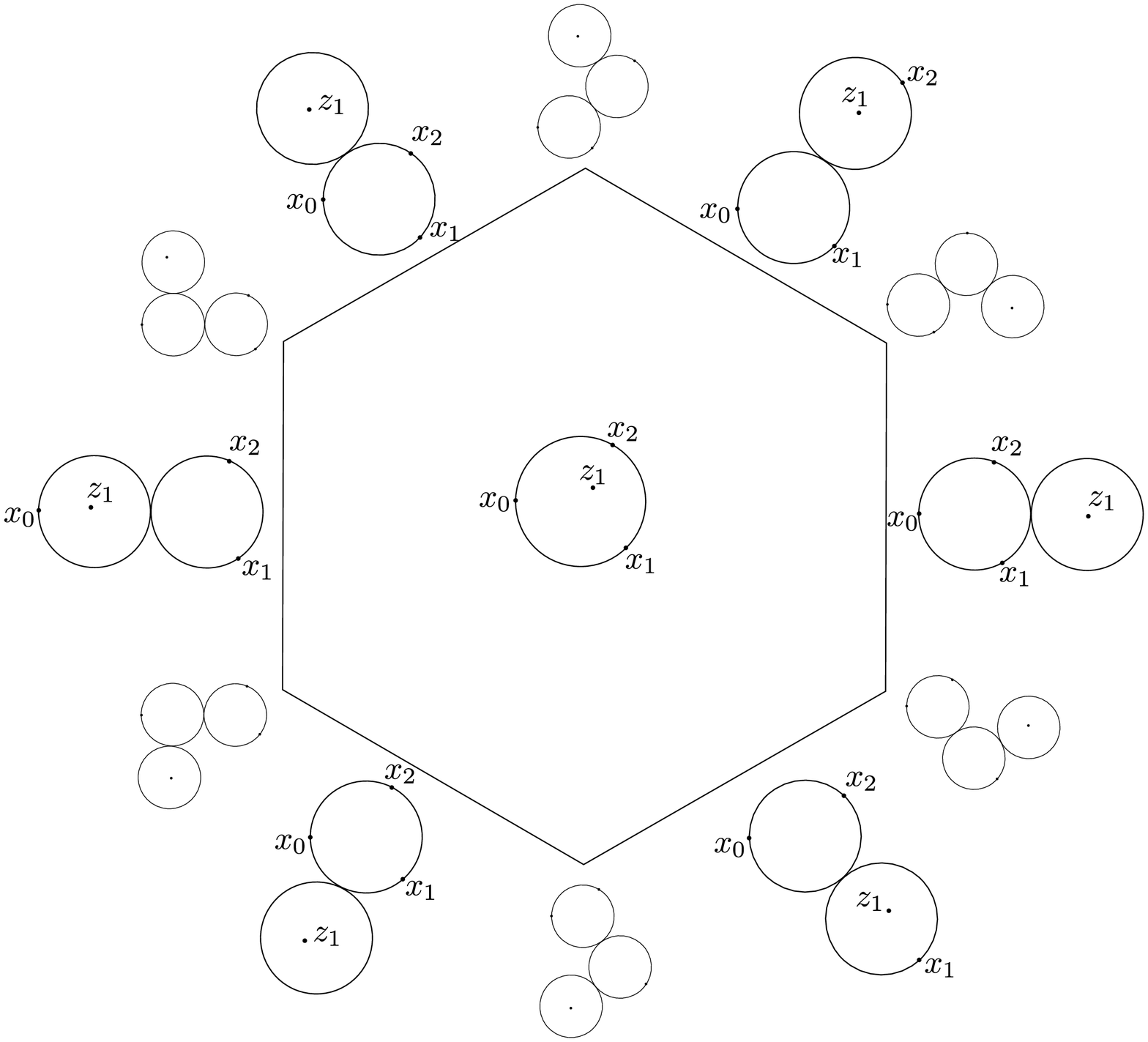}
        \caption{$K_{2,1}$} \label{fig01}
\end{figure}

Denote by $ \ulk \overset{\pi}{\rightarrow} \klk$ the so-called universal curve so that $\pi^{-1}(D)$ is a smooth (resp. nodal) marked disk representing $D$ for every $D \in \mathring{\klk}$ (resp. $D \in \klk \setminus \mathring{\klk}$).

We emphasize that every $1$-corner is the moduli of two smooth marked disks intersecting on a boundary nodal point, so it is canonically isomorphic to a product $K_{\ell^{(1)},k^{(1)}} \times K_{\ell^{(2)},k^{(2)}}$ with $k^{(1)} + k^{(2)} = k$ and $\ell^{(1)} + \ell^{(2)} -1 = \ell$. 

As usual, it is convenient to encode the combinatorial type of a marked nodal disk in a tree $T$ having one vertex for each smooth component and one edge for each special point. The set of such trees has a natural partial order relation given by $T^{(1)} \leq T^{(2)}$ if $T^{(1)}$ is obtained from $T^{(2)}$ by contracting some of its interior edges. Also, it will be useful to see the smooth marked disks as marked complex upper half-planes with the point at infinity corresponding to $x_0$. 

\begin{defi}
Let $X^{\R_+}(T) \equiv \{E^{int}(T) \rightarrow \R_+\}$. The elements of $X^{\R_+}(T)$ are called labelings of $T$ with values in $\R_+$.

If $T^{(1)} \leq T^{(2)}$, a labeling $X^{(2)}$ on $T^{(2)}$ determines one on $T^{(1)}$, denoted by $X^{(2)}|_{T^{(1)}}$, by simply taking $X^{(2)}|_{T^{(1)}}(l) = X^{(2)}(l)$.
\end{defi}


We will use the corner charts given in the following form (see \cite{MW},\cite{Liu}, \cite{MS}): 

\begin{prp} \label{neighk}
        Let $K_{\ell,k, T} \subset \klk$ be the strata of the marked disks having combinatorial type $T$. Then, there is an isomorphism $\psi_T$ from a neighborhood of $K_{\ell,k, T} \times \{0\} = K_{\ell,k, T} \times X^{0}(T)$ in $K_{\ell,k, T} \times X^{\R_+}(T)$ to a neighborhood $\nu(K_{\ell,k, T})$ of $K_{\ell,k, T}$ in $\klk$.
\end{prp}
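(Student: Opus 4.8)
Proposition \ref{neighk} is a standard "collar/corner chart" statement for Deligne–Mumford type compactifications, so the proof is essentially the construction of gluing coordinates near a stratum.

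The plan is to reduce to the case where $T$ has a single interior edge and then iterate. First I would recall that a point of the stratum $K_{\ell,k,T}$ is a stable nodal marked disk $D$ with one smooth component for each vertex of $T$ and one node for each interior edge $l \in E^{int}(T)$; near each node the surface looks, in suitable local holomorphic coordinates adapted to the doubled picture (upper half-plane model with $x_0$ at infinity), like a neighborhood of $0$ in the two half-discs $\{|s|<\epsilon,\ \mathrm{Im}\,s\ge 0\}$ and $\{|t|<\epsilon,\ \mathrm{Im}\,t\ge 0\}$ glued at $s=t=0$. The classical plumbing/gluing construction (as in \cite{MS}, \cite{Liu}, \cite{MW}) replaces this node, for a real gluing parameter $\rho = X^{\R_+}(T)(l)\in\R_+$, by the annulus-with-boundary $\{\rho < |s| < \epsilon,\ \mathrm{Im}\,s\ge 0\}$ identified with $\{\rho < |t| < \epsilon,\ \mathrm{Im}\,t \ge 0\}$ via $st = \rho$. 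Since the disks are real (the doubling has an antiholomorphic involution fixing the boundary), the gluing parameter is genuinely real and nonnegative, and $\rho=0$ returns the nodal curve; this is exactly the half of the usual complex gluing parameter that survives the real structure. Doing this simultaneously and independently at all interior edges of $T$, and keeping track of the (finitely many) remaining moduli of the smooth components, produces a family of stable marked disks over $K_{\ell,k,T}\times X^{\R_+}(T)$ near $K_{\ell,k,T}\times X^{0}(T)$.

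Next I would invoke the universal property of the moduli space: this explicitly constructed family is a family of stable marked disks, hence is classified by a map $\psi_T$ from a neighborhood of $K_{\ell,k,T}\times\{0\}$ in $K_{\ell,k,T}\times X^{\R_+}(T)$ into $\klk$, which by construction restricts to the inclusion on $K_{\ell,k,T}\times\{0\}$. To see $\psi_T$ is a diffeomorphism onto a neighborhood $\nu(K_{\ell,k,T})$, I would argue that it is injective (distinct gluing data give non-isomorphic curves, because the gluing parameters can be recovered from cross-ratios/conformal moduli of the glued curve, e.g. the conformal modulus of the plumbing collar), and that its differential is an isomorphism along $K_{\ell,k,T}\times\{0\}$ — here the $\partial/\partial\rho$ directions map precisely to the normal directions of the stratum in the manifold-with-corners structure of $\klk$, which is the defining property of the cross-ratio corner charts. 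Inverse function theorem along a compact stratum then upgrades this to a diffeomorphism onto an open neighborhood, matching the corner structure since $X^{\R_+}(T)\cong \R_+^{|E^{int}(T)|}$ supplies exactly the $|E^{int}(T)|$ boundary-defining functions of the corner $K_{\ell,k,T}$.

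The main obstacle is not conceptual but bookkeeping: one must make the gluing construction genuinely coherent and $\psi_T$ genuinely a well-defined smooth map, which requires fixing (smoothly in the stratum parameters) the local coordinates at each node used to do the plumbing, and checking that different choices change the family by an isomorphism so that the induced map to the (coarse/fine) moduli space is unambiguous and smooth. The real-structure aspect — verifying that the antiholomorphic involution constrains the complex gluing parameter to its real nonnegative part, so that $X^{\R_+}(T)$ rather than a complex disk is the right parameter space — is the one point where this differs from the purely complex Deligne–Mumford story, but it follows immediately from the fact that the nodes of a stable disk are real (boundary) nodes. I would therefore present the construction for a single node in detail and remark that the general case follows by performing it independently at each interior edge, citing \cite{MS}, \cite{Liu}, \cite{MW} for the standard estimates showing the glued curves are stable and the map is a local diffeomorphism.
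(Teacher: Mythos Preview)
Your argument via plumbing/gluing is correct, but it is not the route the paper takes. The paper does not really prove the proposition from scratch: it cites \cite{MW}, \cite{Liu}, \cite{MS} and then, immediately after the statement, writes down the charts explicitly in terms of \emph{simple ratio coordinates}. Concretely, for a maximal tree $T^{max}$ one assigns to each vertex $v$ a quantity $\Delta_v$ (a difference of consecutive markings, or an imaginary part of an interior marking) and labels an interior edge $l$ with endpoints $v_a$ above $v_b$ by $X(l)=\Delta_{v_a}/\Delta_{v_b}$; nodes then correspond to zero labels, and the chart for a non-maximal $T$ is obtained by restricting these labelings. This is a direct combinatorial construction of $\psi_T^{-1}$ rather than an existence proof via gluing and the universal property.

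The trade-off is the usual one. Your gluing approach is conceptual and manifestly geometric (and the paper does mention, in a later remark, that a coherent choice of strip-like ends lets one realize $\psi_T$ by linear gluing with parameter $1/X_i$), but it requires fixing local coordinates at the nodes smoothly over the stratum and invoking analytic estimates for the local diffeomorphism statement. The simple-ratio approach gives explicit global formulas with no auxiliary choices, makes the corner/labeling combinatorics transparent, and is what the paper actually uses downstream (e.g.\ in the collar construction and in Appendices A and B). One small imprecision in your write-up: you invoke the inverse function theorem ``along a compact stratum,'' but $K_{\ell,k,T}$ is the \emph{open} stratum and is not compact; you get a local diffeomorphism pointwise, and promoting this to a tubular neighborhood of the whole stratum needs a word (or the observation, which the paper makes, that the charts extend to the closure $K_{\ell,k,\geq T}$ because $\klk$ has embedded corners).
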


Without losing generality, one can assume there is $0< \epsilon < 1$ such that
\begin{diagram}
K_{\ell,k, T} \times X^{[0,\epsilon[}(T) & & \rTo^{\psi_T} & & \nu(K_{\ell,k, T})
\end{diagram}
is an isomorphism.

Note that since $\klk$ is a manifold with embedded corners, these charts could be extended to $K_{\ell,k,\geq T}$, that is, to the closure of $K_{\ell,k, T}$. For $T^{max}$ being maximal, the map $\psi_{T^{max}}$ can be made explicit in terms of simple ratio (or cross ratio) coordinates and the above restriction of labelings allows to construct $\psi_T$ for smaller trees (see proposition 6.2 and corollary 6.5 of \cite{MW}).

\begin{figure}[h] 
        \centering 
	\includegraphics[width=120mm]{./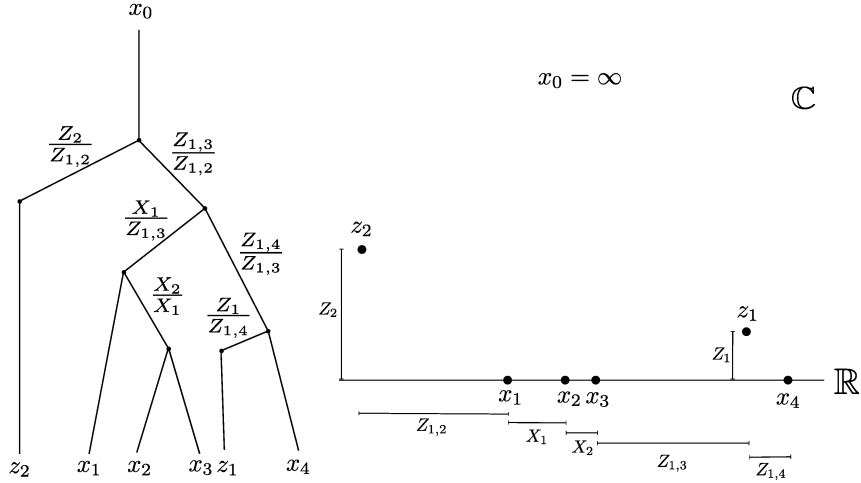}
        \caption{Construction of a marked disk from a labeling $X \in X^{\R_+}(T^{max})$} \label{fig02}
\end{figure}

Following \cite{MW}, we can make the charts $\psi_T$ explicit in terms of simple ratios. Let $T^{max}$ be a maximal planar tree, corresponding to a marked nodal disk $D_{T^{max}}$ with every smooth component being either a disk with $3$ boundary markings and no interior markings or a disk with one boundary marking and one interior marking. The planarity of $T^{max}$ makes the set of markings $\{x_1, \ldots, x_\ell, z_1, \ldots, z_k \}$ into an ordered set $\{y_1, \ldots, y_{\ell+k}\}$. To every trivalent vertex $v$ of $T^{max}$ we associate a marking $y_j$: It can be seen as the uppermost intersection point of a simple path from $y_j$ to the root and a simple path from $y_{j+1}$ to the root. Then take $\Delta_v = y_j - y_{j+1}$. To every bivalent vertex $v$ of $T^{max}$ we associate the marking $z_h$ lying just above it and then set $\Delta_v = Im(z_h)$. Now on an interior edge $l \in E(T^{max})$ with top vertex $v_a$ and bottom vertex $v_b$, we will consider the label $X(l)= \frac{\Delta_{v_a}}{\Delta_{v_b}}$ (see figure \ref{fig02}). 

\begin{diagram}
\nu(K_{\ell,k, T^{max}}) & & \rTo^{\psi_{T^{max}}^{-1}} & &  X^{\C}(T^{max}) \\
D & & \rMapsto & &  X(l) = \frac{\Delta_{v_a}}{\Delta_{v_b}}
\end{diagram}

This will make the nodes correspond to zero labels on the corresponding edge of $T$. When there is no interior markings, $\psi_{T^{max}}$ corresponds to the simple ratio charts of \cite{MW}, taking real positive values. In general, the resulting labelings are complex, but still can be identified with real positive labelings. For example, in a sufficiently small neighborhood $\nu(K_{\ell,k, T^{max}})$ of $K_{\ell,k, T^{max}}$, one can extract the real part of every label as in figure \ref{fig02}. We emphasize that when $k\geq 1$, in any maximally degenerate disk, each interior marking $z_h$ lies in a component with only one node and no other marking, so in the corresponding maximal tree it contributes as a bivalent vertex in the boundary of the edge associated with $z_h$. The label on the edge below that vextex is then seen as the imaginary part $Im(z_h)$ of $z_h$ (see figure \ref{fig02}).

\begin{rem} \label{rema_sphe}
It might be convenient to see the spaces of discs as lying inside the spaces $\R\mathcal{M}_{\ell,k}$ of spheres with $\ell +1$ real and $k$ pairs of complex conjugate markings. We recall the construction of these spaces that appear in \cite{Cey}.

Let $\mathcal{M}_{\ell+1+2k}$ be the space of complex spheres with $\ell +1 +2k$ markings denoted by $\{x_j\}_{0\leq j\leq \ell}$ and $\{z_h\}_{1\leq h \leq 2k}$. It has the structure of a complex $(\ell -2 +2k)$-manifold and has an antiholomorphic involution $\sigma_{\ell,k}$ defined as the composition of the natural complex conjugation $(\Sigma,j) \rightarrow (\Sigma,-j)$ with the transpositions $(z_h z_{h+k})$, $1\leq h \leq k$. The real locus $\R\mathcal{M}_{\ell.k} \equiv fix(\sigma_{\ell,k})$ is then a smooth real $(\ell -2 +2k)$-manifold.

The complex double operation over the complex disks gives a natural map $\klk \overset{\iota}{\rightarrow} \R\mathcal{M}_{\ell,k}$. The image of this map is the closure of the set of spheres represented as $x_0 = \infty, \{x_1 < \ldots < x_\ell \} \subset \R P^1 \subset \C P^1$ and $Im(z_h) > 0$ for $1 \leq h \leq k$.
\end{rem}


\section{Constructing $\cllkr$, the moduli of $\otimes$-clusters} \label{collar}

Now we add up cells to $\klk$ that will be used to encode length variations of metric lines connecting the marked disks. The resulting objects, the $\otimes$-clusters, support planar structures reminescent of the complex structures over the disks that will not vary over $\cllkr$.

\begin{defi} \label{defi_col}
\[
        col(\klk) = \underset{T}{\bigsqcup} \hspace{0.3cm}  K_{\ell,k,\geq T} \times X^{[0,1]}(T) /_{\sim}
\]
with $(D^{(1)},X^{(1)}) \in K_{\ell,k,\geq T^{(1)}} \times X^{[0,1]}(T^{(1)}) \sim (D^{(2)},X^{(2)}) \in K_{\ell,k,\geq T^{(2)}} \times X^{[0,1]}(T^{(2)})$ if
\begin{itemize}
\item $D^{(1)} = D^{(2)}$,
\item $T^{(1)} < T^{(2)}$,
\item $X^{(2)}|_{T^{(1)}}=X^{(1)}$, that is, $X^{(2)}$ is the natural extension of $X^{(1)}$ having $1$ labels on its additional edges.
\end{itemize}
\end{defi}

\begin{figure}[h] 
        \centering 
	\includegraphics[width=95mm]{./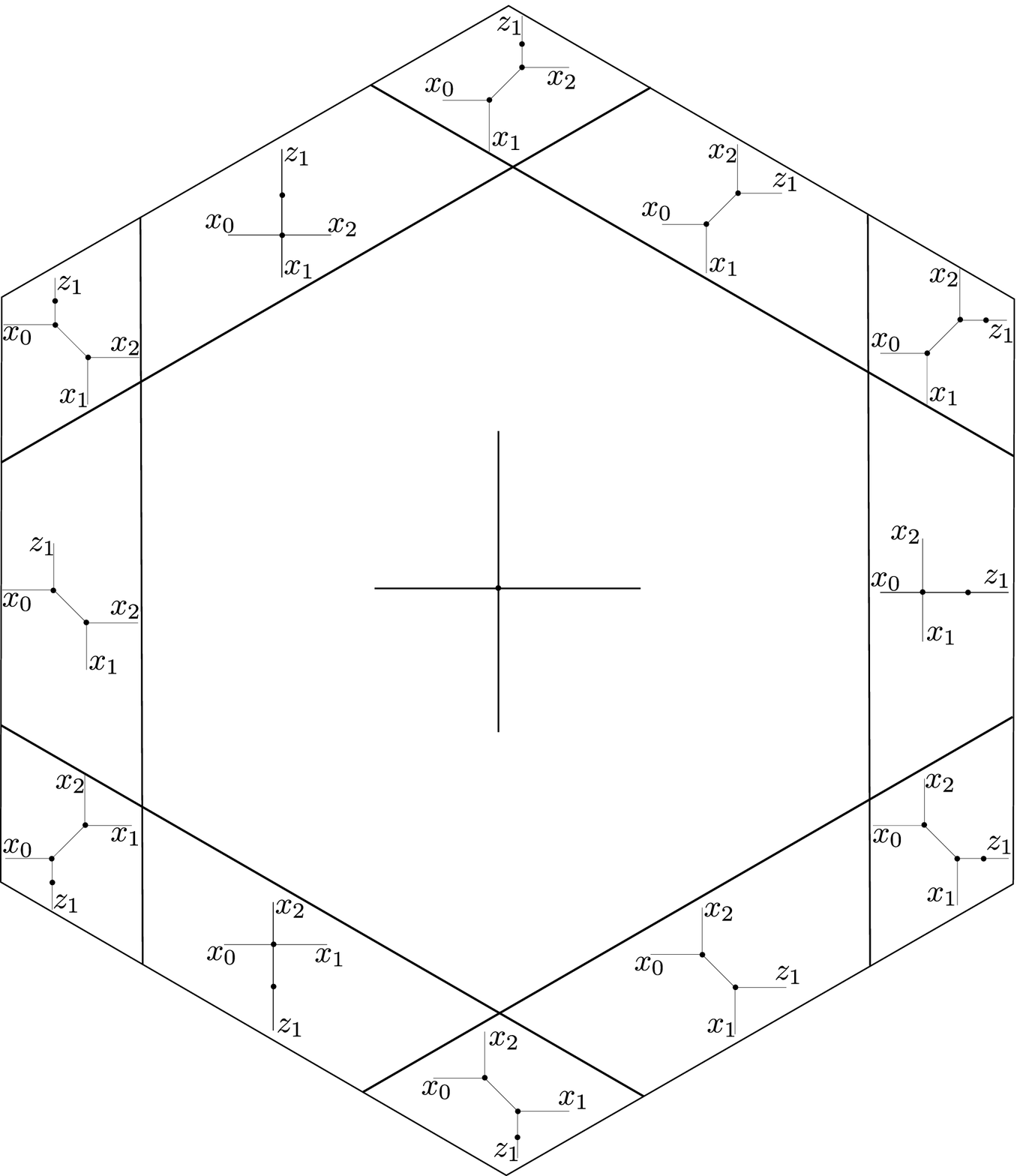}
        \caption{$col(K_{2,1})$} \label{fig03}
\end{figure}



\begin{defi}
        For $1 \leq \ell -1 + 2k$, let $(\cllkr)^{PDIFF} = col(\klk)$ and otherwise take $(\cllkr)^{PDIFF}$ to be a point.
\end{defi}

More geometrically, the above procedure could be seen as a "collar" extension of $\klk$ using, near each maximal corner, a dual cell decomposition reminescent of that of the associahedra seen as a space of metric trees (see \cite{BV}). The cells of this decomposition are given by labelings of maximal trees taking the $1$ value over a subset of interior edges.

Although above the piecewise smooth (manifold with corners) structure would suffice, it will be convenient to smoothen the above charts and see that $\cllkr$, the smoothened version, is isomorphic to $\klk$. As with disks, the combinatorial type of a cluster will be encoded in a tree $T$ having one vertex for each smooth component. Let $\mathcal{C}\ell_{\ell,k,T}^\otimes$ be the clusters having type $T$.

\begin{lem} \label{smoo_lemm}
There exists a piecewise smooth isomorphism $(\cllkr)^{PDIFF} \rightarrow \klk$ sending $\mathcal{C}\ell_{\ell,k,T}^\otimes$ to $K_{\ell,k,T}$ for every combinatorial type $T$.
\end{lem}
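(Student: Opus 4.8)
The plan is to construct the piecewise smooth isomorphism $(\cllkr)^{PDIFF} \to \klk$ locally, near each stratum $K_{\ell,k,T}$, using the collar charts $\psi_T$ of Proposition~\ref{neighk}, and then check that the local pieces agree on overlaps. First I would fix a combinatorial type $T$ with interior edge set $E^{int}(T)$ of cardinality $m$. On the $\klk$ side, $\psi_T$ identifies a neighborhood $\nu(K_{\ell,k,T})$ with $K_{\ell,k,T} \times X^{[0,\epsilon[}(T) \cong K_{\ell,k,T} \times [0,\epsilon[^m$, where the zero labels correspond to the nodes. On the $(\cllkr)^{PDIFF}$ side, Definition~\ref{defi_col} glues in, over the same base $K_{\ell,k,\geq T}$, the cell $X^{[0,1]}(T) \cong [0,1]^m$, where a label equal to $0$ means the node is present (length-$0$ line / no line), a label in $(0,1)$ means a finite positive line length, and a label equal to $1$ is the gluing value identifying the cell with the lower-dimensional face of the adjacent cell $X^{[0,1]}(T')$ for $T' > T$. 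So locally I need a piecewise smooth homeomorphism $[0,\epsilon[^m \to [0,1]^m$ (with the corner stratification of the cube on the right interpreted via the union over $T' \geq T$), sending $0 \mapsto 0$ componentwise and restricting to the identity on the base $K_{\ell,k,T}$; concretely, a monotone piecewise smooth bijection of each coordinate interval, e.g. one that is the identity near $0$ and re-parametrizes $[0,1]$ to $[0,\epsilon[$, done coordinatewise so that the collar cube $[0,1]^m$ is swallowed into the half-open cube.

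The key steps, in order, are: (1) record the combinatorial indexing — that the poset of trees and the face structure of the collar cells $X^{[0,1]}(T)$ match exactly the corner stratification of $\klk$ by the $K_{\ell,k,\geq T}$, which is essentially the content already noted in Section~\ref{disks} that every $1$-corner is a product $K_{\ell^{(1)},k^{(1)}} \times K_{\ell^{(2)},k^{(2)}}$ and, iteratively, every $m$-corner is a product of $m+1$ factors indexed by the edges of $T$; (2) define the coordinatewise reparametrization $\rho: [0,1] \to [0,\epsilon[$, piecewise smooth, strictly increasing, with $\rho(0)=0$ and equal to the identity on a neighborhood of $0$; (3) assemble $\Phi_T := \psi_T \circ (\mathrm{id}_{K_{\ell,k,T}} \times \rho^{\times m})$ on each cell, and verify that on the overlap of the cells for $T$ and $T' > T$ the maps $\Phi_T$ and $\Phi_{T'}$ agree, using the compatibility $X^{(2)}|_{T^{(1)}} = X^{(1)}$ from Definition~\ref{defi_col} together with the restriction property of the charts $\psi_T$ recalled after Proposition~\ref{neighk} (Proposition~6.2 and Corollary~6.5 of \cite{MW}); (4) conclude that the $\Phi_T$ patch to a globally defined piecewise smooth map which is a homeomorphism (it is a bijection, continuous, between compact Hausdorff spaces once we pass to the compactified pieces, and proper), hence an isomorphism in the $PDIFF$ category, sending $\mathcal{C}\ell^\otimes_{\ell,k,T}$ to $K_{\ell,k,T}$ by construction since the zero locus of the collar labels is exactly the node locus.

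The main obstacle I expect is step (3): making the local charts $\Phi_T$ genuinely agree on overlaps rather than merely being "compatible up to isotopy." The subtlety is that a single cell $X^{[0,1]}(T^{max})$ for a maximal tree meets many other maximal cells along shared faces (the $1$-labels), and the collar construction glues all these cubes into one piece homeomorphic to a neighborhood of a deep corner of $\klk$; one must check that the coordinatewise reparametrization $\rho$, applied in the simple-ratio coordinates $X(l) = \Delta_{v_a}/\Delta_{v_b}$, is consistent with how these coordinates transform when one passes from the chart of $T^{max}$ to that of a smaller tree $T$ (the restriction of labelings). Here I would lean on the explicit simple-ratio description after Proposition~\ref{neighk}: because the charts $\psi_T$ for smaller $T$ are built from $\psi_{T^{max}}$ precisely by the labeling-restriction operation, and $\rho$ is applied independently in each coordinate and fixes a neighborhood of $0$, the transition maps on overlaps are the identity near the faces being glued, so consistency holds. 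A secondary, purely bookkeeping, obstacle is verifying orientability is preserved, but that is immediate since $\Phi_T$ is, in each chart, orientation-preserving by the monotonicity of $\rho$, and $\klk$ is already known to be orientable.
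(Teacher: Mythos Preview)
There is a genuine gap at the main cell $T_0$ (the trivial tree with no interior edges). Since $|E^{int}(T_0)| = 0$, your formula gives $\Phi_{T_0} = \psi_{T_0} \circ \mathrm{id} = \mathrm{id}_{\klk}$ on the cell $K_{\ell,k,\geq T_0} \times X^{[0,1]}(T_0) = \klk$. That map surjects onto $\klk$, so there is no room left for the images of the $\Phi_T$ with $|T| \geq 1$; the assembled map cannot be injective. The failure is already visible on the overlap with any $T$-cell, $|T| \geq 1$: Definition~\ref{defi_col} glues the face $\{X \equiv 1\}$ of the $T$-cell to $K_{\ell,k,\geq T}$ inside the $T_0$-cell, and there $\Phi_{T_0}$ returns the corner point $D \in K_{\ell,k,\geq T}$, while $\Phi_T$ returns $\psi_T(D, \rho(1), \dots, \rho(1))$, which lies at strictly positive normal parameter $\rho(1)$ in the interior of $\klk$. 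No strictly increasing $\rho$ with $\rho(0)=0$ can make these agree. (Incidentally, you have the label conventions reversed: with length $= -\log X$, the value $X=0$ is a broken line and $X=1$ is the length-$0$ nodal face glued to the cell for the \emph{smaller} tree $T' < T$, not $T' > T$.)

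The paper's proof sidesteps this by running the construction the other way around: rather than mapping collar cells into fixed corner charts of $\klk$, it invokes the standard fact that attaching an external collar $F \times [0,1]$ along a $1$-corner $F$ of a manifold with embedded corners returns an isomorphic MWEC, and iterates over all $1$-corners; the main cell is automatically pushed inward at each step, and the resulting decomposition is exactly that of Definition~\ref{defi_col}. Appendix~\ref{appe_A} does give a chart-level argument closer in spirit to yours, but supplies precisely the ingredient you are missing: the image $V_{T_0}$ of the main cell is taken to be the \emph{complement} of the corner neighborhoods, and the map from $\klk$ onto $V_{T_0}$ is the nontrivial shift $\chi(X)(l) = \epsilon + X(l)$ in each simple-ratio coordinate, not the identity. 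Your coordinatewise $\rho$ acts only on the collar label factor, never on the $K_{\ell,k,\geq T}$ factor, so it cannot produce this shift.
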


The idea will be to decompose a neighborhood of the corners of $\klk$ into pieces isomorphic to $K_{\ell,k,\geq T} \times X^{[0,1]}(T)$, $|T| \geq 1$, see that their complement is isomorphic to $\klk$ and that the pieces attach as in definition \ref{defi_col} (see figure \ref{fig03}). 

\begin{proof}
Since $\klk$ is a smooth MWEC, given a $1$-corner $F \in C_1(\klk)$, one can add a collar neighborhood $F \times [0,1]$ along $F$. The resulting space is again a smooth MWEC isomorphic to $\klk$. The added collar can then be seen as a compact neighborhood of the image of $F$ under this isomorphism.

Considering another $1$-corner $F' \in C_1(\klk)$, one can look at its corresponding $1$-corner in the above enlargement of $\klk$ and repeat the same collar addition manipulation. If $F \bigcap F' = \emptyset$, then we are left with a new copy of $\klk$ with preferred neighborhoods of $F$ and $F'$, and if $F$ and $F'$ have a common $2$-corner $G \in C_2(\klk)$, then we get a new copy of $\klk$ that decomposes as a preferred $G \times [0,1]^2$ neighborhood of $G$, cells $F \times [0,1]$ and $F' \times [0,1]$ and a main cell isomorphic to $\klk$ (see figure \ref{fig29.1}).

\begin{figure}[h] 
        \centering 
	\includegraphics[width=95mm]{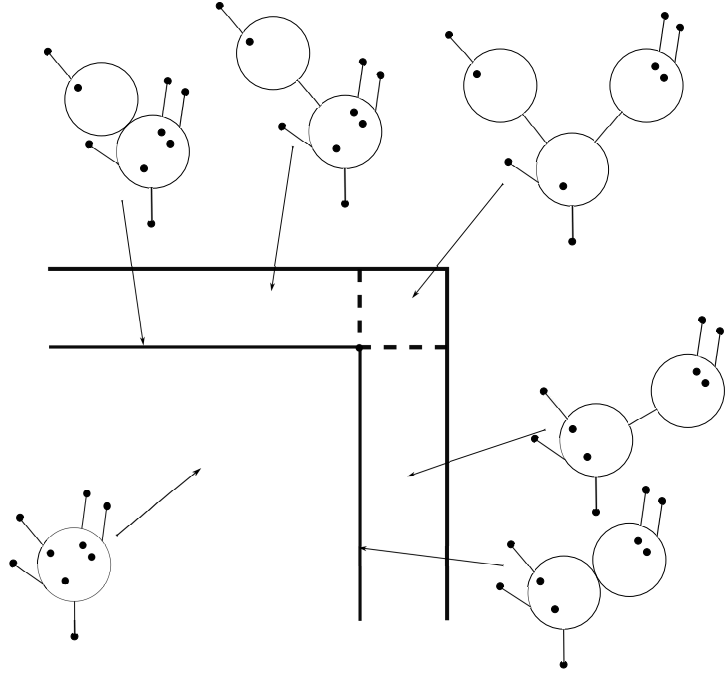}
        \caption{The collar procedure on $\klk$} \label{fig29.1}
\end{figure}

One can iterate over all of $C_1(\klk)$ and see that the resulting space contains for every corner $K_{\ell,k,\geq T}$ a cell isomorphic to $K_{\ell,k,\geq T} \times X^{[0,1]}(T)$ attaching to the $K_{\ell,k,\geq T'} \times X^{[0,1]}(T')$, $T' < T$,  cells as in definition \ref{defi_col}.
\end{proof}

Another way of managing the above construction will be necessary later in the case of quilted $\otimes$-clusters; we include it as appendix \ref{appe_A} to better illustrate the content of appendix \ref{appe_B}.

\begin{defi}
        Let $\cllkr$ be $(\cllkr)^{PDIFF} = col(\klk)$ endowed with the manifold with embedded corner structure induced by the above identification.
\end{defi}

Since every $1$-corner of $\klk$ is naturally isomorphic to a product $K_{\ell^{(1)},k^{(1)}} \times K_{\ell^{(2)},k^{(2)}}$, we get that every $1$-corner of $\cllkr$ is naturally isomorphic to a product $\mathcal{C}l_{\ell^{(1)},k^{(1)}}^\otimes \times \mathcal{C}l_{\ell^{(2)},k^{(2)}}^\otimes$. The same can be said about corners of lower dimension.

\section{Universal curves}

For $\ell -2 +2k \geq 1$, we now describe an extension of the universal curve over $\klk$ to $\cllkr$ such that the fiber over a point of the collar is a set of marked disks connected by metric lines.

Let again $\ulk \overset{\pi}{\rightarrow} \klk$ be that nodal family. We add metric lines between the components of the marked disks lying in the collar part of $\cllkr$ using the tree labelings.

\begin{defi}
        For $\ell -2 +2k \geq 0$, $C = (D,X) \in K_{\ell,k,\geq T} \times X^{[0,1]}(T)$, take the normalization of the marked nodal disk $\pi^{-1}(D)$ modified so that the two markings corresponding to $e \in E^{int}(T)$ are connected by a metric line of length $-log(X(e))$. In addition, for every $1 \leq j \leq \ell$ (resp. $j=0$), identify the origin of a copy of $\overline{\mathbb{R}}_{+}$ (resp. $\overline{\mathbb{R}}_{-}$) with the boundary marking $x_j(D)$ (see figure \ref{fig05}). For $\ell = 1$ and $k=0$, consider a line $\overline{\mathbb{R}}$ and for $\ell = k = 0$, take a half-line $\overline{\mathbb{R}}_{-}$.

        We denote the resulting by $(\pi^\otimes)^{-1}(C)$ and refer to it as the $\otimes$-cluster, with $\ell$ leaves and $k$ interior markings, associated with $C \in \cllkr$. 
\end{defi}

\begin{figure}[h]
        \centering 
	\includegraphics[width=120mm]{./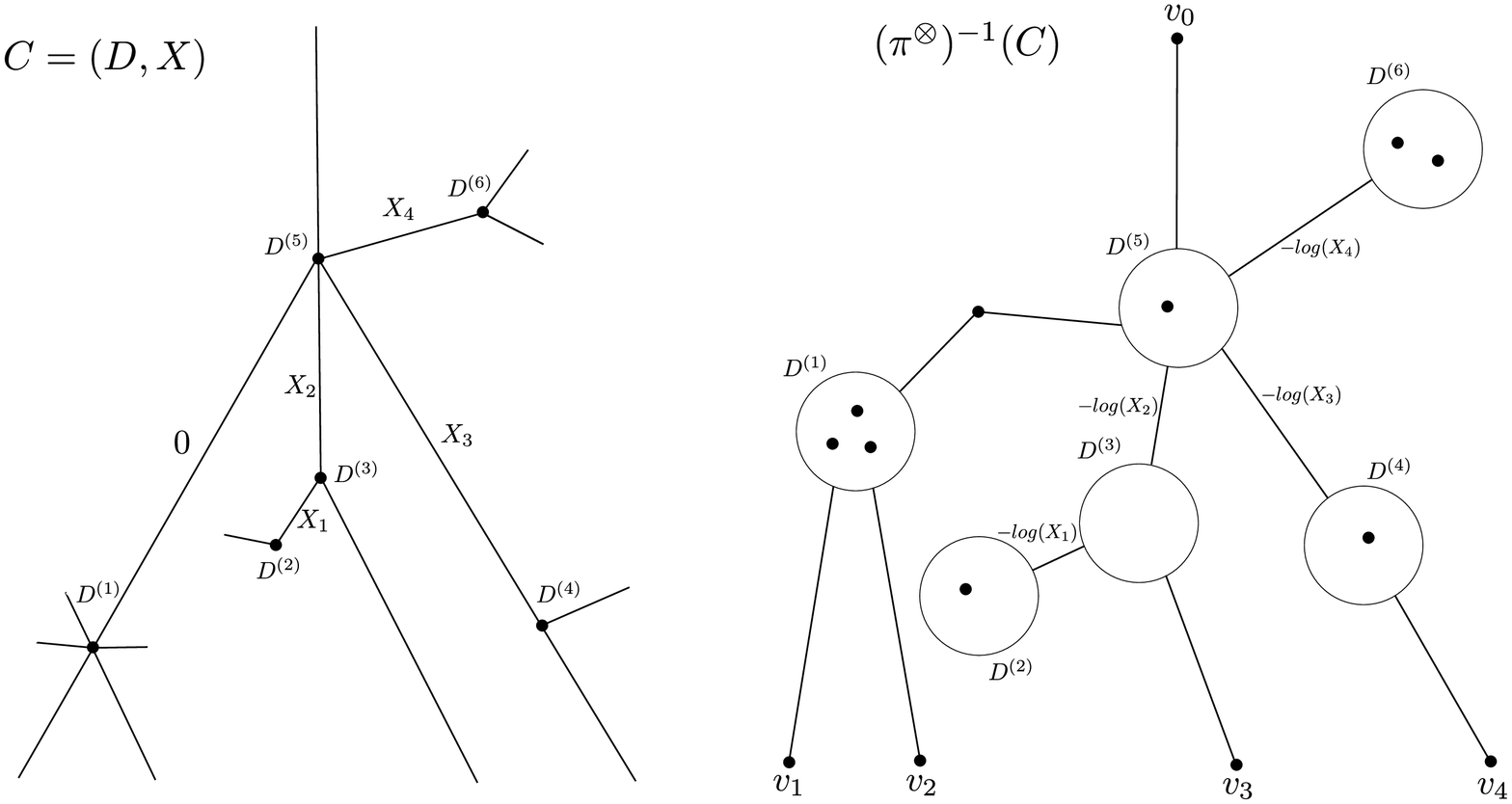}
        \caption{Cluster $(\pi^\otimes)^{-1}(C)$ associated with $C =(D,X)$, $D= D^{(1)} \cup \dots \cup D^{(6)} \in C_5(K_{4,8})$} \label{fig05}
\end{figure}

Like the notation suggests, we will consider the half-lines to be (metrically) semi-infinite, the points at infinity marked as $v_j(C)$, $0 \leq j \leq \ell$, from which we refer to $v_0(C)$ as the root of $C$ and to $v_j(C)$ as the $j^{th}$ leaf of $C$ for $1 \leq j \leq \ell$. 

It will be useful to see a connecting line of infinite length as a broken line $\overline{\mathbb{R}}_{+} \sqcup
\overline{\mathbb{R}}_{-} / _{+\infty \sim -\infty} $ with the $+$ half-line being closer to the root.

\begin{defi}
The points $\{v_j(C)\}_{0 \leq j \leq \ell}$ and the breaking points of $C$ are called the endpoints of $C$. We call a cluster irreducible if it does not contain any broken (i.e. infinite length) line.

$\partial C$ will denote the complement of the interior of the disks in $C$.
\end{defi}

Notice that the breakings determine a decomposition of any $\otimes$-cluster $C$ into a finite composition of irreducible $\otimes$-clusters $\underset{i}{\bigcup}C^{(i)}$.

Taking $\ulkr = \underset{C \in \cllkr}{\bigcup} (\pi^\otimes)^{-1}(C)$, we get a map $\pi^\otimes: \ulkr \rightarrow \cllkr$ and a commutative diagram
\begin{diagram}
        \ulk  && \rInto^{} && \ulkr \\
        \dTo^{\pi} && && \dTo^{\pi^\otimes} \\
        \klk && \rInto^{i} && \cllkr \\
\end{diagram}

Slightly extending the notion of deformation of stable marked bordered Riemann surface (see \cite{Liu}, chap. 3) so that it takes the normalization at a node as the opposite deformation to the (real) gluing at that node, one can say that $\ulkr$ is the union of a smooth family of marked bordered Riemann surfaces and a collection of smooth manifolds with corners made up from the added lines. $\pi^\otimes$ will be thought as a piecewise smooth universal family of $\otimes$-clusters.

\begin{figure}[h]
        \centering 
	\includegraphics[width=110mm]{./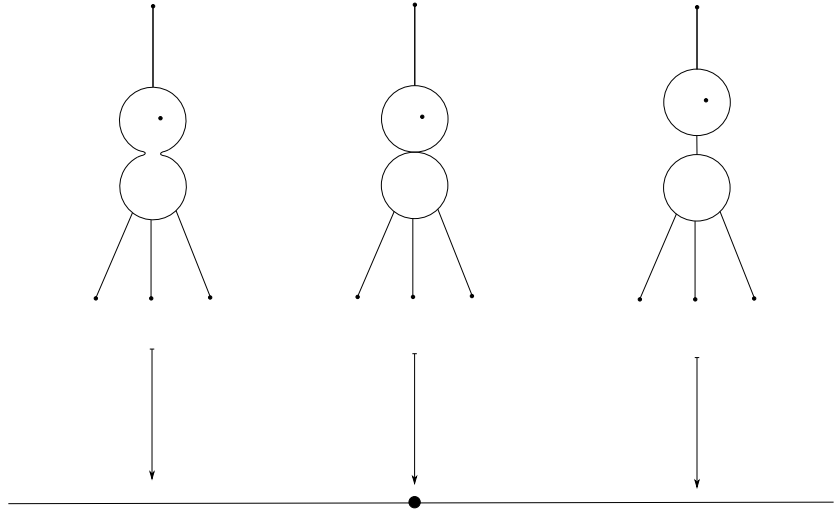}
        \caption{$1$-parameter deformation of a nonsmooth $C \in \mathcal{C}\ell_{3,1}^\otimes$} \label{fig05.1}
\end{figure}

\begin{defi}
        For every $\ell$  and $k$, we set $v_j: \cllkr \rightarrow \ulkr$, $1 \leq j \leq \ell$ (resp. $j=0$) as being the smooth sections that map to the leaves (resp. root) of the $\otimes$-clusters and $z_h: \cllkr \rightarrow \ulkr$, $1 \leq h \leq k$ the smooth sections which maps to the interior markings. Also, for every $1$-corner $F \in C_1(\cllkr)$, set $v_F: F \rightarrow (\pi^\otimes)^{-1}(F) \subset \ulkr$ as the smooth section which maps to the breaking point associated with $F$.
\end{defi}

\section{Coherent systems of ends} \label{sect_ends}

We next build, for every $\ell, k \geq 0$, half-line neighborhoods of the endpoints over $\cllkr$ that are coherent with respect to the product structure of its corners. These ends will later support predetermined Morse functions whereas perturbations of these functions will be allowed on their complement.

\begin{defi} \label{cohe_ends}
        A coherent system of ends is a collection of closed subsets $\mathcal{V} = \{\mathcal{V}_{\ell,k} \subset \ulkr \}_{\ell,k \geq 0}$ coming from the closure of a corresponding collection of open subsets such that
\begin{enumerate}
        \item $\mathcal{V}_{0,0} = \mathcal{U}^\otimes_{0,0} \equiv \overline{\R}_-$, $\mathcal{V}_{1,0} = \mathcal{U}^\otimes_{1,0} \equiv \overline{\R}$. Otherwise if $\ell -2 +2k \geq 0$, then for every irreducible component $C^{(i)}$ of $C$, $\mathcal{V}_{\ell,k} \bigcap C^{(i)}$ is a disjoint union of closed neighborhoods of its endpoints, each being homeomorphic to $[-\infty, -R] \subsetneq \overline{\mathbb{R}}_{-}$, and closed intervals on some interior lines, each being homeomorphic to $[-\frac{\lambda'}{2}, \frac{\lambda'}{2}] \subsetneq [-\frac{\lambda}{2}, \frac{\lambda}{2}]$, \label{c1}

        \item for every irreducible component $C^{(i)} \in \mathcal{C}\ell^\otimes_{\ell^{(i)},k^{(i)}}$ of $C$, $\mathcal{V}_{\ell,k} \bigcap C^{(i)} = \mathcal{V}_{\ell^{(i)},k^{(i)}} \bigcap C^{(i)}$. Therefore, the neighborhoods over $C^{(i)}$ are invariant over deformations of $C \setminus C^{(i)}$, over permutations of the $z_h$'s which leave the $z^{(i)}_h$'s invariant, it is independent of $\ell$, $k$ and of the relative position of $C^{(i)}$ in $C$. \label{c2}
\end{enumerate}

\end{defi}

Although this is not strictly necessary, one can construct coherent systems of ends explicitely.

For $k=0$, condition $\ref{c1}$ of definition $\ref{cohe_ends}$ sets $\mathcal{V}_{0,0} = \mathcal{U}^\otimes_{0,0} \equiv \overline{\R}_-$, $\mathcal{V}_{1,0} = \mathcal{U}^\otimes_{1,0} \equiv \overline{\R}$.

Let $D_0(r) \subset [0,1]^2 \subset \mathbb{R}^2$ be the closed disk of radius $0<r<1$ centered at the origin with respect to the standard metric. Now endow the negative gradient flow lines of $h: (x,y)\in [0,1]^2 \mapsto y^2 - x^2$ with the metric induced from $\R$ by their usual time parametrization.

For $k \geq 1$, identify any line $l$ of $C \in \cllkr$ of finite length with the flow line of the corresponding length and set $\mathcal{V}_{\ell,k} \bigcap l = D_0(r) \bigcap l$. Otherwise if $l$ touches precisely one endpoint (breaking, leave or root) of $C$, identify it with $\{0\} \times [0,1]$ (resp. $[0,1] \times \{0\}$) if $l$ is above (resp. below) that endpoint, the endpoint being identified with $(0,0)$. Then say $\mathcal{V}_{\ell,k} \bigcap l = D_0(r) \bigcap l$. 

Condition $\ref{c1}$ follows directly from the above construction. Note that only the lines of sufficient length will intersect $\mathcal{V}_{\ell,k}$, and that for any disk it is possible to find a neighborhood of it that is not intersecting $\mathcal{V}_{\ell,k}$. Condition $\ref{c2}$ is satisfied simply because the choice of the ends is made independently over each line, the length being the only considered parameter.



\begin{figure}[h]
        \centering 
	\includegraphics[width=110mm]{./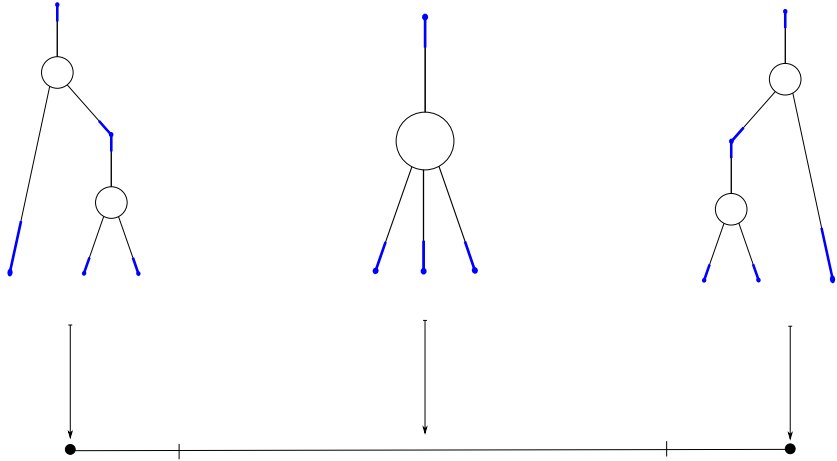}
        \caption{Choice of ends $\mathcal{V}_{3,0}$ over $\mathcal{C}\ell_{3,0}^\otimes$} \label{fig06}
\end{figure}

Therefore,

\begin{lem} \label{ends}
        There exist coherent systems of ends.
\end{lem}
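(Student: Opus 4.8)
The plan is to construct a coherent system of ends explicitly, following the recipe already sketched in the paragraphs preceding the lemma, and then verify that the two defining conditions of Definition \ref{cohe_ends} are satisfied. The only real content is to organize the construction so that the choices made over each line of a $\otimes$-cluster depend \emph{only} on intrinsic data of that line (its length, and which endpoints it touches), never on the global position of the line inside the cluster, on $\ell$, on $k$, or on the complex moduli of the disks.

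First I would fix, once and for all, the model neighborhoods. Set $\mathcal{V}_{0,0} = \mathcal{U}^\otimes_{0,0} = \overline{\R}_-$ and $\mathcal{V}_{1,0} = \mathcal{U}^\otimes_{1,0} = \overline{\R}$; these are forced by condition \ref{c1}. For the remaining cases I would use the fixed ambient picture $[0,1]^2 \subset \R^2$ with the saddle function $h(x,y) = y^2 - x^2$, whose negative gradient flow lines are metrized by their time parametrization, and the closed disk $D_0(r)$ of a fixed radius $0<r<1$. Given any $\otimes$-cluster $C \in \ulkr$ with $\ell - 2 + 2k \geq 0$, and any metric line $l$ of $C$: if $l$ is an interior line of finite length $\lambda$, identify $l$ isometrically with the gradient flow line of $h$ of length $\lambda$ (there is exactly one such, up to the relevant symmetry, and it is centered so that the two ends are symmetric about the origin), and declare $\mathcal{V}_{\ell,k} \cap l = D_0(r) \cap l$; this is a closed subinterval $[-\lambda'/2,\lambda'/2]$ of $[-\lambda/2,\lambda/2]$ when $\lambda$ is large enough, and empty otherwise. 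If instead $l$ is a semi-infinite piece touching a single endpoint (a leaf, the root, or a breaking), identify it with $\{0\}\times[0,1]$ or $[0,1]\times\{0\}$ according to whether $l$ lies above or below that endpoint in the planar order, with the endpoint at $(0,0)$, and again set $\mathcal{V}_{\ell,k}\cap l = D_0(r)\cap l$, a closed half-infinite neighborhood $[-\infty,-R]$ of the endpoint. Broken lines are handled componentwise via the decomposition $C = \bigcup_i C^{(i)}$ into irreducible pieces. Finally, note that the disks themselves receive no neighborhood: only sufficiently long lines meet $\mathcal{V}_{\ell,k}$, and each disk has a neighborhood disjoint from $\mathcal{V}_{\ell,k}$.

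Next I would check the two conditions. Condition \ref{c1} is immediate from the construction: over an irreducible component, $\mathcal{V}_{\ell,k}$ restricts to a disjoint union of closed endpoint neighborhoods of type $[-\infty,-R]$ and closed central intervals $[-\lambda'/2,\lambda'/2]$ on the interior lines. Condition \ref{c2} — the coherence/locality statement — is the point that needs the slightly careful phrasing: because the identification of a line with a model flow line, and hence the set $\mathcal{V}_{\ell,k}\cap l$, was defined using only the length of $l$ and the position of $l$ relative to its own endpoints, it is manifestly unchanged under deforming $C \setminus C^{(i)}$, under permutations of the interior markings fixing the $z^{(i)}_h$, and under re-embedding $C^{(i)}$ as a component of a different cluster; thus $\mathcal{V}_{\ell,k}\cap C^{(i)} = \mathcal{V}_{\ell^{(i)},k^{(i)}}\cap C^{(i)}$. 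One should also observe that the resulting $\mathcal{V}_{\ell,k}$ is indeed closed and is the closure of its interior (the union of the corresponding open neighborhoods), as required by the definition. I do not anticipate a serious obstacle here; the mild subtlety is purely bookkeeping — making sure the model identifications are chosen compatibly with the planar structure (above/below the endpoint) so that they glue correctly along the strata of $\ulkr$ where lines appear, disappear, or break, i.e. so that $\mathcal{V}_{\ell,k}$ varies continuously over $\cllkr$.

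Therefore the collection $\mathcal{V} = \{\mathcal{V}_{\ell,k}\}_{\ell,k\geq 0}$ built above is a coherent system of ends, which proves Lemma \ref{ends}. (An alternative, non-explicit argument would be to build the $\mathcal{V}_{\ell,k}$ inductively on $\ell - 2 + 2k$, at each stage extending the already-chosen ends on the codimension-one corner strata $\mathcal{C}\ell^\otimes_{\ell^{(1)},k^{(1)}}\times\mathcal{C}\ell^\otimes_{\ell^{(2)},k^{(2)}}$ over a collar using the product structure; but the explicit construction above is cleaner and makes coherence transparent.)
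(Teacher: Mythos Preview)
Your proposal is correct and follows essentially the same approach as the paper: the paper's proof is precisely the explicit construction in the paragraphs preceding the lemma (the saddle model $h(x,y)=y^2-x^2$ on $[0,1]^2$, the disk $D_0(r)$, and the line-by-line identification depending only on length and endpoint type), with condition~\ref{c1} read off directly and condition~\ref{c2} following because the choice on each line depends only on its length. Your write-up reproduces this verbatim, with a bit more care about the closure condition and continuity, and the inductive alternative you mention at the end is a reasonable aside but not used in the paper.
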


\begin{rem}
        The requirement that $\mathcal{V}$ comes from the closure of open subsets is chosen to ensure that for a sufficiently large length parameter, an interior line will necessarily intersect $\mathcal{V}$ since the nearby breaking point lies in $\mathcal{V}$ (see figure \ref{fig06}).
\end{rem}

Note that one can define a coherent system of strip-like ends $\mathcal{S} = \{\mathcal{S}_{\ell,k} \subset \mathcal{U}_{\ell,k} \}$ over the disks of $\ulkr$ by pulling back a coherent system of strip-like ends $S = \{S_{\ell,k} \subset U_{\ell,k} \}$ (defined in a similar fashion, see \cite{Sei}, \cite{W1}, \cite{CM}) by the natural projection $\ulkr \rightarrow U_{\ell,k}$ that collapses the connecting lines.

\begin{rem}
An appropriate coherent choice of strip-like ends $S = \{S_{\ell,k} \subset U_{\ell,k} \}$ allows an explicit construction of the corner charts $\psi_T$ of proposition \ref{neighk}: one can set $\psi_T(D,X=(X_1, \ldots, X_{|T|}))$ to be the linear gluing of $D \in K_{\ell,k,\geq T}$ using the parameter $\frac{1}{X_i}$ on the corresponding node, $1\leq i \leq |T|$. 

The same operation can also be seen to generate corner charts for $\{\cllkr\}_{\ell,k \geq 0}$.
\end{rem}

\section{Coherent systems of perturbations}

First, we assign a pair of integers to each of the leaves of a $\otimes$-cluster so that these labels respect in a specific way its ribbon structure. This will later encode an appropriate choice of Morse functions over the ends. One could then either use a collection of  Morse functions as in \cite{Fu} and \cite{BC} or a single one as in \cite{CL}.

First, fix an integer $c \geq 0$.

\begin{defi} \label{lab}
        We call $\mathfrak{l}: \{0,1,2, \dots, \ell\} \rightarrow \{0, 1, ...,c\}^2$ an end labeling of length $\ell$ if
$ 0\leq \pi_1(\mathfrak{l}(0)) = \pi_1(\mathfrak{l}(1)) \leq \pi_2(\mathfrak{l}(1)) = \pi_1(\mathfrak{l}(2)) \leq \pi_2(\mathfrak{l}(2))= \pi_1(\mathfrak{l}(3)) \leq \dots  \leq \pi_2(\mathfrak{l}(\ell-1))= \pi_1(\mathfrak{l}(\ell)) \leq \pi_2(\mathfrak{l}(\ell)) = \pi_2(\mathfrak{l}(0)) \leq  c$, where $\pi_1$ and $\pi_2$ are the projection on the first and second factor of the product, respectively. Moreover, we consider every trivial labelings of length $\ell$ to be equivalent.

        We take $\mathfrak{L}$ to be the set of all the end labelings, $\mathcal{C}\ell^\otimes_{\ell, k, \mathfrak{l}} \equiv (\cllkr,\mathfrak{l})$ and $\mathcal{U}^\otimes_{\ell, k, \mathfrak{l}} \equiv (\ulkr,\mathfrak{l})$ for $\mathfrak{l} \in \mathfrak{L}$ of length $\ell$.

\end{defi}

Note that a labeling $\mathfrak{l}$ of the leaves of a $\otimes$-cluster $C \in \cllkr$ canonically determines an end labeling $\mathfrak{l}^{(i)}$ over each of its irreducible component $C^{(i)}$: Let $v^{(i)}_a$ be the $a^{th}$ leaf of $C^{(i)}$ and $E$ the numbers of the leaves that lie above $v^{(i)}_a$. Now say $\{0,1,2, \dots, \ell\} \setminus E = \{0, 1, 2, \dots, b-1, b\} \bigcup \{a, a +1, \dots, \ell -1, \ell, 0\}$, then take $\mathfrak{l}^{(i)}(a) = (\pi_2(b),\pi_1(a))$. In fact, one can use $\mathfrak{l}$ to associate in the same fashion a pair of integers $\mathfrak{l}(l)$ with each line $l$ of $C$ (see figure \ref{fig07}).

\begin{figure}[h]
        \centering 
	\includegraphics[width=85mm]{./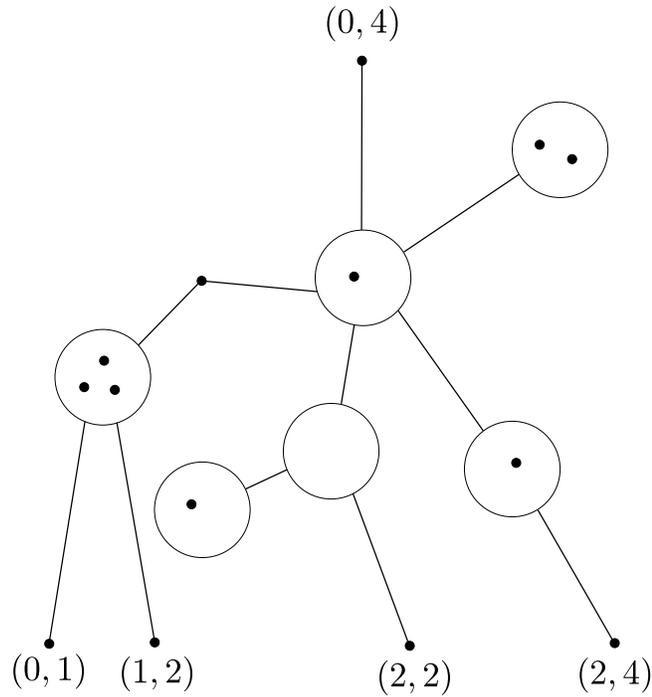}
        \caption{Labeling of $C \in \mathcal{C}\ell^\otimes_{4,8}$ for $c=4$} \label{fig07}
\end{figure}

Next, we assign to every point of $\mathcal{U}^\otimes_{\ell, k, \mathfrak{l}}$ an element in a product of Banach manifolds so that this choice is coherent with respect to the product structure of the corners of $\mathcal{C}\ell^\otimes_{\ell, k, \mathfrak{l}}$. This will later encode the choice of a Morse function plus a $\omega$-tamed almost complex structure on the target space at every point of a $\otimes$-cluster.

Let $\mathcal{M} \times \mathcal{J}$ be a product of Banach spaces with $\pi_\mathcal{M}$ and $\pi_\mathcal{J}$ the
projections on the first and second factor, respectively. 

\begin{defi} \label{pert}
Let $\mathcal{V}$ be a coherent system of ends and $\mathcal{S}$ be a coherent system of strip-like ends. A coherent system of perturbations vanishing on $\mathcal{V}$ and $\mathcal{S}$ is a collection of maps $P = \{ \mathcal{U}^\otimes_{\ell, k, \mathfrak{l}} \overset{p_{\ell,k,\mathfrak{l}}}{\rightarrow} \mathcal{M} \times \mathcal{J} \}_{\ell,k \geq 0, \mathfrak{l} \in \mathfrak{L}}$ such that
\begin{enumerate}
        \item $p_{\ell,k,\mathfrak{l}}$ is piecewise smooth,

        \item $\pi_\mathcal{J} \circ p_{\ell,k,\mathfrak{l}} \equiv 0$ on $\mathcal{S}_{\ell,k,\mathfrak{l}}$ and on the boundaries of the $\otimes$-clusters, 

        \item $\pi_\mathcal{M} \circ p_{\ell,k,\mathfrak{l}} \equiv 0$ on $\mathcal{V}_{\ell,k}$ and $\pi_\mathcal{M} \circ p_{\ell,k,\mathfrak{l}} \equiv 0$ over every line $l$ such that $\pi_1(\mathfrak{l}(l)) \neq \pi_2(\mathfrak{l}(l))$,

        \item for every disk $D$ of $C$ with no interior markings, let    
$\{ x^{D_0}_1, \dots, x^{D_0}_{|D_0|} \}$ (resp. $\{x_0^{D_0} \}$) be the ordered boundary markings of $D$  away from (resp. towards) the root such that $\pi_1(\mathfrak{l}(x^{D_0}_j)) = \pi_2(\mathfrak{l}(x^{D_0}_j))$. Then $\overset{|D_0|}{\underset{j=1}{\sum}} \pi_\mathcal{M} \circ p_{\ell,k,\mathfrak{l}}(x_j^{D_0}) - \pi_\mathcal{M} \circ p_{\ell,k,\mathfrak{l}}(x_0^{D_0}) = 0 $, where the last summand is understood to be $0$ if $\{x_0^{D_0} \} = \emptyset$, \label{line2} 

        \item for every irreducible component $(C^{(i)},\mathfrak{l}^{(i)})$ of $(C,\mathfrak{l})$, $p_{\ell,k,\mathfrak{l}}|_{C^{(i)}} = p_{\ell^{(i)},k^{(i)},\mathfrak{l^{(i)}}}|_{C^{(i)}}$. Therefore, $p_{\ell,k,\mathfrak{l}}$ invariant over changes of $(C,\mathfrak{l})$ which preserve $(C^{(i)},\mathfrak{l}^{(i)})$, that is, independent of changes of  $\ell$, of
$k$, of the relative position of $C^{(i)}$ in $C$, of permutations of the $z_h$'s that preserve the $z_h^{(i)}$'s and of $\mathfrak{l}$ that preserve $\mathfrak{l}^{(i)}$.

\end{enumerate}

We set $\mathfrak{P}(\mathcal{V},\mathcal{S})$ to be the set of all the coherent systems of perturbations vanishing on $\mathcal{V}$ and $\mathcal{S}$.

\end{defi}

\begin{figure}[h]
        \centering 
	\includegraphics[width=110mm]{./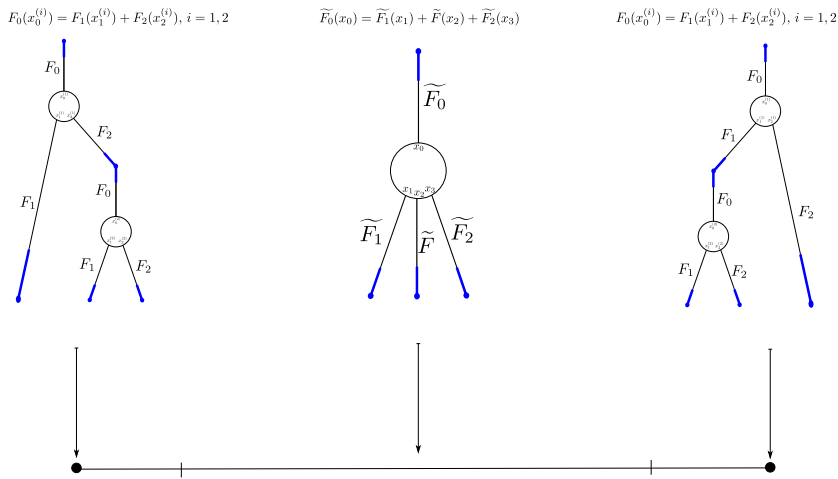}
        \caption{Choice of perturbation $P_{3,0}$ constant over $\mathcal{V}_{3,0}$ for $c=0$} \label{fig08}
\end{figure}

\begin{rem} \label{sub}
        The lines $l$ with $\pi_1(\mathfrak{l}(l)) = \pi_2(\mathfrak{l}(l))$ will later lie in the part of the cluster where perturbations of a single Morse function $f$ are used. $\pi_\mathcal{M} \circ p_{\ell,k,\mathfrak{l}}$ will be a generic choice of perturbation of $f$ needed to achieve transversality near trajectories where two such lines coincide in the target (see \cite{CL}). Condition \ref{line2} is only used to ensure that a hessian is defined naturally on these contact points, the disks with no interior markings corresponding to the so-called ghost disks.

\end{rem}


First notice that when considering disks as real spheres as in remark \ref{rema_sphe}, the choice of coherent perturbation datum over the disks (on $\ulk \rightarrow \klk$) is essentially performed in \cite{CM}. The pullback of such data to clusters under the line collapsing map could be used as $\pi_\mathcal{J} \circ p_{\ell,k,\mathfrak{l}}$.

For simplicity, it will be convenient to generate coherent perturbations that are compatible with the linear gluing of clusters for sufficiently large gluing parameters even though in practice we do not choose exactly these perturbations. We show how to construct such perturbations by induction over the dimension of the corners. More precisely, we extend smoothly a coherent choice over the corners $C(\cllkr)$ of $\cllkr$ to a neighborhood of $C(\cllkr)$. 

Definition \ref{pert} forces the use of trivial perturbation maps for $\ell -2 + 2k < 0$. The case $\ell -2 + 2k = 0$ splits into cases $\ell = 0, k=1$ and $\ell = 2, k=0$ with every possible labeling, over which we can choose arbitrary perturbation maps which satisfy the desired properties.

If we assume a coherent system of perturbations for $\ell -2 +2k < p$, then for $\ell -2 +2k = p$, notice that every $(C,\mathfrak{l}) \in C(\mathcal{C}\ell^\otimes_{\ell, k, \mathfrak{l}})$ splits into irreducible components $(C^{(i)},\mathfrak{l}^{(i)}) \in (\mathcal{C}l_{\ell^{(i)},k^{(i)}}^\otimes)_{\mathfrak{l}^{(i)}}$ such that $\ell^{(i)} -2 + 2k^{(i)} < p$ so we take $p_{\ell,k,\mathfrak{l}}|_{C^{(i)}} = p_{\ell^{(i)},k^{(i)},\mathfrak{l}^{(i)}}|_{C^{(i)}}$. This defines an appropriate choice of perturbation over $C(\mathcal{C}\ell^\otimes_{\ell, k, \mathfrak{l}})$ so we are left with an extension problem over $\mathcal{C}\ell^\otimes_{\ell, k, \mathfrak{l}}$.

First set a system of neighborhoods of the corners: Take $K_{\ell,k,\geq T'} \times X^{[0,\epsilon[}(T') \subset K_{\ell,k,\geq T'} \times X^{[0,1]}(T') \subset \cllkr$, where $0 < \epsilon < 1$. From definition \ref{defi_col}, it is not hard to see that $\nu^{[0,\epsilon[}(\mathcal{C}\ell_{\ell,k,\geq T}) \equiv \underset{T' \geq T}{\bigcup} K_{\ell,k,\geq T'} \times X^{[0,\epsilon[}(T')$ is a neighborhood of $\mathcal{C}\ell_{\ell,k,\geq T}^\otimes$. This neighborhood can be thought as linear gluings on the breakings of $\mathcal{C}\ell_{\ell,k,\geq T}^\otimes$ with sufficiently large gluing parameters.

Then, choose a set of smooth cutoff functions $\beta = \{ \beta_{\ell,k,\geq T} \}_T$ with $\beta_{\ell,k,\geq T}$ being constant to $1$ over $ \nu^{[0,\epsilon[}(\mathcal{C}\ell_{\ell,k,\geq T}^\otimes)$ and having support in $ \nu^{[0,\epsilon+1/N[}(\mathcal{C}\ell_{\ell,k,\geq T}^\otimes)$.

Now if a coherent system of perturbations has been chosen for $\ell -2 +2k < m$, then for $\ell -2 +2k = m$, take $\underset{T}{\prod} \beta_{\ell,k,\geq T}(C)$ times the perturbation over $C\in \cllkr$ induced by seeing $C$ as the result of a linear gluing. Therefore, the extended perturbation can be chosen so that its support lies in an arbitrarily small neighborhood of $C(\cllkr)$.

Define $\mathfrak{P}^{\ell -2 + 2k < m}(\mathcal{V}, \mathcal{S})$ (resp. $\mathfrak{P}^{\ell -2 + 2k \leq m}(\mathcal{V}, \mathcal{S})$) to be the choices of perturbation data on the strata of dimension $\ell -2 + 2k < m$ (resp. $\ell -2 + 2k \leq m$). Let then
\[ 
\mathfrak{P}^{\ell -2 + 2k \leq m}(\mathcal{V}, \mathcal{S}) \overset{R_m}{\longrightarrow} \mathfrak{P}^{\ell -2 + 2k < m}(\mathcal{V}, \mathcal{S})
\]
be the natural restriction map and
\[ 
\mathfrak{P}^{\ell -2 + 2k < m}(\mathcal{V}, \mathcal{S}) \overset{E^\beta_m}{\longrightarrow} \mathfrak{P}^{\ell -2 + 2k \leq m}(\mathcal{V}, \mathcal{S})
\]
the extension map using $\beta$. Thus (see corollary $3.7$ of \cite{CM}),

\begin{lem} \label{rest_bair}
        For $m\geq 1$, $E^\beta_m$ is a continuous linear right inverse for $R_m$, and therefore the preimage of a Baire subset of $\mathfrak{P}^{\ell -2 + 2k < m}(\mathcal{V}, \mathcal{S})$ under $R_m$ is a Baire subset of $\mathfrak{P}^{\ell -2 + 2k \leq m}(\mathcal{V}, \mathcal{S})$.
\end{lem}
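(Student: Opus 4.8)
The plan is to follow the scheme of \cite{CM} (corollary $3.7$). There are two things to establish: first, that $E^\beta_m$ is a continuous linear right inverse of $R_m$; second, that the mere existence of such a right inverse forces preimages of Baire (residual) subsets under $R_m$ to be Baire.

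For the first point, I would unwind the construction of $E^\beta_m$ given just before the statement. A point $C$ in $\nu^{[0,\epsilon[}(\mathcal{C}\ell^\otimes_{\ell,k,\geq T})$ is viewed as a linear gluing of the irreducible components of the corresponding nodal cluster with large gluing parameters, and $E^\beta_m(Q)$ is the perturbation on $\mathcal{C}\ell^\otimes_{\ell,k,\mathfrak{l}}$ obtained by transporting, through these gluing charts, the perturbations that $Q$ attaches to those components, and then multiplying by the fixed product of cutoffs $\prod_T \beta_{\ell,k,\geq T}$. Both operations are (affine-)linear on the Banach spaces of perturbation data: the linear-gluing transport is pre- and post-composition with fixed smooth maps, hence linear and bounded, and multiplication by each smooth compactly supported $\beta_{\ell,k,\geq T}$ is linear and bounded; their composition is therefore continuous and linear. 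That $E^\beta_m(Q)$ actually lies in $\mathfrak{P}^{\ell -2 + 2k \leq m}(\mathcal{V},\mathcal{S})$ — i.e. satisfies all of Definition \ref{pert} — is precisely what the preceding paragraphs arrange: conditions $(1)$--$(5)$ are either local near the ends, where the gluing is the identity and $\prod_T\beta_{\ell,k,\geq T}$ is $\equiv 1$ or $\equiv 0$ (so that vanishing on $\mathcal{V}$, on $\mathcal{S}$, on boundaries and over the ghost-disk lines, as well as condition \ref{line2}, is inherited from $Q$), or they are linear/affine in the perturbation data and hence preserved under a linear transport; condition $(5)$, coherence under passing to irreducible components, holds by the very definition of the gluing transport. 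Finally $R_m \circ E^\beta_m = \mathrm{id}$: on a stratum of dimension $\ell -2 + 2k < m$ one has $\prod_T \beta_{\ell,k,\geq T}\equiv 1$ (the cutoffs are constant to $1$ on the neighborhoods $\nu^{[0,\epsilon[}$ of the corners, as in Definition \ref{defi_col}) and the linear gluing restricts there to the identity, so restricting $E^\beta_m(Q)$ to such strata returns $Q$.

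Granting this, the Baire statement is formal. Since $E^\beta_m$ is a continuous linear section of the continuous linear map $R_m$, the spaces (modelled on separable Banach spaces, or closed affine subspaces thereof, hence completely metrizable) split topologically: the map
\[
\mathfrak{P}^{\ell -2 + 2k \leq m}(\mathcal{V},\mathcal{S}) \longrightarrow \mathfrak{P}^{\ell -2 + 2k < m}(\mathcal{V},\mathcal{S}) \times \ker R_m, \qquad P \longmapsto \bigl(R_m(P),\, P - E^\beta_m(R_m(P))\bigr),
\]
is a homeomorphism with inverse $(Q,S)\mapsto E^\beta_m(Q)+S$, and under it $R_m$ becomes the projection onto the first factor. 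If $B=\bigcap_n U_n$ is residual in $\mathfrak{P}^{\ell -2 + 2k < m}(\mathcal{V},\mathcal{S})$ with each $U_n$ open dense, then $R_m^{-1}(B)$ corresponds to $B\times\ker R_m=\bigcap_n(U_n\times\ker R_m)$, each factor open and dense in the product; as the product of completely metrizable spaces is again Baire, $R_m^{-1}(B)$ is residual.

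I expect the only genuinely delicate point to be the verification in the first step that $E^\beta_m$ lands in the space of \emph{coherent} perturbations — reconciling the linear-gluing prescription with every clause of Definition \ref{pert}, in particular the vanishing of $\pi_{\mathcal M}\circ p$ over ghost-disk lines together with condition \ref{line2} near a corner. This, however, has essentially been carried out in the discussion preceding the statement; the remainder is the standard functional-analytic packaging of \cite{CM}, which serves as the template.
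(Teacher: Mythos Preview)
Your proposal is correct and follows exactly the template the paper intends: the paper gives no proof beyond the parenthetical reference to corollary~3.7 of \cite{CM}, and you have simply unpacked that reference, verifying that $E^\beta_m$ is a continuous linear section and then running the standard splitting argument $P\mapsto(R_m(P),\,P-E^\beta_m(R_m(P)))$ to reduce $R_m$ to a projection. There is nothing to add.
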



\section{Operators over $\otimes$-clusters}

Now we describe a general operator over a cluster defined as a real Cauchy-Riemann operator over its disks and as a hessian connection over its lines. We compute the index of such an operator. Also, for every smooth cluster $C$, we define an operator $\ddel_C$ on its tangent bundle that has a cokernel canonically isomorphic to $T_C\cllkr$ if $\ell-2 +2k \geq 0$.

Take $C \in \cllkr$ and equip any line $l$ of $C$ with a Riemannian metric using the following model: Endow the standard
negative gradient flow lines of the function $h: (x,y) \in [0,1]^2 \mapsto y^2 - x^2$ with the metric induced from $\R$ by
their usual parametrization.

If $l$ is of finite length, take $l$ to be isometric to the flow line of this length. Otherwise, $l$ touches precisely one
endpoint of $C$. If $l$ is above (resp. under) this point, take $l$ to be
isometric to $[0,1] \times \{0\} \subset [0,1]^2$ (resp. $ \{0\} \times [0,1] \subset [0,1]^2$), with the endpoint corresponding to $(0,0)$. For the special case $C = \overline{\R}$, we equip it with the standard metric of $\R$. 

Set $real(\C^n,i) = \{ V \underset{subspace}{\subset} \C^n | iV \oplus V = \C^n\}$. A map $F:\partial
C \rightarrow real(\C^n,i)$, smooth on $\partial d$ for every disk $d$ and such that $F|_l \equiv \R^n
\subset \C^n$ for every line $l$
is called a real boundary condition on $\C^n
\times C \rightarrow C$ and by slight abuse of notation, denote by $F \rightarrow \partial C$ the
associated bundle. For such a real boundary condition, $L^{m,p}(C, \C^n, F) = \{ \xi \in L^{m,p}(C, \C^n) |
\xi(x) \in F_x \forall x \in \partial C\}$ and $L_\pi^{m-1,p}(C, \C^n) = L_\pi^{m-1,p}(C, \C^n, F) = {\underset{l \in lines(C)}\prod}
L^{m-1,p}(l,F|_l) \times \underset{d \in discs(C)}{\prod} L^{m-1,p}(d, \C^n)$.

Let an endpoint condition be a tuplet $\vec{A} = (A_0, \ldots, A_\ell, (A_b)_{b \in breakings(C)})$ of invertible and
diagonalizable $n \times n$  real matrices, then a map \\
$A \in L^{m-1,p} (\partial C, Mat_n(\R))$ such that $A(v_j)=A_j$
for $0 \leq j\leq \ell$ and $A(b)=A_b$ for $b \in breakings(C)$ is said to be compatible with $\vec{A}$.

\begin{defi}
        A real linear Cauchy-Riemann operator of class $(m-1,p)$ on $\C^n \times C \rightarrow C$ with real
boundary condition $F$ and endpoint condition $\vec{A}$ is a real linear operator
\begin{diagram}
        L^{m,p}(C,\C^n,F)&& \rTo^{\ddel}&& L_\pi^{m-1,p}(C,\C^n)\\
\end{diagram}
such that $\forall f \in L^{m,p}(C,\C,\R)$

\begin{diagram}
        \ddel(f\cdot \xi)|_d = f \cdot \ddel(\xi)|_d + \frac{\partial}{\partial \bar{z}} f \cdot \xi |_d, 
\forall d \in discs(C)\\
        \ddel(f\cdot \xi)|_l = f \cdot (\frac{\partial}{\partial t} - A)\xi|_l + \frac{\partial}{\partial t}
f \cdot \xi |_l,  \forall l \in lines(C)\\
\end{diagram}
for some $A \in L^{m-1,p}(\partial C,Mat_n(\R))$ compatible with $\vec{A}$.

\end{defi}

We now compute the index of such an arbitrary Cauchy-Riemann operator. Given $F\rightarrow \partial C$ a real
boundary condition, then since $real(\C^n,i) = Gl_n(\C) / Gl_n(\R)$, which has fundamental group canonically isomorphic
to $\Z$, define $\mu(F)$ to be the index of its associated cycle. Also, given an endpoints condition $\vec{A}$, define
$\mu^+(A_j)$ (resp. $\mu^-(A_j)$) to be the number of positive (resp. negative) eigenvalues of $A_j$ for $0 \leq j \leq
\ell$.

\begin{prp} \label{inde_comp}
        For any $\ell, k \geq 0$ and smooth $C \in \cllkr$, that is, with no lines of zero length nor broken lines, a real linear Cauchy-Riemann operator $\ddel$ on $C$ with real boundary condition $F \rightarrow \partial C$ and endpoint condition $\vec{A}$ is Fredholm of index

\[
        Ind(\ddel) = \mu^+(A_0) - \sum_{j=1}^\ell \mu^+(A_j) + \mu(F)
\]

\end{prp}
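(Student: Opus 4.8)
The plan is to reduce the index computation to two well-understood building blocks---Cauchy--Riemann operators on disks with Lagrangian (real) boundary conditions, and the operator $\frac{\partial}{\partial t} - A$ on a line segment or half-line---and then glue them along the nodes/endpoints using a linear gluing (excision) argument. First I would establish the Fredholm property: over each disk $d$ the restriction $\ddel|_d$ is a standard real linear Cauchy--Riemann operator on a bordered Riemann surface with totally real boundary condition $F|_{\partial d}$, which is Fredholm by the classical theory (e.g. \cite{MS}); over each finite-length line $l$ the operator $\frac{\partial}{\partial t}-A$ with the fixed boundary condition $\R^n\subset\C^n$ at both ends is an elliptic boundary value problem on a compact interval, hence Fredholm of index $0$ (it is a continuous deformation of $\frac{\partial}{\partial t}$); over each semi-infinite line the operator $\frac{\partial}{\partial t}-A$ with an invertible, diagonalizable asymptotic matrix $A_j$ (resp. $A_b$) is Fredholm on $L^{m,p}$ with index equal to the number of unstable directions, i.e. $\mu^+(A_j)$ for a leaf (where the flow exits) and with the appropriate orientation convention for the root and the breakings. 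Since $C$ is a finite tree of such pieces glued at finitely many points, the total operator is Fredholm.

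Next I would compute the index by additivity under gluing. The key is that when one glues two pieces along a node or a matching endpoint, the dimension of the matching condition ($n$ real equations at a node, or the appropriate count at a breaking) enters with a definite sign, and the index of the glued operator equals the sum of the indices minus a correction coming from this matching. For the disks alone, the index of $\ddel|_d$ on a disk with $p$ boundary components collapsed to a single component and real boundary condition $F|_d$ is $n + \mu(F|_d)$ by Riemann--Roch for bordered surfaces; summing $\mu(F|_d)$ over the disks of $C$ gives $\mu(F)$ since $\mu$ is additive under the decomposition of $\partial C$ into the boundary arcs of the disks (the lines contribute the trivial condition $\R^n$, of Maslov index $0$). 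Each finite line contributes $0$. Each node where a line meets a disk boundary contributes an $n$-dimensional constraint that cancels one of the ``$n$'' summands from a disk, so that after assembling the tree only one overall ``$n$'' would survive---but this is exactly absorbed by the root/leaf bookkeeping: the semi-infinite leaf half-lines carry index $\mu^+(A_j)$ and enter the boundary matching negatively, the root half-line carries the complementary count so that $\mu^+(A_0)$ appears positively, and each internal breaking contributes $\mu^+(A_b)+\mu^-(A_b)=n$, precisely cancelling the gluing correction there. Collecting all contributions yields
\[
Ind(\ddel) = \mu^+(A_0) - \sum_{j=1}^{\ell}\mu^+(A_j) + \mu(F).
\]

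I expect the main obstacle to be the careful bookkeeping of the signs and the node-matching corrections, in particular making the orientation conventions for the root versus the leaves consistent with the sign of $\frac{\partial}{\partial t}-A$ (the leaf half-lines point away from the disks toward $+\infty$, the root half-line toward $-\infty$), and checking that the trivial real boundary condition $\R^n$ along every line genuinely contributes Maslov index $0$ to $\mu(F)$ while the disk boundary arcs reassemble to give exactly $\mu(F)$. A clean way to organize this is to verify the formula first on the atomic cases---a single disk (recovering $n+\mu(F)$, i.e. the case $\ell+1$ boundary markings, $\mu^+(A_j)=1$ for all leaves since the $A_j$ are $1\times 1$ positive in the Morse setting, and more generally the stated count), a single finite segment, a single half-line---and then argue by induction on the number of components of $C$, each inductive step removing one outermost disk-and-line or one breaking and invoking the linear gluing/excision additivity of the Fredholm index. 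Alternatively one can deform all the line operators to $\frac{\partial}{\partial t}$ (legitimate since the index is deformation-invariant once the asymptotic matrices stay invertible) and degenerate the line lengths to $0$, reducing to the known index of the Cauchy--Riemann operator on the nodal disk, which is a cross-check on the formula.
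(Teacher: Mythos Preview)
Your overall strategy---decompose $C$ into disks and lines, compute the index on each piece, then account for the matching conditions at the contact points---is exactly what the paper does. The paper organizes this via a single short exact sequence
\[
0 \to L^{m,p}(C,\C^n,F) \to \prod_l L^{m,p}(l,F|_l)\times\prod_d L^{m,p}(d,\C^n,F|_d) \xrightarrow{\prod ev_d} \prod_d F_{x^d_0}\times\cdots\times F_{x^d_{|d|-1}} \to 0
\]
over the identity on the target, which replaces your informal ``gluing corrections'' by the index of the right-hand evaluation map.

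However, two of your component indices are wrong, and they do not cancel. First, on a finite interior line $l\cong[0,\lambda]$ the operator $\partial_t-A:L^{m,p}(l,\R^n)\to L^{m-1,p}(l,\R^n)$ has no boundary condition imposed beyond ``$\R^n$-valued'' (which is the meaning of $F|_l=\R^n$); its kernel is the $n$-dimensional space of solutions of the ODE with free initial value and it is surjective, so the index is $n$, not $0$. Second, on a leaf half-line $\overline{\R}_+$ with asymptotic matrix $A_j$ at $+\infty$, the $L^{m,p}$ kernel consists of solutions decaying at $+\infty$, i.e.\ the stable subspace of $A_j$, so the index is $\mu^-(A_j)$, not $\mu^+(A_j)$. (The root half-line $\overline{\R}_-$ with asymptotic $A_0$ at $-\infty$ gives $\mu^+(A_0)$.) With the correct values $n$, $\mu^-(A_j)$, $\mu^+(A_0)$, $n+\mu(F|_d)$ and the evaluation map of rank $n\sum_d|d|=n(2|intlines(C)|+\ell+1)$, together with $|discs(C)|=|intlines(C)|+1$, the arithmetic closes to $\mu^+(A_0)-\sum_j\mu^+(A_j)+\mu(F)$. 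Finally, your discussion of ``internal breakings'' is moot here: the statement assumes $C$ is smooth, so there are no broken lines and no $A_b$'s enter.
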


\begin{proof}
        Let $n$ be the rank of $F \rightarrow \partial C$. First, we notice that the index of the operator is
well-known over every smooth component of $C$.

Indeed, for any $l \in intlines(C)$, $\ddel |_l$ is Fredholm with $Ind(\ddel|_l) = n$, if $l$ contains $v_0$ then $\ddel
|_{l}$ is Fredholm with $Ind(\ddel|_{l}) = \mu^+(A_0)$ and for $1 \leq j \leq \ell$, if $l$ contains $v_j$ then $\ddel
|_{l}$ is Fredholm with $Ind(\ddel|_{l}) = \mu^-(A_j)$ (see \cite{S2}, \cite{W2}).

Moreover, for any $d \in discs(C)$, a Riemann-Roch-type theorem (see \cite{MS}) ensures that $\ddel|_d$ is
Fredholm with $Ind(\ddel|_d) = n+\mu(F|_d)$.

Then looking at the short exact sequence of operators
\begin{diagram} \label{seq}
          L^{m,p}(C,\C^n) && \rTo &&& \prod_{l \in lines(C)} L^{m,p}(l,F|_l) \times \prod_{d \in
discs(C)} L^{m,p}(d, \C^n,F|_d) &&&& \rTo^{\Pi ev_d} &&& \prod_{d \in discs(C)} F_{x^d_0}
\times \ldots \times F_{(x^d_{|d| -1})}\\
          \dTo^{\ddel} &&&&& \dTo^{\prod_l \ddel|_l \times \prod_d \ddel|_d} &&&&&&& \dTo  \\
          L^{m-1,p}_\pi(C,\C^n) && = &&& L_\pi^{m-1,p}(C,\C^n) &&& \rTo &&&& 0 &  \\
\end{diagram}
where $ev_d$ takes the differences at the contact points between $d$ and the lines touching it.

We have that the middle operator has index
\begin{align*}
        \sum_{l \in lines(C)} Ind(\ddel|_l) +& \sum_{d \in discs(C)} Ind(\ddel|_d) \\
&= \mu^+(A_0) +
\sum_{j=1}^\ell \mu^-(A_j) + n|intlines(C)| + n|discs(C)| + \mu(F) \\
        & = \mu^+(A_0) + \sum_{j=1}^\ell \mu^-(A_j) + \mu(F) +2n |intlines(C)| +n \\
\end{align*}
while the operator on the right has index
\[
        n \sum_{d \in discs(C)} |d| = 2n |intlines(C)| + n(\ell+1)
\]
so that
\begin{align*}
 Ind(\ddel) &= \mu^+(A_0) + \sum_{j=1}^\ell (\mu^-(A_j) -n) + \mu(F)\\
            &= \mu^+(A_0) - \sum_{j=1}^\ell \mu^+(A_j) + \mu(F)\\
\end{align*}
\end{proof}

We would also like to have an analogue of the Cauchy-Riemann operator over the tangent bundle for a cluster. However, this bundle is not well defined at nonsmooth points but if we consider only tangent sections $L^{m,p}_0 (C, TC, T \partial C)$ that vanish at these points besides vanishing at the interior markings, we get an operator

\begin{diagram}
        L^{m,p}_0 (C, TC, T \partial C)&& \rTo^{\ddel_C} && L^{m-1,p}_\pi (C,TC)\\
\end{diagram}
defined as the canonical Cauchy-Riemann operator over the disks and the Levi-Civita connection over the edges.

\begin{lem} \label{coker}
        For smooth $C \in \cllkr$ with $\ell -2 + 2k \geq 0$, $\ddel_C$ is injective with $coker(\ddel_C) = T_C \cllkr \cong \R^{\ell -2 +2k}$ whereas for arbitrary $C \in \cllkr$, $\ddel_C$ is injective with 
\[
coker(\ddel_C) \cong \R^{\ell -2 +2k -|\text{breakings}(C)| -|\text{real nodes}(C)| - 2|\text{complex nodes}(C)|}
\].
For $\ell = 1, k=0$, $\ddel_C$ is surjective with $ker(\ddel_c) \cong \R$, and for $\ell = k = 0$, $\ddel_C$ is an isomorphism.
\end{lem}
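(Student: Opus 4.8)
The plan is to reduce the statement to the index computation of Proposition~\ref{inde_comp} together with an elementary analysis of the kernel, exploiting that the relevant endpoint matrices for $\ddel_C$ are the (constant-coefficient) Hessian operators whose spectral data is forced by the dimension count. First I would observe that $\ddel_C$ is, by its very definition, a real linear Cauchy--Riemann operator on the trivial bundle $TC \cong \C^n \times C \to C$ (with $n = \dim L$) with the real boundary condition $F = TL \to \partial C$ and a specific endpoint condition $\vec A$: the canonical $\bar\partial$ on each disk $d$ with the totally real boundary condition $T\partial d \subset TL$, and on each line $l$ the operator $\frac{\partial}{\partial t} - A$ where $A$ is the Hessian of the model Morse function $h(x,y) = y^2 - x^2$ along $l$. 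For the model $h$ one has $\mathrm{Hess}(h) = \mathrm{diag}(-2,2)$, so each internal endpoint matrix $A_b$ (and each $A_j$) has exactly one positive and one negative eigenvalue in the $2$-dimensional model; more precisely, the leaves $v_j$, $1\le j\le \ell$, sit at the $(0,0)$ end of a \emph{stable} (incoming, $[0,1]\times\{0\}$-type) segment while the root $v_0$ sits at the end of an \emph{unstable} segment, so after tensoring the $2$-dimensional local picture into $\R^n$ appropriately the boundary condition $F = TL$ has Maslov index $\mu(F) = 0$ and one reads off $\mu^+(A_0) = n$ whereas $\mu^+(A_j)$ accounts for the $n-1$ stable directions plus no unstable ones along the incoming leaves. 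I would make this bookkeeping precise and then plug into Proposition~\ref{inde_comp}: the contributions telescope and collapse to
\[
\mathrm{Ind}(\ddel_C) = \mu^+(A_0) - \sum_{j=1}^\ell \mu^+(A_j) + \mu(F) = -(\ell - 2 + 2k),
\]
matching $\dim K_{\ell,k}$ up to sign. (The exceptional cases $\ell=1,k=0$ and $\ell=k=0$ are handled separately, directly from the model: $C = \overline{\R}$ carries $\frac{\partial}{\partial t} - \mathrm{diag}(-2,2)$, whose kernel is the span of the decaying exponential in the unstable direction, giving $\ker \cong \R$ and surjectivity; for $\ell=k=0$, $C = \overline{\R}_-$ with the appropriate one-sided condition, the operator is an isomorphism.)

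Next I would establish injectivity of $\ddel_C$ for $\ell-2+2k \ge 0$. On each disk $d$, elements of $\ker(\ddel_C|_d)$ are holomorphic sections of $\C^n \times d$ with boundary values in the constant totally real subspace $TL$ that moreover vanish at the interior markings $z_h$ and at the boundary nodes (by the $L^{m,p}_0$ constraint defining the domain); a holomorphic function on the disk with real boundary values extends by Schwarz reflection to a bounded entire function, hence is constant, and the vanishing at a node (when $k\ge 1$ or $\ell\ge 1$ so at least one special point is imposed on each component) forces it to be $0$. On each internal line $l$ of finite length $\lambda$, a section in the kernel solves $\dot\xi = A\xi$ with $\xi$ vanishing at both ends (again by the $L^{m,p}_0$ constraint at the two endpoints, which are either nodes or leaves/root); since $A$ is invertible, $\dot\xi = A\xi$ has no nonzero solution vanishing at two distinct points, so $\xi \equiv 0$. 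On semi-infinite lines attached to a leaf or the root, the decay imposed by $L^{m,p}$ together with vanishing at the $(0,0)$ endpoint kills any solution on a stable segment and likewise on an unstable one. Assembling components, $\ker(\ddel_C) = 0$.

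For the cokernel I would use the standard identification of $\mathrm{coker}(\ddel_C)$ with the $L^2$-kernel of the formal adjoint, or equivalently dualize the short exact sequence of operators used in the proof of Proposition~\ref{inde_comp}. Since $\ddel_C$ is injective and Fredholm of index $-(\ell-2+2k)$, we get $\dim\mathrm{coker}(\ddel_C) = \ell - 2 + 2k$ immediately in the smooth case, and the canonical identification $\mathrm{coker}(\ddel_C) \cong T_C\cllkr$ follows because an infinitesimal deformation of the complex structures on the disks (varying cross-ratios) and of the finite line lengths $\lambda$ produces, through the linearization of the defining equations, exactly a spanning family of cokernel directions: deformations of the $\log(X(e))$-lengths contribute the line directions and Teichm\"uller-type deformations of the marked disks contribute the disk directions, and these are complementary and of the right total dimension $\ell - 2 + 2k = \dim\cllkr$; by Lemma~\ref{smoo_lemm} and Lemma~\ref{coker}'s index this must be an isomorphism. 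For nonsmooth $C$, normalizing at a real node removes one dimension of real line/length deformation (equivalently kills the corresponding cokernel direction by the reflection argument forcing continuity across the node is dropped), a complex node removes two, and each breaking removes one, yielding the stated formula
\[
\mathrm{coker}(\ddel_C) \cong \R^{\,\ell - 2 + 2k - |\mathrm{breakings}(C)| - |\mathrm{real\ nodes}(C)| - 2|\mathrm{complex\ nodes}(C)|}.
\]

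The main obstacle I anticipate is not the index count (which is mechanical once the endpoint matrices are pinned down) but rather making the identification $\mathrm{coker}(\ddel_C) \cong T_C\cllkr$ genuinely \emph{canonical} and compatible with the piecewise-smooth structure of $\cllkr$ from Section~\ref{collar}: one must check that the gluing/length parameters $X(e) = e^{-\lambda}$ and the disk cross-ratio coordinates assemble into a well-defined trivialization of the cokernel bundle near the corners, using the explicit charts $\psi_T$ of Proposition~\ref{neighk} and the collar construction. Getting the bookkeeping of nodes versus breakings exactly right in the nonsmooth case — in particular distinguishing a "broken line" (a length-$\infty$ edge, contributing $-1$) from a genuine nodal degeneration of a disk (a real node, also $-1$, or a complex node from colliding interior markings, $-2$) — is the delicate point and will require careful reference to the deformation theory of stable bordered surfaces (as in \cite{Liu}) extended to allow the normalization-at-a-node operation, exactly as set up just before the statement.
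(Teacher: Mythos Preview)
Your proposal contains a fundamental misidentification of the operator. You write that $\ddel_C$ acts on ``the trivial bundle $TC \cong \C^n \times C$ (with $n = \dim L$)'' with boundary condition $F = TL$. This is incorrect: $\ddel_C$ is defined on sections of the tangent bundle of the \emph{source} cluster $C$, which has rank $1$ (complex rank $1$ on each disk, real rank $1$ on each line), with boundary condition $T\partial C$. There is no Lagrangian $L$ in sight at this point of the paper---Chapter~1 is purely about source spaces, and $L$ does not appear until Chapter~2. You are conflating $\ddel_C$ with the target-space operator $\ddel_u$ introduced later. Consequently, your endpoint matrices, your Maslov index $\mu(F)=0$, your ``tensoring into $\R^n$,'' and the Schwarz-reflection-to-constant argument are all set up for the wrong bundle.

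With the correct rank-$1$ bundle, your strategy of invoking Proposition~\ref{inde_comp} could in principle be made to work, but the details change completely: on a disk the boundary condition $T\partial d$ has Maslov index $2$, kernel elements are holomorphic vector fields tangent to the boundary (i.e.\ infinitesimal M\"obius automorphisms of the disk, a $3$-dimensional space, not constants), and injectivity comes from stability---enough vanishing constraints at special points to kill this $3$-dimensional space---not from Liouville. The paper's own proof bypasses this entirely: it observes that the kernel is trivial ``obviously'' because stable means finite automorphism group, and then identifies the cokernel directly via Kodaira--Spencer/Dolbeault theory as first-order deformations of the complex structure on the marked disks together with first-order length variations on the finite edges. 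This is both shorter and more conceptual, and it gives the canonical identification $\mathrm{coker}(\ddel_C) \cong T_C\cllkr$ for free, rather than as a separate step requiring the chart analysis you anticipate as an obstacle.
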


\begin{proof}
        The second part of the statement refers to the operator over $\overline{\R}$ (resp. $\overline{\R}_-$) equipped with the standard metric: The only kernel element is the translation over $\R$ (resp. the zero section) and the above index computation returns $1$ (resp. $0$), so the cokernel must be empty.
        
        When $\ell -2 + 2k \geq 0$, obviously, the kernel of $\ddel_C$ is trivial for $C$ being stable. The rest of the statement depends on standard Kodaira-Spencer deformation theory arguments.

        The Dolbeault theorem insures that the cokernel over a smooth disk with $(\ell+1)$ boundary markings and $k$
interior markings is canonically isomorphic to the $\ell - 2 + 2k$ independent first order deformations of its complex
structure, and that the cokernel over any edge of nonzero finite length is canonically isomorphic to the first order
variation of its length. Therefore, we immediately get the result for $C \in \klk \subset \cllkr$ and it extends canonically to the interior of any $K_{\ell,k,\geq T} \times X^{[0,1]}(T)$ cell with $|V(T)| \geq 2$.

        Furthermore, we notice that the creation of every breaking or real boundary node decreases the dimension of the first order deformations, and therefore the dimension of the cokernel, by one whereas a complex node decreases this dimension by two.
\end{proof}

\section{Orientations on $\{\cllkr\}_{\ell,k \geq 0}$}

As the subtitle suggests, a reader who is not interested in orientation considerations should skip this subsection. Orientation considerations on $\{\cllkr\}_{\ell,k \geq 0}$ will later be used to choose correct signs in the definition of $\delta^\otimes$.

Lemma \ref{coker} shows that for smooth $C$ with $\ell -2 +2k \geq 0$, $coker(\ddel_C)$ is canonically isomorphic to $T_C \cllkr$ so for $C = C^{(1)} \bigcup C^{(2)} \in F \in C_1(\cllkr)$ with $C^{(1)}$ and $C^{(2)}$ smooth, $coker(\ddel_{C^{(1)} \bigcup C^{(2)}})$ is canonically isomorphic to both $T_C F$ and $T_{(C^{(1)},C^{(2)})} \mathcal{C}l^\otimes_{\ell^{(1)},k^{(1)}} \times \mathcal{C}l^\otimes_{\ell^{(2)},k^{(2)}}$. These two identifications will be compared, up to orientation, via the operation of (linear) gluing of $\ddel_C$ at the breaking point. This procedure also applies to the cases where $\ell^{(1)} = 1$ and/or $\ell^{(2)} \leq 1$.

First, we fix a reference orientation of $\ddel_C$ for every smooth $C \in \cllkr$ with $(\ell,k) \neq (0,0)$.

\begin{defi}  \label{ori}

Let $ \mathcal{O}_{z_h} = \frac{\partial}{\partial Re(z_h)} \wedge \frac{\partial}{\partial Im(z_h)} $, $0\leq h \leq k$.

For $k \geq 1$, let $\mathcal{O}_\ell = \frac{\partial}{\partial x_1} \wedge \dots \wedge
\frac{\partial}{\partial x_\ell}$, take the orientation $\mathcal{O}_{\ell,k} = \mathcal{O}_\ell \wedge \mathcal{O}_{z_2} \wedge \dots \wedge \mathcal{O}_{z_k}$ over $\klk$ and extend it to an orientation $\mathcal{O}^\otimes_{\ell,k}$ on $\cllkr$.

For $k = 0$ and $\ell = 1$, we take $\mathcal{O}_{\ell,k}^\otimes = \frac{\partial}{\partial t}$ to be the field that generates the positive (away from the root) translation over $\overline{\mathbb{R}}$.

\end{defi}

Now we find the combinatorial formula for the difference between the orientation $\partial_{F} \mathcal{O}^\otimes_{\ell,k}$
induced on a component $F = K_{\ell^{(1)},k^{(1)}} \times K_{\ell^{(2)},k^{(2)}} \subset C_1(\cllkr)$ by
$\mathcal{O}^\otimes_{\ell,k}$ by using the usual outward pointing direction first convention, and $\mathcal{O}^\otimes_{\ell^{(1)},k^{(1)}} \wedge
\mathcal{O}^\otimes_{\ell^{(2)},k^{(2)}}$.

Note that this orientation induction procedure is also defined in the cases where $C^{(1)}$ or $C^{(2)}$ is $\overline{\R}$ so that their concatenation $C$ does not belong to $C_1(\cllkr)$. One can still generate a one-dimensional family of $\otimes$-clusters $\pi: \#C \rightarrow ]0,\infty]$, $\#_{\rho}C \mapsto \rho$ by the linear gluing maps around the concatenation point (see \cite{S}). Indeed, we have the exact sequence of operators

\begin{diagram} \label{seq3}
        L^{m,p}_0 (\#_{\rho}C, T\#_{\rho}C, T \partial \#_{\rho}C) & & \rTo &&& L^{m,p}(\#_{\rho}C,\iota_{\rho}^*T\#C,\iota_{\rho}^*T\partial\#C) &&&& \rTo & \frac{L^{m,p}_0(\#_{\rho}C,\iota_{\rho}^*T\#C,\iota_{\rho}^*T\partial\#C)}{L^{m,p}_0 (\#_{\rho}C, T\#_{\rho}C, T \partial \#_{\rho}C)}\\
         \dTo^{\ddel_{\#_{\rho}C}} &&&& & \dTo^{\ddel_{\iota_\rho}} &&&& &\dTo^{\ddel_{\iota_\rho} / \ddel_{\#_{\rho}C}}\\
         L^{m-1,p}_\pi (\#_{\rho}C,T\#_{\rho}C) && & \rTo && L^{m-1,p}(\#_{\rho}C,\iota_{\rho}^*T\#C) && \rTo && & \frac{L^{m-1,p}(\#_{\rho}C,\iota_{\rho}^*T\#C)}{L^{m-1,p}_\pi (\#_{\rho}C,T\#_{\rho}C)} \\
\end{diagram}

\noindent where $\iota_\rho: \pi^{-1}(\rho) \hookrightarrow \#C$ is the canonical inclusion. For $\rho= \infty$, the right operator is an isomorphism, the other two having the same kernel and cokernel, and therefore we can choose $\mathcal{O}^\otimes_{\ell^{(1)},k^{(1)}} \wedge \mathcal{O}^\otimes_{\ell^{(2)},k^{(2)}}$ to orient the center operator. For $\rho < \infty$, the right operator has a one-dimensional kernel and the left one has index one less than at $\rho = \infty$. Then for large $\rho$, one can consider the orientation induced from $\rho = \infty$ on the center operator and choose orientation $\mathcal{O}^\otimes_{\ell,k}$ on the left one, so the right operator inherits an orientation. We say it is outward normal if it projects onto the $\frac{\partial}{\partial \rho}$ orientation of $]0,\infty]$ and then assert that $\mathcal{O}^\otimes_{\ell^{(1)},k^{(1)}} \wedge \mathcal{O}^\otimes_{\ell^{(2)},k^{(2)}}$ on the composition induces the outward normal orientation on its glued 1-parameter family (relative to $\mathcal{O}^\otimes_{\ell,k}$), or simply $\partial_{\ddel_C} \mathcal{O}^\otimes_{\ell,k} = \mathcal{O}^\otimes_{\ell^{(1)},k^{(1)}} \wedge \mathcal{O}^\otimes_{\ell^{(2)},k^{(2)}}$.

\begin{lem} \label{out}
        Let $\ell^{(1)} \geq 1$, $\ell^{(2)} \geq 0$ and $k^{(1)}, k^{(2)} \geq 0$ such that $(\ell^{(2)}, k^{(2)}) \neq (0,0)$. Then let $\ell +1 = \ell^{(1)} + \ell^{(2)}$, $k = k^{(1)} + k^{(2)}$, $C^{(1)} \in \mathcal{C}\ell_{\ell^{(1)},k^{(1)}}^\otimes$, $C^{(2)} \in \mathcal{C}\ell_{\ell^{(2)},k^{(2)}}^\otimes$ smooth and define $C =  C^{(1)} \bigsqcup C^{(2)} \big{/}_{v^{(1)}_j \sim v^{(2)}_0}$, that is, $C$ is the concatenation of $C^{(1)}$ and $C^{(2)}$ on the $j^{th}$ leaf of $C^{(1)}$ (see figure \ref{fig09}). Then

\[
        \partial_{\ddel_C} \mathcal{O}^\otimes_{\ell,k} = (-1)^{(\ell^{(1)} -j)\ell^{(2)} + (j-1)} \mathcal{O}^\otimes_{\ell^{(1)},k^{(1)}} \wedge
\mathcal{O}^\otimes_{\ell^{(2)},k^{(2)}}.
\]

\end{lem}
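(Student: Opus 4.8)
The plan is to reduce the statement to the classical sign computation for the boundary orientation of Stasheff associahedra (the $k=0$ case) and then show that the interior markings contribute trivially to the sign. Concretely, I would first choose an ordered coordinate system adapted to the reference orientation $\mathcal{O}^\otimes_{\ell,k}$: by Definition \ref{ori}, $\mathcal{O}^\otimes_{\ell,k}$ is obtained from $\mathcal{O}_\ell \wedge \mathcal{O}_{z_2}\wedge\cdots\wedge\mathcal{O}_{z_k}$ on $\klk$ via the identification $\cllkr\cong\klk$ of Lemma \ref{smoo_lemm}. Since $\ddel_C$ has cokernel canonically $T_C\cllkr$ (Lemma \ref{coker}), and the boundary face $F$ is canonically $K_{\ell^{(1)},k^{(1)}}\times K_{\ell^{(2)},k^{(2)}}$, I would compute $\partial_{\ddel_C}\mathcal{O}^\otimes_{\ell,k}$ as the wedge of an outward-normal vector $\partial/\partial\rho$ (the linear-gluing parameter at the node $v^{(1)}_j\sim v^{(2)}_0$) with the induced orientation on $F$, and then compare term-by-term with $\mathcal{O}^\otimes_{\ell^{(1)},k^{(1)}}\wedge\mathcal{O}^\otimes_{\ell^{(2)},k^{(2)}}$.

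The core computation is the boundary-markings bookkeeping. On $C^{(1)}$ the boundary markings are $x_0,x_1,\dots,x_{\ell^{(1)}}$ (with $x_j$ replaced by the node), and on $C^{(2)}$ they are $x^{(2)}_0,\dots,x^{(2)}_{\ell^{(2)}}$ (with $x^{(2)}_0$ the node); under the concatenation these assemble into the markings $x_1,\dots,x_\ell$ of $C$ by inserting the block $x^{(2)}_1,\dots,x^{(2)}_{\ell^{(2)}}$ of $C^{(2)}$ in position $j$ of the $C^{(1)}$-list. I would fix a basis of $T\klk$ near $F$ consisting of the cross-ratio / simple-ratio coordinates of Section \ref{disks}: the normal coordinate $n$ at the node plus coordinates splitting as (deformations of $C^{(1)}$) $\times$ (deformations of $C^{(2)}$). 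Reordering $\mathcal{O}_\ell = \partial_{x_1}\wedge\cdots\wedge\partial_{x_\ell}$ into $\mathcal{O}_{\ell^{(1)}}\wedge\mathcal{O}_{\ell^{(2)}}$ (with the $x^{(2)}$-block pulled out past the $x_{j},\dots,x_{\ell^{(1)}-1}$ of $C^{(1)}$) produces precisely the permutation whose sign is $(-1)^{(\ell^{(1)}-j)\ell^{(2)}}$; moving the normal/gluing direction into outward-first position past the $j-1$ coordinates that precede it contributes the extra $(-1)^{j-1}$. This is essentially the standard associahedron face sign of Getzler--Jones, which the Remark after the lemma statement in the Overview already flags.

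The only genuinely new input is that the interior markings $z_2,\dots,z_k$ do not perturb the sign. For this I would observe that the partition $k=k^{(1)}+k^{(2)}$ of the interior markings can be taken compatible with the chosen ordering: each $z_h$ lies on exactly one of $C^{(1)}$ or $C^{(2)}$, the factors $\mathcal{O}_{z_h}=\partial_{\mathrm{Re}\,z_h}\wedge\partial_{\mathrm{Im}\,z_h}$ are two-dimensional and hence commute (sign $+1$) past everything, and deleting $z_1$ (which is omitted from $\mathcal{O}_{\ell,k}$ by the normalization in Definition \ref{ori}) is handled by the bivalent-vertex description of the simple-ratio charts in Section \ref{disks}, which identifies $z_1$'s contribution with an $\mathrm{Im}(z_1)$-coordinate carried inside the disk component rather than as a global factor. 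So the $z$-factors split as $\mathcal{O}_{z}$-data for $C^{(1)}$ wedge $\mathcal{O}_{z}$-data for $C^{(2)}$ with no crossing sign, and the extension of $\mathcal{O}_{\ell,k}$ from $\klk$ to $\cllkr$ in Definition \ref{ori} is by construction orientation-preserving across the collar, so it commutes with taking the boundary face.

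I expect the main obstacle to be purely organizational rather than conceptual: pinning down, uniformly in the degenerate edge cases $\ell^{(1)}=1$ (so $C^{(1)}=\overline{\R}$ and $C\notin C_1(\cllkr)$, handled by the glued one-parameter family and exact sequence \eqref{seq3}) and $\ell^{(2)}\in\{0,1\}$, that the ``outward normal'' convention, the sign conventions in the short exact sequence \eqref{seq3}, and the chosen orderings of the cross-ratio coordinates are all mutually consistent, so that the claimed exponent $(\ell^{(1)}-j)\ell^{(2)}+(j-1)$ comes out with no off-by-one error. Once the coordinate conventions are fixed the computation is a finite permutation-sign count.
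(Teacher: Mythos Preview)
Your proposal has a genuine gap in the bookkeeping of the interior markings, and the reduction to the associahedron ($k=0$) case does not work as stated.

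The core issue is a dimension mismatch hidden in your ``interior markings contribute trivially'' step. The reference orientations of Definition~\ref{ori} are only defined for $k\geq 1$: one normalizes by fixing $z_1$ and then $x_1,\dots,x_\ell,z_2,\dots,z_k$ are free coordinates. On the product side, \emph{each} factor has its own normalization: $\mathcal{O}^\otimes_{\ell^{(1)},k^{(1)}}$ omits $z_1^{(1)}$ and $\mathcal{O}^\otimes_{\ell^{(2)},k^{(2)}}$ omits $z_1^{(2)}$. On the glued side, $\mathcal{O}^\otimes_{\ell,k}$ omits only one point, namely $z_1=z_1^{(1)}$; the point $z_1^{(2)}$ now \emph{does} appear among the free $z$-coordinates. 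Concretely, your proposed reordering $\mathcal{O}_\ell\leadsto\mathcal{O}_{\ell^{(1)}}\wedge\mathcal{O}_{\ell^{(2)}}$ already fails dimensionally: the right-hand side has $\ell^{(1)}+\ell^{(2)}=\ell+1$ factors (it still contains $\partial_{x_j^{(1)}}$, the node position on $C^{(1)}$, which is not a leaf of $C$), while the left-hand side has $\ell$.

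What actually happens in the paper's computation (for $k^{(1)},k^{(2)}\geq 1$) is that the outward normal $\partial_{n_F}$ and the node coordinate $\partial_{x_j^{(1)}}$ are identified with $-\partial_{\mathrm{Im}(z_1^{(2)})}$ and $\partial_{\mathrm{Re}(z_1^{(2)})}$ respectively; this is how the ``missing'' $\mathcal{O}_{z_1^{(2)}}$ is recovered on the glued side. The sign $(-1)^{j-1}$ then arises from moving $\partial_{\mathrm{Re}(z_1^{(2)})}=\partial_{x_j^{(1)}}$ past $\partial_{x_1},\dots,\partial_{x_{j-1}}$, not from moving the normal (which is already outward-first). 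Your description of $z_1$ as ``handled by the bivalent-vertex description'' does not capture this: there is a genuine two-for-two coordinate exchange $(\partial_{n_F},\partial_{x_j^{(1)}})\leftrightarrow(\partial_{\mathrm{Im}(z_1^{(2)})},\partial_{\mathrm{Re}(z_1^{(2)})})$, not a trivial commutation. The remaining cases ($\ell^{(i)}=1,k^{(i)}=0$, i.e.\ one factor is the line $\overline{\R}$) are handled separately in the paper via the linear-gluing approximation $\partial_\rho\approx\pm\partial_t$, and are not subsumed by your permutation argument either.
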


\begin{figure}[h]
        \centering 
	\includegraphics[width=110mm]{./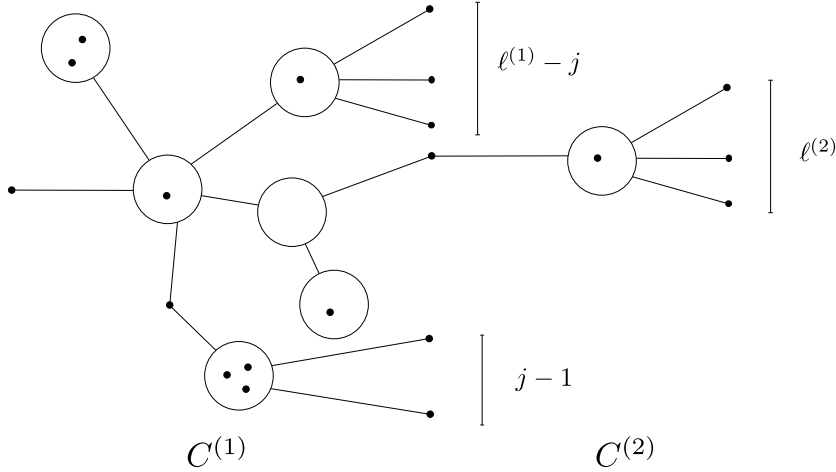}
        \caption{Cluster $C \in C_1(\mathcal{C}\ell_{8,9}^\otimes)$} \label{fig09}
\end{figure}

\begin{proof}
         In the cases where $k^{(1)} \geq 1$ and $k^{(2)} \geq 1$, the composition of $\otimes$-clusters is a stable $\otimes$-cluster with one breaking and therefore lies in a codimension one face $F$ of $\cllkr$. Then as in lemma 2.9 of \cite{W1}, we compute using coordinates in a neighborhood of $F$, the outward normal coordinate being denoted by $\frac{\partial}{\partial n_F}$:

\begin{align*}
& \frac{\partial}{\partial n_F} \wedge (-1)^{ (\ell^{(1)} -j) \ell^{(2)}+ (j-1)} \mathcal{O}^\otimes_{\ell^{(1)},k^{(1)}} \wedge \mathcal{O}^\otimes_{\ell^{(2)},k^{(2)}} \\
&= \bigwedge_{h=2}^{k^{(1)}} \mathcal{O}_{z^{(1)}_h} \wedge \bigwedge_{h=2}^{k^{(2)}} \mathcal{O}_{z^{(2)}_h} \wedge (-1)^{ (\ell^{(1)} - j) \ell^{(2)}+ (j-1)} \frac{\partial}{\partial n_F} \wedge \cO_{\ell^{(1)}} \wedge \cO_{\ell^{(2)}}\\
&= \bigwedge_{h=2}^{k^{(1)}} \mathcal{O}_{z^{(1)}_h} \wedge \bigwedge_{h=2}^{k^{(2)}} \mathcal{O}_{z^{(2)}_h} \wedge (-1)^{ (\ell^{(1)} - j) \ell^{(2)}+ (j-1)} \frac{\partial}{\partial n_F} \wedge \frac{\partial}{\partial x_1^{(1)}} \wedge \ldots \wedge \frac{\partial}{\partial x_{j}^{(1)}} \wedge \ldots \wedge \frac{\partial}{\partial x_{\ell^{(1)}}^{(1)}} \wedge \\ & \frac{\partial}{\partial x_1^{(2)}} \wedge \ldots \wedge
\frac{\partial}{\partial x_{\ell^{(2)}}^{(2)}}\\
&= \bigwedge_{h=2}^{k^{(1)}} \mathcal{O}_{z^{(1)}_h} \wedge \bigwedge_{h=2}^{k^{(2)}} \mathcal{O}_{z^{(2)}_h} \wedge (-1)^{ (\ell^{(1)} - j) \ell^{(2)}+ (j-1)} \\
&- \frac{\partial}{\partial Im(z_1^{(2)})} \wedge \frac{\partial}{\partial x_{1}} \wedge \ldots \wedge \frac{\partial}{\partial x_{j-1}} \wedge \frac{\partial}{\partial Re(z_1^{(2)})} \wedge \frac{\partial}{\partial x_{j+\ell^{(2)}}} \wedge \ldots \wedge \frac{\partial}{\partial x_{\ell^{(2)}+\ell^{(1)}-1}} \wedge \frac{\partial}{\partial x_{j}} \wedge \ldots \wedge \frac{\partial}{\partial x_{j+\ell^{(2)}-1}}\\
&= \bigwedge_{h=2}^{k^{(1)}} \mathcal{O}_{z^{(1)}_h} \wedge \bigwedge_{h=1}^{k^{(2)}} \mathcal{O}_{z^{(2)}_h} \wedge \frac{\partial}{\partial x_{1}} \wedge \ldots \wedge \frac{\partial}{\partial x_{j-1}} \wedge \frac{\partial}{\partial x_{j}} \wedge \ldots \wedge
\frac{\partial}{\partial x_{j+\ell^{(2)}-1}} \wedge \frac{\partial}{\partial x_{j+\ell^{(2)}}} \wedge \ldots \wedge
\frac{\partial}{\partial x_{\ell^{(2)}+\ell^{(1)}-1}} \\
&= \bigwedge_{h=1}^{k} \mathcal{O}_{z_h} \wedge \cO_{\ell} \\
&= \cO_{\ell,k}^\otimes. \\
\end{align*}



If $\ell^{(1)} = 1$, $k^{(1)} = 0$, so that $C^{(1)}$ is a line, and $k^{(2)} \geq 1$, then $\frac{\partial}{\partial \rho}$ goes to $0$ under the connecting map so it lifts to a generator of the kernel of the center operator $\ddel_{\iota_\rho}$. It follows from the definition of the linear gluing that this element gets approximated by $\frac{\partial}{\partial t}$ as $\rho$ goes to $\infty$. Therefore, we compute, up to large $\rho$ approximation,

\begin{align*}
\mathcal{O}^\otimes_{\ell^{(1)},k^{(1)}} \wedge \mathcal{O}^\otimes_{\ell^{(2)},k^{(2)}}
&= \frac{\partial}{\partial t} \wedge \mathcal{O}^\otimes_{\ell^{(2)},k^{(2)}} \\
&= \frac{\partial}{\partial \rho} \wedge \mathcal{O}^\otimes_{\ell,k} \\
&= \partial_{\ddel_C} \mathcal{O}^\otimes_{\ell,k}. \\
\end{align*}

If $\ell^{(2)} = 1$, $k^{(2)} = 0$, so that $C^{(2)}$ is a line, and $k^{(1)} \geq 1$, then $\frac{\partial}{\partial \rho}$ goes to $0$ under the connecting map so it lifts to a generator of the kernel of $\ddel_{\iota_\rho}$. It follows from the definition of the linear gluing that this element gets approximated by $-\frac{\partial}{\partial t}$ as $\rho$ goes to $\infty$. Therefore,

\begin{align*}
(-1)^{\ell^{(1)} -1 } \mathcal{O}^\otimes_{\ell^{(1)},k^{(1)}} \wedge \mathcal{O}^\otimes_{\ell^{(2)},k^{(2)}}
&= (-1)^{\ell^{(1)} } \mathcal{O}^\otimes_{\ell^{(1)},k^{(1)}} \wedge -\frac{\partial}{\partial t} \\
&= \frac{\partial}{\partial \rho} \wedge \mathcal{O}^\otimes_{\ell,k} \\
&= \partial_{\ddel_C} \mathcal{O}^\otimes_{\ell,k}. \\
\end{align*}

If $\ell^{(1)} = \ell^{(2)} = 1$ and $k^{(1)} = k^{(2)} = 0$, then it follows from the definition of the linear gluing that $\frac{\partial}{\partial \rho}$, as $\rho$ goes to $\infty$, is approximated by $\frac{\partial}{\partial t^{(1)}} - \frac{\partial}{\partial t^{(2)}} =  \mathcal{O}^\otimes_{\ell^{(1)},k^{(1)}} - \mathcal{O}^\otimes_{\ell^{(2)},k^{(2)}}$. Moreover, $\frac{\partial}{\partial t} = \mathcal{O}^\otimes_{\ell,k}$ goes, in the limit, to $\frac{\partial}{\partial t^{(1)}} + \frac{\partial}{\partial t^{(2)}} =  \mathcal{O}^\otimes_{\ell^{(1)},k^{(1)}} + \mathcal{O}^\otimes_{\ell^{(2)},k^{(2)}}$. Therefore,

\begin{align*}
2 \mathcal{O}^\otimes_{\ell^{(1)},k^{(1)}} \wedge \mathcal{O}^\otimes_{\ell^{(2)},k^{(2)}}
&= (\mathcal{O}^\otimes_{\ell^{(1)},k^{(1)}} - \mathcal{O}^\otimes_{\ell^{(2)},k^{(2)}}) \wedge (\mathcal{O}^\otimes_{\ell^{(1)},k^{(1)}} + \mathcal{O}^\otimes_{\ell^{(2)},k^{(2)}}) \\
&= \frac{\partial}{\partial \rho} \wedge \mathcal{O}^\otimes_{\ell,k} \\
&= \partial_{\ddel_C} \mathcal{O}^\otimes_{\ell,k}. \\
\end{align*}

\end{proof}

\chapter{The $\otimes$-cluster complex}

\section{$\widetilde{Gl_n(\R)}^{\pm}$-structures}

Let $n\geq 2$. Denote by $\widetilde{Gl_n(\R)}^+$ and $\widetilde{Gl_n(\R)}^-$ the two Lie group structures on the
everywhere nontrivial twofold cover of $Gl_n(\R)$ for which the covering map is a group morphism. Say
$\widetilde{Gl_n(\R)}^+$ (resp. $\widetilde{Gl_n(\R)}^-$) is the group in which a lift of a reflection is of order two
(resp. four). From now on, we choose $\gl$ to denote a particular choice of these group structures, which must be kept
constant in all of the upcoming statements.

\begin{defi}
        A $\gl$-structure on a real vector bundle of rank $n$ $F \rightarrow B$ is the choice of a lift of the
$Gl_n(\R)$-principal structure of $Fr(F) \rightarrow B$, the frame bundle of $F$, to a $\gl$-principal structure, i.e.
the choice of a $\gl$-principal bundle $\widetilde{Fr(F)}^{\pm} \rightarrow B$ that factors through $Fr(F) \rightarrow
B$ by the covering map on every fiber.
\end{defi}

The obstruction to the existence of such a structure is well-known (see \cite{KT}) to be $w_2(F)$ (resp. $w_2(F) + w_1(F)^2$)
for the $+$ (resp. $-$) choice. Note that by the Wu formula  (see \cite{MilS}), for $n \leq 3$, $F$ always has a
$\widetilde{Gl_n(\R)}^-$-structure.

From the homotopy exact sequence associated with \\ $real(\C^n,i) \cong Gl_n(\C) / Gl_n(\R)$, we see that it has a second
homotopy group isomorphic to $\Z / 2\Z$, generated by the action of a retraction in $Gl_n(\C)$ of a nontrivial loop of
$Gl_n(\R)$.

Thus, for any $\mu \in \Z$ and $\otimes$-cluster $C \in \cllkr$, the space $\mathcal{R}(C, \mu)$ of real boundary conditions
of index $\mu$ on $\C^n \times C \rightarrow C$ has fundamental group isomorphic to $(\Z /
2\Z)^{|discs(C)|}$, generated by the action of the above retraction over each disk.

Now if $\widetilde{\mathcal{R}}^{\pm}(C,\mu)$ denotes the space of real boundary conditions of Maslov index $\mu$ on
$\C^n \times C \rightarrow C$ together with a choice of $\gl$-structure on them, it is not hard to see that

\begin{lem} \label{def_gl}
        $\pi_1(\mathcal{R}(C,\mu))$ acts transitively on the fibers of the cover
\[
        \widetilde{\mathcal{R}}^{\pm}(C,\mu) \rightarrow \mathcal{R}(C,\mu)
\]
\end{lem}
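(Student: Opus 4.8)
The plan is to reduce the statement to a standard covering-space transitivity argument. First I would recall the setup: the base $\mathcal{R}(C,\mu)$ of real boundary conditions of index $\mu$ on $\C^n \times C \to C$ has fundamental group $\pi_1(\mathcal{R}(C,\mu)) \cong (\Z/2\Z)^{|discs(C)|}$, with a generator for each disk coming from the action of the retraction in $Gl_n(\C)$ of a nontrivial loop in $Gl_n(\R)$ (established just above in the excerpt). I would fix a basepoint $F_0 \in \mathcal{R}(C,\mu)$ and identify the fiber of $\widetilde{\mathcal{R}}^\pm(C,\mu) \to \mathcal{R}(C,\mu)$ over $F_0$ with the set of $\gl$-structures on the bundle $F_0 \to \partial C$.

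Next I would set up the monodromy action. Since $F_0 \to \partial C$ is a bundle over a disjoint union of disks and intervals (the edges retract onto points and contribute nothing), the set of $\gl$-structures on $F_0$ is a torsor over $H^1(\partial C; \Z/2\Z) \cong (\Z/2\Z)^{|discs(C)|}$ — or more precisely, two $\gl$-structures differ by an element recording, over each disk, whether the lift of the frame bundle agrees or is twisted by the nontrivial double cover. A loop $\gamma$ in $\mathcal{R}(C,\mu)$ based at $F_0$ drags the $\gl$-structure around, and the resulting monodromy permutation of the fiber is computed by seeing how the $\gl$-lift is affected; the key computation is that the generator of $\pi_1$ associated to the disk $d$ — the retraction-of-a-reflection-loop supported near $d$ — acts on the fiber by flipping exactly the $d$-component of the torsor $(\Z/2\Z)^{|discs(C)|}$, and fixes the others. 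This is essentially the same local computation that identifies $\pi_1(real(\C^n,i))$ with $\Z$ and shows the generator of $\pi_2(real(\C^n,i)) \cong \Z/2\Z$ is the retraction of a nontrivial loop of $Gl_n(\R)$; one localizes near the disk, trivializes elsewhere, and checks the $\gl$-lift obstruction changes by the nontrivial element.

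Given that computation, transitivity is immediate: the $|discs(C)|$ generators of $\pi_1(\mathcal{R}(C,\mu))$ act on the fiber $(\Z/2\Z)^{|discs(C)|}$ by the $|discs(C)|$ coordinate flips, and these generate the full translation action of $(\Z/2\Z)^{|discs(C)|}$ on itself, which is transitive. Hence $\pi_1(\mathcal{R}(C,\mu))$ acts transitively on the fiber, and since the action on one fiber being transitive suffices (the space $\mathcal{R}(C,\mu)$ is connected, as it is the space of sections of a fiber bundle with connected fiber $real(\C^n,i) \cong Gl_n(\C)/Gl_n(\R)$ over $\partial C$), we conclude the cover $\widetilde{\mathcal{R}}^\pm(C,\mu) \to \mathcal{R}(C,\mu)$ has transitive monodromy, as claimed.

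The main obstacle will be the local monodromy computation: verifying carefully that the distinguished loop supported near a single disk $d$ acts on the set of $\gl$-structures by flipping precisely the $d$-summand and nothing else. This requires being careful about the interaction between the Maslov/frame-bundle monodromy along $\partial C$ and the twofold cover defining $\gl$ — in particular that the loop is nontrivial in $\pi_1(\mathcal{R})$ exactly because it acts nontrivially, which is where the distinction between the $+$ and $-$ conventions (order-two versus order-four lift of a reflection) must be tracked, though in both cases the net effect on $\gl$-structures is a single coordinate flip. Everything else — connectedness of $\mathcal{R}(C,\mu)$, the torsor structure of the fiber, and the passage from coordinate flips to transitivity — is routine.
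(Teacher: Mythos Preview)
The paper does not actually prove this lemma: it is introduced with the phrase ``it is not hard to see that'' and followed only by the gloss ``In other words: Up to homotopy of the boundary condition, every choice of $\gl$-structure on it is the same.'' So there is no proof in the paper to compare against.

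Your proposal supplies exactly the argument the paper omits, and the structure is correct: identify the fiber over $F_0$ with the set of $\gl$-structures on $F_0 \to \partial C$, observe this is a $(\Z/2\Z)^{|discs(C)|}$-torsor (one $\Z/2\Z$ per disk boundary, nothing from the lines), and then check that the generator of $\pi_1(\mathcal{R}(C,\mu))$ associated with disk $d$ acts on the fiber by flipping the $d$-coordinate. Transitivity is then immediate. You are also right that the only nontrivial content is the local monodromy computation, and that it reduces to the fact that the nontrivial loop in $Gl_n(\R)$ lifts to a path switching the two sheets of $\gl \to Gl_n(\R)$; the $\pm$ choice does not affect the conclusion. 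One small comment: your claim that $\mathcal{R}(C,\mu)$ is connected deserves a word, since the boundary condition is constrained to be $\R^n$ along the lines --- but this just means the space factors as a product of mapping spaces over the disk boundaries with fixed values at the incidence points, and each factor is still connected.
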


In other words: Up to homotopy of the boundary condition, every choice of $\gl$-structure on it is the same.

Given any endpoint condition $\vec{A}$ on $C$, the same phenomena happens with the cover $ori(\ddel,\vec{A})
\rightarrow \mathcal{R}(C,\mu)$ with fiber over $F \rightarrow \partial C$ being the two orientations of the
Cauchy-Riemann operators subject to $F$ and $\vec{A}$. This is a consequence of the short exact sequence \ref{seq} and
the following lemma:

\begin{lem} \cite{W1} \cite{FOOO} \label{def_op}
        For any $\mu \in \Z$, take $\mathcal{R}(\C^n,i,\mu)$ to be the space if real boundary contitions of index $\mu$
on $\C^n \times D_\C \rightarrow D_\C$ and $ori(\ddel) \rightarrow \mathcal{R}(\C^n,i,\mu)$ be the cover with fiber
over $F \rightarrow \partial D_\C$ being the two orientations of the Cauchy-Riemann operators on $\C^n \times D_\C
\rightarrow D_\C$ subject to $F$, then
\[
        ori(\ddel) \rightarrow \mathcal{R}(\C^n,i,\mu)
\]
is the nontrivial twofold cover.
\end{lem}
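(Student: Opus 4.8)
The plan is to reduce the assertion to a one–dimensional computation, namely to the case $n=1$, and then to identify the relevant twofold cover with a concrete loop in $\mathrm{real}(\mathbb{C},i)\cong \mathbb{R}P^1\cong S^1$. First I would recall that $\mathcal{R}(\mathbb{C}^n,i,\mu)$ is connected (it is the space of sections of a bundle with fibre $\mathrm{real}(\mathbb{C}^n,i)=Gl_n(\mathbb{C})/Gl_n(\mathbb{R})$ over $\partial D_\mathbb{C}\simeq S^1$ with prescribed degree $\mu$), so the cover $ori(\ddel)\to\mathcal{R}(\mathbb{C}^n,i,\mu)$ is determined by the monodromy homomorphism $\pi_1(\mathcal{R}(\mathbb{C}^n,i,\mu))\to\mathbb{Z}/2\mathbb{Z}$, and by the discussion just before the lemma we already know $\pi_1(\mathcal{R}(\mathbb{C}^n,i,\mu))\cong\mathbb{Z}/2\mathbb{Z}$, generated by the retraction in $Gl_n(\mathbb{C})$ of a nontrivial loop of $Gl_n(\mathbb{R})$ applied over the disk. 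So the whole statement is the claim that the determinant line of $\ddel$ changes orientation along that generating loop.

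Next I would set up the continuity/additivity properties of the determinant line: for a path of real boundary conditions, $\det(\ddel)$ forms a real line bundle over the parameter space, and it is compatible with direct sums of boundary conditions and with the gluing/stabilization operations (this is the standard machinery of Fukaya–Oh–Ohta–Ono, \cite{FOOO}, and Welschinger, \cite{W1}). Using this, the generating loop — which rotates one $\mathbb{R}$-line summand of $\mathbb{C}^n$ by a full turn while keeping the other $n-1$ summands fixed — gives $\det(\ddel)$ as an external tensor product of the $n=1$, $\mu=\pm1$ situation with a trivial bundle. Hence the monodromy is exactly that of the rank-one case.

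For the rank-one case I would argue directly. A loop of boundary conditions on $\mathbb{C}\times D_\mathbb{C}\to D_\mathbb{C}$ realizing the generator can be written explicitly (e.g. $e^{i\theta}\mathbb{R}\subset\mathbb{C}$ over a small boundary arc, $\theta\in[0,\pi]$, spread out to a full family as in \cite{W1}), and one computes the kernels and cokernels of the corresponding Cauchy–Riemann operators — for $\mu$ in the stable range the operator is surjective with kernel jumping by one real dimension exactly once as $\theta$ crosses a half-integer, so the determinant line picks up a sign. Equivalently, one can invoke the standard fact that $ori(\ddel)\to\mathcal{R}(\mathbb{C},i,\mu)$ pulls back, under the classifying map $\mathcal{R}(\mathbb{C},i,\mu)\to\mathbb{R}P^\infty=BO(1)$ given by the boundary Lagrangian line, to the tautological nontrivial $\mathbb{Z}/2\mathbb{Z}$-cover; this is precisely the content of the Maslov-index mod $2$ orientation anomaly for half-disks.

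The main obstacle I expect is the bookkeeping needed to make the reduction to $n=1$ clean: one must check that the determinant-line construction really is functorial under direct sums of boundary conditions in a way compatible with the chosen generator of $\pi_1(\mathcal{R}(\mathbb{C}^n,i,\mu))$, and that no extra sign is introduced when the $n-1$ "spectator" summands are carried along — i.e. that their contribution to $\det(\ddel)$ is a genuinely trivial (orientable) factor over the loop. Once that compatibility is in place, the rank-one computation (or the comparison with the tautological line bundle over $\mathbb{R}P^\infty$) finishes the proof, and since everything was phrased via the monodromy homomorphism into $\mathbb{Z}/2\mathbb{Z}$, nontriviality of the cover is immediate from nontriviality of that monodromy.
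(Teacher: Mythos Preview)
The paper does not supply its own proof of this lemma; it is quoted as a known result with citations to \cite{W1} and \cite{FOOO}. So there is no ``paper's proof'' to compare your proposal against.

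That said, your outline is essentially the standard argument one finds in those references: use additivity of the determinant line under direct sums to reduce to $n=1$, and then perform an explicit rank-one computation (or identify the orientation cover with the tautological $\mathbb{Z}/2\mathbb{Z}$-cover over $\mathbb{R}P^\infty$). The one place where you should be more careful is the sentence ``for $\mu$ in the stable range the operator is surjective with kernel jumping by one real dimension exactly once'': as written this only treats certain $\mu$, and you then need an additional stabilization/gluing argument (tensoring with a trivial factor or gluing a disk of complementary index) to cover all $\mu\in\mathbb{Z}$. This is routine with the machinery you already invoke, but it should be stated rather than left implicit.
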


Therefore, up to homotopy of the boundary conditions, every orientation of an operator over a cluster is the same.

Thus, again in the case of clusters, the choice of $\gl$-structures on the boundary conditions allows one to manage orientations over families of operators.

\section{Algebraic and geometric settings} \label{defi_gene}

Let $(M,\omega)$ be a $2n$-dimensional compact symplectic manifold with $n \geq 2$ and $L$ a closed connected monotone lagrangian submanifold with minimal Maslov number $N_L \geq 2$ that admits a $\gl$-structure. That is, there exists $\tau > 0$ such that $\omega([u]) = \tau \mu([u])$ $\forall [u] \in \pi_2(M,L)$, where $\mu$ is the Maslov index and $\omega$ stands for the integration of $\omega$, and $N_L = \text{min}\{\mu([u])>0 \mid [u] \in \pi_2(M,L) \} \geq 2$. 

Now we fix  $c \in \N$, a $\omega$-tamed almost complex structure $J$ on $(M,\omega)$, a metric $g$ on $L$, a Morse-Smale (with respect to $g$) function $f: L \rightarrow \R$ with a single local maximum, and if $c\geq 1$, some additional functions $\{ f_r: L \rightarrow \R\}_{0 \leq r \leq c}$ such that $\{f_{r_1} - f_{r_2}\}_{0 \leq r_1 < r_2 \leq c}$ is a set of Morse-Smale functions, each having a single local maximum.

Let $\Lambda = \Z[t,t^{-1}]$ and $\Lambda^+ = \Z[t]$ be polynomial rings, with the grading induced by setting the index of $t$ to be $\mu(t)= N_L$. These should be seen as simplified versions of the usual full Novikov rings $\widetilde{\Lambda} = \Z((t^{\pi_2(M,L)})) = \{ \sum a_i e^{[u_i]} \mid a_i \in \Z, [u_i] \in \pi_2(M,L), \forall R \in \R, | \{ i | \omega([u_i]) < R \} | < \infty \}$ and $\widetilde{\Lambda^+} = \Z((t^{\pi_2(M,L)^+}))$, respectively, where $\pi_2(M,L)^+ = \{A \in \pi_2(M,L) | \omega(A) \geq 0\}$. For simplicity, we use $\Lambda$ throughout the following, but it can be replaced by either of the above rings.

Set the $\Lambda$-modules $V = crit(f)\otimes \Lambda$ and $V_{j_1,j_2} = crit(f_{j_1} - f_{j_2})\otimes \Lambda$, $0 \leq j_1 < j_2 \leq c$. Let then $T^m V= V^{\otimes m} = V \otimes \ldots \otimes V$ be the rank $m$ tensor product of $V$.

\begin{defi}
Let 
\begin{align*}
        \mathcal{C}\ell^\otimes =& \mathcal{C}\ell^\otimes(L,f,f_0, \ldots, f_c) \\
=&  \, \underset{0 \leq c' \leq c}\bigoplus \, \underset{\substack{0\leq j_0 \leq \ldots \leq j_{c'} \leq c\\ m_0, \ldots, m_{c'} \geq 0}}{\bigoplus} T^{m_0}V \otimes V_{j_0,j_1} \otimes T^{m_1}V \otimes \ldots \otimes T^{m_{c'-1}}V \otimes V_{j_{c'-1},j_{c'}} \otimes T^{m_{c'}}V
\end{align*}
where the $c'=0$ inner union is understood to be $T(V) = \underset{m \geq 1}{\bigoplus} T^{m}V$, the tensor algebra of $V$, and the summation symbol allows finite sums only. 

We write an arbitrary monomial element $x = x_1 \otimes \dots \otimes x_q \otimes t^d \in \mathcal{C}\ell^\otimes$ as $ x_1 \dots x_q t^d $ and define its index to be $\mu(x)= \overset{q}{\underset{j=1}{\sum}} \mu^+(x_j) + \mu(t^d) = \overset{q}{\underset{j=1}{\sum}} (n-|x_j|) + d N_L$ and its cardinality as $q(x)= q \in \N$. 
\end{defi}

We now define the usual trajectories between elements of $\mathcal{C}\ell^\otimes$ that satisfy gradient and pseudoholomorphic equations. For the above monotone setting, a result of Lazzarini (\cite{L}), used as in \cite{BC}, allows one to use a fixed almost-complex structure. It will then be convenient to use perturbations not depending on the interior markings and to consider maps from (possibly unstable) $\otimes$-clusters with no interior markings.

\begin{defi}
For a coherent system of ends $\mathcal{V}$ independent of the positions of the interior markings, let $P = \{ p_{\ell,k,\mathfrak{l}} \}_{\ell,k \geq 0, \mathfrak{l} \in \mathfrak{L}}$ be a coherent system of perturbations constant on $\mathcal{V}$ with target $\mathcal{M} \times \mathcal{J}$ such that
\begin{itemize}
\item $\pi_\mathcal{J} \circ p_{\ell,k,\mathfrak{l}} \equiv 0$, $\forall \ell,k \geq 0$ and $\mathfrak{l} \in \mathfrak{L}$,
\item for any $k \geq 1$, $p_{\ell,k,\mathfrak{l}}$ is the pullback of $p_{\ell,0,\mathfrak{l}}$ via the forgetful map $\mathcal{C}l_{\ell,k,\mathfrak{l}}^\otimes \rightarrow \mathcal{C}l_{\ell,0,\mathfrak{l}}^\otimes$.
\end{itemize}
We will refer to $P$ as being a monotone coherent system of perturbations constant on $\mathcal{V}$.
\end{defi}

Here, $\mathcal{M} = \mathcal{M}_f \equiv exp_f( B_f  \subset C^\epsilon(L,\R))$ where the latter supports a Banach chart of the space of smooth Morse-Smale functions on $L$ centered at $f$ (see \cite{F2}). Also, $\mathcal{J} = \mathcal{J}_J \equiv exp_{J}( B_{J} \subset C^\epsilon(M,T_{J}\mathbf{J}(TM,\omega)))$ supports a Banach chart of smooth $\omega$-tamed almost-complex structures on $(M,\omega)$ centered at $J$ (see \cite{F2}, \cite{MS}).

For every $\ell,k \geq 0$, $\mathfrak{l} \in \mathfrak{L}$ and $\otimes$-cluster $C \in \mathcal{C}l_{\ell,k,\mathfrak{l}}^\otimes$, we consider the configuration resulting from forgetting its interior marked points, but without stabilizing it, and shall refer to it by $C' \in \mathcal{C}l_{\ell,\mathfrak{l}}^\otimes$, or later on by $C \in \mathcal{C}l_{\ell,\mathfrak{l}}^\otimes$. First, note that $C' \in \mathcal{C}l_{\ell,\mathfrak{l}}^\otimes$ might contain disks with only one or two boundary markings, and thus be unstable. Secondly, a monotone coherent system of perturbations $P$ constant on $\mathcal{V}$ defines a perturbation over any $C' \in \mathcal{C}l_{\ell,\mathfrak{l}}^\otimes$.

The tangent operator $\ddel_{C'}$ is still defined and has index $\mu( \ddel_{C'} )= -(\ell -2)$ since there is no contribution from the interior markings anymore, and the unstable disks with one (resp. two) marking(s) contribute by $+2$ (resp. $+1$) to the dimension of both the kernel and the cokernel. Moreover, the orientations of $\ddel_{C'}$ naturally correspond to those of $\ddel_{C}$ via the following exact sequence:

\begin{diagram}
        0 & \rTo & L^{m,p}_0 (C, TC, T \partial C) & \rTo &&& L^{m,p}_0 (C', TC', T \partial C') &&& \rTo^{\Pi ev} & \prod_{h=1}^{k} T_{z_h}C \cong \C^k & \rTo & 0\\
        & & \dTo^{\ddel_C} &&&& \dTo^{\ddel_{C'}} &&&& \dTo & &\\
        0 & \rTo & L^{m-1,p}_\pi (C,TC) && \rTo && L^{m-1,p}_\pi (C',TC') && \rTo && 0 & \rTo & 0 \\
\end{diagram}

\begin{defi}
        Let $x^- = x_0^{(1)} \dots x_0^{(q)}$ and $x^+ = x_1^{(1)}  \ldots x_{\ell^{(1)}}^{(1)}  \dots x_1^{(q)} \ldots x_{\ell^{(q)}}^{(q)}$ be generators of $\mathcal{C}\ell^\otimes$ with $\ell^{(r)} \geq 1$, $1 \leq r \leq q$. Consider a pair $(u, C)$ such that

\begin{enumerate}
        \item $C = C^{(1)} \times \dots \times C^{(q)} \in \mathcal{C}l_{\ell^{(1)}}^\otimes \times \dots \times \mathcal{C}l_{\ell^{(q)}}^\otimes$,

        \item $u: C = \overset{q}{\underset{r=1}{\bigsqcup}} C^{(r)} \rightarrow M$ continuous such that $\forall 1 \leq r \leq q$,
        \begin{enumerate}
                \item $u(v_j(C^{(r)})) = x_j^{(r)}$ for $0 \leq j \leq \ell^{(r)}$, this naturally defines a labeling $\mathfrak{l}^{(r)}$ on the endpoints of $C^{(r)}$ generated by taking $\mathfrak{l}^{(r)}(j) = (j_1,j_2)$ if $x_j^{(r)} \in crit(f_{j_1}-f_{j_2})$,

                \item $u(\partial C^{(r)}) \subset L$,



                \item over every line $l$ of $C^{(r)}$ such that $\mathfrak{l}^{(r)}(j) = (j_1,j_2)$ with $j_1 < j_2$, $u$ satisfies the gradient equation $du(-\frac{\partial}{\partial t}(p)) = -\nabla_g (f_{j_1} - f_{j_2}) \circ u(p)$ $\forall p \in l$, where $\frac{\partial}{\partial t}$ is the unit-length vector field on $l$ that points away from the root,

                \item over every line $l$ of $C^{(r)}$ such that $\mathfrak{l}^{(r)}(j) = (j,j)$, $u$ satisfies the gradient equation $du(-\frac{\partial}{\partial t}(p)) = -\nabla_g(\pi_\mathcal{M} \circ p_{k^{(r)},\ell^{(r)},\mathfrak{l}^{(r)}}(p)) \circ u(p)$ $\forall p \in l$, where $\frac{\partial}{\partial t}$ is the unit-length vector field on $l$ that points away from the root,

                \item over every disc $d$ of $C^{(r)}$, $u$ satisfies the pseudoholomorphic equation $du \circ j(x) = J(u(x)) \circ du(x)$ $\forall x \in d$, where $j$ is the underlying complex structure of $d$.
        \end{enumerate}

\end{enumerate}
One can consider the linearized Cauchy-Riemann operator $\ddel_u$ on $u^*TM \rightarrow C$ with boundary conditions $u|_{\partial C}^*TL \rightarrow \partial C$ and path of matrices $A$ given by the hessians of the functions determined by $P$. Now take $(C,u,o_u)$ where $(C,u)$ is as above and $o_u$ is an orientation of $\ddel_u$ and denote $[(C,u,o_u)]$ the homotopy class of these data.

We call $[(C,u,o_u)]$ a Floer trajectory from $x^+$ to $x^-$ (see figure \ref{fig10}). We define its index $\mu([(C,u,o_u)]) = \mu(x^-) - \mu(x^+) + \mu(F_u)$, where $\mu(F_u)$ is the Maslov index of its boundary condition specified by $L$, and its area $\omega([(C,u,o_u)]) = \underset{C}{\int} u^*\omega$. Define $F(x^+,x^-)$ as the set of all the Floer trajectories from $x^+$ to $x^-$.
\end{defi}

\begin{figure}[h]
        \centering 
	\includegraphics[width=110mm]{./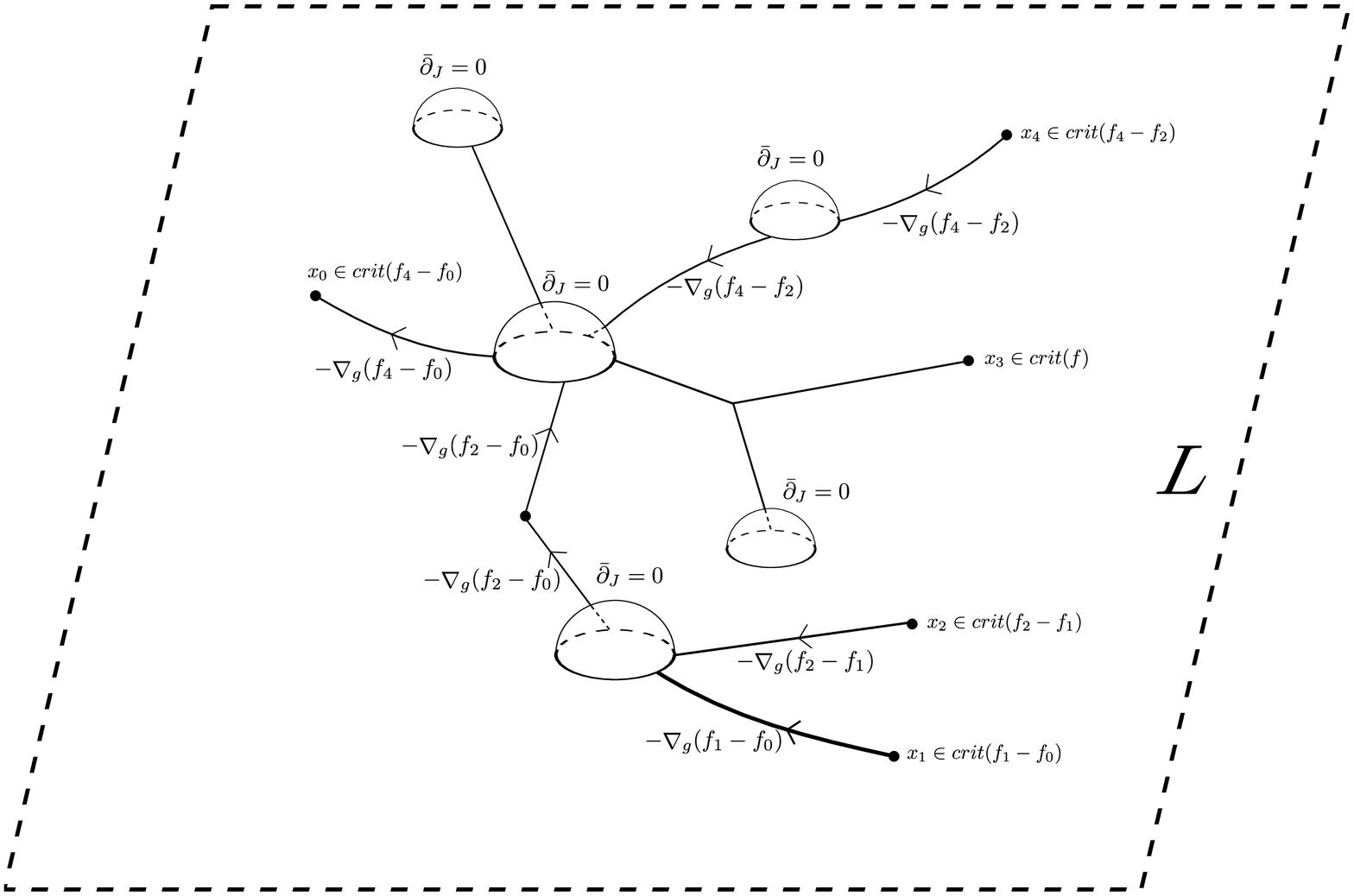}
        \caption{Configuration $(C,u)$ for a Floer trajectory, $C \in C_1(\mathcal{C}\ell^\otimes_{4,8})$} \label{fig10}
\end{figure}

\begin{rem}
By lemma \ref{def_gl}, lemma \ref{def_op}, plus the fact that $L$ admits a $\gl$-structure, we get that the family of operators over a Floer trajectory is orientable. Also note that because the gluing operation canonically transfers orientation (see \cite{FH}, \cite{S2}, \cite{BC}), the composition of trajectories will be well defined and the above homotopy should be considered to allow breakings into compositions of nonbroken trajectories.
\end{rem}

\section{Orientation settings} \label{subs_orie}

This subsection intends to make sense of the $\Lambda$-module elements $+ x_1 \ldots x_q t^d$ and $- x_1 \ldots x_q t^d$ geometrically. Our strategy is reminescent of the orientation procedure in Morse cohomology (see \cite{W2}, and \cite{W1} for the lagrangian Floer setting): Considering $+x$ as an orientation $\mathcal{O}_x$ on $W^s(x)$ (whereas $-x$ stands for $-\mathcal{O}_x$ on $W^s(x)$), and then using the flow over the $\delta$ trajectories to relate the generators. The reader who is not interested in orientation considerations should skip this subsection, replacing $\Z$ with $\Z / 2\Z$ from now on.

To conciliate the preceding algebraic and geometric settings, it will be convenient to consider a Floer trajectory as a particular case of what is called a trajectory $[(C,u,\lambda_{\partial u},o_u)]$ from a generator $x^+$ to a generator $x^-$ (see \cite{W1}, \cite{W2} for other examples) where 

\begin{enumerate}
        \item $C = C^{(1)} \times \dots \times C^{(q)} \in \mathcal{C}l_{\ell^{(1)}}^\otimes \times \dots \times \mathcal{C}l_{\ell^{(q)}}^\otimes$,

        \item $u: C = \overset{q}{\underset{r=1}{\bigsqcup}} C^{(r)} \rightarrow M$ continuous such that $\forall 1 \leq r \leq q$,
        \begin{enumerate}
                \item $u(v_j(C^{(r)})) = x_j^{(r)}$ for $0 \leq j \leq \ell^{(r)}$,


        \end{enumerate}

        \item $\lambda_{\partial u}: \partial C \rightarrow \mathcal{L}ag(u|_{\partial C}^*TM)$ is a lagrangian boundary condition such that $W^s(x_j^{(r)}) \subset (\lambda_{\partial u}(v_j(C^{(r)}))$, $1 \leq r \leq q$, plus a $\gl$-structure on it,

        \item $o_u$ is an orientation of an operator over $u^*TM \rightarrow C$ with boundary conditions $\lambda_{\partial u}$ and endpoint condition $Hess_{x_j^{(r)}}(f_j^{(r)})$ at $v_j(C^{(r)})$, $1 \leq r \leq q$, where $x_j^{(r)} \in crit(f_j^{(r)})$,
\end{enumerate}
and the brackets stand for the homotopy class with fixed area over each disk.

Thus, a trajectory is simply a homotopy class (with fixed area) of maps from a cluster to $(M,\omega)$ with boundary and endpoint conditions together with an orientation of the associated Cauchy-Riemann operators. Note that up to orientation, every Floer trajectory defines a trajectory by using $L$ together with its $\gl$-structure.

\begin{figure}[h]
        \centering 
	\includegraphics[width=110mm]{./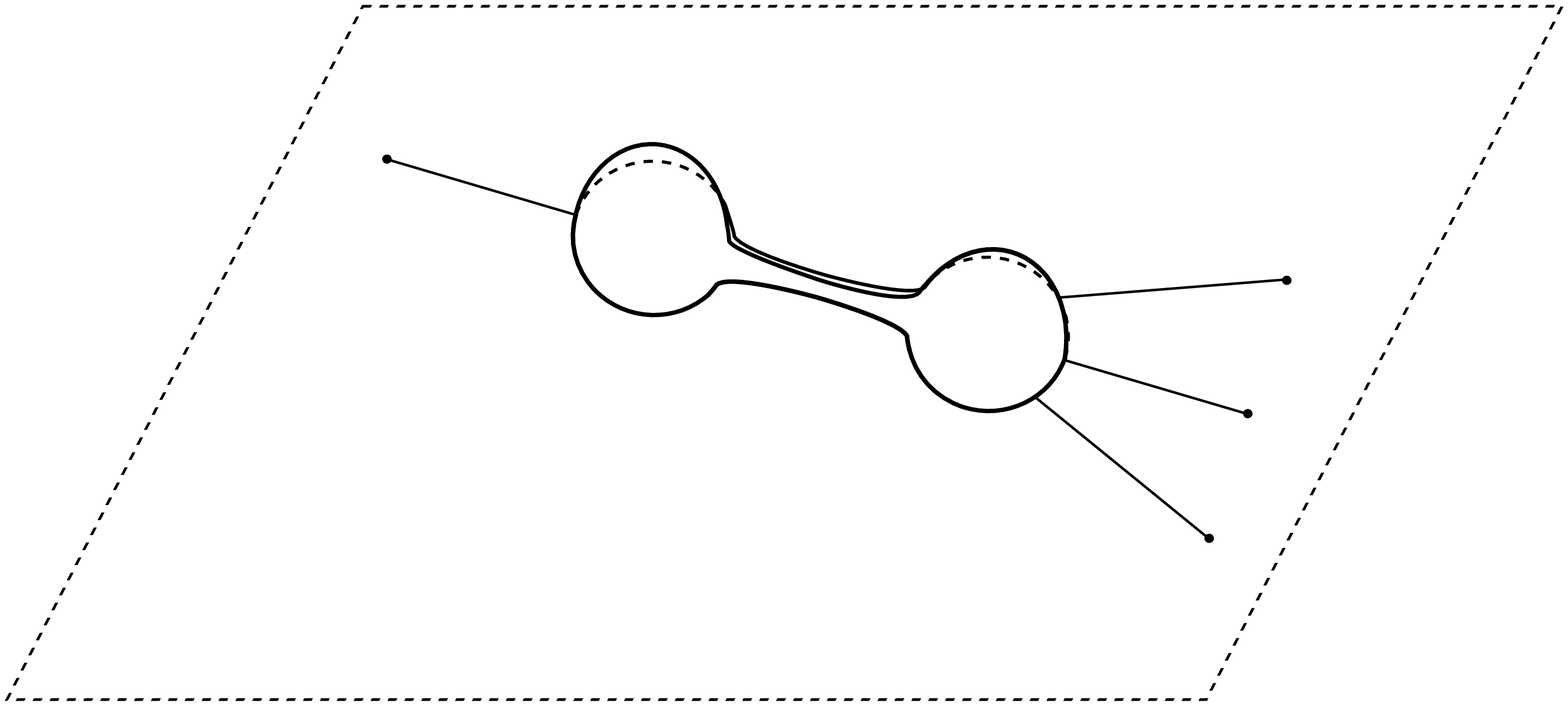}
        \caption{Configuration $(C,u)$ for a trajectory} \label{fig11}
\end{figure}

\begin{exe}
If $x^+ = {\it M} = max(f)$, $x^-=x \in crit(f)$ and area $a=0$ (a trajectory to $x^-$ with $0$ area), then one can think of a trajectory as a semi-infinite flow line of $f$ reaching $x^-$ together with an orientation of $W^s(x^-)$. That is, $u: (\overline{\R_-},-\infty) \rightarrow (M,x^-)$ with endpoint condition $Hess_{x}(f)$ so one can simply think of the trajectory as an orientation of $W^s(x)$.
\end{exe}

\begin{exe}
If $x^+={\it M}$, $x^-= xt^d \in crit(f) \otimes \Lambda$ (and therefore $a = d \tau N_L$), then one can think of a trajectory as a semi-infinite flow line of $f$ connecting $x^-$ with a disk of index $d N_L$ together with an orientation of a Cauchy-Riemann operator over it. 
\end{exe}


We now proceed to choices that will allow a trajectory interpretation for $\pm x_1 \ldots x_q t^d$:

\begin{itemize}
\item For every $x \in crit(f)$ (resp. $crit(f_{j_1}-f_{j_2})$), choose a reference orientation $\mathcal{O}^\otimes_x$ of $W^s(x)$. We will interpret $x = +xt^0 \in \mathcal{C}\ell^\otimes$ as a trajectory from ${\it M}$ to $x$ oriented as $\mathcal{O}^\otimes_x$.

\item Choose a reference orientation $\mathcal{O}^\otimes_{t^1}$ on a trajectory from $x^+={\it M}$ to $x^-= {\it M}$ with index $N_L$. This amounts to choosing an orientation of an operator over disks of index $N_L$ passing through ${\it M}$.

\item Now for every $d \in \N$, set the reference orientation $\mathcal{O}^\otimes_{t^d} = \#^d \mathcal{O}^\otimes_{t^1} = \mathcal{O}^\otimes_{t^1} \# \ldots \# \mathcal{O}^\otimes_{t^1}$, that is, we use the glued orientation over $d$ disks of index $N_L$ passing through ${\it M}$.

\item For every $x \in crit(f)$ (resp. $crit(f_{j_1}-f_{j_2})$), set $\mathcal{O}^\otimes_{xt^d} = \mathcal{O}^\otimes_x \# \mathcal{O}^\otimes_{t^d}$, seeing a trajectory from $M$ to $x$ with index $d N_L$ as the composition of a trajectory from $M$ to $M$ having index $d N_L$ and one from $M$ to $x$ of index $0$.

\item For $ x_1 \ldots x_q t^d \in \mathcal{C}\ell^\otimes$, set $\mathcal{O}^\otimes_{x_1 \ldots x_q t^d} = \mathcal{O}^\otimes_{x_1} \# \ldots \# \mathcal{O}^\otimes_{x_q} \# \mathcal{O}^\otimes_{t^d}$.
\end{itemize}

In what follows, we will consider $\pm x_1 \ldots x_q t^d \in \mathcal{C}\ell^\otimes$ as a trajectory from $1$ to $ x_1 \otimes \ldots \otimes x_{q-1} \otimes x_q$ of index $d N_L$ with orientation $ \pm \mathcal{O}^\otimes_{x_1 \ldots x_q t^d}$.

\begin{rem}
        Also, one can consider an intermediate type of trajectories associated with $\widetilde{\Lambda}$ where $\partial C$ is mapped to $L$ and $\lambda_{\partial u}$ is taken as the pullback of $T^*L$. Moreover, it is possible to restrict to the trajectories with everywhere positive area, resulting in trajectories associated with $\Lambda^+$ and $\widetilde{\Lambda^+}$.
\end{rem}

\section{Definition of $\delta^\otimes$} \label{subs_comp}

We then define a $\Z$-linear differential map that counts rigid Floer trajectories between elements of $\mathcal{C}\ell^\otimes$. We start off with defining intermediate operators that have cardinality $\ell$ inputs and cardinality $1$ outputs:

if $x$ is a generator of $\mathcal{C}\ell^\otimes$ with $q(x) = \ell$, we define
\[
        m_\ell(x) = \underset{\substack{x^- \in \mathcal{C}\ell^\otimes \\ q(x^-)=1}}{\sum} \: \underset{\substack{[(C,u,\mathcal{O}_{\ell}^\otimes)] \in F(x,x^-) \\ \mu(u) = -(\ell -2) }}{\sum} < \mathcal{O}_{\ell}^\otimes \# \mathcal{O}_{x}^\otimes, \mathcal{O}_{x^- t^{\frac{\omega(u)}{\tau N_L}}}^\otimes>  x^- t^{\frac{\omega(u)}{\tau N_L}}.
\]

where $\mathcal{O}_{\ell}^\otimes \# \mathcal{O}_{x}^\otimes$ is the composition at $x$, $<\_, \_> = +1$ (resp. $-1$) if the entries coincide (resp. are opposite), and the orientation $\mathcal{O}_{\ell}^\otimes$ over every $(C,u)$ is chosen via the following exact sequence of operators:

\begin{diagram} \label{seq2}
        0 & \rTo & L^{m,p}_0 (C, TC, T \partial C) & \rTo &&& L^{m,p}(C,u^*TM,u^*TL) &&& \rTo & \frac{L^{m,p}(C,u^*TM,u^*TL)}{L^{m,p}_0 (C, TC, T \partial C)} & \rTo & 0\\
        & & \dTo^{\ddel_C} &&&& \dTo^{\ddel_u} &&&& \dTo^{\ddel_u / \ddel_C} & &\\
        0 & \rTo & L^{m-1,p}_\pi (C,TC) && \rTo && L^{m-1,p}(C,u^*TM) && \rTo && \frac{L^{m-1,p}(C,u^*TM)}{L^{m-1,p}_\pi (C,TC)} & \rTo & 0 \\
\end{diagram}

The right-hand side operator can be made surjective, and hence an isomorphism, by choosing generically the perturbation system and the Morse functions $\{ f_{r_1} - f_{r_2}\}_{0 \leq r_1 \leq r_2 \leq c} $. Thus, 
$\ddel_u$ inherits an orientation from the reference orientation $\mathcal{O}_{\ell}^\otimes$ on $\ddel_C$.

\begin{rem}
Since $\mu(m_\ell)=\mu(x^- t^{\frac{\omega(u)}{\tau N_L}})- \mu(x)=-(\ell-2)$ and $q(m_\ell)=q(x^- t^{\frac{\omega(u)}{\tau N_L}})-q(x)=\ell-1$, then $(\mu+q)(m_\ell)=\mu(m_\ell)+q(m_\ell)=1$, that is, $m_\ell$ is of degree $1$ with respect to the $\mu + q$ grading on $\mathcal{C}\ell^\otimes$.
\end{rem}

We extend $m_\ell$ to a $\Lambda$-module morphism.

\begin{rem}
The expression $m_\ell(x \cdot t^d) = m_\ell(x) \cdot t^d$, $d \in \Z$, is consistent with the interpretation of section \ref{subs_orie}. An output $x^- t^{d + \frac{ \omega(u)}{\tau N_L}}$ from the left hand side is interpreted as a composition of a trajectory from {\it M} to $x$ of index $d N_L$ with one from $x$ to $x^-$ with orientation $< \mathcal{O}_{\ell}^\otimes \# \mathcal{O}_{x t^d}^\otimes, \mathcal{O}_{x^- t^{d + \frac{\omega(u)}{\tau N_L}}}^\otimes>$. The other hand side could be interpreted as a composition of a trajectory from {\it M} to {\it M} of index $d N_L$ with one from {\it M} to $x$ plus one from $x$ to $x^-$ with orientation $< \mathcal{O}_{\ell}^\otimes \# \mathcal{O}_{x}^\otimes, \mathcal{O}_{x^- t^{\frac{\omega(u)}{\tau N_L}}}^\otimes> = < \mathcal{O}_{\ell}^\otimes \# \mathcal{O}_{x}^\otimes \# \mathcal{O}_{t^d}^\otimes, \mathcal{O}_{x^- t^{\frac{\omega(u)}{\tau N_L}}}^\otimes \# \mathcal{O}_{t^d}^\otimes > = < \mathcal{O}_{\ell}^\otimes \# \mathcal{O}_{x t^d}^\otimes, \mathcal{O}_{x^- t^{d + \frac{\omega(u)}{\tau N_L}}}^\otimes>$.
\end{rem}

We define the codifferential $\delta^\otimes: \mathcal{C}\ell^\otimes(L,f,f_0, \ldots, f_c) \rightarrow \mathcal{C}\ell^\otimes(L,f,f_0, \ldots, f_c)$ as
\[
        \delta^\otimes = \delta^\otimes(M,\omega,J,L,P,f,f_0, \dots, f_c,g)= \underset{q \geq 1}{\sum} \underset{1 \leq j \leq q}{\sum} \, \underset{ \ell \geq 1}{\sum} (-1)^{(q-j)\ell + (j-1)} Id^{j-1} \otimes m_\ell \otimes Id^{q-j}. 
\]

Note that it is well defined by compactness of both $M$ and $L$, fairly standard Gromov-type compactness results for pseudoholomorphic discs with lagrangian boundary and the usual compactness results from Morse theory.


\begin{thm} \label{coch_comp}
        For $\delta^\otimes$ defined as above, we have $\delta^\otimes \circ \delta^\otimes = 0$ so that $(\mathcal{C}\ell^\otimes,\delta^\otimes)$ is a cochain complex.
\end{thm}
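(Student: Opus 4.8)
The plan is to reduce the identity $\delta^\otimes\circ\delta^\otimes=0$ to the $A_\infty$ relations for the operations $\{m_\ell\}_{\ell\ge 1}$ and then to establish those relations geometrically by the usual ``boundary of a compact oriented $1$-manifold'' argument, with the signs controlled by Lemma \ref{out}.

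\textbf{Algebraic reduction.} First I would recall (see \cite{K}, \cite{GJ}, \cite{KS}) that for a codifferential of bar type $\delta^\otimes=\sum_{q,j,\ell}(-1)^{(q-j)\ell+(j-1)}\,Id^{j-1}\otimes m_\ell\otimes Id^{q-j}$ on the (unsuspended) tensor module $\mathcal{C}\ell^\otimes$, the equation $\delta^\otimes\circ\delta^\otimes=0$ is equivalent to the system of quadratic relations $\sum_{\ell_1+\ell_2=\ell+1}\ \sum_{1\le i\le\ell_1}(-1)^{\epsilon(i,\ell_1,\ell_2)}\,m_{\ell_1}(x_1,\dots,x_{i-1},m_{\ell_2}(x_i,\dots,x_{i+\ell_2-1}),x_{i+\ell_2},\dots,x_\ell)=0$, where the terms of $(\delta^\otimes)^2$ in which the two operations act on disjoint blocks cancel in pairs, precisely because of the chosen prefactors and the Koszul sign rule $f\otimes g(x_1\otimes x_2)=(-1)^{\mu(g)\mu(x_1)}f(x_1)\otimes g(x_2)$. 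Thus it suffices to prove this $A_\infty$ relation for every fixed input tuple $x_1,\dots,x_\ell$ and every output generator $x^-$ of cardinality one.

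\textbf{Geometric input.} Next, for fixed such data I would introduce the moduli space $\mathcal{M}=\mathcal{M}(x_1,\dots,x_\ell;x^-)$ of Floer trajectories $(C,u)$ with source $C\in\mathcal{C}l_\ell^\otimes$ (no interior markings) and of index one greater than the rigid value, i.e.\ $\mu(u)=3-\ell$. By Proposition \ref{inde_comp}, Lemma \ref{coker} and the exact sequence \ref{seq2} relating $\ddel_C$ and $\ddel_u$, the deformation operator attached to such a pair has index one; for generic perturbation datum $P$ and generic Morse--Smale functions, and after the reduction of non-simple pseudoholomorphic trajectories by means of Lazzarini's structure theorem (\cite{L}) carried out exactly as in \cite{BC} (and performed in Chapter \ref{tran_floe}), every such trajectory is cut out transversally, so $\mathcal{M}$ is a smooth $1$-manifold. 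Gromov compactness for the discs together with the monotonicity hypothesis $N_L\ge 2$ (which forbids the codimension-one disc/sphere bubbling not already encoded in the $\otimes$-cluster source spaces, and makes the reduced non-simple configurations live in codimension $\ge 2$, hence irrelevant) and the classical Morse-theoretic compactness for the connecting gradient segments show that $\mathcal{M}$ admits a compactification $\overline{\mathcal{M}}$ which is a compact $1$-manifold with boundary.

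\textbf{Boundary and signs.} The boundary $\partial\overline{\mathcal{M}}$ consists exactly of the two-level broken configurations: either a gradient segment degenerates at an endpoint of $C$ (a leaf, the root, or an internal breaking), or the source $C$ converges to a point of a codimension-one face of $\mathcal{C}\ell_\ell^\otimes$, which by Section \ref{collar} is canonically a product $\mathcal{C}\ell_{\ell^{(1)}}^\otimes\times\mathcal{C}\ell_{\ell^{(2)}}^\otimes$ with $\ell^{(1)}+\ell^{(2)}=\ell+1$. In both cases, via the gluing theorem (which produces a neighbourhood of each such point in $\overline{\mathcal{M}}$ and transfers orientations canonically, see \cite{BC}, \cite{S2}), these boundary points are in bijection with the composable pairs of rigid trajectories occurring in the $A_\infty$ relation above, namely a rigid $m_{\ell_1}$-trajectory one of whose inputs is the output of a rigid $m_{\ell_2}$-trajectory on the block $(x_i,\dots,x_{i+\ell_2-1})$. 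The orientation bookkeeping is exactly Lemma \ref{out}: the reference orientation $\mathcal{O}^\otimes_{\ell,k}$ induces $(-1)^{(\ell^{(1)}-j)\ell^{(2)}+(j-1)}\,\mathcal{O}^\otimes_{\ell^{(1)},k^{(1)}}\wedge\mathcal{O}^\otimes_{\ell^{(2)},k^{(2)}}$ on each $1$-corner; comparing this with the global prefactors $(-1)^{(q-j)\ell+(j-1)}$ built into $\delta^\otimes$ (chosen to match the combinatorial structure of $\klk$, hence the conventions of \cite{GJ}, \cite{KS}) shows that each boundary point of $\overline{\mathcal{M}}$ contributes with precisely the sign of the corresponding term of $\big(\delta^\otimes\circ\delta^\otimes\big)(x_1\dots x_\ell)$, up to the Koszul signs produced when the operations are evaluated on the critical points. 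Since the signed count of boundary points of a compact oriented $1$-manifold vanishes, the $A_\infty$ relation holds, whence $\delta^\otimes\circ\delta^\otimes=0$ and $(\mathcal{C}\ell^\otimes,\delta^\otimes)$ is a cochain complex.

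\textbf{Main obstacle.} The genuinely delicate part is the transversality and compactness used above in the presence of multiply covered and ghost discs: one must invoke Lazzarini's decomposition and the Biran--Cornea reduction to turn non-simple trajectories into simple ones whose index has dropped by at least two, and thereby confirm that the only codimension-one degenerations of the $1$-dimensional moduli are the gradient breakings and the source-boundary strata listed above. This is precisely the content deferred to Chapter \ref{tran_floe}; the remaining steps — the algebraic reduction to the $A_\infty$ relations and the sign verification via Lemma \ref{out} — are then routine.
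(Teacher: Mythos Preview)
Your proposal is correct and follows essentially the same approach as the paper: the disjoint-block cancellation you handle algebraically is precisely the paper's ``nonbroken'' case (Figure~\ref{fig12}), and your boundary-of-a-compact-$1$-manifold argument with signs governed by Lemma~\ref{out} is the paper's ``broken'' case (Figure~\ref{fig13}), with transversality and compactness deferred to Chapter~\ref{tran_floe} and \cite{BC} in both versions. The only organizational difference is that you first reduce to the $A_\infty$ relations and then prove those, whereas the paper works directly at the level of $\delta^\otimes\circ\delta^\otimes$ and notes the equivalence with the $A_\infty$ relations afterwards; the content is the same.
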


\begin{proof}

By the transversality results of section \ref{tran_floe}, the gluing theorem of \cite{BC} and some standard compactness results, the broken trajectories counted by $\delta^\otimes \circ \delta^\otimes$ are in 1:1 correspondence with the boundary of a compact 1-dimensional piecewise smooth manifold.



\begin{rem}
For the compactness part of this statement, note that our use of perturbations over $\otimes$-clusters carries well over the degenerations (breaking of flow lines) of trajectories. For instance, a flow line must break on critical points of the prescribed function over this line simply because the other functions are used over pieces of lines that are of finite length, so they cannot approach a critical point.
\end{rem}

It remains to see that the pairs of trajectories counted by $\delta^\otimes \circ \delta^\otimes$ are counted with opposite orientation.

In the special case where the pair of composite trajectories does not express as broken trajectories (see figure \ref{fig12}), this is purely algebraic (see \cite{W1}). More precisely, the associated trajectory is counted by both

\begin{figure}[h]
        \centering 
	\includegraphics[width=110mm]{./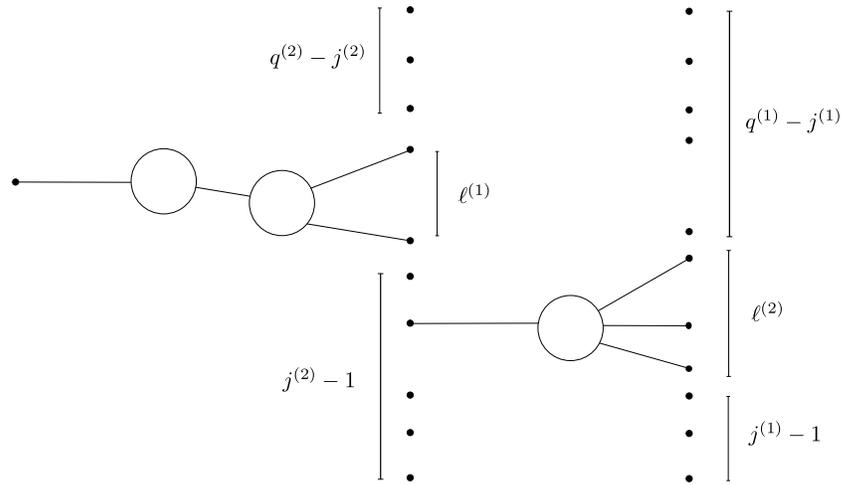}
        \caption{Pair of nonbroken Floer trajectories counted by $\delta^\otimes \circ \delta^\otimes$} \label{fig12}
\end{figure}

\begin{align*}
&(-1)^{(q^{(2)}-j^{(2)})\ell^{(2)} + (j^{(2)}-1)} Id^{(j^{(2)}-1)}\otimes m_{\ell^{(2)}} \otimes Id^{(q^{(2)}-j^{(2)})} \circ \\
&(-1)^{(q^{(1)}-j^{(1)})\ell^{(1)} + (j^{(1)}-1)} Id^{(j^{(1)}-1)}\otimes m_{\ell^{(1)}} \otimes Id^{(q^{(1)}-j^{(1)})}
\end{align*}
,where $j^{(1)} < j^{(2)}$, with orientation
\[
(-1)^{(j^{(2)}-j^{(1)}) + ((q^{(1)}-j^{(1)}) - (q^{(2)}-j^{(2)}))\ell^{(1)} + (q^{(2)}-j^{(2)})(\ell^{(2)} +\ell^{(2)})}  \mathcal{O}_{\ell^{(2)}}^\otimes  \wedge \mathcal{O}_{\ell^{(1)}}^\otimes
\]
and

\begin{align*}
&(-1)^{((q^{(1)}-j^{(1)})-(\ell^{(2)}-1))\ell^{(1)} + (j^{(1)}-1)} Id^{(j^{(1)}-1)}\otimes m_{\ell^{(1)}} \otimes Id^{(q^{(1)}-j^{(1)})-(\ell^{(2)}-1)} \circ \\
&(-1)^{(q^{(2)}-j^{(2)})\ell^{(2)} + (j^{(2)}-1)+(\ell^{(1)}-1)} Id^{(j^{(2)}-1)+(\ell^{(1)}-1)}\otimes m_{\ell^{(2)}} \otimes Id^{(q^{(2)}-j^{(2)})}
\end{align*}
with orientation
\begin{align*}
&(-1)^{(j^{(2)}-j^{(1)}) + ((q^{(1)}-j^{(1)}) - (q^{(2)}-j^{(2)}))\ell^{(1)} + (q^{(2)}-j^{(2)})(\ell^{(2)} +\ell^{(2)})} (-1)^{\ell^{(1)} \ell^{(2)} +1} \mathcal{O}_{\ell^{(1)}}^\otimes  \wedge \mathcal{O}_{\ell^{(2)}}^\otimes = \\
& - (-1)^{(j^{(2)}-j^{(1)}) + ((q^{(1)}-j^{(1)}) - (q^{(2)}-j^{(2)}))\ell^{(1)} + (q^{(2)}-j^{(2)})(\ell^{(2)} +\ell^{(2)})}  \mathcal{O}_{\ell^{(2)}}^\otimes  \wedge \mathcal{O}_{\ell^{(1)}}^\otimes
\end{align*}

For the broken case (see figure \ref{fig13}), relying on lemma \ref{out}, a broken trajectory $(C,u)$ is counted with orientation
\begin{align*}
&(-1)^{ (q^{(2)}-j^{(2)})(\ell^{(1)} +\ell^{(2)})} (-1)^{((q^{(1)}-j^{(1)}) - (q^{(2)}-j^{(2)}))\ell^{(1)} + (j^{(2)}-j^{(1)})}  \mathcal{O}_{\ell^{(2)}}^\otimes  \wedge \mathcal{O}_{\ell^{(1)}}^\otimes = \\
&(-1)^{ (q^{(2)}-j^{(2)})(\ell^{(1)} +\ell^{(2)})}  \partial_{\ddel_C} \mathcal{O}_{\ell^{(1)} +\ell^{(2)}-1}^\otimes
\end{align*}
so it induces $(-1)^{ (q^{(2)}-j^{(2)})(\ell^{(1)} +\ell^{(2)})}$ times the outward normal orientation on its 1-dimensional glued family. This is due to the fact that the orientation transfer on linear gluings considered in \ref{out}, for a sufficiently large gluing parameter, canonically corresponds to orientation transfer over gluings of trajectories (see \cite{S}, \cite{FH}). Since $(q^{(2)}-j^{(2)})(\ell^{(1)} +\ell^{(2)})$ is constant along the homotopy class, we get the result.

\begin{figure}[h]
        \centering 
	\includegraphics[width=110mm]{./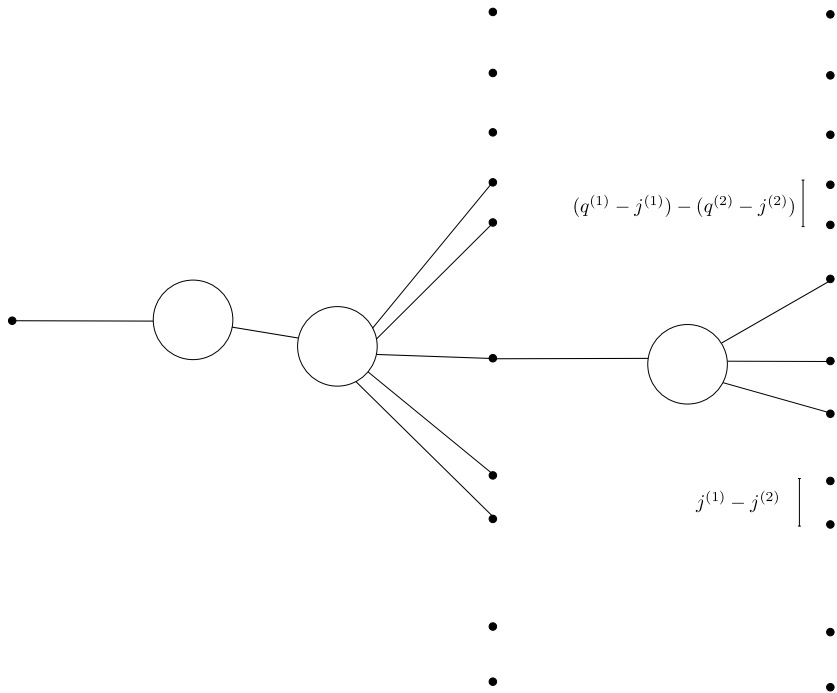}
        \caption{Broken Floer trajectory counted by $\delta^\otimes \circ \delta^\otimes$} \label{fig13}
\end{figure}

\end{proof}

Note that $\delta^\otimes \circ \delta^\otimes =0$ is equivalent, using the sign convention of \cite{GJ}, to the $A_\infty$ associativity relations on the products $\{ m_\ell \}_{\ell \geq 1}$. In other words, $(\mathcal{C}\ell^\otimes,\delta^\otimes)$ could be seen as the so-called (unsuspended) bar complex of this underlying $A_\infty$ algebra. In fact, using the usual Koszul sign rule $f \otimes g(x_1 \otimes x_2) = (-1)^{\mu(g) \mu(x_1)} f(x_1) \otimes g(x_2)$, we compute that the $m_\ell$ satisfy
\[
0 = \underset{\substack{\ell^{(1)}+\ell^{(2)}-1 \\ =q}}{\sum} \hspace{0.2cm} \underset{1 \leq j \leq \ell^{(1)}}{\sum} (-1)^{\epsilon(j,\ell^{(2)})} m_{\ell^{(1)}}(x_1, \ldots, x_{j-1}, m_{\ell^{(2)}}(x_j, \ldots,x_{j+\ell^{(2)}-1} ), x_{j+\ell^{(2)}}, \ldots, x_{\ell^{(1)}+\ell^{(2)}-1})
\]
where $\epsilon(j,\ell^{(2)}) = \ell^{(2)} (\mu(x_1) + \ldots +\mu(x_{j-1})) + (j-1)(\ell^{(2)}-1)+ (\ell^{(1)} -1)\ell^{(2)})$ so they define an $A_\infty$ algebra (in the conventions of \cite{KS} or \cite{PS}).

\begin{rem} \label{rema_susp}
In terms of the suspended tensor algebra $sA$ of a $\mu$-graded module $A$, that is, $(sA)_i = A_{i+1}$,
\begin{diagram}
A^\ell & \rTo^{m_\ell} & A \\
\dTo^{s^\ell} & & \dTo^s \\
(sA)^\ell & \rTo^{b_\ell} & sA \\
\end{diagram}
where $m_\ell$ is of degree $2-\ell$, $b_\ell$ is of degree $1$, and $s$ of degree $-1$. We can compute that
\begin{align*}
0 &= \underset{\ell^{(1)}+\ell^{(2)}-1=q}{\sum} \hspace{0.2cm} \underset{1 \leq j \leq \ell^{(1)}}{\sum} (-1)^{(q-j)\ell^{(2)}+ (j-1)} m_{\ell^{(1)}} \circ Id^{j-1} \otimes m_{\ell^{(2)}} \otimes Id^{q-j} \\
&= \underset{\ell^{(1)}+\ell^{(2)}-1=q}{\sum} \hspace{0.2cm} \underset{1 \leq j \leq \ell^{(1)}}{\sum} (-1)^{(q-j)\ell^{(2)}+ (j-1)} (s^{-1}\circ b_{\ell^{(1)}} \circ s^{\ell^{(1)}}) \circ Id^{j-1} \otimes (s^{-1}\circ b_{\ell^{(2)}}\circ s^{\ell^{(2)}}) \otimes Id^{q-j} \\
&= \underset{\ell^{(1)}+\ell^{(2)}-1=q}{\sum} \hspace{0.2cm} \underset{1 \leq j \leq \ell^{(1)}}{\sum} (-1)^{(j-1)} (s^{-1}\circ b_{\ell^{(1)}} ) \circ s^{j-1} \otimes ( b_{\ell^{(2)}}\circ s^{\ell^{(2)}}) \otimes s^{q-j} \\
&= \underset{\ell^{(1)}+\ell^{(2)}-1=q}{\sum} \hspace{0.2cm} \underset{1 \leq j \leq \ell^{(1)}}{\sum} s^{-1}\circ (b_{\ell^{(1)}}  \circ Id^{j-1} \otimes b_{\ell^{(2)}} \otimes Id^{q-j}) \circ s^q  \\
&= s^{-1}\circ (\underset{\ell^{(1)}+\ell^{(2)}-1=q}{\sum} \hspace{0.2cm} \underset{1 \leq j \leq \ell^{(1)}}{\sum} b_{\ell^{(1)}}  \circ Id^{j-1} \otimes b_{\ell^{(2)}} \otimes Id^{q-j}) \circ s^q  \\
\end{align*}
so that no signs appear in the complex built from the $b_\ell$, that is,
\[
0 = \underset{\ell^{(1)}+\ell^{(2)}-1=q}{\sum} \hspace{0.2cm} \underset{1 \leq j \leq \ell^{(1)}}{\sum} b_{\ell^{(1)}}  \circ Id^{j-1} \otimes b_{\ell^{(2)}} \otimes Id^{q-j}
\]
and
\[
0 = \underset{\ell^{(1)}+\ell^{(2)}-1=q}{\sum} \hspace{0.2cm} \underset{1 \leq j \leq \ell^{(1)}}{\sum} (-1)^{\overline{\epsilon}(j,\ell^{(2)})} b_{\ell^{(1)}}(x_1, \ldots, x_{j-1}, b_{\ell^{(2)}}(x_j, \ldots,x_{j+\ell^{(2)}-1} ), x_{j+\ell^{(2)}}, \ldots, x_{\ell^{(1)}+\ell^{(2)}-1})
\]
where $\overline{\epsilon}(j,\ell^{(2)}) = \mu(x_1) + \ldots +\mu(x_{j-1}) + (j-1)$ and $\mu(x_i)$ is the degree of $x_i$ in $A$. This coincides with the algebraic formalism of \cite{FOOO} and \cite{Sei}.
\end{rem}

Note that it is known that in the $\otimes$ (nonsymmetric) setting no information is stored in the associated cohomology groups. Indeed, a Morse function $f$ with a single local maximum {\it M} has been chosen. Notice that $m_2 \circ ({\it M} \otimes \_) = Id $: If $x \in crit(f_{j_1} -f_{j_2})$, then $m_2$ on ${\it M} \otimes x$ counts the only flow line of $f$ from {\it M} to $x$ and thus returns $x$. Otherwise if $x \in crit(f)$, then, for small enough perturbations, $m_2$ on ${\it M} \otimes x$ counts the only perturbed flow line of $f$ from {\it M} intersecting the perturbed constant flow line from $x$ to $x$ so it again returns $x$. Also, a Morse theory argument tells that $m_1({\it M})=0$ and for index reasons $m_\ell({\it M} \otimes \_)=0$ for $\ell >2$. Therefore, in the language of $A_\infty$ algebras,

\begin{lem}
${\it M} = max(f)$ is a unit of the $(\mathcal{C}\ell^\otimes (L,f), \{ m_\ell\}_{\ell \geq 1}) $ $A_\infty$ algebra.
\end{lem}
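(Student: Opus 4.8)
The plan is to check, for the distinguished generator $M = \max(f) \in \mathrm{crit}(f) \subset \mathcal{C}\ell^\otimes$, the defining relations of a (strict) $A_\infty$-unit in the conventions of \cite{K}, \cite{KS}: namely $m_1(M) = 0$; $m_2(M \otimes x) = x$ for every generator $x$, together with the matching right-unit identity $m_2(x \otimes M) = (-1)^{|x|}x$; and $m_\ell(x_1 \otimes \cdots \otimes M \otimes \cdots \otimes x_\ell) = 0$ for all $\ell \geq 3$. The single geometric input used throughout is that $M$, being the unique local maximum of the Morse--Smale pair $(f,g)$, has negative definite Hessian, so $\mu^+(M) = 0$ (hence $\mu(M) = 0$ and $|M| = n$), its stable manifold $W^s(M)$ is the point $M$, and its unstable manifold $W^u(M)$ is dense in $L$; equivalently, by Proposition \ref{inde_comp}, the real linear Cauchy--Riemann operator on a line ending at a leaf labelled $M$ has index $\mu^-(M) = n$, the maximal value.

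For $m_1(M) = 0$: a Floer trajectory of index $2 - 1 = 1$ from $M$ to an output $x^- t^d$ satisfies $\mu(x^-) + dN_L = \mu(x^-) - \mu(M) + \mu(F_u) = 1$, and since $N_L \geq 2$ while $\mu(x^-) = \mu^+(x^-) \geq 0$, this forces $d = 0$. So the trajectory carries no disk of positive area, is purely Morse-theoretic, and $m_1(M)$ is just the Morse codifferential $\delta(L,f,g)$ applied to $M$. But $M$ being the unique maximum, $C^0(L,f,g)$ has rank one, $C^{-1} = 0$, and $L$ is connected, so $C^0 = \ker(\delta|_{C^0}) = H^0(L;\mathbb{Z}) = \mathbb{Z}$ and therefore $\delta(M) = 0$; cf.\ \cite{W2}.

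For $m_2(M \otimes x) = x$: a rigid (index $0$) trajectory contributing to $m_2(M \otimes x)$ has output $x^- t^d$ with $\mu(x^-) - \mu(x) + dN_L = 0$, and for sufficiently small perturbation data the piece of the trajectory attached to the leaf $x$ stays in a small neighbourhood of $x$ and, by the same index/area bound as for $m_1$, carries no disk. Since $W^u(M)$ fills $L$, the gradient half-line issuing from the leaf labelled $M$ imposes no constraint and meets this short piece transversally in exactly one point; continuing to the root one reads off $x^- = x$, $d = 0$, with count $+1$ relative to the reference orientations $\mathcal{O}^\otimes$, exactly as in \cite{BC}. Hence $m_2 \circ (M \otimes -) = \mathrm{Id}$. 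The right-unit identity $m_2(x \otimes M) = (-1)^{|x|} x$ follows identically, the sign being recorded from the Koszul rule and the combinatorial signs $(-1)^{(q-j)\ell + (j-1)}$ built into $\delta^\otimes$.

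The main obstacle is the vanishing $m_\ell(x_1 \otimes \cdots \otimes M \otimes \cdots \otimes x_\ell) = 0$ for $\ell \geq 3$, which the text attributes to ``index reasons''. The point is that inserting $M$ at an input leaf replaces the endpoint condition there by $\mathrm{Hess}_M(f)$ with $\mu^+(M) = 0$, so the associated line operator attains its maximal index $n$ (Proposition \ref{inde_comp}); the constraint becomes codimension zero, and every would-be rigid configuration then admits a nontrivial deformation obtained by sliding along the free flow out of $M$, so that the relevant moduli space is never $0$-dimensional and contributes nothing. Making this precise --- and reconciling it with the fact that for $\ell = 2$ the same codimension-zero flexibility produces the identity rather than zero --- requires the index bookkeeping of Proposition \ref{inde_comp} together with the transversality arguments underlying Theorem \ref{coch_comp} (as in \cite{BC}); alternatively, once $m_1(M) = 0$ and $m_2(M \otimes -) = \mathrm{Id}$ are known, the vanishing $m_\ell(M \otimes -) = 0$ for $\ell \geq 3$ can be extracted inductively from the $A_\infty$-relations on $\{m_\ell\}$ established in Theorem \ref{coch_comp}.
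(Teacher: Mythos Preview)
Your approach mirrors the paper's: the justification of this lemma is actually the paragraph immediately preceding it, which verifies $m_1(M)=0$ (``a Morse theory argument''), $m_2(M\otimes -)=\mathrm{Id}$ (by direct inspection of the unique perturbed flow line), and asserts $m_\ell(M\otimes -)=0$ for $\ell>2$ ``for index reasons''. Your treatment of $m_1$ and $m_2$ is more explicit than the paper's but follows the same line; you also attempt the two-sided unit identities, which the paper does not address (and does not need, since only left multiplication by $M$ enters the contracting-homotopy argument that follows the lemma).

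There is, however, a genuine error in your alternative route for $\ell\ge 3$. You claim that once $m_1(M)=0$ and $m_2(M\otimes -)=\mathrm{Id}$ are known, the vanishing $m_\ell(M\otimes -)=0$ can be ``extracted inductively from the $A_\infty$-relations''. This is false. Feeding $(M,x,y)$ into the $A_\infty$-relation and using those two facts, the terms $m_2(m_2(M,x),y)$ and $m_2(M,m_2(x,y))$ both reduce to $\pm m_2(x,y)$ and cancel, and one is left with
\[
m_1\bigl(m_3(M,x,y)\bigr)\;+\;m_3\bigl(M,m_1(x),y\bigr)\;+\;(-1)^{\mu(x)}m_3\bigl(M,x,m_1(y)\bigr)\;=\;0,
\]
which says only that $m_3(M,-,-)$ is an $m_1$-chain map, not that it vanishes; the same pattern persists for higher $\ell$. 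This is exactly the distinction between a \emph{homotopy} unit and a \emph{strict} unit in an $A_\infty$-algebra (cf.\ \cite{FOOO}, \cite{Sei}): the strict-unit condition $m_\ell(\ldots,M,\ldots)=0$ is genuinely additional and cannot be deduced from the relations alone.

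So the vanishing for $\ell\ge 3$ must come from geometry. Your sliding picture is the right intuition (and is what the paper's ``index reasons'' is pointing at), but be aware of the subtlety you partly flag: moving the marking $x_1$ along $\partial D$ changes the source $C$ and hence the perturbation $p_{\ell,0,\mathfrak{l}}|_C$ over the \emph{other} lines, so $u$ restricted to $C\setminus\{v_1\text{-line}\}$ need not remain a solution. The honest argument is the dimension count you allude to: forgetting the $M$-leaf leaves the index of $D\bar\partial_u$ unchanged (since $\mu^+(M)=0$ in Proposition~\ref{inde_comp}) while the source index rises by one, so the underlying configuration lies in a moduli of virtual dimension $-1$ and is empty for generic data. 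That is the content of ``for index reasons''; it does not follow from the $A_\infty$-relations.
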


Then left multiplication by {\it M} is seen, using the definition of $\delta^\otimes$, to be a contracting homotopy between the identity and the trivial cochain maps:
\[
\delta^\otimes \circ ({\it M} \otimes) + ({\it M} \otimes) \circ \delta^\otimes = (m_2 \circ ({\it M} \otimes \_)) \otimes Id - ({\it M} \otimes) \circ \delta^\otimes + ({\it M} \otimes) \circ \delta^\otimes = Id - 0 
\]
where the first term corresponds to applying $m_2$ on {\it M} and the first factor argument.

\section{Chain complex formulation}

It is also possible to encode the $\otimes$-cluster trajectories information using the homological formalism, as originally proposed in \cite{CL}. 

Let $\Lambda = \Z[t,t^{-1}]$, $\Lambda^+ = \Z[t]$, $\widetilde{\Lambda}$ and $\widetilde{\Lambda^+}$ be the Novikov rings defined as in section \ref{defi_gene}. In what follows, we use $\Lambda$, but the same should apply to the latter versions.

Set the $\Lambda$-modules $V = crit(f)\otimes \Lambda$ and $V_{j_1,j_2} = crit(f_{j_1} - f_{j_2})\otimes \Lambda$, $0 \leq j_1 < j_2 \leq c$. Let then $T^m V= V^{\otimes m} = V \otimes \ldots \otimes V$ be the rank $m$ tensor product of $V$.

\begin{defi}
Let 
\begin{align*}
        \widehat{\mathcal{C}\ell^\otimes} =& \widehat{\mathcal{C}\ell^\otimes}(L,f,f_0, \ldots, f_c) \\
=&  \, \underset{0 \leq c' \leq c}\bigoplus \, \underset{\substack{0\leq j_0 \leq \ldots \leq j_{c'} \leq c\\ m_0, \ldots, m_{c'} \geq 0}}{\widehat{\bigoplus}} T^{m_0}V \otimes V_{j_0,j_1} \otimes T^{m_1}V \otimes \ldots \otimes T^{m_{c'-1}}V \otimes V_{j_{c'-1},j_{c'}} \otimes T^{m_{c'}}V
\end{align*}
where the $c'=0$ inner union is understood to be $T(V) = \underset{m \geq 1}{\widehat{\bigoplus}} T^{m}V$, the tensor algebra of $V$, and now $\widehat{\bigoplus}$ allow formal series. 

\end{defi}

An element of $\widehat{\mathcal{C}\ell^\otimes}$ might now be an infinite sum. Thus the cardinality $q(x)$ of its terms can go to infinity, due to the contribution of products of critical points of $f$. In particular, any infinite sum having bounded energy must have the cardinality of its terms going to infinity (otherwise this would contradict the Novikov condition on the coefficients).

The differential map will now count rigid Floer trajectories between elements of $\widehat{\mathcal{C}\ell^\otimes}$. We start off by defining intermediate operators that have cardinality $1$ inputs and cardinality $\ell$ outputs. If $x$ is a generator of $\mathcal{C}\ell^\otimes$ with $q(x) = 1$, we define $m^{op}_\ell$ as being the opposite of $m_\ell$, that is, we transpose the inputs and outputs of $m_\ell$ so that
\[
        m^{op}_\ell(x) = \underset{\substack{x^+ \in \mathcal{C}\ell^\otimes \\ q(x^+)=\ell}}{\sum} \: \underset{\substack{[(C,u,\mathcal{O}_{\ell}^\otimes)] \in F(x^+,x) \\ \mu(u) = \ell -2 }}{\sum} < \mathcal{O}_{\ell}^\otimes \# \mathcal{O}_{x^+}^\otimes, \mathcal{O}_{x t^{\frac{\omega(u)}{\tau N_L}}}^\otimes>  x^+ t^{\frac{\omega(u)}{\tau N_L}}
\]

Then, $m^{op}_\ell$ is of degree $-1$ with respect to the $\mu+q$ grading on $\widehat{\mathcal{C}\ell^\otimes}$, where now $\mu(t^d)= -dN_L$. Reversing the sign of the chosen Morse functions, one might see the trajectories counted by $m^{op}_\ell$ as flowing from the single input to the cardinality $\ell$ output. With this in mind, we get $\mu(x)=|x|$, where $|\_|$ will now denote the Morse index with respect to the reversed Morse functions, so that now $\mu(x_1 \otimes \ldots \otimes x_q t^d)= \sum_{j=1}^{q}|x_j| - dN_L$.

We define the differential $m^{op}: \widehat{\mathcal{C}\ell^\otimes}(L,f,f_0, \ldots, f_c) \rightarrow \widehat{\mathcal{C}\ell^\otimes}(L,f,f_0, \ldots, f_c)$ as
\[
        m^{op} = m^{op}(M,\omega,J,L,P,f,f_0, \dots, f_c,g)= \underset{q \geq 1}{\sum} \underset{1 \leq j \leq q}{\sum} \, \underset{ \ell \geq 1}{\sum} (-1)^{(q-j)\ell + (j-1)} Id^{j-1} \otimes m^{op}_\ell \otimes Id^{q-j}. 
\]

Note that it is well defined by fairly standard compactness results, but now $m^{op}(x)$ is in general a formal series with increasing cardinalities. From theorem \ref{coch_comp}, we get that


\begin{cor}
        For $m^{op}$ defined as above, we have $m^{op} \circ m^{op} = 0$ so that $(\widehat{\mathcal{C}\ell^\otimes},m^{op})$ is a chain complex.
\end{cor}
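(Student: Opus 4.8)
The plan is to deduce the corollary directly from Theorem \ref{coch_comp} by a formal transposition argument, so that essentially no new geometry is needed. First I would make precise the duality between $(\mathcal{C}\ell^\otimes, \delta^\otimes)$ and $(\widehat{\mathcal{C}\ell^\otimes}, m^{op})$: the operators $m^{op}_\ell$ were \emph{defined} to count the same Floer trajectories as $m_\ell$, merely with inputs and outputs interchanged (equivalently, with the sign of the Morse functions reversed so that the single generator becomes the source and the $\ell$-fold tensor the target). Since the pairing $\langle \mathcal{O}_\ell^\otimes \# \mathcal{O}^\otimes_{\bullet}, \mathcal{O}^\otimes_{\bullet}\rangle$ appearing in $m_\ell$ and in $m^{op}_\ell$ is the \emph{same} number for a fixed rigid trajectory, and the combinatorial sign $(-1)^{(q-j)\ell+(j-1)}$ in $m^{op}$ is literally the one used in $\delta^\otimes$, the composite $m^{op}\circ m^{op}$ is, termwise, the transpose of $\delta^\otimes\circ\delta^\otimes$ under the obvious identification of index-$2$ two-level trees read ``upside down''.

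Concretely, the key steps would be: (1) fix $\ell^{(1)},\ell^{(2)},j$ and observe that a rigid trajectory contributing to a two-fold composite $m^{op}_{\ell^{(1)}}\otimes\ldots$ following $m^{op}_{\ell^{(2)}}$ is exactly (the reversal of) a rigid trajectory contributing to the corresponding composite $m_{\ell^{(2)}}\circ(Id\otimes m_{\ell^{(1)}}\otimes Id)$ in $\delta^\otimes\circ\delta^\otimes$; (2) check that the sign with which it is counted agrees, using the sign computations already carried out in the proof of Theorem \ref{coch_comp} (the orientation formula of Lemma \ref{out} is symmetric under swapping the two components up to the explicit factor $(-1)^{(\ell^{(1)}-j)\ell^{(2)}+(j-1)}$, which is precisely what the combinatorial prefactor in $m^{op}$ reproduces); (3) invoke the transversality results of section \ref{tran_floe} and the gluing theorem of \cite{BC}, together with Gromov compactness for disks plus Morse compactness, to see that these rigid broken trajectories are the boundary of a compact $1$-dimensional piecewise smooth manifold, so they cancel in pairs. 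The only genuinely new point relative to Theorem \ref{coch_comp} is the remark already made in the text: $m^{op}(x)$ is now a formal series of increasing cardinality, so one must check that $m^{op}\circ m^{op}$ is well defined on $\widehat{\mathcal{C}\ell^\otimes}$, i.e.\ that for a fixed output monomial only finitely many pairs of trajectories contribute; this follows from the Novikov finiteness condition together with the fact that trajectories of bounded energy and bounded output cardinality form compact (hence finite, in the rigid case) sets.

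The main obstacle I expect is purely bookkeeping: verifying that the Koszul/combinatorial signs line up after the transposition. One has to be careful that reversing a trajectory interchanges the roles of the ``inner'' and ``outer'' operator in a two-level tree, so the pair of cancelling terms in $m^{op}\circ m^{op}$ corresponds to the pair $\bigl(m_{\ell^{(2)}}\circ(\ldots\otimes m_{\ell^{(1)}}\otimes\ldots),\ m_{\ell^{(1)}}\circ(\ldots\otimes m_{\ell^{(2)}}\otimes\ldots)\bigr)$ in $\delta^\otimes\circ\delta^\otimes$, and the degree shift $\mu(t^d)=-dN_L$ (rather than $+dN_L$) must be tracked through the pairings $\langle\cdot,\cdot\rangle$. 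But since the signs in $\delta^\otimes\circ\delta^\otimes=0$ were shown in Theorem \ref{coch_comp} to be exactly those of \cite{GJ}, and these are invariant under the $A_\infty$ ``op'' construction (an $A_\infty$ structure transposes to an $A_\infty$ structure on the dual/opposite with the same sign conventions, cf.\ Remark \ref{rema_susp}), this reduces to citing that algebraic fact: $m^{op}\circ m^{op}=0$ is equivalent to $\{m^{op}_\ell\}$ being an $A_\infty$ algebra, which is the opposite $A_\infty$ structure to $\{m_\ell\}$, hence automatically satisfies the relations. So after step (2) the conclusion is immediate, and I would write the proof essentially as ``transpose the proof of Theorem \ref{coch_comp}, noting that the signs and the compactness/gluing arguments carry over verbatim, and that well-definedness on $\widehat{\mathcal{C}\ell^\otimes}$ follows from the Novikov condition.''
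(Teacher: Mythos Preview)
Your proposal is correct and takes essentially the same approach as the paper: the paper treats this as an immediate corollary of Theorem~\ref{coch_comp}, writing only ``From theorem \ref{coch_comp}, we get that'' before stating it, with no further argument. Your detailed justification of the transposition (matching trajectories, checking signs via Lemma~\ref{out}, and verifying well-definedness on $\widehat{\mathcal{C}\ell^\otimes}$ via the Novikov condition) is more explicit than what the paper provides, but the underlying logic is the same.
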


Following the suspension procedure of remark \ref{rema_susp}, we get

\begin{diagram}
A & \rTo^{m^{op}_\ell} & A^\ell \\
\dTo^{s} & & \dTo^{s^\ell} \\
sA & \rTo^{b^{op}_\ell=d^\otimes_\ell} & (sA)^\ell \\
\end{diagram}

where $d^\otimes_\ell$ is of degree $\mu(d^\otimes_\ell)= -1$ with respect to the grading $\mu$ on $sA$. Then setting

\[
d^\otimes = \underset{q \geq 1}{\sum} \underset{1 \leq j \leq q}{\sum} \, \underset{ \ell \geq 1}{\sum} Id^{j-1} \otimes d^\otimes_\ell \otimes Id^{q-j},
\]

one obtains that $d^\otimes \circ d^\otimes = 0$.

Moreover, using the usual Koszul sign rule $f \otimes g(x_1 \otimes x_2) = (-1)^{\mu(g) \mu(x_1)} f(x_1) \otimes g(x_2)$, we compute that the $d^\otimes(x_1 \otimes x_2)= d^\otimes \otimes Id(x_1 \otimes x_2) + Id \otimes d^\otimes(x_1 \otimes x_2) = d^\otimes(x_1) \otimes x_2 + (-1)^{\mu(x_1)}x_1 \otimes d^\otimes(x_2)$, where now $\mu(x_1)=|x_1|-1$, so that it satisfies the usual Leibniz rule.

Therefore we get back to the algebraic formulation of \cite{CL}:

\begin{cor}
        For $d^\otimes$ defined as above, we have $d^\otimes \circ d^\otimes = 0$ and $d^\otimes(x_1 \otimes x_2) = d^\otimes(x_1) \otimes x_2 + (-1)^{\mu(x_1)}x_1 \otimes d^\otimes(x_2)$ so that $((s\widehat{\mathcal{C}\ell^\otimes}, \mu),d^\otimes)$ is a differential graded algebra.
\end{cor}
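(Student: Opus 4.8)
The plan is to treat this corollary as a formal consequence of the preceding corollary ($m^{op}\circ m^{op}=0$) together with the suspension bookkeeping already carried out in Remark \ref{rema_susp}, plus two essentially tautological observations about the product. There are three things to check: that the concatenation product on $s\widehat{\mathcal{C}\ell^\otimes}$ is associative and respects the $\mu$-grading, that $d^\otimes$ is a $\mu$-degree $-1$ derivation for it, and that $d^\otimes\circ d^\otimes=0$.

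For the product, the multiplication on $s\widehat{\mathcal{C}\ell^\otimes}$ is the concatenation of tensor monomials inherited from the tensor algebra $T(V)$, extended across the $V_{j_1,j_2}$-blocks in the obvious way (declaring a concatenation to be $0$ once the total number of $V_{\cdot,\cdot}$-factors exceeds $c$, which leaves associativity intact). It is manifestly associative, and since $\mu$ is additive under $\otimes$ — with $\mu(x_i)=|x_i|-1$ on generators and $\mu(t^d)=-dN_L$ — it preserves the grading. For the Leibniz rule, I would unwind $d^\otimes=\sum_{q\ge 1}\sum_{1\le j\le q}\sum_{\ell\ge 1} Id^{j-1}\otimes d^\otimes_\ell\otimes Id^{q-j}$ applied to a monomial $x_1\otimes x_2$ with $q(x_1)=q_1$: each summand expands a single generator, so none straddles the boundary between $x_1$ and $x_2$, and splitting the $j$-sum at $q_1$ produces the terms acting inside $x_1$, which reassemble to $d^\otimes(x_1)\otimes x_2$, together with the terms with $j>q_1$, which by the Koszul rule $f\otimes g(x_1\otimes x_2)=(-1)^{\mu(g)\mu(x_1)}f(x_1)\otimes g(x_2)$ with $\mu(g)=\mu(d^\otimes_\ell)=-1$ reassemble to $(-1)^{\mu(x_1)}x_1\otimes d^\otimes(x_2)$. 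Since each $d^\otimes_\ell$ has $\mu$-degree $-1$ on $sA$, so does $d^\otimes$.

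For $d^\otimes\circ d^\otimes=0$ I would argue that $d^\otimes$ is the conjugate of $m^{op}$ by the letterwise suspension $\tilde s=\bigoplus_q s^{\otimes q}$ on $s\widehat{\mathcal{C}\ell^\otimes}$, i.e. $d^\otimes=\tilde s\circ m^{op}\circ\tilde s^{-1}$. This is exactly the content of the displayed computation in Remark \ref{rema_susp}, rerun with $m^{op}_\ell$ (degree $-1$, one input and $\ell$ outputs) in place of $m_\ell$: substituting $m^{op}_\ell=s^{-\otimes\ell}\circ d^\otimes_\ell\circ s$ into the identity $m^{op}\circ m^{op}=0$ and commuting the suspension maps across each $d^\otimes_\ell$ produces precisely the sign $(-1)^{(q-j)\ell+(j-1)}$ appearing in the definition of $m^{op}$, so that the $d^\otimes_\ell$ assemble into $d^\otimes$ with all signs absorbed. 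Then $d^\otimes\circ d^\otimes=\tilde s\circ(m^{op}\circ m^{op})\circ\tilde s^{-1}=0$ by the previous corollary. (Alternatively one could derive $d^\otimes\circ d^\otimes=0$ directly from the opposite $A_\infty$ relations among the $m^{op}_\ell$, but the conjugation route avoids repeating the sign analysis.)

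I expect the only genuine subtlety — the hard step — to be this last sign verification: confirming that conjugating $m^{op}$ by $\tilde s$ yields precisely the sign-free operator $d^\otimes$. This requires being careful about the degree conventions ($\mu(t^d)=-dN_L$ in the chain formulation, $\mu(x_i)=|x_i|-1$ on $sA$) and about the placement of the suspension on an operator with one input and several outputs; once that is pinned down, it is the same rearrangement as in Remark \ref{rema_susp}, and everything else in the corollary is routine.
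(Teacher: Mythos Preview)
Your proposal is correct and follows essentially the same route as the paper: the text preceding the corollary derives $d^\otimes\circ d^\otimes=0$ by invoking the suspension bookkeeping of Remark~\ref{rema_susp} applied to $m^{op}\circ m^{op}=0$, and obtains the Leibniz rule from the decomposition $d^\otimes(x_1\otimes x_2)=(d^\otimes\otimes Id+Id\otimes d^\otimes)(x_1\otimes x_2)$ together with the Koszul rule with $\mu(d^\otimes)=-1$. Your write-up is simply a more detailed spelling-out of these same two moves.
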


\chapter{Moduli of quilted $\otimes$-clusters and complex morphisms}

Using a construction of \cite{MW}, we now give a description of the source spaces used to define morphisms of complexes.

\section{Moduli of marked quilted disks}

\begin{defi}
	Let $D_\C$ be a complex disk and $x_0 \in \partial D_\C$. A complex disk $C \subsetneq D_\C$ tangent to $\partial D_\C$ at $x_0$ is called a seam of $D_\C$ at $x_0$. Then a pair $(D,C)$, where $D \in \mathring{\klk}$ and $C$ is a seam of $D$ at $x_0$, will be referred to as a quilted marked disk and $\mathring{\qklk}$ will denote the space of such disks.
\end{defi}

Canonically, $\mathring{\qklk} \cong \mathring{\klk} \times ]0,1[$ with the second factor corresponding to the radius of the quilting disk, and notice that under the biholomorphic map that identifies $D_\C$ with the upper complex half-plane sending $x_0$ to $\infty$, $\partial C$ is sent to a horizontal line.

This space allows a Deligne-Mumford-Knudsen type compactification $\qklk$ which admits an orientable $(\ell -1 + 2k)$-dimensional singular manifold with corner structure (see figure \ref{fig14}). The case $k=0$ gives the Stasheff multiplihedra and otherwise we rely on the complexification of the multiplihedra defined in \cite{MW}. We first recall its construction as a moduli of scaled marked genus zero Riemann surfaces.

\begin{figure}[h]
        \centering 
	\includegraphics[width=110mm]{./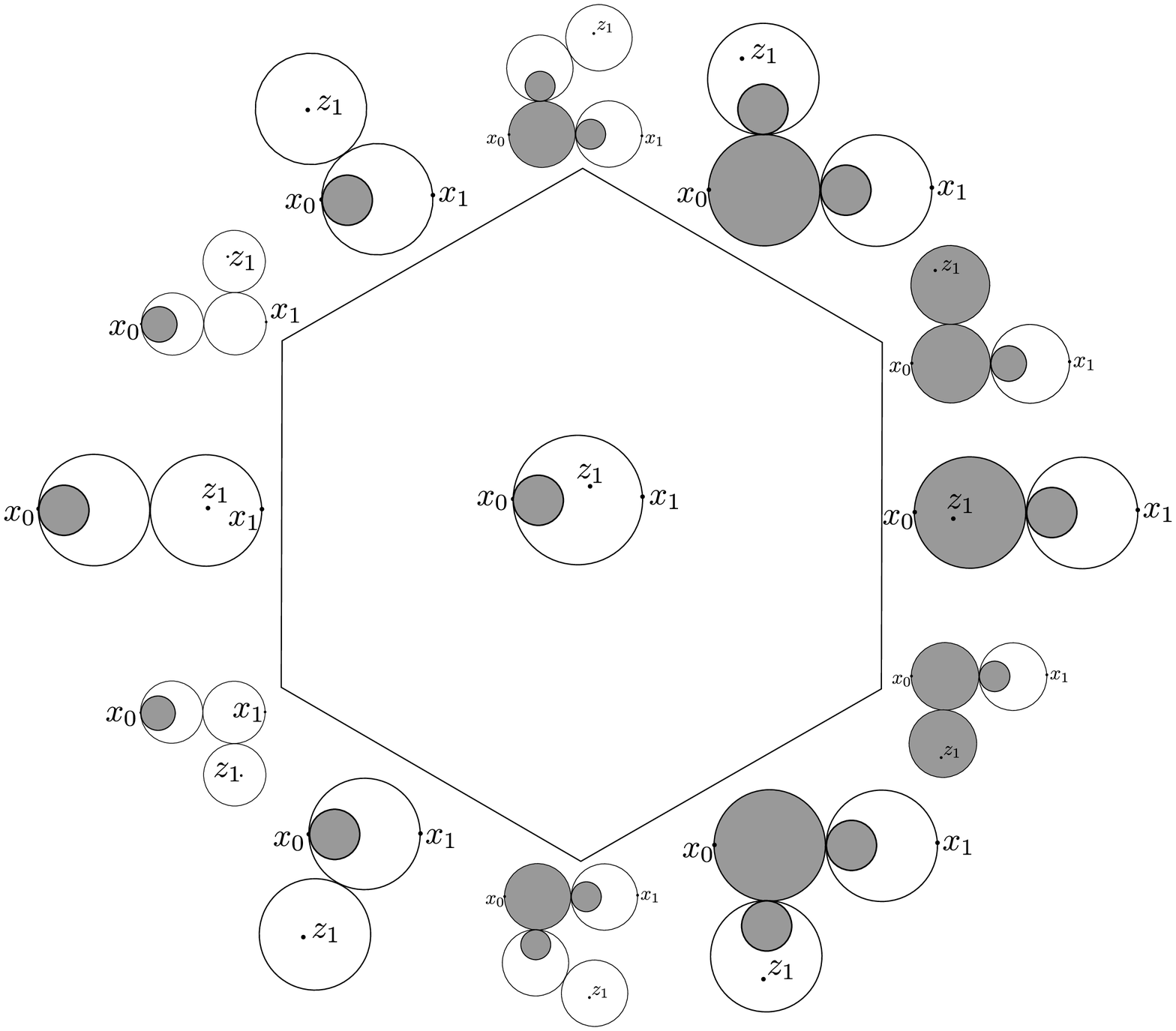}
        \caption{$Q_{1,1}$} \label{fig14}
\end{figure}

\begin{defi}
        Let $(\Sigma, x_0, \dots, x_\ell, z_1, \dots, z_k, z_{k+1}, \dots, z_{2k})$ be a marked genus zero (possibly nodal and unstable) Riemann surface. Let $\Sigma^{(i)}$ be a smooth component of $\Sigma$ and $x^{(i)}_0$ be its closest marking to $x_0$. Then a (possibly zero or infinite) translation invariant $(1,1)$-form $\phi^{(i)}$ on $\Sigma^{(i)} \backslash x^{(i)}_0$ is called a scaling on $\Sigma^{(i)}$. Then the triple $(\Sigma, x_0, \dots, x_\ell, z_1, \dots, z_k, z_{k+1}, \dots, z_{2k},\phi)$, where $\phi$ is a choice of scaling for each component of $\Sigma$, is called a genus zero scaled marked Riemann surface.

        An automorphism of scaled marked Riemann surface is an automorphism of marked Riemann surface preserving the volume forms. A scaled marked Riemann surface is said to be stable if its automorphism group is finite. Let $\mathcal{Q}_q(\C)$ be the set of stable genus zero scaled Riemann surfaces with $q=\ell +1 +2k$ markings.
\end{defi}

In other words, the scaling preserving condition means that we now consider marked Riemann surfaces up to translation in the complement of $x_0$ instead of up to translation and scaling in the complement of $x_0$ as it was the case in section \ref{disks}.

$\mathcal{Q}_q(\C)$ is naturally endowed with natural cross-ratio coordinates giving it the structure of a complex projective variety with toric singularities (see corollary 10.6 of \cite{MW}). We again choose to express the local charts in terms of labelings on combinatorial trees.

Recall that a colored tree $T$ is a tree with a special subset of vertices, called the colored vertices, such that a simple path from a leaf to the root meets exactly one colored vertex. Let $X$ be a labeling on $T$, $v^-$ be a vertex lying below its colored vertices and $v$ be any colored vertex above $v^-$. Then $X$ is called balanced if for every $v^-$ the product of the labels over the simple path from $v$ to $v^-$ is independent of $v$. For any colored tree $T$, let $X^{\C}(T)$ be the set of balanced complex labelings of the interior edges of $T$.

As with (nonbalanced) labelings, if $T^{(1)} \leq T^{(2)}$, a balanced labeling $X^{(2)}$ on $T^{(2)}$ defines a balanced labeling $X^{(2)}|_{T^{(1)}}$  by taking
\[
X^{(2)}|_{T^{(1)}}(l) = X^{(2)}(l)\cdot \prod_{l'} X^{(2)}(l') \prod_{l''} X^{(2)}(l''),
\]
where the products are over contracted edges $l'$ that are below $l$ and connected to $l$ by contracted edges only and contracted edges $l''$ above $l$ that are part of a sequence of contracted edges connecting $l$ with a colored vertex.

\begin{prp}\cite{MW} \label{neigh}
        Let $\mathcal{Q}_{q,T}(\C) \subset \mathcal{Q}_q(\C)$ be the strata of the scaled marked curves having combinatorial type $T$. Then, there is an isomorphism $\psi_T$ from a Zariski neighborhood of $\mathcal{Q}_{q,T}(\C) \times \{0\}$ in $\mathcal{Q}_{q,T}(\C) \times X^{\C}(T)$ to a Zariski neighborhood $\nu(\mathcal{Q}_{q,T}(\C))$ of $\mathcal{Q}_{q,T}(\C)$ in $\mathcal{Q}_q(\C)$.
\end{prp}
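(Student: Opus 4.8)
\emph{Proof plan.} The plan is to reduce to the case of a maximal colored tree, for which a chart can be written down explicitly in cross-ratio coordinates, and then propagate the charts downward along the partial order, in exact parallel with the treatment of $\klk$ in Proposition \ref{neighk}.

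First I would treat a maximal colored tree $T^{max}$, whose generic curve $\Sigma_{T^{max}}$ of combinatorial type $T^{max}$ has all components of the simplest possible kind (three special points, or one interior marking, or carrying a colored vertex), exactly as for the maximally degenerate disks described after Proposition \ref{neighk}. Here the cross-ratio coordinates of Corollary 10.6 of \cite{MW} already identify a Zariski neighborhood of $[\Sigma_{T^{max}}]$ with an affine toric variety. The map $\psi_{T^{max}}$, defined on a Zariski neighborhood of $\mathcal{Q}_{q,T^{max}}(\C) \times \{0\}$ in $\mathcal{Q}_{q,T^{max}}(\C) \times X^{\C}(T^{max})$, is the gluing (plumbing) construction: given a stable scaled nodal curve $\Sigma$ of type $T^{max}$ and a balanced labeling $X$, glue the node corresponding to an interior edge $l$ with gluing parameter $1/X(l)$ (so $X(l)=0$ returns the node), and transport the translation-invariant $(1,1)$-forms of the components to the glued curve. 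The point is that the \emph{balancing} condition is precisely what makes these local forms patch into a single translation-invariant scaling on each component of the glued curve: the product of the $X$-labels along a simple path from a colored vertex down to a fixed vertex $v^-$ below the seams is the relative scaling factor, and the balanced condition says this product is independent of the chosen colored vertex. One then checks that $\psi_{T^{max}}$ is a morphism of varieties, that it is injective because $X$ can be read off from a point of the image as an explicit ratio of cross-ratios (equivalently, of the components of the scaling form), and that its image is a Zariski neighborhood of the stratum, giving the claim for $T=T^{max}$.

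For a general colored tree $T$ I would fix a maximal refinement $T^{max} \geq T$ and use the restriction-of-labelings map $X^{\C}(T) \hookrightarrow X^{\C}(T^{max})$ given by the displayed formula preceding the statement (which multiplies the label on an edge of $T$ by the labels on the contracted edges below it and on the contracted edges above it lying on a path to a colored vertex). Just as the charts $\psi_T$ of Proposition \ref{neighk} are obtained from $\psi_{T^{max}}$ in \cite{MW} (Proposition 6.2 and Corollary 6.5), $\psi_T$ is then the restriction of $\psi_{T^{max}}$ to $\mathcal{Q}_{q,T}(\C) \times (\text{image of } X^{\C}(T))$, once one notes that $\mathcal{Q}_{q,T}(\C)$ itself splits as a product of lower-dimensional $\mathcal{Q}$-type and $\mathcal{M}$-type moduli, one factor per component of the generic curve of type $T$. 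Coherence of the $\psi_T$ as $T$ varies follows from the transitivity of the restriction formula.

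The main obstacle is that $\mathcal{Q}_q(\C)$ has \emph{toric singularities}, so the asserted isomorphism is one of (possibly singular) varieties, and the real content is that the local model of $\mathcal{Q}_q(\C)$ near $\mathcal{Q}_{q,T}(\C)$ is exactly the affine toric variety $X^{\C}(T)$ cut out by the monomial balancing relations — unlike in the Deligne--Mumford case of $\klk$, the gluing parameters satisfy extra relations coming from the scaling constraint. Checking that these balancing relations generate precisely the defining ideal of the local model (no more and no fewer) is the delicate step; it is carried out by the explicit cross-ratio computation in \cite{MW}, which I would quote rather than reproduce. Once the balanced-labeling formalism is in place, the remaining bookkeeping — which vertices are colored, and whether a scaling degenerates to $0$ or to $\infty$ along a given degeneration — is routine.
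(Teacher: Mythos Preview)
Your proposal is correct and matches the approach the paper takes. Note that the paper does not actually prove this proposition; it is cited directly from \cite{MW}, and the paper only supplements the statement with the explicit simple-ratio description of $\psi_{T^{max}}$ (associating to each vertex $v$ a quantity $\Delta_v$ --- a difference of marking positions, an imaginary part, or the seam height --- and labeling each edge by the ratio $\Delta_{v_a}/\Delta_{v_b}$), which is exactly the cross-ratio chart you invoke, together with the remark that for non-maximal $T$ one proceeds as in Proposition 6.2 and Corollary 6.5 of \cite{MW} via the balanced-labeling restriction, again as you describe.
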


Note that here, again, $\mathcal{Q}_{q,T}(\C)$ is not a compact subset and that, in general, the above proposition cannot be extended to model a neighborhood of its closure $\mathcal{Q}_{q,\geq T}(\C)$, unlike in the unquilted case. More precisely, the balancing condition on the labelings implies that $\mathcal{Q}_{q,T}(\C)$ is generally a toric singular stratum (see figure \ref{fig15}).

\begin{figure}[h]
        \centering 
	\includegraphics[width=110mm]{./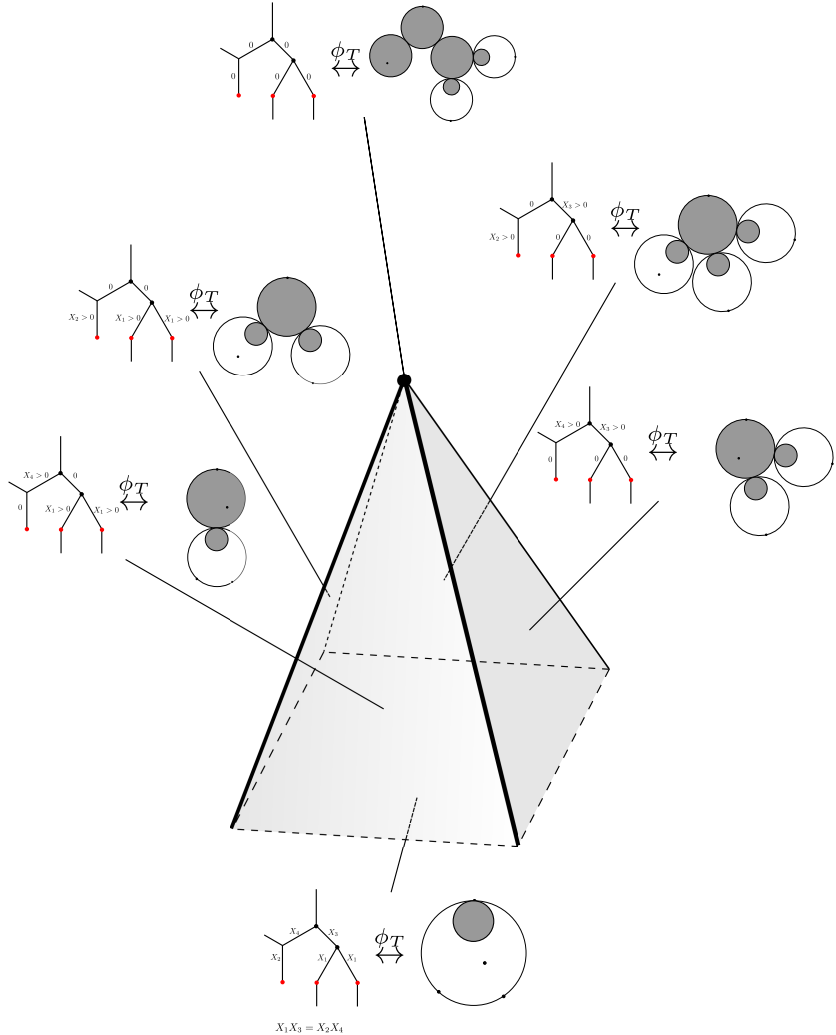}
        \caption{Singularity in $Q_{1,2}$} \label{fig15}
\end{figure}

\begin{defi}
        Set $\sigma_{\ell,k}$ as the anti-holomorphic involution on $\mathcal{Q}_q(\C)$ defined by \\
$\sigma_{\ell,k}(\Sigma, x_0, \dots, x_\ell, z_1, \dots, z_k, z_{k+1}, \dots, z_{2k},\phi) = (\overline{\Sigma}, x_0, \dots, x_\ell, z_k, \dots, z_{2k}, z_{1}, \dots, z_{k},\overline{\phi})$. Define $\qklk(\R) \equiv \text{fix}(\sigma_{\ell,k})$ and call its elements real scaled marked surfaces. Then let $\qklk \subset \qklk(\R)$ be the closure of the set of the smooth real scaled marked spheres $(\C P^1, \infty, x_1, \dots, x_\ell, z_1, \dots, z_{2k}, \phi)$ such that
\begin{itemize}
        \item $x_j \in \R P^1$, $0 \leq j \leq \ell$, with $x_0 < x_1 < \dots < x_\ell$,
        \item $z_h \in \mathbb{H}^+ \subset \C \cong \C P^1 \backslash \{x_0\}$, $1 \leq h \leq k$, 
        \item $\phi = y dz \wedge d\bar{z}$, with $y \in \R_+\backslash \{0\}$. 
\end{itemize}
\end{defi}

Remark that $Q_{\ell,0}$ is the geometric realization of Stasheff multiplihedra (\cite{Sta}) as a space of quilted disks (\cite{MW}). Even though $\qklk$ is not a manifold with corners, it still has embedded boundary components $B(\qklk) = \underset{m}{\bigcup} B_m(\qklk)$, each component having a combinatorial type encoded in a colored tree with at least one interior edge.

Also, the natural projection $\mathring{\qklk} \cong \mathring{\klk} \times ]0,1[ \rightarrow \mathring{\klk}$ extends to a natural projection $\qklk \rightarrow \klk$ defined by forgetting the scaling and stabilizing.

As in \cite{MW}, we can make the charts $\psi_T$ restricted to $\qklk$ explicit in terms of simple ratios. Let $T^{max}$ be a maximal colored planar tree, corresponding to a marked nodal quilted disk $D_{T^{max}}$ with every smooth component being either a disk with $3$ boundary markings and no interior markings, a disk with one boundary marking and one interior marking or a quilted disk with one boundary marking. The planarity of $T^{max}$ makes the set of markings $\{x_1, \ldots, x_\ell, z_1, \ldots, z_k \}$ into an ordered set $\{y_1, \ldots, y_{\ell+k}\}$. To every trivalent vertex $v$ of $T^{max}$ we associate a marking $y_j$: $v$ being the uppermost intersection point of a simple path from $y_j$ to the root and a simple path from $y_{j+1}$ to the root. Then take $\Delta_v = y_j - y_{j+1}$. To every uncolored bivalent vertex $v$ of $T^{max}$ we consider the interior marking $z_h$ lying just above it and then set $\Delta_v = Im(z_h)$. Finally, to every colored bivalent vertex $v$, we set $\Delta_v = Im(S)$ to be the height of the seam $S$ seen in the upper half-plane. Now on an interior edge $l \in E(T^{max})$ with top vertex $v_a$ and bottom vertex $v_b$, we will consider the label $X(l)= \frac{\Delta_{v_a}}{\Delta_{v_b}}$. 

\begin{diagram}
\nu(K_{\ell,k, T^{max}}) & & \rTo^{\psi_{T^{max}}^{-1}} & &  X^{\C}(T^{max}) \\
D & & \rMapsto & &  X(l) = \frac{\Delta_{v_a}}{\Delta_{v_b}}
\end{diagram}

This will make the nodes correspond to zero labels on the corresponding edge of $T$. When there is no interior markings, $\psi_{T^{max}}$ corresponds to the simple ratio charts of \cite{MW}, taking real positive values. In general, the resulting labelings are complex, but still can be identified with real positive labelings. For example, in a sufficiently small neighborhood $\nu(K_{\ell,k, T^{max}})$ of $K_{\ell,k, T^{max}}$, one can again extract the real part over the labelings. 


\section{Constructing $\qcllkr$, the moduli of quilted $\otimes$-clusters} \label{sect_quil}

As in section \ref{collar}, we now add a collar neighborhood to $\qklk$ that has again a decomposition that allows us to encode the lengths of connecting lines.

Let $Q_{\ell,k,T} \subset \qklk$ be the boundary stratum of $\qklk$ corresponding to the colored tree $T$. Then by proposition \ref{neigh}, it has a neighborhood isomorphic to $Q_{\ell,k,T} \times X^{\R_+}(T)$ where $X^{\R_+}(T)$ stands for the set of balanced labelings on the interior edges of $T$ with values in $\R_+$. Indeed, one can take $T$ a maximal tree and associate to any labeling $X^{\R_+}(T)$ a marked quilted disk in the same fashion as in the unquilted case except that now the height of the seam is computed by taking the product of the labels below the colored vertices (see \cite{MW}). For $T$ nonmaximal, we can proceed as in proposition 6.2 and corollary 6.5 of \cite{MW}, using the balanced labelings obtained by collapsing balanced labelings on maximal trees. 

Now we define a collar extension of $\qklk$ as

\begin{defi}
\[
        col(\qklk) = \underset{T}{\bigsqcup} \hspace{0.3cm}  \mathcal{Q}_{\ell,k,\geq T} \times X^{[0,1]}(T) /_{\sim}
\]
with $(Q^{(1)},X^{(1)}) \in \mathcal{Q}_{\ell,k,\geq T^{(1)}} \times X^{[0,1]}(T^{(1)}) \sim (Q^{(2)},X^{(2)}) \in \mathcal{Q}_{\ell,k,\geq T^{(2)}} \times X^{[0,1]}(T^{(2)})$ if
\begin{itemize}
\item $Q^{(1)} = Q^{(2)}$,
\item $T^{(2)} > T^{(1)}$, that is, $T^{(1)}$ is obtained from $T^{(2)}$ by contracting some interior edges,
\item $X^{(2)}$ is the extension of $X^{(1)}$ having $1$ labels on the extra edges.
\end{itemize}
\end{defi}

\begin{defi}
        For $1 \leq \ell -1 + 2k$, let $(\qcllkr)^{PDIFF} = col(\qklk)$ and otherwise take $(\qcllkr)^{PDIFF}$ to be a point.
\end{defi}

Geometrically, one could see the above construction as an extension of $\mathcal{Q}_{\ell,k}$ using a local dual cell decomposition reminescent of that on the multiplihedra seen as a space of painted metric trees (see \cite{For}). These cells are again given as balanced labelings of maximal colored trees having $1$ labels over a subset of edges (see figure \ref{fig16})).

\begin{figure}[h]
        \centering 
	\includegraphics[width=110mm]{./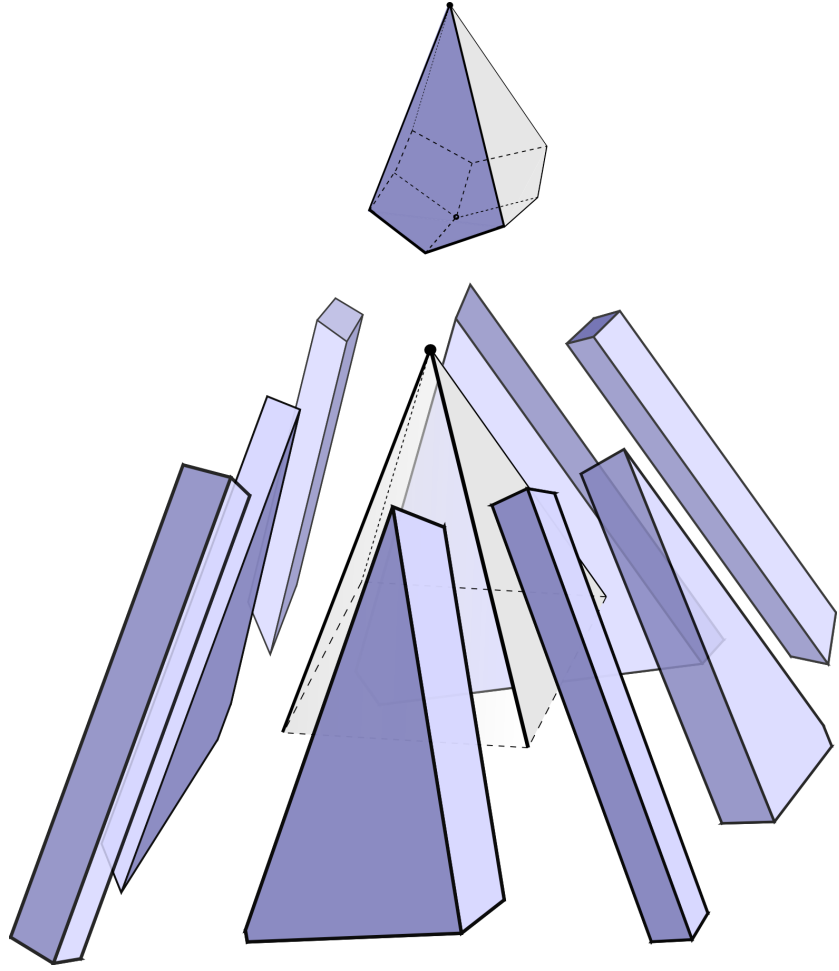}
        \caption{ $col(Q_{1,2})$ near its singularity} \label{fig16}
\end{figure}

Although above the piecewise smooth (singular manifold with corners) structure would suffice, it will be convenient to smoothen the above charts and see that $\qcllkr$, the smoothened version, is isomorphic to $\qklk$. As with disks, the combinatorial type of a cluster will be encoded in a painted tree $T$ having one vertex for each smooth component. Let $\mathcal{QC}\ell_{\ell,k,T}^\otimes$ be the quilted clusters having type $T$.

\begin{lem} \label{qsmoo_lemm}
There exists a piecewise smooth isomorphism $(\qcllkr)^{PDIFF} \rightarrow \qklk$ sending $\mathcal{QC}\ell_{\ell,k,T}^\otimes$ to $\mathcal{Q}_{\ell,k,T}$ for every combinatorial type $T$.
\end{lem}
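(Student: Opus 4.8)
The plan is to mimic closely the proof of Lemma~\ref{smoo_lemm} in the unquilted case, replacing the global collar-addition argument on a manifold with embedded corners by a \emph{local} collar-addition argument carried out over each of the embedded boundary strata $Q_{\ell,k,T}$ of $\qklk$. The only genuinely new feature is that $\qklk$ is not a manifold with corners but a (toric) singular manifold with \emph{embedded} boundary components $B(\qklk) = \bigcup_m B_m(\qklk)$, with the singularities concentrated along strata whose combinatorial type is a colored tree containing an interior ghost component (cf. the discussion around figure~\ref{fig15} and the analogue of Lemma~\ref{out}'s setup). So I would first recall from Proposition~\ref{neigh} and the discussion following it that, even though the $\psi_T$ charts do not in general extend over the closure $\mathcal{Q}_{\ell,k,\geq T}(\C)$, for the \emph{real} locus $\qklk$ one does have, for each $T$, a neighborhood of $Q_{\ell,k,T}$ isomorphic to $Q_{\ell,k,T} \times X^{\R_+}(T)$ obtained from balanced labelings (the height of the seam being read off as the product of labels below the colored vertices). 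The point to extract is that these charts are compatible with restriction of balanced labelings exactly as in Definition~\ref{defi_col}, via the formula for $X^{(2)}|_{T^{(1)}}$ recalled above.

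Next I would carry out the collar procedure exactly as in the proof of Lemma~\ref{smoo_lemm}: given a codimension-one embedded boundary component $B \in B_1(\qklk)$, add a collar $B \times [0,1]$ along $B$; the result is again a singular MWEC isomorphic to $\qklk$, and the added collar is a preferred compact neighborhood of the image of $B$. Iterating over all of $B_1(\qklk)$ (handling disjoint pairs and pairs sharing a codimension-two component just as in the unquilted proof, with figure~\ref{fig29.1} replaced by its quilted analogue in figure~\ref{fig16}), one obtains for every boundary stratum $Q_{\ell,k,\geq T}$ a cell isomorphic to $Q_{\ell,k,\geq T} \times X^{[0,1]}(T)$, and these cells attach to the $Q_{\ell,k,\geq T'} \times X^{[0,1]}(T')$, $T' < T$, cells precisely as prescribed in the definition of $col(\qklk)$ — here is where the balanced-labeling restriction formula does the bookkeeping that in the unquilted case was done by plain labeling restriction. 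Matching the combinatorial types under the isomorphism gives the assertion that $\mathcal{QC}\ell_{\ell,k,T}^\otimes$ is sent to $\mathcal{Q}_{\ell,k,T}$.

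The main obstacle, and the place where this is not a verbatim copy of Lemma~\ref{smoo_lemm}, is that the collar addition has to be performed near the toric singular strata of $\qklk$, where the local model is $Q_{\ell,k,T} \times X^{\R_+}(T)$ with $X^{\R_+}(T)$ a cone of balanced labelings that is \emph{not} a quadrant $\R_+^{|E^{int}(T)|}$ but its toric-singular quotient/subvariety. I would address this by noting that the collar extension only ever modifies the labeling factor (it replaces a germ of $[0,1)$-labelings near a face by a genuine $[0,1]$-labeling cell) and does so compatibly with the balancing constraints, so the singular structure of $Q_{\ell,k,T}$ itself is untouched and the operation restricts fiberwise to each toric cone; concretely one checks that on a maximal colored tree the balanced $[0,1]$-labelings form a cube-like cell just as unbalanced ones do, and the collapsing maps to nonmaximal trees are the same as those already used to build the charts in Proposition~\ref{neigh}. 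This is exactly the ``more delicate collar procedure'' alluded to in the Overview, and the technical content is the verification that the dual-cell decomposition by painted metric trees (in the sense of \cite{For}) is preserved under these operations; once that is in place, orientability and smoothness of the resulting MWEC follow as in the unquilted case, and the alternative bookkeeping needed for the quilted setting is the one deferred to appendices~\ref{appe_A} and~\ref{appe_B}.
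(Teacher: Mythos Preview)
Your proposal has a genuine gap at exactly the point you flag as ``the main obstacle.'' The iterated collar-addition argument that proves Lemma~\ref{smoo_lemm} relies essentially on $\klk$ being a manifold with embedded corners: after gluing $F\times[0,1]$ along a $1$-corner $F$, the collar neighborhood theorem for MWECs provides the isomorphism back to $\klk$, and this is what makes the iteration well-defined. For $\qklk$ that theorem is unavailable, and your substitute assertion---that adding $B\times[0,1]$ along a $1$-boundary $B$ yields ``again a singular MWEC isomorphic to $\qklk$''---is precisely what needs proof. The difficulty is not that $B$ is singular in isolation, but that the $\psi_T$ charts of Proposition~\ref{neigh} do \emph{not} extend to the closure $\mathcal{Q}_{\ell,k,\geq T}$ (the paper says this explicitly), so you have no global normal $[0,\epsilon)$-factor along $\overline{B}$ into which to push the collar. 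Your claim that the operation ``restricts fiberwise to each toric cone'' and that balanced $[0,1]$-labelings on a maximal colored tree ``form a cube-like cell'' does not address this: the singularity appears in how the cells for different $T$ attach along the balancing locus (figure~\ref{fig16}), and an abstract collar move gives no control there. The paper itself signals this right after the proof of Lemma~\ref{smoo_lemm}: the collar argument is supplemented by the chart-based approach of Appendix~\ref{appe_A} precisely because the latter, and not the former, is what generalizes to the quilted case.

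The paper's actual proof (Appendix~\ref{appe_B}) takes a different route. It works entirely inside the simple-ratio chart $\psi_{T^{max}}$ near each maximal corner and decomposes a neighborhood of $X^0(T^{max})$ in the balanced cone $X^{\R_+}(T^{max})$ into cells $V_T$ by hand. The key device is a system of edge weights $M_l$ (depending on the depth of $l$ below the colored vertices, chosen so that $\sum_{l\in T_v} M_l=1$ along any root-to-color path) and an explicit retraction $\chi$ with formulas such as $\chi(X)(l)=(\epsilon^{M_l}+X(l)^2)\cdot(\text{correction factors})$ on edges below the coloring. These formulas are engineered so that $\chi$ sends balanced labelings to balanced labelings and so that the reduced labeling $\chi(X)|_T$ hits the boundary value $\epsilon^{N_l}$ exactly when $X$ vanishes on $T$. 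This is the substantive content you defer when you write ``the alternative bookkeeping needed for the quilted setting is the one deferred to appendices~\ref{appe_A} and~\ref{appe_B}''; without carrying it out, the collar-iteration sketch does not constitute a proof.
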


This is performed as in the unquilted case in appendix \ref{appe_B}, decomposing single ratio charts of $\qklk$.

\begin{defi}
        Let $\qcllkr$ be $(\qcllkr)^{PDIFF} = col(\qklk)$ endowed with the singular manifold with embedded corner structure induced by the above identification.
\end{defi}


Recall that every 1-boundary of $\qklk$ is naturally isomorphic either to a product $Q_{\ell^{(1)},k^{(1)}} \times K_{\ell^{(2)},k^{(2)}}$, where the second factor stands for an unquilted disk not containing the root, or to a product $K_{\ell^{(1)},k^{(1)}} \times Q_{\ell^{(2)},k^{(2)}} \times \dots \times Q_{\ell^{(q)},k^{(q)}}$, where the first factor stands for an unquilted disk containing the root and the $q-1$ nodes. We immediately get that every 1-boundary stratum of $\qcllkr$ is naturally isomorphic either to a product $\mathcal{QC}l_{\ell^{(1)},k^{(1)}}^\otimes \times \mathcal{C}l_{\ell^{(2)},k^{(2)}}^\otimes$ or a product $\mathcal{C}l_{\ell^{(1)},k^{(1)}}^\otimes \times \mathcal{QC}l_{\ell^{(2)},k^{(2)}}^\otimes \times \dots \times \mathcal{QC}l_{\ell^{(q)},k^{(q)}}^\otimes$.

Also, the natural projection $\qklk \rightarrow \klk$ defined by forgetting the quilting extends to a projection $\qcllkr \rightarrow \cllkr$ by summing the lengths of the lines touching a bivalent quilted disk.

\section{Coherent system of perturbation homotopies}

In order to define morphisms between complexes built from two different choices of perturbation data, one needs to choose coherent perturbations over the quilted clusters that interpolate the two data sets.

First, we set the family of nodal disks over which we will choose the interpolation data. Let $\qulkr \overset{\pi}{\rightarrow} \qklk$ be the nodal family of quilted marked disks. Again, we add metric lines between the components of the marked disks lying in the collar part of $\qklk$.

\begin{defi}
        For $\ell -1 +2k \geq 0$, $G = (Q,X) \in Q_{\ell,k,\geq T} \times X^{[0,1]}(T)$, take the marked nodal quilted disk $\pi^{-1}(Q)$ and take its normalization modified so that the two markings corresponding to $e \in E^{int}(T)$ are connected by a metric line of length $-log(X(e))$. In addition, for every $1 \leq j \leq \ell$ (resp. $j=0$), identify the origin of a copy of $\overline{\mathbb{R}}_{+}$ (resp. $\overline{\mathbb{R}}_{-}$) with the boundary marking $x_j(Q)$ (see figure \ref{fig17}). For $\ell = 1$ and $k=0$, consider the pair $(\overline{\mathbb{R}},[0,1])$, and for $\ell = k = 0$, associate to $s \in \overline{\R}_-$ the pair $(\overline{\mathbb{R}}_{-},[s,min\{s+1,0\}])$.

        We denote the resulting by $(\pi^\mathcal{Q})^{-1}(G)$ and refer to it as the quilted $\otimes$-cluster, with $\ell$ leaves and $k$ interior markings, associated with $G \in \qcllkr$. 
\end{defi}

\begin{figure}[h]
        \centering 
	\includegraphics[width=110mm]{./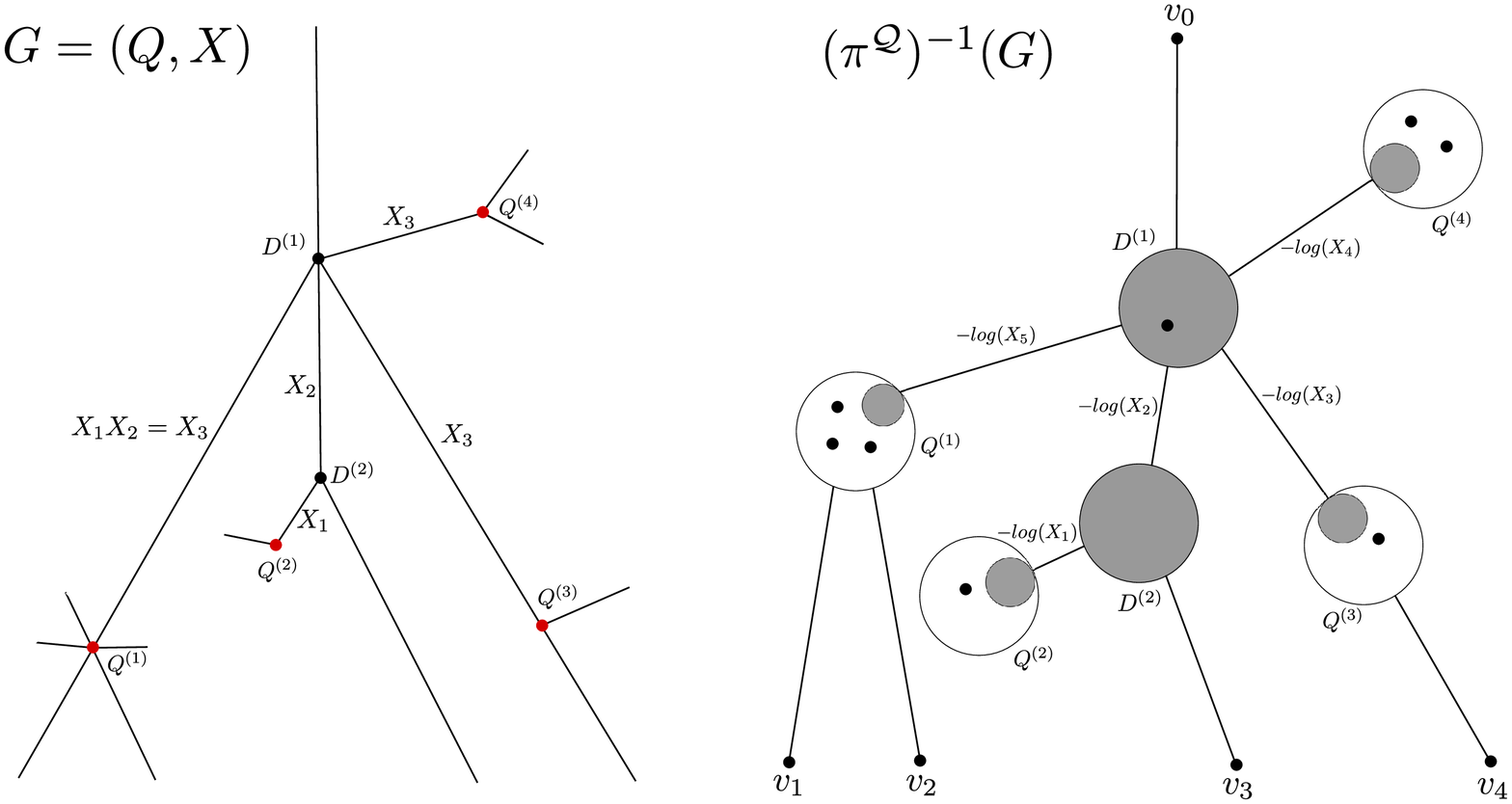}
        \caption{Quilted cluster $(\pi^\mathcal{Q})^{-1}(G)$ associated with $G=(Q,X)$, $Q= D^{(1)} \cup D^{(2)} \cup Q^{(1)} \cup\dots \cup Q^{(4)} \in C_2(Q_{4,8})$} \label{fig17}
\end{figure}

Again in the case of quilted clusters, the half-lines are considered as semi-infinite ending to either a leaf or the root and a connecting line of infinite length is seen as a broken line.

\begin{defi}
The points $\{v_j(G)\}_{0 \leq j \leq \ell}$ and the breaking points of $G$ are called the endpoints of $G$. We call a quilted cluster irreducible if it does not contain any broken line.

$\partial G$ will denote the complement of the interior of the disks in $G$.
\end{defi}


Taking $\qulkr = \underset{G \in \qcllkr}{\bigcup} (\pi^\otimes)^{-1}(G)$, we get a map $\pi^\otimes: \qulkr \rightarrow \qcllkr$ and again, $\pi^\otimes$ will be thought as a universal family of quilted $\otimes$-clusters.

\begin{defi}
        For every $\ell$  and $k$, we set $v_j: \qcllkr \rightarrow \qulkr$, $1 \leq j \leq \ell$ (resp. $j=0$) as being the smooth sections that map to the leaves (resp. root) of the $\otimes$-clusters and $z_h: \qcllkr \rightarrow \qulkr$, $1 \leq h \leq k$ the smooth sections which map to the interior markings. Also, for every $1$-boundary $F \in C_1(\qcllkr)$, set $v_F: F \rightarrow (\pi^\otimes)^{-1}(F) \subset \qulkr$ as the smooth section which maps to the breaking point associated with $F$.
\end{defi}

Now, given two coherent systems of ends $(\mathcal{V}^{(0)},\mathcal{S}^{(0)})$ and $(\mathcal{V}^{(1)},\mathcal{S}^{(1)})$ over $\{ \cllkr \}_{\ell,k \geq 0}$, one can use their preimage under the forgetful map $\qulkr \rightarrow \ulkr$. Note that the endpoints of any quilted cluster split as the endpoints lying below the seam and the ones above it. Let then $(\mathcal{V}^H,\mathcal{S}^H)$ be the coherent system of ends defined by taking
\begin{itemize}
\item $(\mathcal{V}^{(1)},\mathcal{S}^{(1)})$ above the quilted disks,
\item $(\mathcal{V}^{(0)},\mathcal{S}^{(0)})$ under them,
\item over a quilted disk with $\ell=1$ and $q=0$ (i.e. $D\backslash \{x_0,x_1\} \cong \R \times [0,1]$, $x_0$ seen as $-\infty$ and $x_1$ seen as $+\infty$), we use $\mathcal{S}^{(0)}$ over $]-\infty,-1] \times [0,1]$ and $\mathcal{S}^{(1)}$ over $[1,\infty[ \times [0,1]$. We then choose iteratively extensions $\mathcal{S}^H$ over $\{ \qklk \}_{\ell,k}$ (see \cite{W1}).
\end{itemize}
It is again coherent in the sense that the endpoint neighborhoods over any component do not depend upon the deformations of its complement.

\begin{defi} \label{qpht}
Next, take two coherent systems of perturbations $P^{(0)}= \{ \mathcal{U}^\otimes_{\ell, k, \mathfrak{l}} \overset{p^{(0)}_{\ell,k,\mathfrak{l}}}{\rightarrow} \mathcal{M}^{(0)} \times \mathcal{J}^{(0)} \}_{\ell,k \geq 0, \mathfrak{l} \in \mathfrak{L}}$ and $P^{(1)} = \{ \mathcal{U}^\otimes_{\ell, k, \mathfrak{l}} \overset{p^{(1)}_{\ell,k,\mathfrak{l}}}{\rightarrow} \mathcal{M}^{(1)} \times \mathcal{J}^{(1)} \}_{\ell,k \geq 0, \mathfrak{l} \in \mathfrak{L}}$ constant over $(\mathcal{V}^{(0)},\mathcal{S}^{(0)})$ and $(\mathcal{V}^{(1)},\mathcal{S}^{(1)})$, respectively.

Let $ \mathcal{M}^H \times \mathcal{J}^H \overset{s}{\rightarrow} [0,1]$ be a smooth Banach bundle such that $s^{-1}(i) \cong \mathcal{M}^{(i)} \times \mathcal{J}^{(i)}$, $i=0,1$.


Let $\mathcal{QC}\ell_{\ell,k,\mathfrak{l}}^\otimes$ and $\mathcal{QU}_{\ell,k,\mathfrak{l}}^\otimes$ be defined as in the unquilted case. A set of smooth maps $\mathcal{Q}P^H = \{ \mathcal{QU}^\otimes_{\ell, k, \mathfrak{l}} \overset{p^H_{\ell,k,\mathfrak{l}}}{\rightarrow} \mathcal{M}^H \times \mathcal{J}^H \}_{\ell,k \geq 0, \mathfrak{l} \in \mathfrak{L}}$ such that

\begin{enumerate}
        \item $s \circ p^H_{\ell,k,\mathfrak{l}} \equiv 1$ and $p^H_{\ell,k,\mathfrak{l}}= p^{(1)}_{\ell,k,\mathfrak{l}}$ above the quilted components and on the $\mathcal{S}^{(1)}$ strip-like ends,

	\item $s \circ p^H_{\ell,k,\mathfrak{l}} \equiv 0$ and $p^H_{\ell,k,\mathfrak{l}} = p^{(0)}_{\ell,k,\mathfrak{l}}$ below the quilted components and on the $\mathcal{S}^{(0)}$ strip-like ends, 


        \item $\pi_\mathcal{J} \circ p^H_{\ell,k,\mathfrak{l}} \subset \{0\} \times [0,1]$ on the boundaries of the quilted components, 

        \item it is coherent with respect to the product structure of the boundary components of $\mathcal{\mathcal{QC}\ell}^\otimes_{\ell, k, \mathfrak{l}}$


\end{enumerate}

is called a coherent choice of homotopy between $P^{(0)}$ and $P^{(1)}$.

\end{defi}

The procedure of lemma \ref{rest_bair} provides the existence of extension operators so that generic choices over low dimensional strata can be extended to generic choices over strata of greater dimension.

\section{Orientations on $\{ \qcllkr \}_{\ell,k \geq 0} $ }

We first notice that we can associate to every smooth quilted cluster $G$ an operator $\ddel_G \equiv 0 \oplus \ddel_C: L^{m,p}_0 (C, TC, T \partial C) \rightarrow  \R \oplus L^{m-1,p}_\pi (C,TC)$, where $C$ is the underlying cluster of $G$. Therefore, for $\ell -1 +2k \geq 1$ we proceed to the identification $coker(\ddel_G) = \R \times T_C \cllkr = T_G \qcllkr$ where, as before, $\R_+$ corresponds to the increase of the quilting circle radius.

\begin{defi}  \label{qori}

For $G \in \qcllkr $ smooth, the reference orientation on $\ddel_G$ is taken to be $\mathcal{QO}^\otimes_{\ell,k} \equiv \frac{\partial}{\partial s} \wedge \mathcal{O}^\otimes_{\ell,k}$ where $\frac{\partial}{\partial s}$ is a positive generator of the $\R$ factor of the target space.

\end{defi}

Now we find the combinatorial formula for the difference between the orientation $\partial_{F} \mathcal{QO}^\otimes_{\ell,k}$
induced on a component $F \in C_1(\qcllkr)$ by $\mathcal{QO}^\otimes_{\ell,k}$ by using the usual convention, and its reference orientation given by a product of $\mathcal{O}^\otimes$'s and $\mathcal{QO}^\otimes$'s.

There are two different cases to consider, that is, $F = \mathcal{QC}\ell^\otimes_{\ell^{(1)},k^{(1)}} \times \mathcal{C}\ell^\otimes_{\ell^{(2)},k^{(2)}}$ and $F = \mathcal{C}\ell^\otimes_{\ell^{(1)},k^{(1)}} \times \mathcal{QC}\ell^\otimes_{\ell^{(2)},k^{(2)}} \times \dots \times \mathcal{QC}\ell^\otimes_{\ell^{(q)},k^{(q)}}$ (see figure \ref{fig18}).

\begin{figure}[h]
        \centering 
	\includegraphics[width=110mm]{./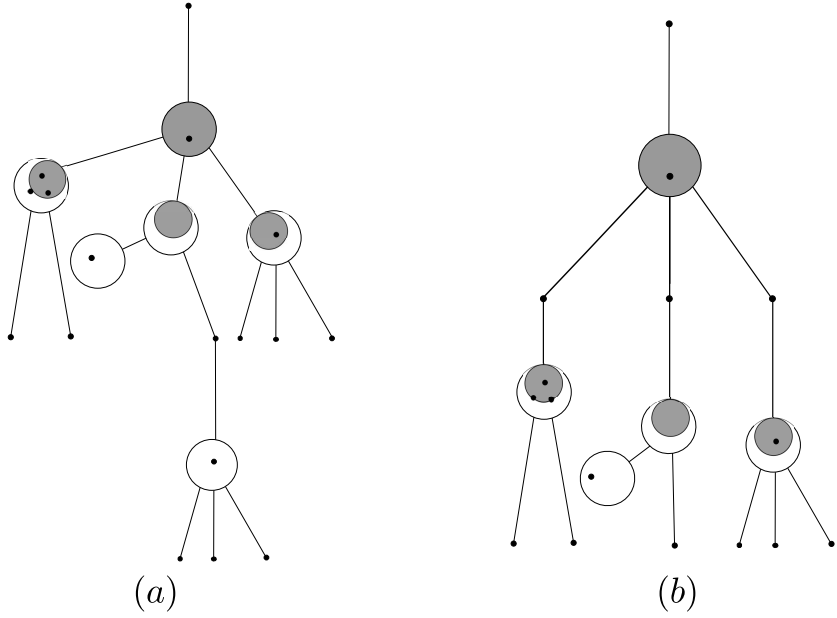}
        \caption{The two types of boundaries of $\qklk$: (a) lower facet (b) upper facet} \label{fig18}
\end{figure}

In the first case, it follows from the corresponding comparison formula for $\qklk$. That is the object of lemma \ref{out} (also see lemma 3.4 of \cite{W1}) and its proof only relies on the combinatorics of $\klk$.

\begin{lem} \label{qout1}
        Let $\ell^{(1)} \geq 1$, $\ell^{(2)} \geq 0$ and $k^{(1)}, k^{(2)} \geq 0$ such that $(\ell^{(2)}, k^{(2)}) \neq (0,0)$. Then let $\ell +1 = \ell^{(1)} + \ell^{(2)}$, $k = k^{(1)} + k^{(2)}$, and $F = \mathcal{QC}\ell^\otimes_{\ell^{(1)},k^{(1)}} \times \mathcal{C}\ell^\otimes_{\ell^{(2)},k^{(2)}} \in C_1(\qcllkr)$ be a lower facet corresponding to an unquilted cluster attached on the $j^{th}$ leaf of a quilted one. Then

\[
        \partial_{F} \mathcal{QO}^\otimes_{\ell,k} = (-1)^{(\ell^{(1)} -j)\ell^{(2)} + j} \mathcal{QO}^\otimes_{\ell^{(1)},k^{(1)}} \wedge
\mathcal{O}^\otimes_{\ell^{(2)},k^{(2)}}.
\]
\end{lem}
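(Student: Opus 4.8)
The plan is to reduce everything to the unquilted comparison formula of Lemma~\ref{out} by exploiting the block structure of the operator attached to a quilted cluster. Recall from the discussion preceding Definition~\ref{qori} that for a smooth quilted cluster $G$ with underlying cluster $C$ one has $\ddel_G = 0\oplus\ddel_C$, so that, using Lemma~\ref{coker}, $\operatorname{coker}(\ddel_G) = \R\langle\frac{\partial}{\partial s}\rangle \oplus \operatorname{coker}(\ddel_C) = T_G\qcllkr$ when $\ell-1+2k\geq 1$, and the reference orientation is by definition $\mathcal{QO}^\otimes_{\ell,k} = \frac{\partial}{\partial s}\wedge\mathcal{O}^\otimes_{\ell,k}$; the same splitting holds over the quilted factor of $F$, namely $\mathcal{QO}^\otimes_{\ell^{(1)},k^{(1)}} = \frac{\partial}{\partial s}\wedge\mathcal{O}^\otimes_{\ell^{(1)},k^{(1)}}$.

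First I would observe that, since $F$ is a \emph{lower} facet, the real boundary node producing it sits on the $j^{th}$ leaf of the quilted component and away from its seam. Hence the linear gluing at that node producing the one–parameter family $\#G\to\,]0,\infty]$ acts only on the $\ddel_C$ summand and is the identity on the $\R\langle\frac{\partial}{\partial s}\rangle$ summand; in particular the outward–pointing gluing coordinate $\frac{\partial}{\partial n_F}$ lies in the cluster directions exactly as in Lemma~\ref{out} and is independent of the quilting scale $s$, and $F$ is an embedded facet of $\qcllkr$ so that the convention $\mathcal{QO}^\otimes_{\ell,k}=\frac{\partial}{\partial n_F}\wedge\partial_F\mathcal{QO}^\otimes_{\ell,k}$ applies directly. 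All of the degenerate sub-cases ($\ell^{(1)}=1$, $k^{(1)}=0$; or $\ell^{(2)}=1$, $k^{(2)}=0$; or $C^{(1)},C^{(2)}$ unstable so that $C$ is not itself in $C_1(\cllkr)$) are handled by the linear–gluing one–parameter family argument of Lemma~\ref{out}, now with $\frac{\partial}{\partial s}$ carried along untouched.

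Combining these, Lemma~\ref{out} gives $\mathcal{O}^\otimes_{\ell,k} = \frac{\partial}{\partial n_F}\wedge(-1)^{(\ell^{(1)}-j)\ell^{(2)}+(j-1)}\mathcal{O}^\otimes_{\ell^{(1)},k^{(1)}}\wedge\mathcal{O}^\otimes_{\ell^{(2)},k^{(2)}}$, and wedging on the left with $\frac{\partial}{\partial s}$ and moving it past $\frac{\partial}{\partial n_F}$ (one transposition, hence one sign) yields
\begin{align*}
\mathcal{QO}^\otimes_{\ell,k}
&= \frac{\partial}{\partial s}\wedge\mathcal{O}^\otimes_{\ell,k}
 = \frac{\partial}{\partial s}\wedge\frac{\partial}{\partial n_F}\wedge(-1)^{(\ell^{(1)}-j)\ell^{(2)}+(j-1)}\mathcal{O}^\otimes_{\ell^{(1)},k^{(1)}}\wedge\mathcal{O}^\otimes_{\ell^{(2)},k^{(2)}}\\
&= \frac{\partial}{\partial n_F}\wedge(-1)^{(\ell^{(1)}-j)\ell^{(2)}+j}\,\frac{\partial}{\partial s}\wedge\mathcal{O}^\otimes_{\ell^{(1)},k^{(1)}}\wedge\mathcal{O}^\otimes_{\ell^{(2)},k^{(2)}}\\
&= \frac{\partial}{\partial n_F}\wedge(-1)^{(\ell^{(1)}-j)\ell^{(2)}+j}\,\mathcal{QO}^\otimes_{\ell^{(1)},k^{(1)}}\wedge\mathcal{O}^\otimes_{\ell^{(2)},k^{(2)}},
\end{align*}
which is exactly the claimed identity for $\partial_F\mathcal{QO}^\otimes_{\ell,k}$.

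I expect the only delicate point to be verifying that the orientation $\frac{\partial}{\partial s}$ of the quilting-scale summand of $\operatorname{coker}(\ddel_G)$ is genuinely transported unchanged through the linear gluing at the breaking and is not touched by the orientation-transfer convention used in the exact sequence~\ref{seq3}; once the block-diagonal form $\ddel_G=0\oplus\ddel_C$ is in place this is pure bookkeeping, but it must be carried out carefully. Everything else is the combinatorics of $\klk$, already done in Lemma~\ref{out}.
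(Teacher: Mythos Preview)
Your main computation (wedging $\frac{\partial}{\partial s}$ on the left of the identity from Lemma~\ref{out} and commuting it past $\frac{\partial}{\partial n_F}$) is correct and is exactly what the paper does in its ``otherwise'' case.

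There is, however, a genuine gap in your treatment of the degenerate case $\ell^{(1)}=1,\ k^{(1)}=0$. You assert that this case is ``handled by the linear--gluing one--parameter family argument of Lemma~\ref{out}, now with $\frac{\partial}{\partial s}$ carried along untouched.'' But this is precisely the case where your structural hypothesis fails: when the quilted factor is the point $\mathcal{QC}\ell^\otimes_{1,0}$, the facet $F$ is the \emph{bottom} face of $\qcllkr$ (the seam radius going to~$0$), and the outward normal is $-\frac{\partial}{\partial s}$ itself. The normal direction is the quilting direction, not a cluster direction, so there is no separate $\frac{\partial}{\partial s}$ to ``carry along untouched,'' and there is no underlying breaking in $\cllkr$ to which Lemma~\ref{out} applies. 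Your earlier claim that ``$\frac{\partial}{\partial n_F}$ lies in the cluster directions'' simply does not hold here.

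The paper treats this case separately and directly: since the outward normal is $-\frac{\partial}{\partial s}$, one has $\mathcal{QO}^\otimes_{\ell,k}=\frac{\partial}{\partial s}\wedge\mathcal{O}^\otimes_{\ell,k}=(-\frac{\partial}{\partial s})\wedge(-\mathcal{O}^\otimes_{\ell,k})$, hence $\partial_F\mathcal{QO}^\otimes_{\ell,k}=-\mathcal{O}^\otimes_{\ell^{(2)},k^{(2)}}$, which is the claimed formula with $j=1$. You should isolate this case and argue it this way; the rest of your proof then goes through.
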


\begin{proof}
        For $\ell^{(1)}=1$ and $k^{(1)} =0$, $F$ is the bottom face of $\qcllkr$ so $(-1)^j \frac{\partial}{\partial s}= -\frac{\partial}{\partial s}$ is the outward normal. Therefore $-\frac{\partial}{\partial s} \wedge -\mathcal{O}^\otimes_{\ell,k} = \mathcal{QO}^\otimes_{\ell,k}$ and $\partial_{F} \mathcal{QO}^\otimes_{\ell,k} = -\mathcal{O}^\otimes_{\ell^{(2)},k^{(2)}}$.

        Otherwise, we can take the outward normal $\frac{\partial}{\partial n_F}$ to lie in a level set of the quilting radius parameter $s$ and therefore see it as the outward normal to $\cllkr$. Then, by using definition \ref{ori} and proposition \ref{out}, we get
\begin{align*}
        & \frac{\partial}{\partial n_F} \wedge (-1)^{(\ell^{(1)} -j)\ell^{(2)} + j} \mathcal{QO}^\otimes_{\ell^{(1)},k^{(1)}} \wedge\mathcal{O}^\otimes_{\ell^{(2)},k^{(2)}}\\
        &= \frac{\partial}{\partial n_F} \wedge (-1)^{(\ell^{(1)} -j)\ell^{(2)} + j} \frac{\partial}{\partial s} \wedge \mathcal{O}^\otimes_{\ell^{(1)},k^{(1)}} \wedge \mathcal{O}^\otimes_{\ell^{(2)},k^{(2)}} \\          
        &= \frac{\partial}{\partial s} \wedge (-1)^{(\ell^{(1)} -j)\ell^{(2)} + (j-1)} \frac{\partial}{\partial n_F} \wedge \mathcal{O}^\otimes_{\ell^{(1)},k^{(1)}} \wedge \mathcal{O}^\otimes_{\ell^{(2)},k^{(2)}} \\
        &= \frac{\partial}{\partial s} \wedge \mathcal{O}^\otimes_{\ell,k} = \mathcal{QO}^\otimes_{\ell,k}.\\
\end{align*}

\end{proof}

In the second case, 

\begin{lem} \label{qout2}
        Let $q \geq 1$, $\ell^{(i)} -1 +2k^{(i)} \geq 0$, $1 \leq i \leq q$. Then let $\ell = \sum^{q}_{i=1} \ell^{(i)}$, $k = \sum^{q+1}_{h=1} k^{(h)}$, and $F = \mathcal{C}\ell^\otimes_{q,k^{(q+1)}} \times \mathcal{QC}\ell^\otimes_{\ell^{(1)},k^{(1)}} \times \dots \times \mathcal{QC}\ell^\otimes_{\ell^{(q)},k^{(q)}} \in C_1(\qcllkr)$. Then

\[
        \partial_{F} \mathcal{QO}^\otimes_{\ell,k} = (-1)^{\sum^q_{i=1} (q -i)(\ell^{(i)} - 1)} \mathcal{O}^\otimes_{q,k^{(q+1)}} \wedge
\mathcal{QO}^\otimes_{\ell^{(1)},k^{(1)}} \wedge \dots \wedge \mathcal{QO}^\otimes_{\ell^{(q)},k^{(q)}}.
\]

\end{lem}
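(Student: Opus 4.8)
The plan is to establish the formula by an explicit computation in local coordinates near $F$, following the template of the proofs of Lemma \ref{out} and Lemma \ref{qout1} and working in the smooth locus of $\qcllkr$. As in those lemmas, one first reduces to the generic case in which every branch satisfies $\ell^{(i)} - 1 + 2k^{(i)} \geq 1$ and the root satisfies $q - 2 + 2k^{(q+1)} \geq 0$, so that $F$ is a genuine codimension one boundary stratum admitting product charts; the remaining degenerate configurations --- a branch equal to the line-pair $\mathcal{C}\ell^\otimes_{1,0}$, or a root that collapses to a point or a line --- are handled separately via the linear gluing exact sequences exactly as in the corresponding special cases of Lemmas \ref{out} and \ref{qout1}. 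Throughout, the combinatorial prefactor will turn out to be constant along the relevant homotopy classes, so it suffices to verify the identity of orientations once.

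For the generic case, fix an outward pointing normal coordinate $\frac{\partial}{\partial n_F}$ to $F$ and use the scaled-curve charts of \cite{MW} (Proposition \ref{neigh}) together with the identification of Lemma \ref{qsmoo_lemm} to write explicit coordinates near an interior point of $F$. On the $F$ side these are: the free boundary-marking coordinates and the interior-marking coordinates of the root disk $\mathcal{C}\ell^\otimes_{q,k^{(q+1)}}$, together with, for each branch $i$, its seam radius $s^{(i)}$ and its own boundary- and interior-marking coordinates; by Definition \ref{qori} the product reference orientation of $F$ is $\mathcal{O}^\otimes_{q,k^{(q+1)}} \wedge \bigwedge_{i=1}^{q} \bigl( \frac{\partial}{\partial s^{(i)}} \wedge \mathcal{O}^\otimes_{\ell^{(i)},k^{(i)}} \bigr)$. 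On the glued side the coordinates are the single seam radius $s$, the $\ell$ boundary markings $x_1, \dots, x_\ell$ --- which, by the ribbon structure, fall into $q$ consecutive blocks, the block of branch $i$ having length $\ell^{(i)}$ --- and the interior markings $z_1, \dots, z_k$, with reference orientation $\mathcal{QO}^\otimes_{\ell,k} = \frac{\partial}{\partial s} \wedge \mathcal{O}^\otimes_{\ell,k}$. The facet-gluing map identifies the $i$-th block of $\{x_h\}$ with the boundary coordinates of branch $i$; it identifies the $q$ ordered node positions $x^{(q+1)}_1 < \dots < x^{(q+1)}_q$ of the root together with $\frac{\partial}{\partial n_F}$ and the seam radii $s^{(1)}, \dots, s^{(q)}$ with the global seam radius $s$ and the coordinates recording how the $q$ quilted branches are inserted into the glued quilted disk; and it identifies the remaining interior markings in the obvious way, some suppressed normalization $z^{(i)}_1$ (or $z^{(q+1)}_1$) being promoted to a free coordinate exactly as $z^{(2)}_1$ is in the proof of Lemma \ref{out}.

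The sign is then read off by reorganizing the resulting wedge product into standard form. Commuting the $q$ boundary blocks of $\{x_h\}$ into place past the ordered tuple $\frac{\partial}{\partial x^{(q+1)}_1} \wedge \dots \wedge \frac{\partial}{\partial x^{(q+1)}_q}$ of root node positions, and observing that the $i$-th block (of length $\ell^{(i)}$) replaces the single node position $x^{(q+1)}_i$, produces precisely the shuffle sign $(-1)^{\sum_{i=1}^{q}(q-i)(\ell^{(i)}-1)}$ --- this is the same combinatorics that governs $q$-fold operadic substitution in the associahedra and the sign in the definition of the morphism $H$ above, cf.\ Getzler--Jones \cite{GJ}. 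The remaining rearrangements --- commuting the even-dimensional interior-marking factors past everything, reshuffling the $q$ seam radii against the single global radius $s$, and the explicit orientation signs incurred in passing between the $(n_F, \cdot)$-gluing coordinates and the branch-insertion coordinates, as in Lemma \ref{out} --- must, and one checks do, combine to $+1$ once absorbed into this prefactor, giving the stated formula.

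The main obstacle is precisely this middle step. Unlike the lower facet of Lemma \ref{qout1}, which reduces to the unquilted Lemma \ref{out} by choosing $n_F$ inside a level set of the quilting radius, the upper facet is a codimension one stratum carrying $q$ factors on the branch side: the $q$ boundary nodes break simultaneously, as a single codimension one degeneration "at the colored level", because the scaling on the root component couples them. One therefore cannot avoid working directly with the Ma'u--Woodward scaled-curve charts and the balancing condition that makes $\qcllkr$ toric singular; pinning down exactly how the $q$ branch seam radii $s^{(i)}$, the node positions $x^{(q+1)}_i$ and the normal coordinate $n_F$ together encode the single radius $s$ and the branch-insertion data of the glued quilted disk --- and verifying that the attendant signs collapse to $+1$ --- is where the real work lies. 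One could instead try to deduce the formula by iterating Lemma \ref{out} along the codimension $q$ corner $\mathcal{C}\ell^\otimes_{q,k^{(q+1)}} \times \prod_i \mathcal{C}\ell^\otimes_{\ell^{(i)},k^{(i)}}$ of $\cllkr$ under the forgetful projection $\qcllkr \rightarrow \cllkr$ and then accounting for the seam-radius fibre, but bookkeeping the corner-orientation conventions there is of comparable difficulty, so the direct computation is preferable.
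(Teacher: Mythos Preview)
Your plan coincides with the paper's: a direct coordinate computation near a generic point of $F$, after separating off the degenerate top-face case (the paper only isolates the single case $\ell^{(j)}=1$, $k^{(j)}=0$ for all $j$, where $\frac{\partial}{\partial s}$ is itself the outward normal; your longer list of degenerations is not needed). You are also right that the crux is the identification of $(n_F, x^{(q+1)}_j, s^{(j)})$ with the glued coordinates, and you honestly flag that step as unresolved.

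The missing ingredient that the paper supplies --- and which dissolves the obstacle you describe --- is this: the previously-suppressed first interior marking $z_1^{(j)}$ of each quilted branch is promoted to a free coordinate in the glued quilted disk, and the identification is
\[
\frac{\partial}{\partial n_F} \longleftrightarrow \frac{\partial}{\partial s}, \qquad
\frac{\partial}{\partial x^{(q+1)}_j} \longleftrightarrow \frac{\partial}{\partial Re(z_1^{(j)})}, \qquad
\frac{\partial}{\partial s^{(j)}} \longleftrightarrow \frac{\partial}{\partial Im(z_1^{(j)})},
\]
orientation-preservingly. So each pair $(x^{(q+1)}_j, s^{(j)})$ becomes a single even-dimensional complex coordinate $z_1^{(j)}$, and $n_F$ becomes the global seam radius directly. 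Once this is written down, the sign falls out from two elementary reorderings: moving each $\cO_{\ell^{(j)}}$ rightward past the later $\frac{\partial}{\partial Im(z_1^{(i)})}$'s gives $(-1)^{\sum(q-i)\ell^{(i)}}$, and interleaving $Re_1,\ldots,Re_q,Im_1,\ldots,Im_q$ into consecutive pairs gives $(-1)^{\sum(q-i)}$; together these cancel the prefactor $(-1)^{\sum(q-i)(\ell^{(i)}-1)}$. Your account of the sign as a ``block-replaces-node'' shuffle is not quite the same bookkeeping and, as stated, does not obviously produce the right exponent on its own; it is the pairing of $x^{(q+1)}_j$ with $s^{(j)}$ into a complex direction that makes the cancellation transparent and avoids any direct appeal to the balanced-labeling charts you were worried about.
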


\begin{proof}
	Again, the result is direct if $F$ is the top face of $\qcllkr$, that is, when $\ell^{(j)} = 1$ and $k^{(j)} = 0$, $ 1 \leq j \leq q$: $\frac{\partial}{\partial s}$ is the outward normal and $\frac{\partial}{\partial s} \wedge \mathcal{O}^\otimes_{\ell,k} = \mathcal{QO}^\otimes_{\ell,k}$.

Otherwise, if $k^{(j)} \geq 1$, $1 \leq j \leq q$, we compute using appropriate coordinates in a neighborhood of $F$: 

\begin{align*}
& \frac{\partial}{\partial n_F} \wedge (-1)^{\sum^q_{i=1} (q -i)(\ell^{(i)} - 1)} \mathcal{O}^\otimes_{q,k^{(q+1)}} \wedge
\mathcal{QO}^\otimes_{\ell^{(1)},k^{(1)}} \wedge \dots \wedge \mathcal{QO}^\otimes_{\ell^{(q)},k^{(q)}} \\
&= \bigwedge_{h=2}^{k^{(q+1)}} \mathcal{O}_{z^{(q+1)}_h} \wedge \bigwedge_{j=1}^{q} \bigwedge_{h=2}^{k^{(j)}} \mathcal{O}_{z^{(j)}_h} \wedge  (-1)^{\sum^q_{i=1} (q -i)(\ell^{(i)} - 1)} \frac{\partial}{\partial n_F} \wedge \frac{\partial}{\partial x_1^{(q+1)}} \wedge \ldots \wedge \frac{\partial}{\partial x_{q}^{(q+1)}} \wedge \\
& \frac{\partial}{\partial s^{(1)}} \wedge \cO_{\ell^{(1)}} \wedge \ldots \wedge \frac{\partial}{\partial s^{(q)}} \wedge \cO_{\ell^{(q)}}\\
&= \bigwedge_{h=2}^{k^{(q+1)}} \mathcal{O}_{z^{(q+1)}_h} \wedge \bigwedge_{j=1}^{q} \bigwedge_{h=2}^{k^{(j)}} \mathcal{O}_{z^{(j)}_h} \wedge  (-1)^{\sum^q_{i=1} (q -i)(\ell^{(i)} - 1)} \frac{\partial}{\partial s} \wedge \frac{\partial}{\partial Re(z_1^{(1)})} \wedge \ldots \wedge \frac{\partial}{\partial Re(z_{1}^{(q)})} \wedge \\
& \frac{\partial}{\partial Im(z_1^{(1)})} \wedge \cO_{\ell^{(1)}} \wedge \ldots \wedge \frac{\partial}{\partial Im(z_1^{(q)})} \wedge \cO_{\ell^{(q)}}\\
&= \bigwedge_{h=2}^{k^{(q+1)}} \mathcal{O}_{z^{(q+1)}_h} \wedge \bigwedge_{j=1}^{q} \bigwedge_{h=2}^{k^{(j)}} \mathcal{O}_{z^{(j)}_h} \wedge  (-1)^{\sum^q_{i=1} (q -i)} \frac{\partial}{\partial s} \wedge \frac{\partial}{\partial Re(z_1^{(1)})} \wedge \ldots \wedge \frac{\partial}{\partial Re(z_{1}^{(q)})} \wedge \\
& \frac{\partial}{\partial Im(z_1^{(1)})} \wedge \ldots \wedge \frac{\partial}{\partial Im(z_1^{(q)})} \wedge \cO_{\ell^{(1)}} \wedge \ldots \wedge \cO_{\ell^{(q)}}\\
&= \bigwedge_{h=2}^{k^{(q+1)}} \mathcal{O}_{z^{(q+1)}_h} \wedge \bigwedge_{j=1}^{q} \bigwedge_{h=1}^{k^{(j)}} \mathcal{O}_{z^{(j)}_h} \wedge \frac{\partial}{\partial s} \wedge \cO_{\ell}\\
&= \frac{\partial}{\partial s} \wedge \bigwedge_{h=2}^{k} \mathcal{O}_{z_h} \wedge \cO_{\ell} = \frac{\partial}{\partial s} \wedge \mathcal{O}^\otimes_{\ell,k} = \mathcal{QO}^\otimes_{\ell,k}. \\
\end{align*}

\end{proof}

\section{Complex morphisms}

The goal of this section is to compare, via trajectories having quilted clusters as their sources, \\ $((\mathcal{C}\ell^\otimes)^{(0)},(\delta^\otimes)^{(0)})$ and $((\mathcal{C}\ell^\otimes)^{(1)},(\delta^\otimes)^{(1)})$, two complexes built from different data sets by using an appropriate homotopy of these data.

As before fix $c \geq 0$, and let $(\mathcal{C}\ell^\otimes)^{(i)} = \mathcal{C}\ell^\otimes(L^{(i)},f^{(i)},f^{(i)}_0, \dots, f^{(i)}_c)$, $i = 0,1$, be the appropriately ordered products of critical points.

We now define trajectories between $x^- \in (\mathcal{C}\ell^\otimes)^{(0)}$ and $x^+ \in (\mathcal{C}\ell^\otimes)^{(1)}$ which satisfy the gradient and pseudoholomorphic equations associated with a coherent system of perturbation homotopies $\mathcal{QP}^H$ between $P^{(0)}$ and $P^{(1)}$.

Let $(\phi^H_s)_{s\in[0,1]}$ be a hamiltonian flow such that $\phi^H_0 = Id$ and $\phi^H_1(L^{(0)}) = L^{(1)}$. Let then $f^H_s: \phi^H_s(L^{(0)}) \rightarrow \R$ be a Morse-Smale family of functions. Then, the bundle $\mathcal{M}^H \rightarrow [0,1]$ will be chosen so that the fiber over $s$ is a $C^{\epsilon}$ Banach space tangent to $f^H_s$ in the space of smooth Morse-Smale functions on $L$. Also, let $(J^H_s)_{s\in[0,1]}$ be a smooth homotopy between $J^{(0)}$ and $J^{(1)}$ in $\mathbf{J}(TM,\omega)$. Then, $\mathcal{J}^H \rightarrow [0,1]$ will be chosen so that the fiber over $s$ is a $C^{\epsilon}$ Banach space tangent to $J^H_s$ in $\mathbf{J}(TM,\omega)$. Finally, for every pair $0 \leq r_1 < r_2 \leq c$, choose a smooth Morse-Smale homotopy $\{f^H_{r_1} - f^H_{r_2}\} \rightarrow [0,1]$ from $f^{(0)}_{r_1} - f^{(0)}_{r_2}$ to $f^{(1)}_{r_1} - f^{(1)}_{r_2}$ and choose a metric homotopy $g^H \rightarrow [0,1]$ from $g^{(0)}$ to $g^{(1)}$.

Now choose $\mathcal{Q}P^H = \{ \mathcal{QU}^\otimes_{\ell, k, \mathfrak{l}} \overset{p^H_{\ell,k,\mathfrak{l}}}{\rightarrow} \mathcal{M}^H \times \mathcal{J}^H \}_{\ell,k \geq 0, \mathfrak{l} \in \mathfrak{L}}$ a coherent system of perturbation homotopies between $P^{(0)}$ and $P^{(1)}$ such that for any $k \geq 1$, $p^H_{\ell,k,\mathfrak{l}}$ is the pullback of $p^H_{\ell,0,\mathfrak{l}}$ via the forgetful map $\mathcal{QC}l_{\ell,k,\mathfrak{l}}^\otimes \rightarrow \mathcal{QC}l_{\ell,0,\mathfrak{l}}^\otimes$ so that the perturbation data does not depend on the position of the interior markings. Note that this still allows a very general choice of almost-complex structures over the quilted disks.

As to define the differential in the monotone setting, we forget the interior markings over quilted clusters. For every $\ell,k \geq 0$, $\mathfrak{l} \in \mathfrak{L}$ and quilted $\otimes$-cluster $G \in \mathcal{QC}l_{\ell,k,\mathfrak{l}}^\otimes$, we consider the configuration resulting from forgetting its interior marked points, but without stabilizing it, and shall refer to it by $G' \in \mathcal{QC}l_{\ell,\mathfrak{l}}^\otimes$. Again, removing the interior markings on a monovalent or bivalent unquilted disk or on a monovalent quilted disk will result in instability. However, as for non-quilted clusters, the orientations of $\ddel_{G'}$ naturally correspond to those of $\ddel_{G}$. Moreover, a monotone coherent system of perturbation homotopies $\mathcal{Q}P^H$ defined as above defines a perturbation homotopy over any $G' \in \mathcal{QC}l_{\ell,\mathfrak{l}}^\otimes$.

\begin{defi} \label{qtraj}
        Let $x^- = x_0^{(1)} \ldots x_0^{(q)} \in (\mathcal{C}\ell^\otimes)^{(0)}$ and $x^+ = x_1^{(1)} \ldots x_{\ell^{(1)}}^{(1)} \ldots x_1^{(q)} \ldots x_{\ell^{(q)}}^{(q)} \in (\mathcal{C}\ell^\otimes)^{(1)}$ two generators with $\ell^{(r)} \geq 1$, $1 \leq r \leq q$. Consider a pair $(u, G)$ such that

\begin{enumerate}
        \item $G = G^{(1)} \times \dots \times G^{(q)} \in \mathcal{QC}l_{\ell^{(1)}}^\otimes \times \dots \times \mathcal{QC}l_{\ell^{(q)}}^\otimes$,

        \item $u: G = \overset{q}{\underset{r=1}{\bigsqcup}} G^{(r)} \rightarrow M$ continuous such that $\forall 1 \leq r \leq q$,
        \begin{enumerate}

                \item $u(v_j(G^{(r)})) = x_j^{(r)}$ for $0 \leq j \leq \ell^{(r)}$, this naturally defines a labeling on the endpoints of $G^{(r)}$,

                \item $u(p) \subset \phi^H_{s\circ p^H(p)}(L)$, for every $p \in \partial G^{(r)}$,



                \item over every line $l$ of $G^{(r)}$, $u$ satisfies the gradient equation $du(-\frac{\partial}{\partial t}(p)) = -\nabla_{g^H_{s\circ p^H}}(\pi_\mathcal{M} \circ p^H(p)) \circ u(p)$ $\forall p \in l$,

                \item over every disc $d$ of $G^{(r)}$, $u$ satisfies the pseudoholomorphic equation $du \circ j(p) = (\pi_\mathcal{J} \circ p^H(p))(u(p)) \circ du(p)$ $\forall p \in d$, where $j$ is the underlying complex structure of $d$.
        \end{enumerate}

\end{enumerate}

As in the non-quilted case, over such a trajectory, one can consider the linearized Cauchy-Riemann operator $\ddel_u$ on $u^*TM \rightarrow G$ with boundary conditions $u|_{\partial G}^*TL \rightarrow \partial G$ and path of matrices $A$ given by the hessians of the functions determined by $\mathcal{Q}P^H$. As above, take $(G,u.o_u)$ where $(G,u)$ is a trajectory and $o_u$ is an orientation of $\ddel_u$ and denote $[(G,u.o_u)]$ the homotopy class of these data modulo permutations of the interior markings over every quilted cluster.

We call $[(G,u.o_u)]$  a quilted Floer trajectory from $x^+$ to $x^-$. We define its index $\mu([(G,u.o_u)]) = \mu(x^-) - \mu(x^+) + \mu(F_u)$ and its area $\omega([(G,u.o_u)])$ by the area of the trajectory with boundary in $L^{(0)}$ defined by applying $\phi^H$ over the partially quilted part of $(G,u.o_u)$. Define $QF(x^+, x^-)$ as the set of all the quilted Floer trajectories from $x^+$ to $x^-$.
\end{defi}



We then define a map that counts rigid quilted trajectories between elements of $(\mathcal{C}\ell^\otimes)^{(0)}$ and $(\mathcal{C}\ell^\otimes)^{(1)}$. We start off with defining intermediate operators that have cardinality $\ell$ inputs and cardinality $1$ outputs:

if $x$ is a generator of $(\mathcal{C}\ell^\otimes)^{(1)}$ with $q(x) = \ell$, we define
\[
        h_\ell(x) = \underset{\substack{x^- \in (\mathcal{C}\ell^\otimes)^{(0)} \\ q(x^-)=1}}{\sum} \: \underset{\substack{[(G,u, \mathcal{QO}^\otimes_{\ell})] \in QF(x,x^-) \\ \mu([(G,u,\mathcal{QO}^\otimes_{\ell})]) = -(\ell -1) }}{\sum} < \mathcal{QO}_{\ell}^\otimes \# \mathcal{O}_{x}^\otimes ,\mathcal{O}_{x^- t^{\frac{\omega(u)}{\tau N_L}}}^\otimes > x^- t^{\frac{\omega(u)}{\tau N_L}}.
\]

where the orientation over every $(G,u)$ is chosen via the following exact sequence of operators:

\begin{diagram} 
        0 & \rTo & L^{m,p}_0 (G, TG, T \partial G) & \rTo &&& L^{m,p}(G,u^*TM,u^*TL) &&& \rTo & \frac{L^{m,p}(G,u^*TM,u^*TL)}{L^{m,p}_0 (G, TG, T \partial G)} & \rTo & 0\\
        & & \dTo^{\ddel_G} &&&& \dTo^{\ddel_u} &&&& \dTo^{\ddel_u / \ddel_G} & &\\
        0 & \rTo & \R \times L^{m-1,p}_\pi (G,TG) && \rTo && L^{m-1,p}(G,u^*TM) && \rTo && \frac{L^{m-1,p}(G,u^*TM)}{\R \times L^{m-1,p}_\pi (G,TG)} & \rTo & 0 \\
\end{diagram}

The right-hand side operator can be made surjective, and hence an isomorphism, by choosing generically $\mathcal{Q}P^H$. 
So, if we orient $\ddel_G$ as $\mathcal{QO}_{\ell}^\otimes$, its reference orientation obtained from $\mathcal{QO}_{\ell,k}^\otimes$, $\ddel_u$ inherits an orientation denoted by the same symbol.


We extend $h_\ell$ to a $\Lambda$-module map and define the morphism $H$ on arbitrary elements as
\[
        H = \underset{ \sum_i \ell^{(i)} \geq 0}{\sum} \, \underset{q \geq 1}{\sum} (-1)^{\sum^q_{i=1} (q -i)(\ell^{(i)} - 1)}  h_{\ell^{(1)}} \otimes \dots \otimes h_{\ell^{(q)}}. 
\]

Note that it is well defined by compactness of both $M$ and $L$, fairly standard Gromov-type compactness results for pseudoholomorphic discs with lagrangian boundary and the usual compactness results from Morse theory.

\begin{prp} \label{coch_map}
        $H$, defined as above, is a cochain map from $((\mathcal{C}\ell^\otimes)^{(1)},(\delta^\otimes)^{(1)})$ to \\
$((\mathcal{C}\ell^\otimes)^{(0)},(\delta^\otimes)^{(0)})$. That is, we have $H \circ (\delta^\otimes)^{(1)} = (\delta^\otimes)^{(0)} \circ H$.
\end{prp}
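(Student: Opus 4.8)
The plan is to run the same compactness/gluing argument that established Theorem \ref{coch_comp}, now on the moduli of quilted $\otimes$-clusters. Concretely, for a pair of generators $x^+ \in (\mathcal{C}\ell^\otimes)^{(1)}$ and $x^- \in (\mathcal{C}\ell^\otimes)^{(0)}$ with $\mu(x^-) - \mu(x^+) + \mu(F_u) = -(\ell-1)+1 = 2-\ell$ (so that the relevant moduli of quilted Floer trajectories is $1$-dimensional), I would consider the $1$-dimensional moduli space $\mathcal{M}(x^+,x^-)$ of such trajectories. By the transversality results of section \ref{tran_floe} extended to the quilted setting (using that $\mathcal{Q}P^H$ can be chosen generically via the extension operators of the analogue of lemma \ref{rest_bair}), the reduction procedure on nonsimple trajectories à la \cite{L}, \cite{BC}, and the gluing theorem of \cite{BC}, this space is a compact $1$-dimensional piecewise smooth manifold with boundary. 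The boundary points are exactly the codimension-one degenerations, which by the discussion of the boundary structure of $\qcllkr$ are of two kinds: either a breaking occurs over the quilted components, producing a configuration in a $\mathcal{QC}\ell^\otimes_{\ell^{(1)},k^{(1)}} \times \mathcal{C}\ell^\otimes_{\ell^{(2)},k^{(2)}}$ $1$-corner — these are counted by $(\delta^\otimes)^{(0)} \circ H$ — or a breaking occurs below the quilted components, producing a configuration in a $\mathcal{C}\ell^\otimes_{q,k^{(q+1)}} \times \mathcal{QC}\ell^\otimes_{\ell^{(1)},k^{(1)}} \times \dots \times \mathcal{QC}\ell^\otimes_{\ell^{(q)},k^{(q)}}$ $1$-corner — these are counted by $H \circ (\delta^\otimes)^{(1)}$. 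Since the total oriented boundary count of a compact $1$-manifold is zero, one obtains $H \circ (\delta^\otimes)^{(1)} = (\delta^\otimes)^{(0)} \circ H$ once the signs are matched.

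The sign verification is where the actual content lies, and I would carry it out in two parallel parts corresponding to the two facet types. For the lower facets, lemma \ref{qout1} gives $\partial_F \mathcal{QO}^\otimes_{\ell,k} = (-1)^{(\ell^{(1)}-j)\ell^{(2)} + j}\, \mathcal{QO}^\otimes_{\ell^{(1)},k^{(1)}} \wedge \mathcal{O}^\otimes_{\ell^{(2)},k^{(2)}}$, and as in the proof of Theorem \ref{coch_comp} the orientation transfer under linear gluing for large gluing parameter corresponds canonically (see \cite{S}, \cite{FH}) to the gluing of trajectories, so a broken trajectory through such a facet is counted by $(\delta^\otimes)^{(0)} \circ H$ with a sign governed by this formula together with the Koszul reordering coming from the tensor positions. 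For the upper facets, lemma \ref{qout2} gives $\partial_F \mathcal{QO}^\otimes_{\ell,k} = (-1)^{\sum_{i=1}^q (q-i)(\ell^{(i)}-1)}\, \mathcal{O}^\otimes_{q,k^{(q+1)}} \wedge \mathcal{QO}^\otimes_{\ell^{(1)},k^{(1)}} \wedge \dots \wedge \mathcal{QO}^\otimes_{\ell^{(q)},k^{(q)}}$, which is precisely the sign appearing in the definition of $H$, so the contribution assembles into $H \circ (\delta^\otimes)^{(1)}$. The bookkeeping is essentially the same algebraic computation as in remark \ref{rema_susp}: passing to the suspended picture kills the internal signs, and one checks that the two boundary contributions, which a priori differ by $\partial_F$-induced signs, in fact enter with opposite total sign so that their sum vanishes.

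I would organize the write-up as: (1) set up the $1$-dimensional moduli space and invoke compactness/gluing; (2) identify its two families of boundary points with the facet structure of $\qcllkr$; (3) do the sign comparison for lower facets using lemma \ref{qout1}, exactly mirroring the broken-case computation in Theorem \ref{coch_comp}; (4) do the sign comparison for upper facets using lemma \ref{qout2}; (5) also handle the degenerate small-$\ell$ cases (lines, $\ell=1$) where one of $\mathcal{QO}^\otimes$ is a translation generator $\frac{\partial}{\partial t}$ or $\frac{\partial}{\partial s}$, just as in the last cases of the proof of lemma \ref{out}; (6) conclude by summing the oriented boundary.

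The main obstacle I expect is the sign comparison for the upper facets: the breaking below the seam produces a \emph{tree} of quilted clusters rather than a simple concatenation, so one must carefully track (i) the reordering of the $z_h$ markings across the $q$ quilted components, (ii) the $\frac{\partial}{\partial s}$ factors for each seam, and (iii) the Koszul signs generated when $h_{\ell^{(i)}}$ in tensor slot $i$ is composed with $m_\ell$ acting on one of its inputs, all of which must reassemble into exactly the sign $(-1)^{\sum_i(q-i)(\ell^{(i)}-1)}$ that defines $H$ composed with the sign defining $(\delta^\otimes)^{(1)}$. The corresponding computation in Theorem \ref{coch_comp} for $\delta^\otimes \circ \delta^\otimes$ is the right template, but the presence of the seam and the $q$-fold (rather than $2$-fold) splitting makes it noticeably more delicate; I would expect this to be the longest and most error-prone part of the argument. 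The lower-facet case, by contrast, is a near-verbatim copy of the broken case in Theorem \ref{coch_comp}.
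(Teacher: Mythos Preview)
Your overall architecture matches the paper's proof: set up the $1$-dimensional moduli of quilted Floer trajectories, invoke transversality/compactness/gluing, identify its boundary with the two facet types of $\qcllkr$, and match signs via lemmas \ref{qout1} and \ref{qout2}. The paper also reduces to cardinality-one targets via lemma 1.9 of \cite{W2}, which trims the bookkeeping, but that is a convenience rather than a new idea.

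However, you have the facet-to-composition assignment reversed, and this would derail the sign computation. A lower facet $\mathcal{QC}\ell^\otimes_{\ell^{(1)},k^{(1)}} \times \mathcal{C}\ell^\otimes_{\ell^{(2)},k^{(2)}}$ has the unquilted piece attached on a \emph{leaf} of the quilted one, i.e.\ \emph{above} the seam, hence in the $(1)$-data region; the corresponding broken trajectory is an $m_\ell$-trajectory in the $(1)$ complex followed by a quilted trajectory, which is exactly $H \circ (\delta^\otimes)^{(1)}$, not $(\delta^\otimes)^{(0)} \circ H$. Dually, an upper facet $\mathcal{C}\ell^\otimes_{q,k^{(q+1)}} \times \mathcal{QC}\ell^\otimes_{\ell^{(1)},k^{(1)}} \times \dots \times \mathcal{QC}\ell^\otimes_{\ell^{(q)},k^{(q)}}$ has the unquilted piece containing the \emph{root}, i.e.\ \emph{below} the seam in the $(0)$-data region; this is $q$ quilted trajectories feeding into one $m_{q}$-trajectory in the $(0)$ complex, which is $(\delta^\otimes)^{(0)} \circ H$. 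The paper's explicit computations confirm this: the term $h_{\ell^{(1)}} \otimes \dots \otimes h_{\ell^{(q)}} \circ (Id^{j-1}\otimes m_\ell \otimes Id^{q-j})$ is handled with lemma \ref{qout1}, and $(Id^{r-1}\otimes m_{\ell'}\otimes Id^{q-r}) \circ h_{\ell'^{(1)}} \otimes \dots \otimes h_{\ell'^{(q+\ell'-1)}}$ with lemma \ref{qout2}. If you carry your swapped identification into the sign verification, the exponents from \ref{qout1} and \ref{qout2} will be paired with the wrong Koszul signs from the definitions of $H$ and $\delta^\otimes$, and the two contributions will not assemble into a common $(-1)^\Phi$ times the outward normal. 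Once you correct the assignment, your steps (3) and (4) go through exactly as in the paper.
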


\begin{proof}

Again, the transversality results of section \ref{tran_floe}, the gluing theorem of \cite{BC} and some standard compactness results, the composite trajectories counted by $H \circ (\delta^\otimes)^{(1)} - (\delta^\otimes)^{(0)} \circ H$ are in 1:1 correspondence with the boundary of a compact 1-dimensional piecewise smooth manifold. Actually, we can now assume simplicity of the $1$-dimensional families of Floer trajectories by further choosing generically the perturbation over the quilted disk components.

It remains to see that the two composite trajectories corresponding to the two ends of a connected component of this moduli space are counted with opposite orientation. Again, the computations are performed in \cite{W1}, and moreover, lemma 1.9 of \cite{W2} allows one to assume that the trajectories have cardinality one targets, but include a full verification for completeness. 

Indeed,  we see that each of these compositions is counted by either

\begin{align*}
&(-1)^{\sum^q_{i=1} (q -i)(\ell^{(i)} - 1)}  h_{\ell^{(1)}} \otimes \dots \otimes h_{\ell^{(q)}} \circ \\
&(-1)^{(\sum^q_{i=1} \ell^{(i)}-j)\ell + (j-1)} Id^{(j-1)}\otimes m_{\ell} \otimes Id^{(q-j)},
\end{align*}
so by lemma \ref{qout1} it is counted with orientation
\begin{align*}
&(-1)^{\sum^q_{i=1} (q -i)(\ell^{(i)} - 1) + (\sum^q_{i=1} \ell^{(i)}-j)\ell + (j-1)}  \mathcal{QO}_{\ell^{(1)}}^\otimes \wedge \dots \wedge \mathcal{QO}_{\ell^{(q)}}^\otimes \wedge \mathcal{O}_{\ell}^\otimes = \\
&(-1)^{\sum^q_{1} (q -i)(\ell^{(i)} - 1) + \sum^q_{r+1} \ell^{(i)} \ell  + \sum^{r-1}_{1} \ell^{(i)} + \sum^q_{r+1} (\ell^{(i)}-1)\ell} \\
&\mathcal{QO}_{\ell^{(1)}}^\otimes \wedge \dots \wedge \Big((-1)^{((\sum^q_{1} \ell^{(i)}-j) - \sum^q_{r+1} \ell^{(i)})\ell + (j-1) - \sum^{r-1}_{1} \ell^{(i)}} \mathcal{QO}_{\ell^{(r)}}^\otimes \wedge \mathcal{O}_{\ell}^\otimes \Big) \\
&\wedge \dots  \wedge \mathcal{QO}_{\ell^{(q)}}^\otimes = \\
&(-1)^\Phi \mathcal{QO}_{\ell^{(1)}}^\otimes \wedge \dots \wedge \partial_{F} \mathcal{QO}_{\ell^{(r)} + \ell -1}^\otimes  \wedge \dots \mathcal{QO}_{\ell^{(q)}}^\otimes
\end{align*}
where $\Phi = \underset{i \neq r}{\sum} (q -i)(\ell^{(i)} - 1)  + \sum^{r-1}_{1} \ell^{(i)} + (q-r)(\ell^{(r)} + \ell -1)$, or by

\begin{align*}
&(-1)^{(q-r)\ell' + (r-1)} Id^{(r-1)}\otimes m_{\ell'} \otimes Id^{(q-r)}  \circ \\
&(-1)^{\sum^{q+(\ell'-1)}_{i=1} (q+(\ell'-1) -i)(\ell'^{(i)} - 1)} h_{\ell'^{(1)}} \otimes \dots \otimes h_{\ell'^{(q + (\ell' -1))}},
\end{align*}
so by lemma \ref{qout2} it is counted with orientation
\begin{align*}
&(-1)^{\sum^{q+(\ell'-1)}_{1} (q+(\ell'-1) -i)(\ell'^{(i)} - 1) + (q-r)\ell' + (r-1)} \mathcal{O}_{\ell'}^\otimes \wedge \mathcal{QO}_{\ell'^{(1)}}^\otimes \wedge \dots \wedge \mathcal{QO}_{\ell'^{(q+(\ell'-1))}}^\otimes  = \\
&(-1)^{\underset{\substack{\scriptscriptstyle i \leq r \\ \scriptscriptstyle i \geq r+(\ell'-1) } }{\sum} (q -i)(\ell^{(i)} - 1) +(q-r)\sum^{\ell'}_{1} (\ell'^{(m+(r-1))}-1) +(\ell'-1)\sum^{r-1}_{1} (\ell'^{(i)}-1)  +(q-r)\ell' + (r-1)+ \sum^{r-1}_{1} \ell'(\ell'^{(i)}-1)} \\
&\mathcal{QO}_{\ell'^{(1)}}^\otimes \wedge \dots \wedge \Big((-1)^{((\sum^{\ell'}_{m=1} (\ell'-m)(\ell'^{(m+(r-1))}-1) } \mathcal{O}_{\ell'}^\otimes \wedge \mathcal{QO}_{\ell'^{(r)}}^\otimes \wedge \dots \wedge \mathcal{QO}_{\ell'^{(r+(\ell'-1))}}^\otimes\Big) \\
&\wedge \dots \mathcal{QO}_{\ell'^{(q+(\ell'-1)))}}^\otimes = \\
&(-1)^\Phi \mathcal{QO}_{\ell^{(1)}}^\otimes \wedge \dots \wedge \partial_{F'} \mathcal{QO}_{\ell^{(r)} + \ell -1}^\otimes  \wedge \dots \mathcal{QO}_{\ell^{(q)}}^\otimes
\end{align*}

The conclusion is that each broken trajectory induces $(-1)^\Phi$ times the outward normal orientation on its 1-dimensional glued family. Since $\Phi$ is constant along the homotopy class, we get the result. 

\end{proof}

\section{Homotopies between complex morphisms}

The goal of this section is to relate complex morphisms $H^{(0)}$ and $H^{(1)}$ built from different coherent systems of perturbation homotopies $\mathcal{Q}P^{H^{(0)}}$ and $\mathcal{Q}P^{H^{(1)}}$. We want to verify that they are homotopic and consequently complete the proof of the functoriality of the $\otimes$-complex construction.

Choose a smooth homotopy $(\mathcal{Q}P^{H^{(t)}})_{t\in[0,1]}$ from $\mathcal{Q}P^{H^{(0)}}$ to $\mathcal{Q}P^{H^{(1)}}$ so that $\mathcal{Q}P^{H^{(t)}}$ satisfies definition \ref{qpht} for all $t \in [0,1]$. Take $[0,1] \times \qcllkr$ and pullback the universal curve $\qulkr \rightarrow \qcllkr$. Then one can choose to use $\mathcal{Q}P^{H^{(t)}}$ as a choice of perturbation data over $\{t\} \times \qcllkr$.

Then, one defines a pair $[(G,u,o_u)]$ a $t$-quilted Floer trajectory from $x^+$ to $x^-$ as being a Floer quilted trajectory with perturbation data given by $\mathcal{Q}P^{H^{(t)}}$ (see definition \ref{qtraj}). 

We then define a differential map that counts rigid $t$-quilted trajectories between elements of $(\mathcal{C}\ell^\otimes)^{(1)}$ and $(\mathcal{C}\ell^\otimes)^{(0)}$ with $t$ varying in $[0,1]$. We start off with defining intermediate operators that have cardinality $\ell$ inputs and cardinality $1$ outputs:

if $x$ is a generator of $(\mathcal{C}\ell^\otimes)^{(1)}$ with $q(x) = \ell$, we define
\[
        k^{(t)}_\ell(x ) = \underset{\substack{x^- \in (\mathcal{C}\ell^\otimes)^{(0)} \\ q(x^-)=1}}{\sum} \: \underset{\substack{[(G,u,\frac{\partial}{\partial t} \wedge \mathcal{QO}^\otimes_{\ell})] \in tQF(x,x^-) \\ \mu(G,u) = -(\ell) }}{\sum} <\frac{\partial}{\partial t} \wedge \mathcal{QO}^\otimes_{\ell} \# \mathcal{O}_x^\otimes , \mathcal{O}_{x^-}^\otimes > x^-.
\]

where the orientation over every $(G,u)$ is again chosen via an exact sequence of operators with this time the source operator being $0 \oplus \ddel_{G}$ and has reference orientation $\frac{\partial}{\partial t} \wedge \mathcal{QO}^\otimes_{\ell}$. Again, the right-hand side operator in the associated exact sequence is made an isomorphism by choosing generically $\mathcal{Q}P^{H^{(t)}}$ and $\ddel_u$ then inherits an orientation also called $\frac{\partial}{\partial t} \wedge \mathcal{QO}^\otimes_{\ell}$.

We again extend $k^{(t)}_\ell$ linearly and define the homotopy $K$ on elements of cardinality $\sum_{i=1}^{q} \ell^{(i)} = q'$ as
\[
        K = \underset{ q \geq 1}{\sum} (-1)^q \, \underset{ q' }{\sum} (-1)^{\sum^q_{i=1} (q -i)(\ell^{(i)} - 1)} \, \overset{q}{\underset{ p= 1}{\sum}} (-1)^{\sum^{p-1}_{i=1} (\ell^{(i)} - 1)} \underset{ t\in [0,1]}{\sum} h^{(t)}_{\ell^{(1)}} \otimes \dots \otimes k^{(t)}_{\ell^{(p)}} \otimes \dots \otimes h^{(t)}_{\ell^{(q)}}. 
\]


\begin{prp} \label{coch_homo}
        For $K$, defined as above, is a cochain homotopy between $H^{(1)}$ and $H^{(0)}$. That is, we have $H^{(1)} - H^{(0)} =  K \circ (\delta^\otimes)^{(1)} + (\delta^\otimes)^{(0)} \circ K$.
\end{prp}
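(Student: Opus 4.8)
The plan is to mimic the proof of Proposition~\ref{coch_map}, now using the moduli spaces $[0,1]\times\qcllkr$ as source spaces. First I would set up the one–dimensional moduli problem: for each pair of generators $x^+\in(\mathcal{C}\ell^\otimes)^{(1)}$, $x^-\in(\mathcal{C}\ell^\otimes)^{(0)}$ with $\mu([(G,u,o_u)])=1-\ell$, consider the space of $t$-quilted Floer trajectories with $t$ free in $[0,1]$ whose index, as a family over $[0,1]$, is zero. Transversality for such families follows from Lemma~\ref{rest_bair} applied to the perturbation homotopies $(\mathcal{Q}P^{H^{(t)}})_{t\in[0,1]}$, exactly as in the proof of Theorem~\ref{coch_comp} and Proposition~\ref{coch_map}, together with the reduction argument of \cite{BC},\cite{L} for nonsimple disks; and we may further assume simplicity of the interior one–parameter families by a generic choice of perturbation over the quilted disk components. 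By the gluing theorem of \cite{BC} and the usual Gromov- and Morse-type compactness, this moduli space is a compact one–dimensional piecewise smooth manifold whose boundary consists of: (i) the $t=1$ endpoints, contributing $H^{(1)}$; (ii) the $t=0$ endpoints, contributing $H^{(0)}$; (iii) breakings over the $(1)$-data, i.e.\ $1$-corners of the form $\mathcal{QC}\ell^\otimes\times\mathcal{C}\ell^\otimes$, contributing a term of $K\circ(\delta^\otimes)^{(1)}$; and (iv) breakings over the $(0)$-data, i.e.\ $1$-corners of the form $\mathcal{C}\ell^\otimes\times\mathcal{QC}\ell^\otimes\times\dots\times\mathcal{QC}\ell^\otimes$, contributing a term of $(\delta^\otimes)^{(0)}\circ K$.

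The combinatorial identity $\partial([0,1]\times\qcllkr)$ then gives, at the chain level, the desired relation $H^{(1)}-H^{(0)}=K\circ(\delta^\otimes)^{(1)}+(\delta^\otimes)^{(0)}\circ K$ provided the signs match. So the substantive content is the orientation bookkeeping. Here I would proceed as in the proof of Proposition~\ref{coch_map}: on each codimension-one boundary face of type (iii) or (iv) I would compare the orientation $\frac{\partial}{\partial t}\wedge\mathcal{QO}^\otimes_{\ell}$ induced on the glued family with the product reference orientations, using Lemma~\ref{qout1} for the $\mathcal{QC}\ell^\otimes_{\ell^{(1)},k^{(1)}}\times\mathcal{C}\ell^\otimes_{\ell^{(2)},k^{(2)}}$ faces and Lemma~\ref{qout2} for the $\mathcal{C}\ell^\otimes_{q,k}\times\mathcal{QC}\ell^\otimes_{\ell^{(1)},k^{(1)}}\times\dots$ faces, each now prefixed by the $\frac{\partial}{\partial t}$ direction which commutes past the relevant wedge factors up to a sign $(-1)^{(\ell-2+2k)}$ — this is the source of the leading $(-1)^q$ and the inner $(-1)^{\sum^{p-1}_{i=1}(\ell^{(i)}-1)}$ appearing in the stated formula for $K$. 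As in that proof, one checks the exponent that multiplies the outward normal orientation is constant along each homotopy class, and that for the two endpoints of a given connected component of the $1$-manifold these exponents differ so that the two contributions enter with opposite sign in $K\circ(\delta^\otimes)^{(1)}+(\delta^\otimes)^{(0)}\circ K$, while the $t=0$ and $t=1$ caps reproduce $H^{(0)}$ and $H^{(1)}$ with the correct relative sign.

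The main obstacle I expect is precisely this sign verification: unlike Proposition~\ref{coch_map}, there are now two distinct families of boundary breakings (over the $(0)$- and over the $(1)$-data), and one must keep track of the extra $\frac{\partial}{\partial t}$ factor, the quilting-radius factor $\frac{\partial}{\partial s}$ already built into $\mathcal{QO}^\otimes$, and how the Koszul rule shuffles the $m_\ell$ or $h_{\ell^{(i)}}$ against the $Id$-blocks and the $k^{(t)}_{\ell^{(p)}}$ insertion. I would organize this by first proving a single combinatorial lemma — the analogue of Lemmas~\ref{qout1}–\ref{qout2} for the boundary of $[0,1]\times\qcllkr$, stating $\partial_{F}\bigl(\tfrac{\partial}{\partial t}\wedge\mathcal{QO}^\otimes_{\ell,k}\bigr)$ in terms of the product of $\mathcal{QO}^\otimes$'s (and one $\mathcal{O}^\otimes$) with an explicit sign — and then reading the sign in the definition of $K$ directly off that lemma, exactly as the sign in $\delta^\otimes$ was read off Lemma~\ref{out}. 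A secondary, more routine point is checking that the $t=0$ and $t=1$ ends genuinely give $H^{(0)}$ and $H^{(1)}$ with the orientations fixed in Proposition~\ref{coch_map} (no spurious sign), which follows because at $t=0,1$ the operator $0\oplus\ddel_G$ has reference orientation $\frac{\partial}{\partial t}\wedge\mathcal{QO}^\otimes_{\ell}$ and $\frac{\partial}{\partial t}$ is inward at $t=0$ and outward at $t=1$.
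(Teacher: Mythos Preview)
Your proposal is correct and follows essentially the same approach as the paper: set up the one-dimensional moduli over $[0,1]\times\qcllkr$, identify the four boundary types, and verify signs using Lemmas~\ref{qout1} and~\ref{qout2} with the extra $\tfrac{\partial}{\partial t}$ factor prepended. The paper carries out the two sign checks directly (reducing first to cardinality-one targets via lemma~1.11 of \cite{W2}) rather than packaging them into a separate boundary-orientation lemma, and it does not invoke a global $(-1)^{(\ell-2+2k)}$ commutation as you suggest---the $(-1)^q$ and $(-1)^{\sum^{p-1}(\ell^{(i)}-1)}$ in $K$ are verified by a direct computation that moves $\tfrac{\partial}{\partial t}$ to the front in each case---but the substance is the same.
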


\begin{proof}

Again, by choosing generically the perturbation data and applying the arguments of section \ref{tran_floe}, the composite trajectories counted by $ H^{(1)} - H^{(0)} -  K \circ (\delta^\otimes)^{(1)} - (\delta^\otimes)^{(0)} \circ K$ are in 1:1 correspondence with the boundary of a compact 1-dimensional piecewise smooth manifold.

It remains to see that the broken trajectories corresponding to the two ends of a connected component induce opposite orientations on their homotopy class. Using lemma 1.11 of \cite{W2}, we will assume that the target of the trajectories are of cardinality $1$.

First,  $ -K \circ (\delta^\otimes)^{(1)}$ counts with

\begin{align*}
&  k^{(t)}_{\ell^{(1)}} \circ (-1)^{(q-j)\ell + (j-1)} Id^{(j-1)}\otimes m_{\ell} \otimes Id^{(q-j)},
\end{align*}
so by lemma \ref{qout1} it counts with orientation
\begin{align*}
& - (-1)^{(q-j)\ell + j} \frac{\partial}{\partial t} \wedge \mathcal{QO}_{\ell^{(1)}}^\otimes \wedge \mathcal{O}_{\ell}^\otimes = \\
& \partial_{F} (\frac{\partial}{\partial t} \wedge \mathcal{QO}_{\ell^{(1)} + \ell -1}^\otimes).
\end{align*}

Second, $ - (\delta^\otimes)^{(0)} \circ K$ counts with

\begin{align*}
& -m_{\ell} \circ (-1)^{\ell} \, (-1)^{\sum^{\ell}_{i=1} (\ell -i)(\ell^{(i)} - 1)} \, (-1)^{\sum^{p-1}_{i=1} (\ell^{(i)} - 1)} h^{(t)}_{\ell^{(1)}} \otimes \dots \otimes k^{(t)}_{\ell^{(p)}} \otimes \dots \otimes h^{(t)}_{\ell^{(\ell)}},
\end{align*}
so by lemma \ref{qout2} it counts with orientation
\begin{align*}
& -(-1)^{\sum^{\ell}_{i=1} (\ell -i)(\ell^{(i)} - 1)} \, (-1)^{\ell} \, (-1)^{\sum^{p-1}_{i=1} (\ell^{(i)} - 1)} \mathcal{O}_{\ell}^\otimes \wedge \mathcal{QO}_{\ell^{(1)}}^\otimes \wedge \dots \wedge \frac{\partial}{\partial t} \wedge \mathcal{QO}_{\ell^{(p)}}^\otimes \wedge \dots \wedge \mathcal{QO}_{\ell^{(\ell)}}^\otimes  \\
&= -(-1)^{\sum^{\ell}_{i=1} (\ell -i)(\ell^{(i)} - 1)} \, \frac{\partial}{\partial t} \wedge \mathcal{O}_{\ell}^\otimes \wedge \mathcal{QO}_{\ell^{(1)}}^\otimes \wedge \dots \wedge \mathcal{QO}_{\ell^{(p)}}^\otimes \wedge \dots \wedge \mathcal{QO}_{\ell^{(\ell)}}^\otimes  \\
&= \partial_{F} (\frac{\partial}{\partial t} \wedge \mathcal{QO}_{\sum^{\ell}_{i=1} \ell^{(i)}}^\otimes).
\end{align*}

Since, by construction, $H^{(1)}$ and $- H^{(0)}$ also count with orientations that induce outward normal orientations on their glued families, we get the result.

\end{proof}

\begin{prp} \label{h0h1_homo}
        For $H^{(i)}$ a cochain map between $\big((\mathcal{C}\ell^\otimes)^{(i+1)},(\delta^\otimes)^{(i+1)}\big)$ and \\
$\big((\mathcal{C}\ell^\otimes)^{(i)},(\delta^\otimes)^{(i)}\big)$, $i=0,1$, defined as above , and $H^{(1) \circ (0)}$ a cochain map between \\ 
$\big((\mathcal{C}\ell^\otimes)^{(2)},(\delta^\otimes)^{(2)}\big)$ and $\big((\mathcal{C}\ell^\otimes)^{(0)},(\delta^\otimes)^{(0)}\big)$ built from the concatenation of their perturbation homotopies. Then $H^{(0)} \circ H^{(1)}$ and $H^{(1) \circ (0)}$ are homotopic.
\end{prp}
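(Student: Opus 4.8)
The plan is to realize the composite $H^{(0)}\circ H^{(1)}$ and the concatenated morphism $H^{(1)\circ(0)}$ as the two families of boundary trajectories of a one-parameter moduli space of trajectories whose sources are quilted clusters carrying \emph{two} seams. First I would introduce, following the doubly-quilted disk construction of Ma'u--Woodward--Wehrheim (\cite{MWW}), the moduli spaces of disks with two parallel inner seams, complexify and take real loci in $\R\mathcal{M}_{\ell,k}$ exactly as in the one-seam case, and then run the collar-enlargement and smoothing procedure of Section \ref{sect_quil} (Lemma \ref{qsmoo_lemm}) verbatim to obtain spaces $\mathcal{Q}^2\mathcal{C}\ell_{\ell,k}^\otimes$ of twice-quilted $\otimes$-clusters. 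These are again orientable singular MWEC isomorphic to the corresponding disk moduli; their one-dimensional combinatorial structure is governed by an interval coordinate $\rho\in[0,1]$ measuring the distance between the two seams, and the codimension-one faces are of the types: $\rho=0$ (the two seams coincide, giving a single-seam quilted cluster with the concatenated homotopy datum, i.e.\ $H^{(1)\circ(0)}$); $\rho=1$ (the seams maximally separated, giving a composite of two one-seam quilted clusters, i.e.\ $H^{(0)}\circ H^{(1)}$); and the usual ``breaking'' faces obtained by bubbling off an unquilted cluster above, between, or below the seams, together with faces where one seam collides with an endpoint.

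Next I would set up the perturbation data: over the region above the upper seam use $P^{(2)}$, between the seams use the homotopy datum interpolating $P^{(2)}$ to $P^{(1)}$ (the one underlying $H^{(1)}$), below the lower seam use $P^{(0)}$, with the concatenated homotopy $P^{(2)}\rightsquigarrow P^{(1)}\rightsquigarrow P^{(0)}$ recovered along $\rho=0$. Coherence with respect to the corner product structure is imposed as in Definition \ref{qpht}, and the extension-by-induction argument of Lemma \ref{rest_bair} gives genericity; transversality for the $1$-dimensional families is achieved as in Section \ref{tran_floe} using the reduction to simple trajectories, after further generic choice of the almost complex structure over the (now doubly-)quilted components. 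Counting rigid twice-quilted trajectories defines intermediate operators $g_\ell$, and assembling them with the combinatorial signs dictated by the structure of $\mathcal{Q}^2\mathcal{C}\ell^\otimes$ produces a degree-$(-1)$ map $G^2:(\mathcal{C}\ell^\otimes)^{(2)}\to(\mathcal{C}\ell^\otimes)^{(0)}$. The standard Gromov- and Morse-type compactness arguments, exactly as in Theorem \ref{coch_comp} and Proposition \ref{coch_map}, show that the rigid-up-to-one-parameter trajectories form a compact $1$-manifold whose boundary is in bijection with the trajectories counted by $H^{(0)}\circ H^{(1)} - H^{(1)\circ(0)} - G^2\circ(\delta^\otimes)^{(2)} - (\delta^\otimes)^{(0)}\circ G^2$, so that $G^2$ is the desired cochain homotopy.

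The orientation bookkeeping is carried out as in the proof of Proposition \ref{coch_homo}: each boundary face gets the orientation $\partial_F$ of the reference orientation $\tfrac{\partial}{\partial\rho}\wedge\mathcal{QO}^\otimes$ (now with an extra seam coordinate), computed using Lemmas \ref{qout1} and \ref{qout2} applied to each of the two seams in turn, and one checks that the sign $(-1)^{\Phi}$ relating a given broken trajectory to the outward-normal orientation of its glued $1$-family is constant along the homotopy class, forcing the two ends of each component to be counted with opposite sign. The main obstacle I anticipate is the \emph{geometry of the $\rho\to 0$ degeneration}: as the two seams merge one must check that the collar structure on $\mathcal{Q}^2\mathcal{C}\ell^\otimes$ near that face genuinely matches, as an MWEC with its chosen smoothing, the one-seam space $\mathcal{Q}\mathcal{C}\ell^\otimes$ built from the concatenated homotopy, and that the interpolated perturbation data glue smoothly across this identification — this is the place where the ``toric singular'' nature of the quilted moduli and the delicacy of the collar procedure (already flagged after Lemma for $\qcllkr$) make the argument nontrivial, and it is essentially the doubly-quilted analogue of the compatibility checks in \cite{MWW}. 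A secondary but routine difficulty is verifying that the inductive extension of coherent perturbations still works in the presence of the new family of ``between-the-seams'' faces; this follows the same pattern as Lemma \ref{rest_bair} and I would treat it as such.
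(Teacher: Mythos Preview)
Your approach is essentially the paper's own: build biquilted $\otimes$-clusters from the two-seam disk moduli of \cite{MWW}, read off the four codimension-one face types as the terms $H^{(0)}\circ H^{(1)}$, $H^{(1)\circ(0)}$, $R\circ(\delta^\otimes)^{(2)}$, $(\delta^\otimes)^{(0)}\circ R$ (your $G^2$ is the paper's $R$), and verify signs as in Proposition \ref{coch_homo}. Two small corrections to your description: the biquilted moduli is not a product $[0,1]\times\mathcal{QC}\ell^\otimes$ governed by a single seam-distance coordinate $\rho$---it carries its own balanced-labeling toric structure analogous to $\qklk$, and the face you call ``$\rho=1$'' is really the degeneration where the biquilted piece breaks into a composition of one-seam quilted clusters (paper's type (a)); and there is no separate ``unquilted bubble between the seams'' face contributing a $(\delta^\otimes)^{(1)}$ term---the four face types in the paper's figure exhaust the codimension-one boundary, so drop that case from your list.
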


\begin{proof}

Again, by choosing generically the perturbation data, the composite trajectories counted by $ H^{(0)} \circ H^{(1)} = H^{(1) \circ (0)}$ are seen to be in 1:1 correspondence with the boundary of a compact 1-dimensional piecewise smooth manifold by the same techniques as in section \ref{tran_floe}.

This time the 1-dimensional homotopies are modeled upon an extension of the spaces of biquilted disks $\{ \mathcal{R}_{\ell,k} \}_{\ell,k \geq 0}$, these having the same type of "balanced labelings" local charts as $\qklk$ (see \cite{MWW}). Loosely speaking, the radius of larger seam will encode the position of the perturbation data that defines $H^{(1)}$ while the radius of the smaller one will encode the position of the data corresponding to $H^{(0)}$.

The $1$-boundaries of the space of biquilted disks come in four types (see \cite{MWW}) illustrated in figure \ref{fig20}. Type (a) rigid Floer trajectories correspond to those counted by $H^{(0)} \circ H^{(1)}$, while type (d) rigid Floer trajectories correspond to those counted by $H^{(1) \circ (0)}$. Finally, the trajectories modeled on type (b) (resp. (d)) correspond to those counted by $R \circ (\delta^\otimes)^{(2)}$ (resp. $(\delta^\otimes)^{(0)} \circ R$), where $R$ stands for the homotopy that counts rigid $(t_1,t_2)$-biquilted Floer trajectories.

\begin{figure}[h]
        \centering 
	\includegraphics[width=110mm]{./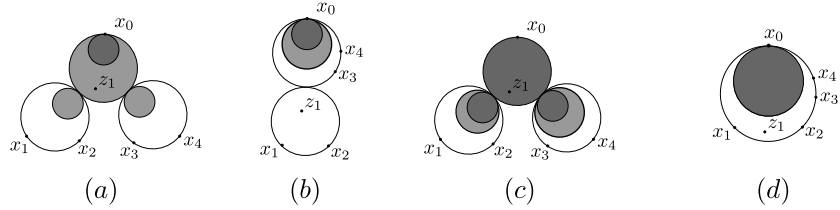}
        \caption{Types of facets of $\mathcal{R}_{\ell,k}$} \label{fig20}
\end{figure}

It remains to see that the broken trajectories corresponding to each term induce opposite orientations on their homotopy class. This sign computation is again very similar to the verification of proposition \ref{coch_homo}, proceeding as in \cite{W2}.

\end{proof}


A consequence of the above propositions is that the complex built using only one Morse function and perturbations, that is when we set $c=0$, is equivalent to the ones built from many. More precisely, let $(\mathcal{C}\ell_{ \{f,f_0, \dots, f_c\} }^\otimes,\delta^\otimes)$ be the complex obtained with $c\geq 1$, $(\mathcal{C}\ell_{\{ f\} }^\otimes,\delta^\otimes)$ be the subcomplex generated by the product of critical points of $f$ only and $(\mathcal{C}\ell_{ \{f_0, \dots, f_c\} }^\otimes,\delta^\otimes)$ the subcomplex generated without the critical points of $f$. In the monotone setting, these complexes support filtrations \\
$ \Big( (\mathcal{C}\ell_{ \{f,f_0, \dots, f_c\} }^\otimes)_{\leq c+d},\delta^\otimes \Big)_{ d \geq 0}$ and $ \Big( (\mathcal{C}\ell_{ \{f\} }^\otimes)_{\leq c+d},\delta^\otimes \Big)_{ c+d \geq 1}$ generated by products of length $c+d$ or less.

\begin{cor}
For $c, d\geq 0$, $\Big( (\mathcal{C}\ell_{ \{f,f_0, \dots, f_c\} }^\otimes)_{\leq c+d},\delta^\otimes \Big)$ and
$\Big( (\mathcal{C}\ell_{ \{f\} }^\otimes)_{\leq c+d},\delta^\otimes \Big)$ are homotopy equivalent.
\end{cor}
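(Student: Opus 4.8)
The plan is to produce an explicit homotopy equivalence between the two filtered complexes by running the comparison machinery of the previous sections with the two data sets chosen so that the intermediate homotopy data is especially simple. Concretely, I would apply Proposition \ref{coch_map} (and its companion Propositions \ref{coch_homo}, \ref{h0h1_homo}) to the following two instances of $\otimes$-cluster complexes: on one side, $\mathcal{C}\ell^\otimes(L,f,f_0,\dots,f_c)$ with the perturbation datum $P$ that we actually use, and on the other side, the ``same'' complex but where the functions $f_0,\dots,f_c$ have been isotoped through Morse-Smale families to functions that are all $C^\infty$-close to $f$ (so that $f_{r_1}-f_{r_2}$ is replaced by a small Morse perturbation of the zero situation, i.e.\ its critical points are forced into a controlled position). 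The hamiltonian isotopy $\phi^H$ is taken to be the identity since $L$ does not move; only the Morse data and $P$ vary, so the quilted $\otimes$-clusters of Section \ref{sect_quil} furnish the morphism $H$ between the two complexes. Since the construction is done with a single fixed almost complex structure $J$ in the monotone setting, the geometry of the disks is unchanged; only the gradient flow lines over the linear pieces get reorganized.

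\textbf{Key steps, in order.} First I would set up the target complex: choose $(f_0',\dots,f_c')$ Morse-Smale, each with a single maximum, and $C^\epsilon$-close to $f$ and to each other, with generic coherent perturbation datum $P'$; then $\mathcal{C}\ell^\otimes(L,f,f_0',\dots,f_c')$ is defined and filtered by cardinality exactly as $\mathcal{C}\ell^\otimes_{\{f,f_0,\dots,f_c\}}$ is, and the filtration degrees $\le c+d$ are preserved by $\delta^\otimes$ because $m_\ell$ raises cardinality by $\ell-1\ge 0$ and the total number of ``$V_{j_1,j_2}$-slots'' is bounded by $c$ independently of the geometry. Second, I would invoke Proposition \ref{coch_map} to get a cochain map $H$ from the $(f_0,\dots,f_c)$-complex to the $(f_0',\dots,f_c')$-complex, and observe that $H=h_1\oplus(\text{higher }h_\ell)$ decreases or preserves cardinality by the same bookkeeping, hence respects the filtration, so it restricts to maps $(\mathcal{C}\ell^\otimes)_{\le c+d}\to(\mathcal{C}\ell^\otimes)_{\le c+d}$. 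Third, using Propositions \ref{coch_homo} and \ref{h0h1_homo} applied to the concatenation of $H$ with its ``reverse'' morphism $H'$ (obtained by running the interpolation the other way), I would show $H'\circ H\simeq \mathrm{id}$ and $H\circ H'\simeq\mathrm{id}$ through homotopies $K$ which, again by the cardinality count, are filtration-preserving; therefore $H$ is a homotopy equivalence of filtered complexes. Fourth, for the target complex with the $f_r'$ all close to $f$, a standard Morse-theory continuation argument (the trajectories over the linear pieces with function $f_{r_1}'-f_{r_2}'$, which is a $C^\epsilon$-small Morse function, and the unperturbed-versus-perturbed comparison used already in the proof that $M=\max(f)$ is a unit) identifies $\mathcal{C}\ell^\otimes(L,f,f_0',\dots,f_c')$ up to filtered homotopy equivalence with a complex expressed entirely in terms of products of critical points of $f$; combining, one gets the filtered homotopy equivalence between $(\mathcal{C}\ell^\otimes_{\{f,f_0,\dots,f_c\}})_{\le c+d}$ and $(\mathcal{C}\ell^\otimes_{\{f\}})_{\le c+d}$, which is the assertion.

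\textbf{Main obstacle.} The genuinely delicate point is verifying that all the maps produced by the quilted-cluster machinery --- $H$, $H'$, and the cochain homotopies $K$ of Proposition \ref{coch_homo}, as well as the biquilted homotopy of Proposition \ref{h0h1_homo} --- are compatible with the cardinality filtration, i.e.\ that they never \emph{lower} the number of $V_{j_1,j_2}$-type factors below what the filtration index allows and never create spurious high-cardinality output with bounded energy. This needs the same contraction-pattern observation used to define $\delta^\otimes$ (over each disk, the sum of incoming line-labels equals the outgoing one, so the count of ``$f_{j_1}-f_{j_2}$ slots'' is a monotone quantity along degenerations), together with the Novikov-type energy bound that forbids infinitely many such factors at a fixed filtration level; once this is in place the truncation to $(\,\cdot\,)_{\le c+d}$ is automatic and the induced maps on the quotients are well defined. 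A secondary, but routine, issue is that $(\mathcal{C}\ell^\otimes_{\{f\}})_{\le c+d}$ is defined only for $c+d\ge 1$, so the statement is understood with that convention, and the case $c+d=0$ (or the degenerate low-index strata) must be checked separately, which is immediate since both sides are then concentrated in the unit $M$.
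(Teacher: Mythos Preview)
Your overall architecture---build quilted morphisms $H$ and $H'$ between the two complexes and then invoke Propositions \ref{coch_map}, \ref{coch_homo}, \ref{h0h1_homo} to show they are mutually inverse up to homotopy---is the same as the paper's. But the route you take to connect the multi-function complex to the pure-$f$ complex has a genuine gap, and the paper fills it differently.

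\textbf{The gap.} Your step four asserts that once $f_0',\dots,f_c'$ are $C^\epsilon$-close to $f$, a ``standard Morse-theory continuation argument'' identifies $\mathcal{C}\ell^\otimes(L,f,f_0',\dots,f_c')$ with a complex built only from $\mathrm{crit}(f)$. This is not justified: the generators of the former include critical points of $f'_{j_1}-f'_{j_2}$, which is a \emph{small} Morse function on $L$, and there is no canonical or even natural identification between $\mathrm{crit}(f'_{j_1}-f'_{j_2})$ and $\mathrm{crit}(f)$. Continuation in Morse theory compares two Morse functions on $L$, not a Morse function with the zero function; as $f'_{j_1}-f'_{j_2}\to 0$ the critical set does not converge to $\mathrm{crit}(f)$ but rather degenerates. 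So the limit you want to take does not make sense as stated, and the complex you end up with after step three still has the wrong generator set.

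\textbf{How the paper handles this.} The paper avoids this problem by inserting an intermediate complex $\big((\mathcal{C}\ell_{\{f\}}^\otimes)_{\le c+d,\mathfrak{L}},\delta^\otimes_{\mathfrak{L}}\big)$ whose generators are tensor products of critical points of $f$ alone but which carries the end-labeling data $\mathfrak{l}\in\mathfrak{L}$ (Definition \ref{lab}) as extra decoration. The quilted morphism $H$ is then defined to land in this labeled complex---a trajectory from $\mathcal{C}\ell_{\{f,f_0,\dots,f_c\}}^\otimes$ to $\mathcal{C}\ell_{\{f\}}^\otimes$ automatically induces a labeling on the output---and $H'$ goes back using labeled leaves even though they all map to $\mathrm{crit}(f)$. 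The compositions $H'\circ H$ and $H\circ H'$ are handled exactly as you outline. Finally, the labeled complex is shown to be equivalent to the unlabeled $\big((\mathcal{C}\ell_{\{f\}}^\otimes)_{\le c+d},\delta^\otimes\big)$ by observing that one may use the trivial-label perturbation data throughout, making the differential insensitive to labels. This bookkeeping device is the missing idea in your proposal.

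\textbf{A smaller point.} Your ``main obstacle'' is mis-identified. The filtration $(\,\cdot\,)_{\le c+d}$ is by total cardinality $q$ of the tensor word, not by the number of $V_{j_1,j_2}$-slots; since each $h_\ell$ takes $\ell$ inputs to one output, $H$ and the homotopies $K$ are automatically filtration-decreasing and no separate argument is needed.
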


Although the underlying morphisms of complex are built just as before using quilted trajectories with the appropriate perturbation datum above and below the quilted components, this requires more explanations.

\begin{proof}
Let $\Big( (\mathcal{C}\ell_{ \{f\} }^\otimes)_{\leq c+d, \mathfrak{L}},\delta^\otimes_{\mathfrak{L}} \Big)$ be the complex built just as $\Big( (\mathcal{C}\ell_{ \{f,f_0, \dots, f_c\} }^\otimes)_{\leq c+d},\delta^\otimes \Big)$, but having its labeled generators coming from the function $f$ only. This means that its differential generally uses different perturbations for different labelings.

A morphism $H:\Big( (\mathcal{C}\ell_{ \{f,f_0, \dots, f_c\} }^\otimes)_{\leq c+d},\delta^\otimes \Big) \rightarrow \Big( (\mathcal{C}\ell_{ \{f\} }^\otimes)_{\leq c+d, \mathfrak{L}},\delta^\otimes_{\mathfrak{L}} \Big)$ is defined by taking the perturbation data associated with $\delta^\otimes$ above the quilted components and the perturbation data of $\delta^\otimes_{\mathfrak{L}}$ under them as in definition \ref{qpht}. Notice that a trajectory from $\mathcal{C}\ell_{ \{f,f_0, \dots, f_c\} }^\otimes$ to $\mathcal{C}\ell_{ \{f\} }^\otimes$ induces a labeling on the root collection of critical points of $f$.

To define a homotopy inverse morphism \\ $H': \Big( (\mathcal{C}\ell_{ \{f\} }^\otimes)_{\leq c+d, \mathfrak{L}},\delta^\otimes_{\mathfrak{L}} \Big) \rightarrow \Big( (\mathcal{C}\ell_{ \{f,f_0, \dots, f_c\} }^\otimes)_{\leq c+d},\delta^\otimes \Big)$ we have to use trajectories with a labeling on the leaves even though they are mapped to critical points of the same function. $H'$ is then defined by taking the perturbation data of $\delta^\otimes_{\mathfrak{L}}$ above the quilted components and the perturbation data of $\delta^\otimes$ below.

Then, $H' \circ H$ and $H \circ H'$ is seen to be a homotopy equivalence. Indeed, proposition \ref{h0h1_homo} ensures that $H' \circ H$ is homotopic to the chain map built from the concatenated perturbation homotopies. We can choose this concatenation to be homotopically trivial, so $H' \circ H$ is homotopic to the chain morphism from $\Big( (\mathcal{C}\ell_{ \{f,f_0, \dots, f_c\} }^\otimes)_{\leq c+d},\delta^\otimes \Big)$ to itself built with the trivial perturbation homotopy. As it is usual in Morse homology, the latter must be the identity, simply because it counts the same trajectories as $\delta^\otimes$, but with index one lower so the only possibility left is the constant trajectories having lines as sources.

It remains to see that $\Big( (\mathcal{C}\ell_{ \{f\} }^\otimes)_{\leq c+d, \mathfrak{L}},\delta^\otimes_{\mathfrak{L}} \Big)$ and $\Big( (\mathcal{C}\ell_{ \{f\} }^\otimes)_{\leq c+d},\delta^\otimes \Big)$ are equivalent. Notice that when defining $\Big( (\mathcal{C}\ell_{ \{f\} }^\otimes)_{\leq c+d, \mathfrak{L}},\delta^\otimes_{\mathfrak{L}} \Big)$, one could have only used the trivial label perturbation data of $\delta^\otimes$. Therefore, $\Big( (\mathcal{C}\ell_{ \{f\} }^\otimes)_{\leq c+d, \mathfrak{L}},\delta^\otimes_{\mathfrak{L}} \Big)$ is equivalent to a complex that is invariant under changes of labelings, but the latter reduces to $\Big( (\mathcal{C}\ell_{ \{f\} }^\otimes)_{\leq c+d},\delta^\otimes \Big)$ when forgetting the labelings.
\end{proof}

\section{Categorical formulation}

The results of propositions \ref{coch_comp}, \ref{coch_map}, \ref{coch_homo} and \ref{h0h1_homo} could be stated in a more compact form. Let $\mathcal{L}^{mono, \pm}(M,\omega)$ be the category whose objects are tuples $(L,J,P,f,g)$ where:
\begin{itemize}
\item $L$ is a closed embedded monotone $\gl$ lagrangian submanifolds of $(M,\omega)$,
\item $J$ is a $\omega$-tamed smooth almost complex structure,
\item $(f,g)$ is a smooth Morse-Smale pair,
\item $P$ is a coherent system of perturbations of $J$ and $f$,
\end{itemize}

with $Hom_{\mathcal{L}^{mono, \pm}(M,\omega)}((L^{(0)},J^{(0)},P^{(0)},f^{(0)},g^{(0)}),(L^{(1)},J^{(1)},P^{(1)},f^{(1)},g^{(1)}))$ being made of pairs $(\phi^t_H,\mathcal{Q}P^H)$ where:
\begin{itemize}
\item $\phi^t_H$ is a hamiltonian isotopy such that $\phi^1_H(L^{(0)})=L^{(1)}$,
\item $\mathcal{Q}P^H$ is a coherent perturbation homotopy from $P^{(0)}$ to $P^{(1)}$ along a smooth homotopy from $(J^{(0)},f^{(0)},g^{(0)})$ to $(J^{(1)},f^{(1)},g^{(1)})$.
\end{itemize}

\begin{defi}
Let $h\mathcal{L}^{mono, \pm}(M,\omega)$ be the quotient category obtained from $\mathcal{L}^{mono, \pm}(M,\omega)$ by identifying homotopic morphisms.
\end{defi}

Now write $\Lambda$-mod for the (abelian) category of $\Lambda$-modules, $K(\Lambda \text{-mod})$ for the category of cochain complexes over $\Lambda$-mod and $hK(\Lambda \text{-mod})$ for the (triangulated) homotopy category of the latter.


\begin{thm}
\begin{diagram}
h\mathcal{L}^{mono, \pm}(M,\omega) & && \rTo^{\mathcal{C}\ell^\otimes} && & hK(\Lambda \text{-mod}) \\
(L,J,P,f,g) & && \rMapsto  && & (\mathcal{C}\ell_{\{ f\} }^\otimes(M,\omega,L,J,P,f,g),\delta^\otimes(M,\omega,L,J,P,f,g))
\end{diagram}
is a contravariant functor.
\end{thm}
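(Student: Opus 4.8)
Given an object $(L,J,P,f,g)$ of $\mathcal{L}^{mono,\pm}(M,\omega)$ — so $c=0$, $V=crit(f)\otimes\Lambda$ and $\mathcal{C}\ell_{\{f\}}^\otimes = T(V)$ — Theorem~\ref{coch_comp} gives $\delta^\otimes\circ\delta^\otimes=0$, so that $(\mathcal{C}\ell_{\{f\}}^\otimes,\delta^\otimes)$ is an object of $K(\Lambda\text{-mod})$. Since all auxiliary choices entering ``$P$'' (coherent systems of ends, strip-like ends, the Banach charts $\mathcal{M}_f$, $\mathcal{J}_J$) are part of the given datum, the assignment on objects is unambiguous once that datum is fixed.

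\textbf{Morphisms.} To a morphism $(\phi^t_H,\mathcal{Q}P^H)$ from $(L^{(0)},J^{(0)},P^{(0)},f^{(0)},g^{(0)})$ to $(L^{(1)},J^{(1)},P^{(1)},f^{(1)},g^{(1)})$ I attach the cochain map $H$ of Proposition~\ref{coch_map}, which runs from $((\mathcal{C}\ell^\otimes)^{(1)},(\delta^\otimes)^{(1)})$ to $((\mathcal{C}\ell^\otimes)^{(0)},(\delta^\otimes)^{(0)})$; this reversal of direction is the source of the contravariance. This defines a map on the morphism sets of $\mathcal{L}^{mono,\pm}(M,\omega)$ with values in $K(\Lambda\text{-mod})$, and to pass to the quotient category $h\mathcal{L}^{mono,\pm}(M,\omega)\to hK(\Lambda\text{-mod})$ I must check that homotopic morphisms are sent to homotopic cochain maps. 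That is exactly Proposition~\ref{coch_homo}: a smooth homotopy $(\mathcal{Q}P^{H^{(t)}})_{t\in[0,1]}$ of coherent perturbation homotopies produces a cochain homotopy $K$ with $H^{(1)}-H^{(0)}=K\circ(\delta^\otimes)^{(1)}+(\delta^\otimes)^{(0)}\circ K$, so $H^{(0)}$ and $H^{(1)}$ agree in $hK(\Lambda\text{-mod})$.

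\textbf{Units and composition.} The identity of an object is represented by $(\mathrm{id},\text{trivial perturbation homotopy})$; the corresponding count involves only the constant quilted trajectories whose sources are single lines (any nonconstant contribution would have index one lower), hence the associated cochain map is $\mathrm{Id}$, so units go to units. For composition, the composite in $\mathcal{L}^{mono,\pm}(M,\omega)$ of $(\phi^t_{H^{(1)}},\mathcal{Q}P^{H^{(1)}})\colon(0)\to(1)$ with $(\phi^t_{H^{(0)}},\mathcal{Q}P^{H^{(0)}})\colon(1)\to(2)$ is obtained by concatenating the hamiltonian isotopies and the perturbation homotopies, and Proposition~\ref{h0h1_homo} states precisely that the cochain map $H^{(1)\circ(0)}$ built from this concatenation is cochain homotopic to $H^{(0)}\circ H^{(1)}$. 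In $hK(\Lambda\text{-mod})$ this reads $\mathcal{C}\ell^\otimes(\beta\circ\alpha)=\mathcal{C}\ell^\otimes(\alpha)\circ\mathcal{C}\ell^\otimes(\beta)$ for composable $\alpha,\beta$, which is contravariant functoriality. Assembling the four points yields the theorem.

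\textbf{Where the work lies.} All the analytic substance — transversality (Section~\ref{tran_floe}), the gluing theorem of \cite{BC}, Gromov/Morse compactness, and the orientation bookkeeping relating the reference orientations of $\mathcal{C}\ell^\otimes$, $\mathcal{QC}\ell^\otimes$ and the biquilted moduli $\{\mathcal{R}_{\ell,k}\}$ to the $A_\infty$/multiplihedron combinatorics — is already contained in Propositions~\ref{coch_comp},~\ref{coch_map},~\ref{coch_homo},~\ref{h0h1_homo}, so the residual task is organizational: confirming that the concatenation of morphisms in the category matches the geometric concatenation of perturbation homotopies used in Proposition~\ref{h0h1_homo}, and that the auxiliary choices packaged in ``$P$'' and ``$\mathcal{Q}P^H$'' never obstruct composability. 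The genuinely delicate ingredient, which I expect a careful reader to examine most closely, is Proposition~\ref{h0h1_homo} itself: associativity up to homotopy rests on the four-type boundary analysis of the biquilted-disk moduli of \cite{MWW}, and that is the step on which the functoriality of the whole construction ultimately hinges.
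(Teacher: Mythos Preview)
Your proposal is correct and matches the paper's approach exactly: the theorem is stated there without proof, as an explicit repackaging of Theorem~\ref{coch_comp} and Propositions~\ref{coch_map},~\ref{coch_homo},~\ref{h0h1_homo}, and you have simply spelled out which result handles which functor axiom (your treatment of the identity morphism via constant line trajectories is the same argument the paper uses in the corollary immediately following). One small notational slip: your labeling of the perturbation homotopies in the composition paragraph is swapped relative to the paper's convention in Proposition~\ref{h0h1_homo} (there $H^{(i)}$ is the cochain map $(\mathcal{C}\ell^\otimes)^{(i+1)}\to(\mathcal{C}\ell^\otimes)^{(i)}$, so the morphism $(0)\to(1)$ in $\mathcal{L}^{mono,\pm}$ should carry the data producing $H^{(0)}$, not $H^{(1)}$), but this does not affect the argument.
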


\chapter{Regularity for spaces of Floer trajectories} \label{tran_floe}

This section is intended to establish the regularity results needed in theorem \ref{coch_comp} and propositions \ref{coch_map}, \ref{coch_homo} and \ref{h0h1_homo}. This is performed by adapting the reduction to simple trajectories procedures of \cite{BC} making use of the disk decomposition results of \cite{L} and the monotonicity hypothesis.

\section{Decomposition results}

We first set some conventions. Throughout this section, a generic subset of a topological space will mean a second category subset in the sense of Baire, that is, a countable intersection of open dense subsets. Let, as in section \ref{subs_comp}, $\mathcal{J} = \mathcal{J}_J = exp_{J}( B_{J} \subset C^\epsilon(M,T_{J}\mathbf{J}(TM,\omega)))$ be a Banach chart of smooth $\omega$-tamed almost complex structures on $(M,\omega)$ being $C^\epsilon$ relative to $J$ (see \cite{F2}) and $\mathcal{M}_f = exp_f( B_f  \subset C^\epsilon(L,\R))$ be a Banach chart of smooth Morse-Smale functions on $(L,g)$ being $C^\epsilon$ relative to $f$.

\begin{defi} \label{simp}
A Floer trajectory configuration $u:(C,\partial C) \rightarrow (M,L)$ is called simple if the following conditions are satisfied:
\begin{itemize}
\item $u|_D$ is simple for every $D \in \text{Disks}(C)$, that is, for every disk $D$ of $C$ there is an open and dense subset $S\subset D$ such that $du|_D(s) \neq 0$ and $u|_D^{-1}(u|_D(s))=s$ for all $s\in S$,
\item the maps $\{u|_D \}_{D \in \text{Disks}(C)}$ are absolutely distinct, that is, there is no disk $D$ of $C$ such that $u(D)\subset \underset{D' \in \text{Disks}(C) \backslash \{D\} } {\bigcup} u(D')$,
\item the maps $\{u|_L \}_{L \in \text{Lines}(C)}$ are absolutely distinct, that is, there is no line segment $L$ of $C$ such that $u(L)\subset \underset{L' \in \text{Lines}(C) \backslash \{L\} } {\bigcup} u(L')$.
\end{itemize}
\end{defi}

By standard arguments (see \cite{MS}), for generic $p \in \mathcal{J} \times \mathcal{M} = \mathcal{J}_J \times \mathcal{M}_f \times \mathcal{M}_{f_1-f_0} \times \dots \mathcal{M}_{f_c-f_{c-1}}$ and associated coherent perturbation data $P_p$, we get over every such simple $u$ an exact sequence of operators:

\begin{diagram}
        0 & \rTo & L^{m,p}_0 (C, TC, T \partial C) & \rTo &&& L^{m,p}(C,u^*TM,u^*TL) &&& \rTo & \frac{L^{m,p}(C,u^*TM,u^*TL)}{L^{m,p}_0 (C, TC, T \partial C)} & \rTo & 0 \\
        & & \dTo^{\ddel_C} &&&& \dTo^{\ddel_u} &&&& \dTo^{\ddel_u / \ddel_C} & & \\
        0 & \rTo & L^{m-1,p}_\pi (C,TC) && \rTo && L^{m-1,p}(C,u^*TM) && \rTo && \frac{L^{m-1,p}(C,u^*TM)}{L^{m-1,p}_\pi (C,TC)} & \rTo & 0 \\
\end{diagram}

where $\ddel_u / \ddel_C$ is surjective and therefore $ker(\ddel_u / \ddel_C)$ is of dimension $Ind(\ddel_u) - Ind(\ddel_C)$. The latter can be seen as being isomorphic to $T_u \mathfrak{M}$ where $\mathfrak{M}$ is the moduli of (simple) Floer configurations near $u$ with source in the same open stratum as $C$. Combining this with lemma \ref{rest_bair}, the spaces of simple Floer configurations are smooth for generic pairs $(p,P_p)$.

Our goal is now to show that, in the monotone setting, over a generic subset 
\begin{align*}
& \mathcal{P}\text{ert}^{gen} \subset \mathcal{P}\text{ert} \equiv \{ (p,P_p) | p\in \mathcal{J}_J \times \mathcal{M}_f \times \mathcal{M}_{f_1-f_0} \times \dots \mathcal{M}_{f_c-f_{c-1}}, \\
& P_p \hspace{0.1cm} \text{monotone coherent system of perturbations of} \hspace{0.1cm} p \hspace{0.1cm} \text{vanishing on} \hspace{0.1cm} \mathcal{V} \},
\end{align*}
$\mathcal{V}$ being a coherent system of ends, the considered Floer trajectories are simple:

\begin{prp} \label{gene_tran}
Let $L\subset (M,\omega)$ be a monotone lagrangian submanifold with $N_L \geq 2$. There exists a generic subset $\mathcal{P}\text{ert}^{gen} \subset \mathcal{P}\text{ert}$ such that for every $(p,P_p) \in \mathcal{P}\text{ert}^{gen}$, every Floer configuration $u:(C,\partial C) \rightarrow (M,L)$ satisfying $P_p$ and such that $Ind(\ddel_u) \leq -(\ell -2) +1$ is simple.
\end{prp}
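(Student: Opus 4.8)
The plan is to combine the standard Sard--Smale transversality theory for \emph{simple} Floer configurations with a reduction argument in the spirit of \cite{BC}, using Lazzarini's structure theorem for $J$-holomorphic disks \cite{L} together with the monotonicity hypothesis to control the drop of the Fredholm index. Throughout we use that in the present setting the perturbation system satisfies $\pi_\mathcal{J}\circ p\equiv 0$, so over every disk the defining equation is the genuine $J$-holomorphic equation for the fixed $J$, and that monotonicity gives $\omega([v])=\tau\mu([v])$ with $\mu([v])\geq N_L\geq 2$ for every non-constant $J$-disk $v$ with boundary on $L$.

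First I would fix, for each combinatorial type $T$ of $\otimes$-cluster and each end labeling $\mathfrak{l}$, the subset $\mathcal{P}\text{ert}^{simp}_{T,\mathfrak{l}}\subset\mathcal{P}\text{ert}$ over which the space of simple Floer configurations with source of type $T$ is cut out transversally. By the universal-moduli argument of \cite{MS} combined with the extension Lemma \ref{rest_bair} (which propagates genericity from lower-dimensional strata to higher ones), this $\mathcal{P}\text{ert}^{simp}_{T,\mathfrak{l}}$ is a Baire subset, and over it the operator $\ddel_u/\ddel_C$ appearing in the exact sequence \ref{seq2} is surjective; hence near a simple $u$ the moduli of simple configurations of that type is a manifold of dimension $Ind(\ddel_u)-Ind(\ddel_C)=Ind(\ddel_u)+(\ell-2)$ by Lemma \ref{coker}. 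In particular, over $\mathcal{P}\text{ert}^{gen}\equiv\bigcap_{T,\mathfrak{l}}\mathcal{P}\text{ert}^{simp}_{T,\mathfrak{l}}$, which is still Baire as a countable intersection, \emph{there is no simple Floer configuration $u$ with $Ind(\ddel_u)+(\ell-2)<0$.}

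Next I would set up the reduction. Given a Floer configuration $u$ satisfying $P_p$ that fails to be simple in the sense of Definition \ref{simp}, I would, iterating as in \cite{BC}: (i) if some disk $u|_D$ is not somewhere injective, replace it, via Lazzarini's decomposition \cite{L}, by the simple $J$-disks $v_1,\dots,v_r$ whose images cover $u(D)$, with $[u|_D]=\sum m_i[v_i]$ and therefore $\mu(F_{u|_D})=\sum m_i\mu([v_i])\geq\mu([v_i])+N_L$ whenever $\sum m_i>1$ or $r>1$; (ii) if some disk's image is absorbed by the others, or some line's image is absorbed by the other lines, delete that component and reconnect the adjacent edges, using uniqueness of negative gradient trajectories of the (perturbed) Morse--Smale functions to treat the line components. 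This process terminates because the total $\omega$-area and the number of disk components strictly decrease, and it produces a \emph{simple} Floer configuration $u^{red}$ with the same asymptotic generators $x^{\pm}$, a source $C^{red}$ with the same number $\ell$ of leaves, and Maslov boundary index dropped by at least $N_L\geq 2$; hence, by the index formula of Proposition \ref{inde_comp} and the identification $Ind(\ddel_u)=\mu(x^-)-\mu(x^+)+\mu(F_u)$, one gets $Ind(\ddel_{u^{red}})\leq Ind(\ddel_u)-2$. Now if $(p,P_p)\in\mathcal{P}\text{ert}^{gen}$ and $u$ is as in the statement, i.e.\ $Ind(\ddel_u)\leq-(\ell-2)+1$, but not simple, then $u^{red}$ is a simple configuration of type $C^{red}$ with $Ind(\ddel_{u^{red}})+(\ell-2)\leq Ind(\ddel_u)+(\ell-2)-2\leq 1-2=-1<0$, contradicting the conclusion of the previous paragraph; so $u$ must be simple, which proves the proposition.

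The main obstacle I expect is step (ii) above, namely making precise how the Lazzarini pieces together with the deleted line and disk components are reassembled into a bona fide $\otimes$-cluster Floer configuration \emph{for the same coherent perturbation datum $P_p$}: this is exactly where the constancy of $J$ over the disks, the coherence axioms of Definition \ref{pert}, and the treatment of ghost disks with their naturally induced hessians (condition \ref{line2}) all come into play. A closely related technical point is the careful verification that the Fredholm index of the cluster operator $\ddel_u$ genuinely drops by at least $N_L$ under reduction, which requires tracking Lazzarini's additivity of Maslov indices through the index computation of Proposition \ref{inde_comp} and the exact sequences relating $\ddel_u$ to the restrictions of $\ddel_u$ to the individual disk and line components.
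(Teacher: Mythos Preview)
Your overall strategy---reduce a non-simple configuration to a simple one with strictly smaller index and derive a contradiction from transversality---is exactly the paper's. The gap is in how you plan to resolve your ``main obstacle.'' You propose to reassemble the reduced pieces into a \emph{bona fide} $\otimes$-cluster Floer configuration for the same coherent datum $P_p$, so that the Baire set $\mathcal{P}\text{ert}^{gen}$ you built in the first paragraph (indexed by genuine cluster types $T$) applies to $u^{red}$. The paper observes that this generally fails: after deleting an absorbed disk and reattaching its incident lines to another disk, the incidence points of several lines may land on the same boundary point, and the planar order may be violated, so $r(C)$ is typically \emph{not} an $\otimes$-cluster and your transversality statement does not cover it.

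The paper's remedy is to enlarge the class of sources rather than force them back into clusters. It introduces explicit elementary reductions on the source (types I, II, III for $n\geq3$; more elaborate ``generalized'' reductions for $n\leq2$, where Lazzarini's graph can produce interior nodes and interior incidence points), equips each $r(C)$ with the \emph{induced} perturbation $r_*(P|_C)$, and runs Sard--Smale on a separate universal moduli $\mathcal{U}\text{niv}^{red}$ of simple \emph{reduced} configurations. The contradiction is then $Ind(\ddel_{r(u)}/\ddel_{r(C)})\leq 2k(r(C))-1$ against the $2k(r(C))$-dimensional kernel forced by the freedom to move interior markings (possible since $J$ is constant over the disks); your count $Ind(\ddel_{u^{red}})+(\ell-2)<0$ is the $k(r(C))=0$ instance of this. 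You should also be aware that the paper splits off the case $n\leq2$, where the index bookkeeping acquires an extra $-(n-1)N$ correction from interior nodes and interior incidences that must be tracked separately.
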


Then, we will get the following regularity result:

\begin{cor}
The spaces of Floer trajectory configurations having their source in the same open stratum as $C \in \cllkr$, satisfying $(p,P_p) \in \mathcal{P}\text{ert}^{gen}$ and $Ind(\ddel_u) \leq -(\ell-2) +1$ are smooth manifolds of dimension $Ind(\ddel_u) - Ind(\ddel_C)$.
\end{cor}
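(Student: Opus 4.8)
The plan is to reduce the assertion to the standard Sard--Smale transversality scheme, the key preliminary being that there are no non-simple configurations to worry about. By Proposition \ref{gene_tran}, for $(p,P_p)\in\mathcal{P}\text{ert}^{gen}$ every Floer trajectory configuration $u:(C,\partial C)\to(M,L)$ satisfying $P_p$ with $Ind(\ddel_u)\le -(\ell-2)+1$ is simple; hence the moduli space in the statement consists entirely of simple configurations, and it suffices to establish smoothness and the dimension formula for such. In particular the exact sequence of operators displayed just before the Corollary is available over every point of this moduli space once we further restrict $(p,P_p)$ as below.

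First I would form the universal moduli space of pairs $(u,(p,P_p))$, where $u$ is a simple solution of the pseudoholomorphic/gradient equations defining a Floer trajectory configuration with source in the fixed open stratum $\mathcal{C}\ell^\otimes_{\ell,k,T}$ and $(p,P_p)$ ranges over $\mathcal{P}\text{ert}$, and check by the usual argument — using the structure of simple maps (on each disk an open dense set of injective points where $du\neq0$, and absolutely distinct disk and line components, so that perturbations can be localized) — that the linearization of the defining section, now including the variation of $p$, is surjective. This makes the universal space a Banach manifold. The projection to $\mathcal{P}\text{ert}$ is Fredholm, and the displayed exact sequence shows $Ind(\ddel_u/\ddel_C)=Ind(\ddel_u)-Ind(\ddel_C)$ with $T_u\mathfrak{M}\cong\ker(\ddel_u/\ddel_C)$, where $\ddel_C$ is the tangent operator of Lemma \ref{coker}; thus the fiber dimension of the projection is $Ind(\ddel_u)-Ind(\ddel_C)$. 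Sard--Smale then yields a residual set of regular values in $\mathcal{P}\text{ert}$; intersecting it with $\mathcal{P}\text{ert}^{gen}$ and with the analogous residual sets for all $(\ell,k,\mathfrak{l})$ and all open strata (a countable family) gives again a residual set, which I rename $\mathcal{P}\text{ert}^{gen}$. Over any $(p,P_p)$ in it the fiber is cut out transversally, hence is a smooth manifold of dimension $Ind(\ddel_u)-Ind(\ddel_C)$, understood to be empty when this integer is negative.

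The one point requiring genuine care, and the main obstacle, is that $P_p$ must be coherent with respect to the product structure of the corners of $\mathcal{C}\ell^\otimes_{\ell,k}$, so the generic choice cannot be made independently on each stratum but must be assembled inductively on the dimension of the corners. This is exactly what Lemma \ref{rest_bair} supplies: since $E^\beta_m$ is a continuous linear right inverse of the restriction map $R_m$, preimages of residual sets under $R_m$ are residual, so extending a generic coherent choice on the strata of dimension $<m$ to the strata of dimension $\le m$ preserves genericity. Running this induction over all $m$ produces the single set $\mathcal{P}\text{ert}^{gen}$ for which the regularity statement holds simultaneously on every open stratum, which together with the reduction to simple configurations above completes the proof. (One also records, as an aside, that the orientation of the fiber is the one induced from $\mathcal{O}^\otimes_{\ell,k}$ via the same exact sequence, as used in Section \ref{subs_comp}.)
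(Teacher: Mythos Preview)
Your proposal is correct and follows exactly the approach the paper intends: the paper treats this Corollary as immediate from the paragraph preceding Proposition~\ref{gene_tran} (standard Sard--Smale over simple configurations, with the exact sequence giving the dimension and Lemma~\ref{rest_bair} handling coherence) together with Proposition~\ref{gene_tran} itself (which eliminates non-simple configurations under the index bound). You have simply written out in detail what the paper compresses into ``by standard arguments'' and the word ``Corollary''.
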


The proof of proposition \ref{gene_tran} will be based, as in \cite{BC}, on the following fundamental result of Lazzarini (\cite{L}):

\begin{thm} \label{lazz_deco}
Let $J \in \mathcal{J}(M, \omega)$ be any smooth $\omega$-tamed almost complex structure on $(M,\omega)$ and $v:(D_{\C},\partial D_{\C}) \rightarrow (M,L)$ be a nonconstant $J$-holomorphic disk. Then there exists a graph $\mathcal{G}(v) \subset D_{\C}$ with $\partial D_{\C} \subset \mathcal{G}(v)$ such that for every connected component $\mathcal{D} \subset D_{\C} \backslash \mathcal{G}(u)$, there is a surjective holomorphic map $\pi_{\overline{\mathcal{D}}}: (\overline{\mathcal{D}}, \partial \overline{\mathcal{D}}) \rightarrow (D_{\C},\partial D_{\C})$ and a simple $J$-holomorphic disk $v_{\mathcal{D}}:(D_{\C},\partial D_{\C}) \rightarrow (M,L)$ such that $v\mid_{\overline{\mathcal{D}}} = v_{\mathcal{D}} \circ \pi_{\overline{\mathcal{D}}}$.

If $m_\mathcal{D} \in \N$ stands for the degree of $\pi_{\overline{\mathcal{D}}}$, then, in $H_2(M,L)$, we have
\[
[v] = \underset{\mathcal{D}}{\sum} m_\mathcal{D} [v_{\mathcal{D}}].
\]
\end{thm}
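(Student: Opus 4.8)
The plan is to follow Lazzarini's original argument (\cite{L}), which I would reorganize into three stages: constructing the graph, analyzing the combinatorics of the complementary components, and assembling the homology identity. First I would introduce the \emph{non-injectivity set} $N(v) \subset D_\C$: the set of points $z$ at which either $dv(z)=0$ or there exists $z'\neq z$ with $v(z')=v(z)$ and $dv(z'),dv(z)$ spanning the same tangent line. Using the fact that a $J$-holomorphic disk has only isolated critical points and that self-intersections are either isolated interior points or occur along the boundary, one shows $N(v)$ has a natural real-analytic (indeed, locally a finite union of arcs) structure away from a finite set of exceptional points. The graph $\mathcal{G}(v)$ is then taken to be the closure of (the appropriate part of) $N(v)$ together with $\partial D_\C$; the key local models come from the Carleman similarity principle, which near a critical or self-tangency point conjugates $v$ to a holomorphic map $z\mapsto z^k$ (up to a homeomorphism), so locally $\mathcal{G}(v)$ looks like a finite collection of arcs meeting at the point. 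This already forces $\mathcal{G}(v)$ to be a (topological) graph and $\partial D_\C \subset \mathcal{G}(v)$.

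Next I would analyze a fixed connected component $\mathcal{D}$ of $D_\C \setminus \mathcal{G}(v)$. The crucial point is that on $\mathcal{D}$ the map $v$ is, by construction, locally injective and immersive away from the boundary of $\mathcal{D}$, and in fact $v|_{\overline{\mathcal{D}}}$ factors through a \emph{simple} disk: one defines an equivalence relation on $\overline{\mathcal{D}}$ identifying points with the same image under $v$ and having matching tangent data, and shows the quotient is again a disk $D_\C$ carrying an induced $J$-holomorphic map $v_\mathcal{D}$ which is simple. The map $\pi_{\overline{\mathcal{D}}}\colon \overline{\mathcal{D}}\to D_\C$ is the quotient map; it is holomorphic (again by the similarity principle, the complex structures match), surjective, proper, and hence a branched covering of some finite degree $m_\mathcal{D}\in\N$. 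Sending $\partial\overline{\mathcal{D}}$ to $\partial D_\C$ is automatic since $v(\partial\overline{\mathcal{D}})\subset L$ and the boundary arcs of $\mathcal{D}$ lie in $\mathcal{G}(v)$. The hardest part of the whole argument, and the place I would spend the most care, is precisely this factorization step: proving that the combinatorial/analytic structure of $N(v)$ is rigid enough that each complementary region really does ``unfold'' to a genuine simple disk — this requires the delicate local analysis of how the arcs of $\mathcal{G}(v)$ fit together at the exceptional points and a careful argument that no further identifications or branchings are hidden inside $\mathcal{D}$.

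Finally, for the homology identity, I would observe that $D_\C = \overline{\bigcup_\mathcal{D} \mathcal{D}} = \mathcal{G}(v)\cup\bigcup_\mathcal{D}\overline{\mathcal{D}}$, and since $\mathcal{G}(v)$ has measure zero (it is a graph, hence $1$-dimensional) it carries no relative $2$-cycle. Pushing forward the fundamental class $[D_\C,\partial D_\C]$ and using $v|_{\overline{\mathcal{D}}} = v_\mathcal{D}\circ\pi_{\overline{\mathcal{D}}}$ together with $(\pi_{\overline{\mathcal{D}}})_*[\overline{\mathcal{D}},\partial\overline{\mathcal{D}}] = m_\mathcal{D}\,[D_\C,\partial D_\C]$ (this is just the degree of the branched cover), one obtains
\[
[v] = \sum_\mathcal{D} (v|_{\overline{\mathcal{D}}})_*[\overline{\mathcal{D}},\partial\overline{\mathcal{D}}] = \sum_\mathcal{D} m_\mathcal{D}\,(v_\mathcal{D})_*[D_\C,\partial D_\C] = \sum_\mathcal{D} m_\mathcal{D}\,[v_\mathcal{D}]
\]
in $H_2(M,L)$. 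Since the number of components $\mathcal{D}$ is finite (another consequence of the finiteness of the exceptional set and properness), the sum is finite. I would emphasize that for the applications in this paper only the \emph{existence} of the decomposition and the resulting area/Maslov bookkeeping $\omega([v]) = \sum_\mathcal{D} m_\mathcal{D}\,\omega([v_\mathcal{D}])$, $\mu([v]) = \sum_\mathcal{D} m_\mathcal{D}\,\mu([v_\mathcal{D}])$ are needed, so I would state the theorem as a black box cited from \cite{L} rather than reproduce the full proof, and only sketch the above for the reader's orientation.
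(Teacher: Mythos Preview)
Your final instinct is exactly what the paper does: the theorem is not proved here at all but is quoted verbatim as a black box from Lazzarini \cite{L}, with no accompanying sketch. Your three-stage outline of Lazzarini's argument is a reasonable summary of the original, but it is extra material relative to this paper, which simply states the result and immediately uses it (together with its refinement Theorem~\ref{lazz_high} for $n\geq 3$) as input to the reduction procedure in the proof of Proposition~\ref{gene_tran}.
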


To improve the readability of the general argument, we first prove proposition \ref{gene_tran} for $n=dim(L) \geq 3$. In that case, theorem \ref{lazz_deco} has a stonger statement:

\begin{thm} \label{lazz_high}
If $n\geq 3$, then for a generic choice of $J\in \mathcal{J}(M, \omega)$, any nonconstant $J$-holomorphic disk $v:(D_{\C},\partial D_{\C}) \rightarrow (M,L)$ is multicovered, in the sense that there exists a simple $J$-holomorphic disk $r(v):(D_{\C},\partial D_{\C}) \rightarrow (M,L)$ and a surjective holomorphic map $\pi: (D_{\C},\partial D_{\C}) \rightarrow (D_{\C},\partial D_{\C})$ such that $v= r(v) \circ \pi$ and $\pi^{-1}(\partial D_{\C}) = \partial D_{\C}$. Therefore, in $H_2(M,L)$, we have $[v] = m [r(v)]$ where $m\geq 1$ is the degree of $\pi$.
\end{thm}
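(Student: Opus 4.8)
The plan is to derive the global multicovering statement from Lazzarini's local decomposition, Theorem~\ref{lazz_deco}, combined with the elementary fact that two distinct somewhere-injective $J$-holomorphic disks meet only in a finite set, whereas adjacent pieces of the complement of the bubbling graph always share a non-degenerate arc. Throughout, the genericity of $J$ and the hypothesis $n=\dim L\geq 3$ enter through the sharp form of Theorem~\ref{lazz_deco} (in particular, that the pieces $v_{\mathcal{D}}$ may be taken nonconstant and that $\mathcal{G}(v)$ is a genuine graph with the stated covering structure on each complementary region).

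First I would apply Theorem~\ref{lazz_deco} to the given nonconstant disk $v$, producing the graph $\mathcal{G}(v)\supset\partial D_{\C}$, the connected components $\mathcal{D}$ of $D_{\C}\setminus\mathcal{G}(v)$, the surjective holomorphic maps $\pi_{\overline{\mathcal{D}}}\colon(\overline{\mathcal{D}},\partial\overline{\mathcal{D}})\to(D_{\C},\partial D_{\C})$, and the simple disks $v_{\mathcal{D}}$ with $v|_{\overline{\mathcal{D}}}=v_{\mathcal{D}}\circ\pi_{\overline{\mathcal{D}}}$. The key step is to show that \emph{all} the $v_{\mathcal{D}}$ agree, up to reparametrization by $\mathrm{Aut}(D_{\C})$, with a single simple disk, which will be $r(v)$. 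For adjacent components $\mathcal{D}_i$, $\mathcal{D}_j$ meeting along an edge $e$ of $\mathcal{G}(v)$, the restriction $v|_e$ cannot be constant — otherwise $v$ would vanish to first order along an interior arc and hence be constant by unique continuation, contradicting $v$ nonconstant — so $v(e)$ is an infinite connected subset of $\mathrm{im}(v_{\mathcal{D}_i})\cap\mathrm{im}(v_{\mathcal{D}_j})$. Since distinct simple $J$-holomorphic disks intersect in a discrete, hence finite, set, this forces $\mathrm{im}(v_{\mathcal{D}_i})=\mathrm{im}(v_{\mathcal{D}_j})$ and therefore $v_{\mathcal{D}_i}=v_{\mathcal{D}_j}$ up to reparametrization. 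Because $D_{\C}$ is connected, any two complementary components are linked by a chain of pairwise-adjacent ones, so the equality propagates to every component; write $w=r(v)$ for the common simple disk.

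Next I would glue the maps $\pi_{\overline{\mathcal{D}}}$ into one proper holomorphic self-map of $D_{\C}$. Along a shared edge $e$ of $\mathcal{D}_i$ and $\mathcal{D}_j$ one has $w\circ\pi_{\overline{\mathcal{D}_i}}|_e=v|_e=w\circ\pi_{\overline{\mathcal{D}_j}}|_e$; as $w$ is simple, it is injective off a finite set, so $\pi_{\overline{\mathcal{D}_i}}|_e$ and $\pi_{\overline{\mathcal{D}_j}}|_e$ coincide off a finite set and hence everywhere by continuity. Thus the $\pi_{\overline{\mathcal{D}}}$ assemble to a continuous map $\pi\colon D_{\C}\to D_{\C}$, holomorphic on $D_{\C}\setminus\mathcal{G}(v)$; being bounded, it extends holomorphically across $\mathcal{G}(v)$ by removability of singularities, and by construction $v=w\circ\pi$ with $\pi(\partial D_{\C})\subset\partial D_{\C}$. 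Since each $\pi_{\overline{\mathcal{D}}}$ is a finite branched cover, $\pi$ is a nonconstant proper holomorphic self-map of the disk, i.e. a finite Blaschke product; in particular $\pi^{-1}(\partial D_{\C})=\partial D_{\C}$ and $\deg\pi=:m\geq 1$. The homology identity $[v]=m\,[r(v)]$ follows at once (and, compared with $[v]=\sum_{\mathcal{D}}m_{\mathcal{D}}[v_{\mathcal{D}}]$, yields $m=\sum_{\mathcal{D}}m_{\mathcal{D}}$).

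The step I expect to be the main obstacle is making the assembly rigorous near the vertices of $\mathcal{G}(v)$ and at points where $w$ passes through its own double locus or where the $\pi_{\overline{\mathcal{D}}}$ branch: one must verify that the glued map is genuinely continuous there and that the removable-singularity argument covers all of $\mathcal{G}(v)$, not just the interiors of its edges. This is precisely the place where the precise structural properties of $\mathcal{G}(v)$ from \cite{L}, valid for generic $J$ when $n\geq 3$, are needed.
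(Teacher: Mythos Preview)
The paper does not prove Theorem~\ref{lazz_high}; it is quoted without proof as a result of Lazzarini~\cite{L} (a sharpening of Theorem~\ref{lazz_deco} available for generic $J$ when $n\geq 3$) and then used as a black box in the $n\geq 3$ case of Proposition~\ref{gene_tran}. There is therefore no argument in the paper to compare yours against.

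That said, your attempted derivation from Theorem~\ref{lazz_deco} has a genuine gap. The claim that ``distinct simple $J$-holomorphic disks intersect in a discrete, hence finite, set'' is not a consequence of Theorem~\ref{lazz_deco} (which holds for \emph{every} $J$, with no genericity whatsoever) and is false as stated: the boundaries lie in the $n$-manifold $L$, and two simple disks can perfectly well share a boundary arc. The nearest tool the paper supplies is Lemma~\ref{non_over}, which for generic $J$ and $n\geq 3$ yields only \emph{containment} $v_{\mathcal D_i}(D_\C)\subset v_{\mathcal D_j}(D_\C)$ and $v_{\mathcal D_i}(\partial D_\C)\subset v_{\mathcal D_j}(\partial D_\C)$ from an infinite intersection, not equality. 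Upgrading containment of images of two simple disks to ``equal up to $\mathrm{Aut}(D_\C)$'' requires a further step --- for instance, constructing $\psi=v_{\mathcal D_j}^{-1}\circ v_{\mathcal D_i}$ on the somewhere-injective locus, extending holomorphically across the complement, and using simplicity of $v_{\mathcal D_i}$ to force $\deg\psi=1$. This is exactly where the hypotheses $n\geq 3$ and generic $J$ actually enter your argument, and it should be isolated and invoked explicitly rather than attributed to ``the sharp form of Theorem~\ref{lazz_deco}''. You are right to flag the gluing of the $\pi_{\overline{\mathcal D}}$ across the vertices of $\mathcal G(v)$ as the other delicate point; a bare bounded-extension argument is not obviously sufficient there without further input on the local structure of the frame from~\cite{L}.
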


Also, in that instance, we have a nonoverlapping result for absolutely distinct simple discs (see lemma 3.2.2 of \cite{BC}):

\begin{lem} \label{non_over}
If $n\geq 3$, then for a generic choice of $J\in \mathcal{J}(M, \omega)$, any two simple $J$-holomorphic disk $v_i:(D_{\C},\partial D_{\C}) \rightarrow (M,L)$, $i\in \{1,2\}$, with $v_1(D_{\C}) \bigcap v_2(D_{\C})$ being an infinite set are such that either $v_1(D_{\C}) \subset v_2(D_{\C})$ and $v_1(\partial D_{\C}) \subset v_2(\partial D_{\C})$ or $v_2(D_{\C}) \subset v_1(D_{\C})$ and $v_2(\partial D_{\C}) \subset v_1(\partial D_{\C})$.
\end{lem}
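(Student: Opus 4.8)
\textbf{Proof plan for Lemma \ref{lazz_high} and Lemma \ref{non_over}.}

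The plan is to deduce both statements from Lazzarini's decomposition theorem (Theorem \ref{lazz_deco}) together with a dimension-counting argument over the space of simple $J$-holomorphic disks, exploiting that $n = \dim L \geq 3$. First I would recall the standard evaluation-transversality package: for generic $J \in \mathcal{J}(M,\omega)$, the moduli space $\mathcal{M}^{simp}(A, J)$ of simple $J$-holomorphic disks with boundary on $L$ in a fixed class $A \in \pi_2(M,L)$ is a smooth manifold of dimension $n + \mu(A)$, and for each $k$ the $k$-fold boundary evaluation map $ev : \mathcal{M}^{simp}(A,J) \times (\partial D_\C)^k \to L^k$ (together with the interior evaluation maps into $M$) is a submersion at points with distinct arguments. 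Since $L$ is monotone with $N_L \geq 2$, we have $\mu(A) \geq 2$ for every nonconstant disk, which keeps all the relevant strata of expected dimension and is what makes the codimension counts below strictly positive.

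For Lemma \ref{lazz_high}: apply Theorem \ref{lazz_deco} to a nonconstant disk $v$. It suffices to rule out, for generic $J$, the appearance in the graph $\mathcal{G}(v)$ of any component $\mathcal{D} \subsetneq D_\C$ whose closure has boundary meeting $\partial D_\C$ in an arc but for which $\pi_{\overline{\mathcal{D}}}$ does not extend to a branched cover of the whole disk with $\pi^{-1}(\partial D_\C) = \partial D_\C$; equivalently, to show that the only configuration compatible with genericity is $\mathcal{G}(v) = \partial D_\C$ and $v = v_{\mathcal{D}} \circ \pi$ with $\pi$ a branched cover fixing the boundary setwise. The obstruction to any nontrivial interior node in the decomposition is that it forces two simple disks (or a simple disk and itself along two sub-arcs) to share an interior point AND a boundary point, which by the submersivity of the combined interior–boundary evaluation map is a condition of positive codimension (here is where $n \geq 3$ enters: the "interior intersection" condition cuts codimension $2n - $ (target dimension of $M$ for the interior pair) which, combined with the boundary-coincidence condition of codimension $n$, exceeds the dimension of the relevant stratum once $\mu \geq 2$). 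So for $J$ outside a countable union of positive-codimension strata, no such decomposition occurs, leaving exactly the multiply-covered conclusion. The class identity $[v] = m[r(v)]$ is then the $H_2(M,L)$ statement of Theorem \ref{lazz_deco} with a single component.

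For Lemma \ref{non_over}: take simple $J$-holomorphic disks $v_1, v_2$ with $v_1(D_\C) \cap v_2(D_\C)$ infinite. If neither image contains the other, then after applying Theorem \ref{lazz_deco} (or rather the simple-disk intersection theory of \cite{MS} adapted to the bordered case) one finds a point $p$ in the common image that is a regular value of both maps with finite fibers, so $v_1$ and $v_2$ locally foliate a common $2$-dimensional piece of $M$; pushing this to the boundary using the infinitude of the intersection together with unique continuation gives that the boundary arcs must also overlap — and then the configuration $(v_1, v_2)$ with an interior coincidence is cut out by an evaluation map of positive codimension in $\mathcal{M}^{simp} \times \mathcal{M}^{simp}$ (again using $n \geq 3$ to beat the dimension of the product), hence excluded for generic $J$ unless the images are nested as claimed, and then the boundary inclusion follows from unique continuation applied along $\partial D_\C$. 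I would assemble the countably many "bad" $J$-strata from all triples (class of $v_1$, class of $v_2$, combinatorial type of intersection) and intersect their complements.

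The main obstacle I expect is the bordered-curve evaluation transversality at the boundary: establishing that the mixed interior/boundary evaluation maps are submersions at configurations with coincident boundary points requires care with the reflection/doubling trick and with the fact that $J$ is fixed along $L$ to a constant on $TL$ in our setting — one must check the perturbations of $J$ allowed in $\mathcal{J}_J$ are rich enough to achieve the needed surjectivity despite the boundary constraint. Once that is in hand, both lemmas are a matter of organizing the dimension counts; these are exactly the arguments of \cite{BC}, Section 3.2, and I would cite that for the routine parts rather than reproduce them.
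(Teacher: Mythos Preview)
The paper does not prove this lemma at all: it is stated with the parenthetical ``(see lemma 3.2.2 of \cite{BC})'' and then used as a black box in the reduction procedure. Since your proposal ultimately defers to \cite{BC}, Section 3.2, you land in the same place as the paper.

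That said, your sketch of the argument conflates two distinct mechanisms, and as written the logic is tangled. The image containment $v_1(D_\C) \subset v_2(D_\C)$ (or vice versa) has nothing to do with $n \geq 3$ or with dimension counting: it comes purely from the local structure of $J$-holomorphic curves (Micallef--White / Carleman similarity), which says that two such curves through a common point either coincide on a neighborhood or intersect there in an isolated point. Infinite intersection thus forces local coincidence, and simplicity plus analytic continuation then forces one image to contain the other --- this holds for every $J$, in every dimension. What genericity of $J$ together with $n \geq 3$ actually buys is the \emph{boundary} statement $v_1(\partial D_\C) \subset v_2(\partial D_\C)$: one must exclude, by a transversality/codimension count on evaluation maps, the possibility that a boundary arc of one simple disk lands in the interior image of the other. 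Your paragraph reverses these roles, invoking the dimension count to rule out ``interior coincidence'' (which is exactly what you want to \emph{keep}) and invoking unique continuation for the boundary (where it does not directly apply). If you want to sketch the argument rather than merely cite it, separate these two steps cleanly.
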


\section{Case $n\geq 3$}

\begin{proof}[Proof of proposition \ref{gene_tran} for $n\geq 3$]

First, it will be convenient to define combinatorial operations, called reductions, on clusters that are built from elementary cut and paste operations called elementary reductions. These are motivated by the reductions of the Floer trajectories allowed by the above structural results for $J$-holomorphic disks.

\begin{defi} \label{elem_redu}
An elementary reduction of $C \in \cllkr$ is defined as the result of one of the following cut and paste manipulations:

\begin{enumerate}[I)]

\item Let $D \in \text{Disks}(C)$, $k(D)$ be the number of interior markings in $D$ and $d \in \N$ such that $d| k(D)$. For any holomorphic map $\pi:(D, \partial D) \rightarrow (D_{\C}, \partial D_{\C})$ of degree $d$, one can define an object $r_{I}(C)$ by removing $D\backslash \{ x_j(D) \}_{0\leq j \leq \ell(D)}$ from $C$, attaching $x_j(D)$ to $\pi(x_j(D)) \in \partial D_{\C}$ for $0\leq j \leq \ell(D)$ and marking the points $\{ \pi(z_h(D)) \}_{1 \leq h \leq \frac{k(D)}{d}}$ (see figure \ref{fig21}).

\begin{figure}[h] 
        \centering 
	\includegraphics[width=120mm]{./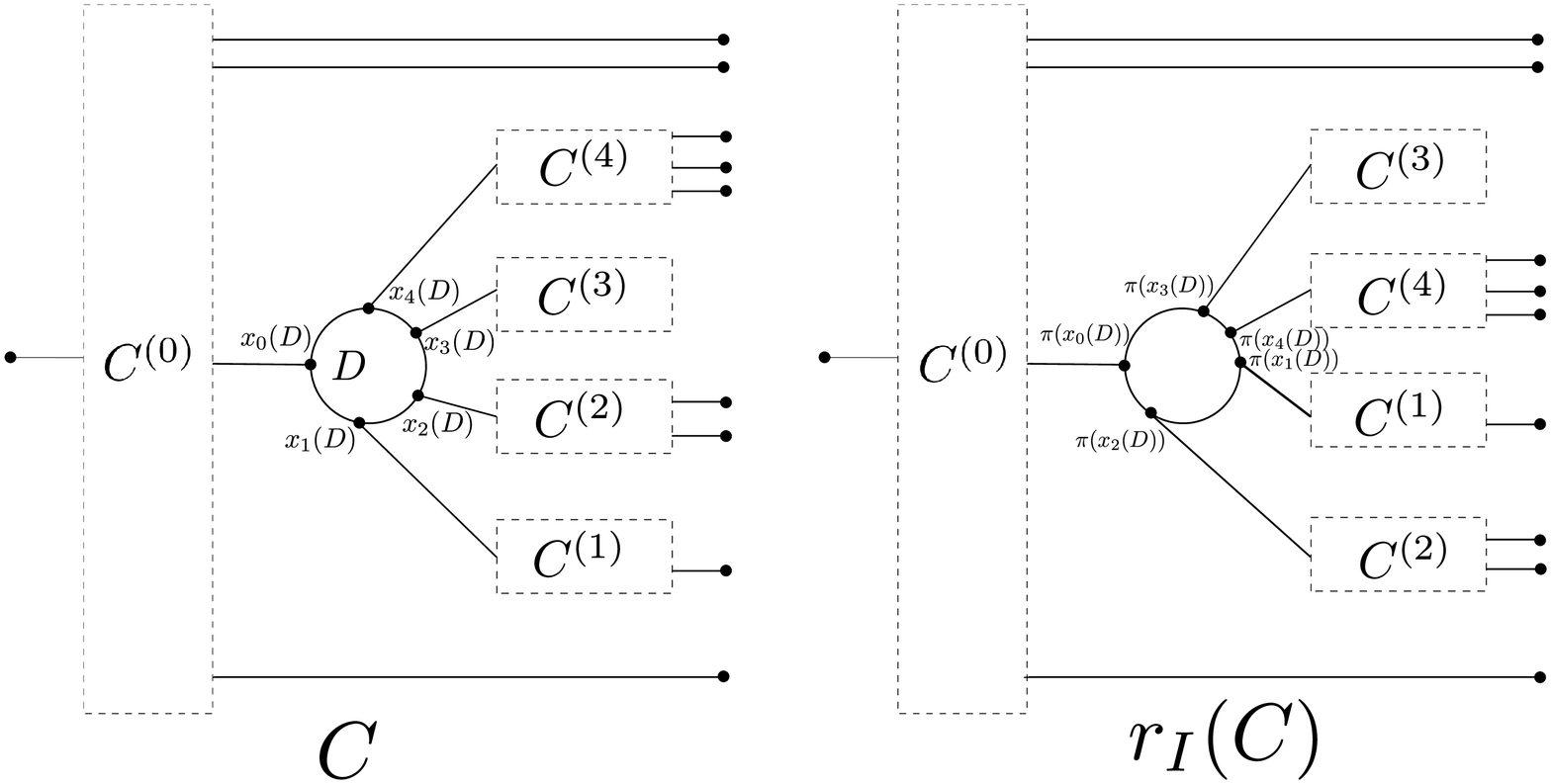}
        \caption{A type $I$ elementary reduction of $C$} \label{fig21}
\end{figure}

\item \begin{enumerate}[a)]
\item Let $D_1, D_2 \in \text{Disks}(C)$ with $D_2$ lying above $D_1$ and $\iota: \{ x_j(D_2) \}_{1\leq j \leq \ell(D_2)} \rightarrow \partial D_1$. Then one can consider the object $r_{IIa}(C)$ defined by removing $D_2 \backslash \{ x_j(D_2) \}_{0\leq j \leq \ell(D_2)}$ and the line touching $x_0(D_2)$ from $C$, and then attaching $x_j(D_2)$ to $\iota(x_j(D_2))$, $1\leq j \leq \ell(D_2)$ (see figure \ref{fig22}).

\begin{figure}[h] 
        \centering 
	\includegraphics[width=120mm]{./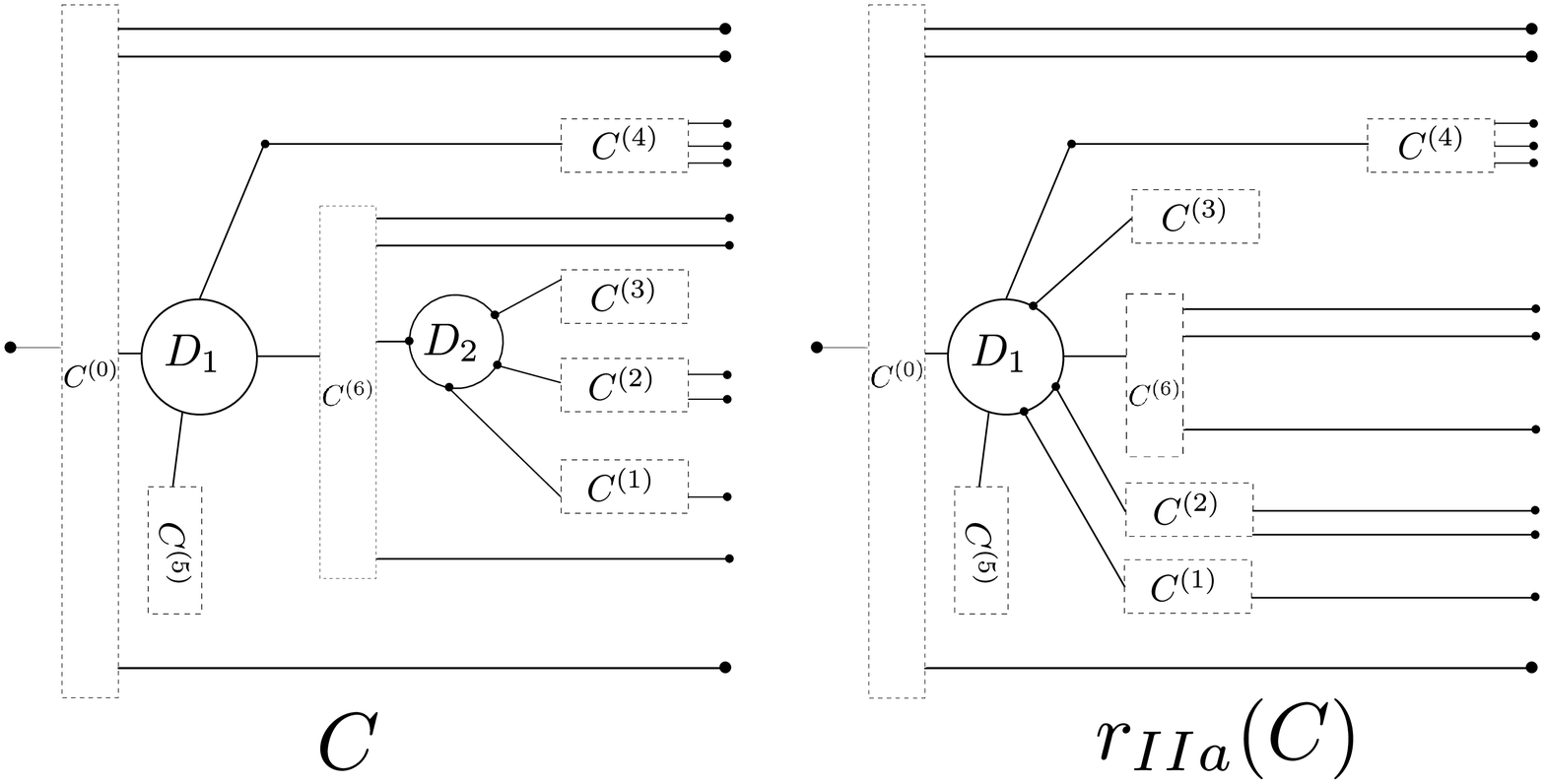}
        \caption{A type $IIa$ elementary reduction of $C$} \label{fig22}
	\centering 
	\includegraphics[width=120mm]{./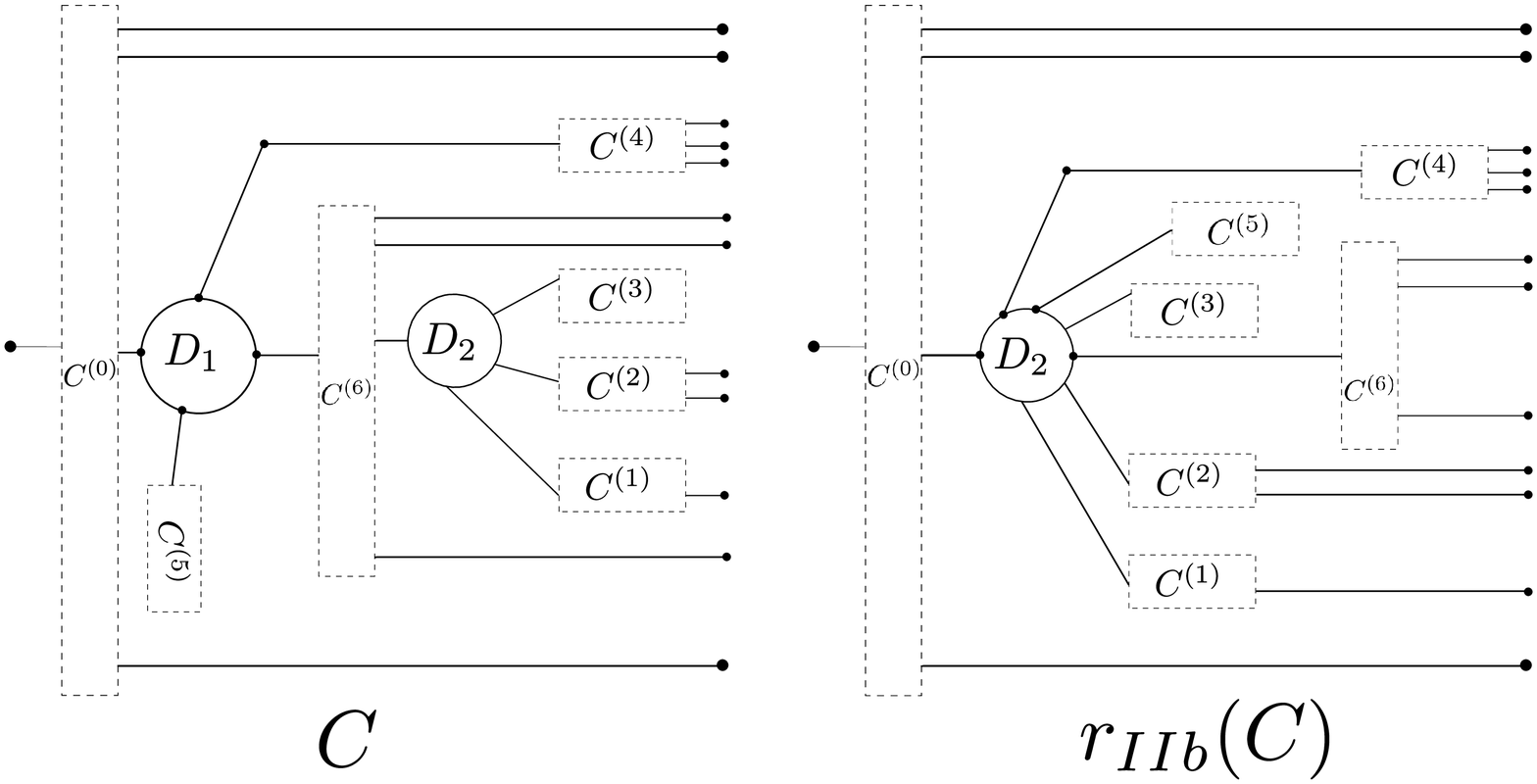}
        \caption{A type $IIb$ elementary reduction of $C$} \label{fig23}
\end{figure}

\item Let $D_1, D_2 \in \text{Disks}(C)$ with $D_1$ lying above $D_2$ and $\iota: \{ x_j(D_2) \}_{0\leq j \leq \ell(D_2)} \rightarrow \partial D_1$. Then one can consider the object $r_{IIb}(C)$ defined by removing $D_2 \backslash \{ x_j(D_2) \}_{0\leq j \leq \ell(D_2)}$ and the line touching $x_0(D_1)$ from $C$, and then attaching $x_j(D_2)$ to $\iota(x_j(D_2))$, $0\leq j \leq \ell(D_2)$ (see figure \ref{fig23}).

\end{enumerate}

\item For every leaf $v_j$ of $C$, $1\leq j \leq \ell$, let $C_j$ be the cluster made of the lines and the marked disks through which every continuous path from $v_0$ to $v_j$ must go. Then one can consider the cluster $r_{III}(C) = \underset{1 \leq j \leq \ell}{\bigcup} C_j$ (see figure \ref{fig24}).

\begin{figure}[h] 
        \centering 
	\includegraphics[width=120mm]{./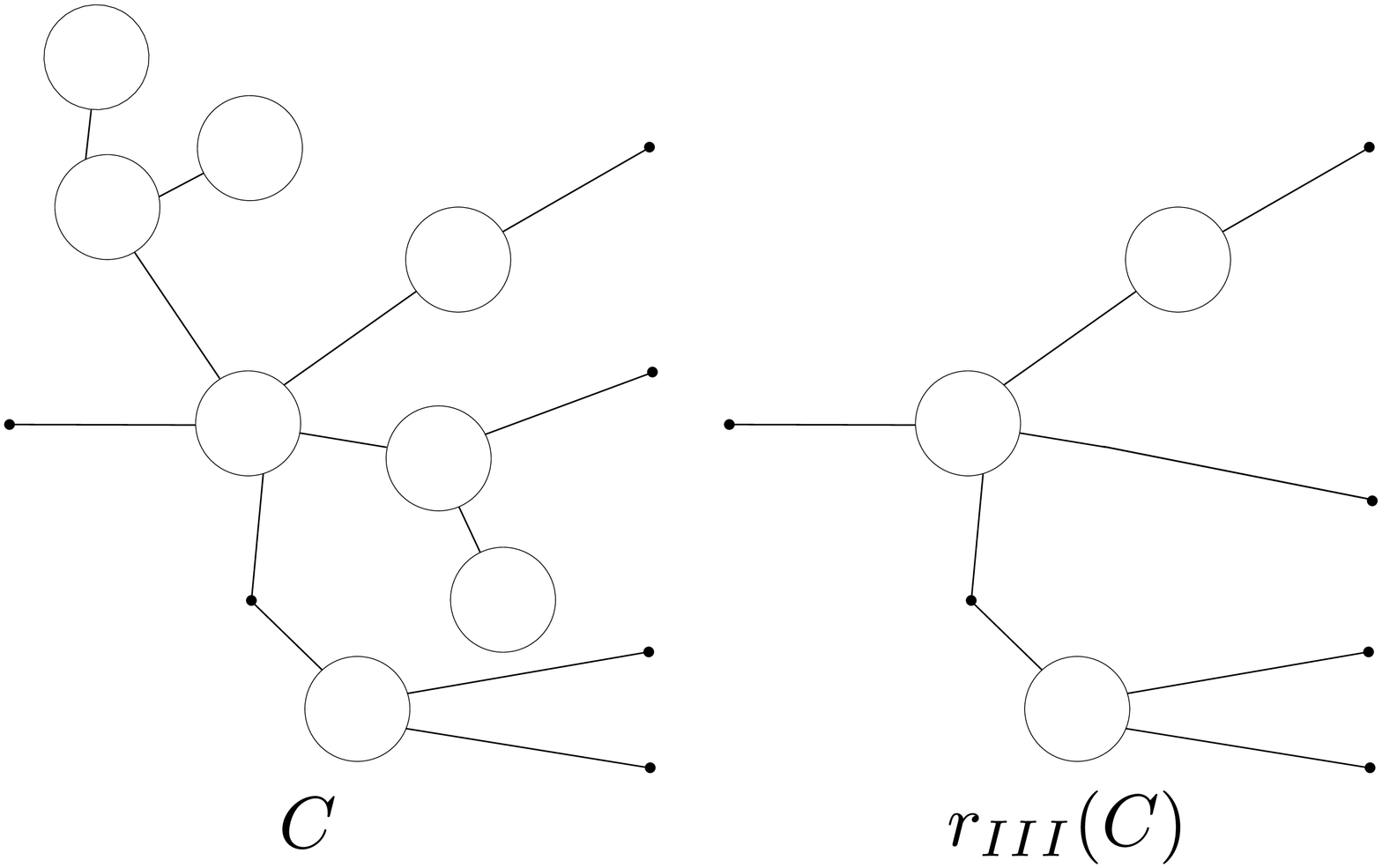}
        \caption{A type $III$ elementary reduction of $C$} \label{fig24}
\end{figure}

\end{enumerate}
A composition of a finite number of elementary reductions is called a reduction.
\end{defi}

Note that the objects resulting from reductions are not exactly $\otimes$-clusters as the incidence points of some lines might coincide, and moreover if that happens, the ribbon structure is not well defined. However, the choice of a perturbation $P|_C$ over $C \in \cllkr$ naturally defines one over any of its reductions $r(C)$ that we will denote by $r_*(P|_C)$, and a tangent operator $D\overline{\partial}_{r(C)}$ is still defined.

We fix a coherent system of ends $\mathcal{V}$ and proceed to the choice of generic elements of $\mathcal{P}\text{ert} \equiv \{ (p,P_p) | p\in \mathcal{J}_J \times \mathcal{M}_f \times \mathcal{M}_{f_1-f_0} \times \dots \mathcal{M}_{f_c-f_{c-1}}, \\ P_p \hspace{0.1cm} \text{monotone coherent system of perturbations of} \hspace{0.1cm} p \hspace{0.1cm} \text{vanishing on} \hspace{0.1cm} \mathcal{V} \}$.

Let us first restrict ourselves to the subset $\mathcal{P}\text{ert}^{red} \subset \mathcal{P}\text{ert}$ defined by restricting the $\mathcal{J}_J$ factor of $p$ to the generic subsets coming from theorem \ref{lazz_high} and lemma \ref{non_over}.

First consider the usual universal moduli space of Floer trajectory configurations $\mathcal{U}\text{niv} = \{ (u,C,P_p) | u:(C,\partial C) \rightarrow (M,L) \hspace{0.1cm} \text{Floer with respect to} \hspace{0.1cm} P_p, \\ Ind(\ddel_u) \leq -(\ell -2) +1\} \rightarrow \mathcal{P}\text{ert}^{red}$. Not every trajectory $(u,C,P_p)$ of $\mathcal{U}\text{niv}$ needs to be simple, but since we restricted to $\mathcal{P}\text{ert}^{red}$, any trajectory can be reduced to a simple trajectory having a reduced cluster as its source. This can be achieved as follows:
\begin{enumerate}[1)]
\item Reduce iteratively every nonsimple disk using theorem \ref{lazz_high} together with the induced type $I$ reductions over the source.

\item Eliminate iteratively every disk $D$ of the resulting trajectory that is sent into the image of the other disks using lemma \ref{non_over} together with the induced type $II$ reductions over the source. After each step, one might additionally perform a type $III$ reduction.

\item Perform a type $III$ reduction over the source.

\item 
On the linear parts of the trajectory where the lines satisfy the gradient flow of a single Morse function, then nondistinction of the lines should also be eliminated. These trajectories are reduced as in \cite{BC}, bypassing the disks lying between nondistinct lines.
\end{enumerate}

Denote the resulting reduced trajectory configuration by $(r(u),r(C),r_*(P|_C))$ and notice that it is simple in the sense of the natural adaptation of definition \ref{simp}. Also note that a nonsimple trajectory configuration of $\mathcal{U}\text{niv}$ might be reducible in many different ways.

We will then consider $\mathcal{U}\text{niv}^{red} = \{ (r(u),r(C),r_*(P|_C)) | r(u):(r(C),\partial r(C)) \rightarrow (M,L) \hspace{0.1cm} \text{reduced Floer} \\ \text{with respect to} \hspace{0.1cm} r_*(P|_C), Ind(\ddel_{r(u)}) \leq -(\ell-2) +1 -2\} \rightarrow \mathcal{P}\text{ert}^{red}$, the space of every possible reduced, and therefore simple, trajectory configurations.

As with $\mathcal{U}\text{niv}$, one can use standard perturbation arguments to show that for a generic $\mathcal{P}\text{ert}^{gen} \subset \mathcal{P}\text{ert}^{red}$, then for every simple $(r(u),r(C),r_*(P|_C)) \in \mathcal{U}\text{niv}^{red}$ above $\mathcal{P}\text{ert}^{gen}$, we get an exact sequence of operators

\begin{diagram} \label{redu_sequ}
         L^{m,p}_0 (r(C), Tr(C), T \partial r(C)) & \rTo &&& L^{m,p}(r(C),r(u)^*TM,r(u)^*TL) &&& \rTo && \frac{L^{m,p}(r(C),r(u)^*TM,r(u)^*TL)}{L^{m,p}_0 (r(C), Tr(C), T \partial r(C))} \\
         \dTo^{\ddel_{r(C)}} &&&& \dTo^{\ddel_{r(u)}} &&&&& \dTo^{\ddel_{r(u)} / \ddel_{r(C)}} \\
         L^{m-1,p}_\pi (r(C),Tr(C)) && \rTo && L^{m-1,p}(r(C),r(u)^*TM) && \rTo &&& \frac{L^{m-1,p}(r(C),r(u)^*TM)}{L^{m-1,p}_\pi (r(C),Tr(C))} \\
\end{diagram}

where $\ddel_{r(u)} / \ddel_{r(C)}$ is surjective and therefore $ker(\ddel_{r(u)} / \ddel_{r(C)})$ is of dimension $Ind(\ddel_{r(u)}) - Ind(\ddel_{r(C)})$.

We argue that every trajectory configuration $(u,C,P_p) \in \mathcal{U}\text{niv}$ above $\mathcal{P}\text{ert}^{gen}$ must be simple so we get the desired result. Indeed, if $(u,C,P_p)$ is not simple, then we can apply a reduction $r$ on it and get the reduced exact sequence \ref{redu_sequ}. Since $Ind(\ddel_{u}) \leq -(\ell -2) +1$, that $Ind(\ddel_{r(C)}) \geq Ind(\ddel_{C})+2(k - k(r(C))) = -(\ell -2 + 2k(r(C)))$ and that the monotonicity hypothesis implies that $Ind(\ddel_{r(u)}) \leq Ind(\ddel_{u}) -2 $, we must have

\begin{align*}
Ind(\ddel_{r(u)} / \ddel_{r(C)}) &= Ind(\ddel_{r(u)}) - Ind(\ddel_{r(C)}) \\
 &\leq -(\ell -2) +1 -2 +(\ell-2 + 2k(r(C))) \\
 &= 2k(r(C)) -1
\end{align*}

Remember that $\ddel_{r(u)} / \ddel_{r(C)}$ must have a kernel of dimension at least $2k(r(C))$ due to the invariance of $J$ over the disks that allows the position of the interior markings over the reduced trajectory to be changed according to $2k(r(C))$ real independent parameters. Therefore, it must have a cokernel of dimension at least one, but that is impossible for a simple configuration lying above $\mathcal{P}\text{ert}^{gen}$.

\end{proof}

\section{Case $n\leq 2$}

\begin{proof}[Proof of proposition \ref{gene_tran} for $n\leq 2$]

We will use the same ideas as in the case $n\geq 3$, except that in the present case we must make use of the more general theorem \ref{lazz_deco}, leading to the use of more general reduced trajectories.

Therefore, we define a generalized reduction procedure over clusters.

\begin{defi}
A generalized elementary reduction of $C \in \cllkr$ is defined as the result of one of the following cut and paste manipulations:

\begin{enumerate}[I)]
\item Given $D \in \text{Disks}(C)$, $G \subset D$ a graph with $\partial D \subset G$ and a surjective holomorphic map $\pi_{\overline{\mathcal{D}}}: (\overline{\mathcal{D}},\partial \overline{\mathcal{D}}) \rightarrow (D_\mathcal{D},\partial D_\mathcal{D}) \cong (D_{\C}, \partial D_{\C})$ for every connected component $\mathcal{D}$ of $D\backslash G$. One can define an object $r^g_{I}(C)$ by substituting $D$ for a nodal disk $D' \cong \underset{\mathcal{D}}{\bigcup} D_{\mathcal{D}}$ where a node $n_{\mathcal{D}_1,\mathcal{D}_2} \in D_{\mathcal{D}_1}\bigcap D_{\mathcal{D}_2}$, $\mathcal{D}_1 \neq \mathcal{D}_2$, must be such that $\pi^{-1}_{\overline{\mathcal{D}_1}}(n_{\mathcal{D}_1,\mathcal{D}_2})= \pi^{-1}_{\overline{\mathcal{D}_2}}(n_{\mathcal{D}_1,\mathcal{D}_2})$ is an interior point of an arc of $\overline{\mathcal{D}_1} \bigcap \overline{\mathcal{D}_2}$ (see figure \ref{fig25}).

\begin{figure}[h] 
        \centering 
	\includegraphics[width=120mm]{./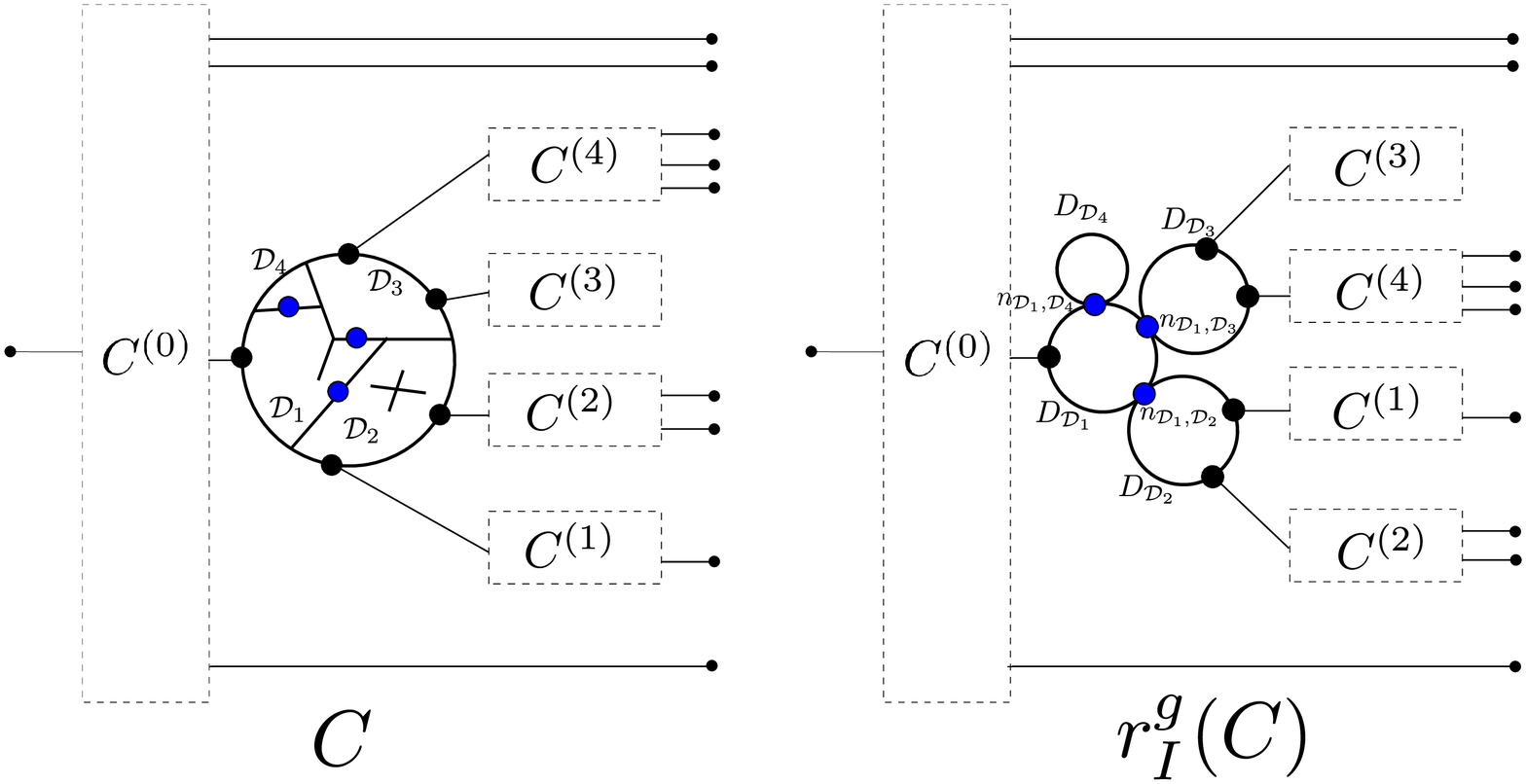}
        \caption{A type $I$ generalized elementary reduction of $C$} \label{fig25}
\end{figure}

\item Given $D \in \text{Disks}(C)$, a map $\iota: \{ x_0(D), \dots , x_{|D|}(D) \} \rightarrow \underset{ D' \in \text{Disks}(C) \backslash D}{\bigcup} D'$ and, for every $0 \leq j \leq |D|$, pairs of points $(\iota_0^{(j)+},\iota_0^{(j)-}), \dots, (\iota_{s_j}^{(j)+},\iota_{s_j}^{(j)-})$ of $\underset{ D' \in \text{Disks}(C) \backslash D}{\bigcup} D'$ such that $\iota_0^{(j)+} = \iota(x_0(D))$, $\iota_{s_j}^{(j)+} = \iota(x_j(D))$ and the points of any pair belong to the same disk, one can perform the following operations and call the resulting $r_{II}^g(C)$:

\begin{enumerate}[i)]
\item for every $j \in \{1, \dots, |D| \}$ such that $\iota(x_j(D))$ is not above $D$, detach the line touching $D$ at $x_j(D)$ and attach it back to $\iota(x_j(D))$ (see figure \ref{fig26}),

\item then, for every $j \in \{1, \dots, |D| \}$ such that $\iota(x_j(D))$ is above $D$, say $\iota(x_j(D)) \in D' \in \text{Disks}(C)$, detach the line touching $D$ at $x_j(D)$, attach it back to $\iota(x_j(D))$ and remove the line connecting to $x_0(D')$. The resulting space has two connected components that we reconnect using the following steps (see figure \ref{fig26}):

\begin{enumerate}[1)]
\item if $s_j = 0$, so that $\iota(x_0(D)) \in D'$, identify $x_0(D)$ and $\iota(x_0(D))$,

\item if $s_j \geq 1$, then \label{iter}

\begin{enumerate}[]
\item if $\iota_{s_j}^{(j)+}$ does not lie in the same connected component of the resulting space as $\iota_{s_j -1}^{(j)-}$, identify them,

\item if $\iota_{s_j}^{(j)+}$ lies in the same connected component of the resulting space as $\iota_{s_j -1}^{(j)-}$, but not in the same disk, say $\iota_{s_j -1}^{(j)-} \in D''$ above $D'$, identify $\iota_{s_j}^{(j)+}$ and $\iota_{s_j -1}^{(j)-}$ and remove the line attaching to $x_0(D'')$ (if any). Repeat \ref{iter} with the index lowered by one.
\end{enumerate}

\end{enumerate}

\end{enumerate}

\begin{figure}[h] 
        \centering 
	\includegraphics[width=120mm]{./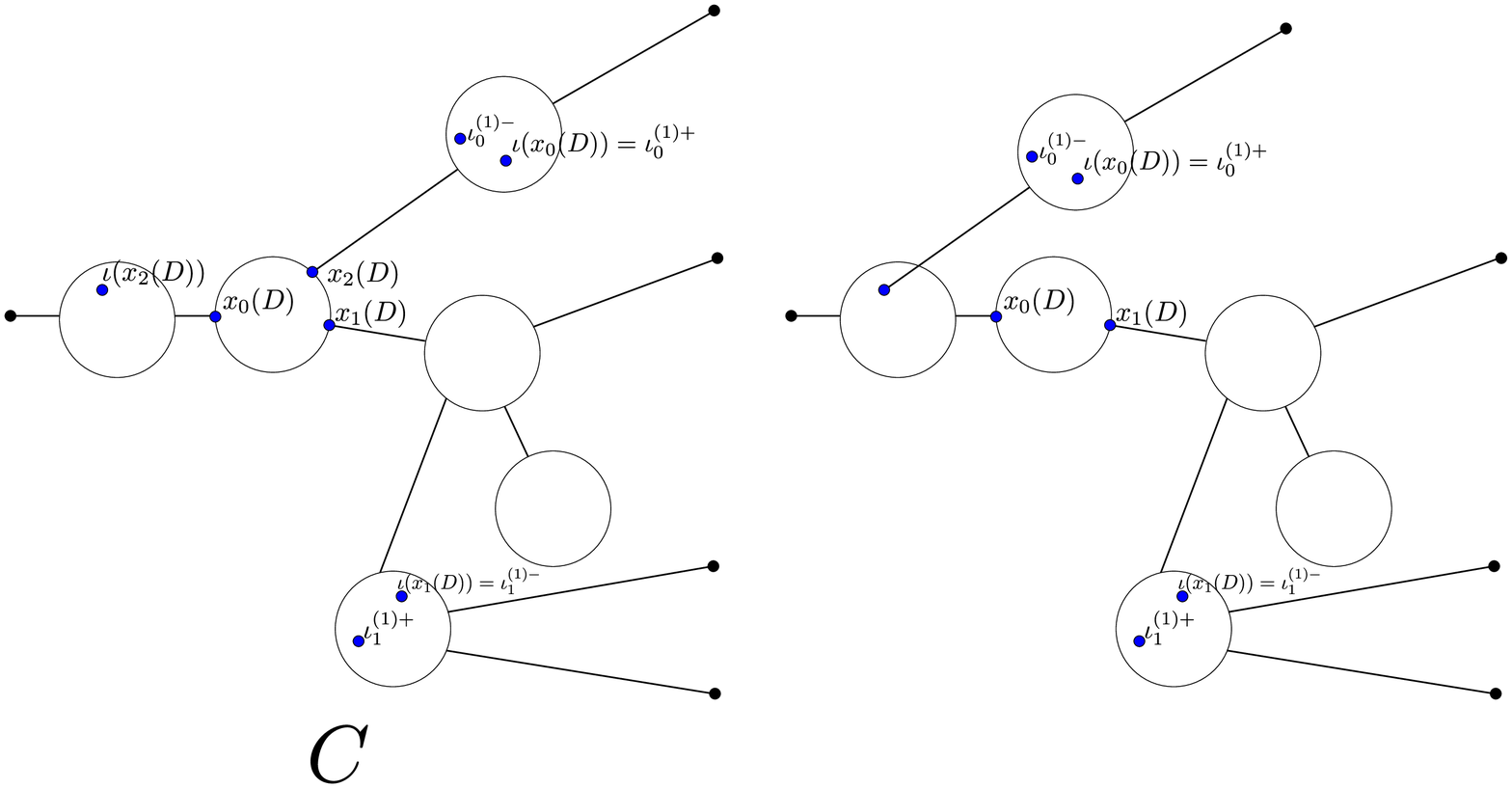}
	\includegraphics[width=120mm]{./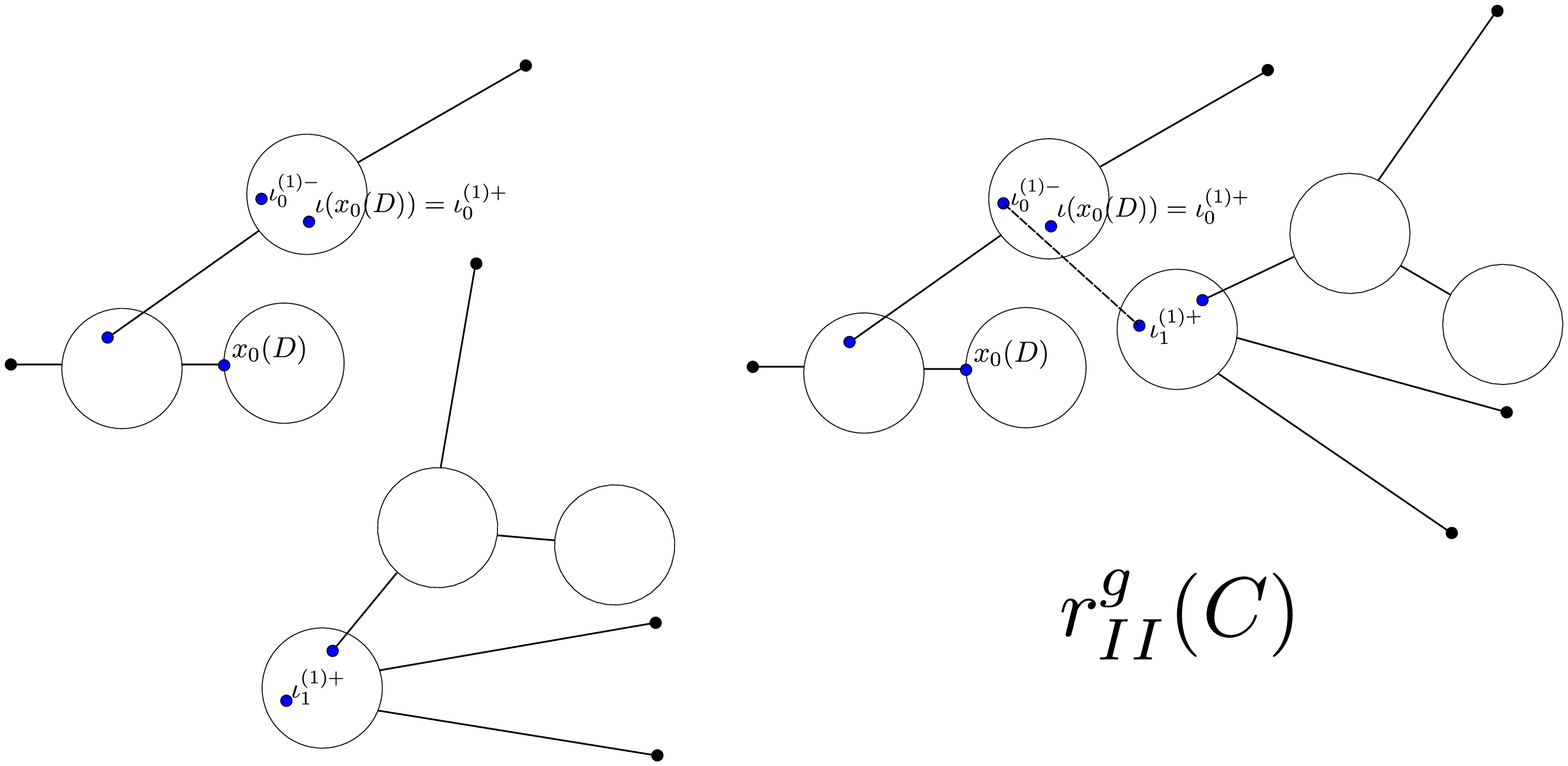}
        \caption{A type $II$ generalized elementary reduction of $C$} \label{fig26}
\end{figure}

\item One can consider the elementary reduction $r^g_{III}(C) = \underset{1 \leq j \leq \ell}{\bigcup} C_j$ as in definition \ref{elem_redu}.


\end{enumerate}
A composition of a finite number of elementary reductions is called a generalized reduction.
\end{defi}

Note that the objects resulting from generalized reductions are not exactly $\otimes$-clusters as the incidence points of some lines might be interior points of disks and many can coincide. However, we again only need a source operator of known index over a general reduction.

We again fix a coherent system of ends $\mathcal{V}$ and proceed to the choice of generic elements of $\mathcal{P}\text{ert} \equiv \{ (p,P_p) | p\in \mathcal{J}_J \times \mathcal{M}_f \times \mathcal{M}_{f_1-f_0} \times \dots \mathcal{M}_{f_c-f_{c-1}}, \\
P_p \hspace{0.1cm} \text{monotone coherent system of perturbations of} \hspace{0.1cm} p \hspace{0.1cm} \text{vanishing on} \hspace{0.1cm} \mathcal{V} \}$.


Consider again the usual universal moduli space of Floer trajectory configurations $\mathcal{U}\text{niv} = \{ (u,C,P_p) | u:(C,\partial C) \rightarrow (M,L) \hspace{0.1cm} \text{Floer with respect to} \hspace{0.1cm} P|_C, \\ Ind(\ddel_u) \leq -(\ell-2) +1\} \rightarrow \mathcal{P}\text{ert}$. Not every trajectory $(u,C,P_p)$ of $\mathcal{U}\text{niv}$ needs to be simple, but from theorem \ref{lazz_deco}, any trajectory can be reduced to a simple trajectory having a generalized reduced cluster as its source. This can be achieved as follows:
\begin{enumerate}[1)]
\item Decompose iteratively every nonsimple disk into a nodal disk with simple components using theorem \ref{lazz_deco} together with the induced type $I$ generalized reductions over the source.

\item Eliminate iteratively every disk $D$ of the resulting trajectory that is sent by $u$ into the image of the other disks using (if necessary) some paths $\gamma_j: [0,1] \rightarrow \underset{ D' \in \text{Disks}(C) \backslash D}{\bigcup} D'$, $1 \leq j \leq |D|$, such that 
\begin{itemize}
\item $u(\gamma_j(0))=u(x_0(D))$,
\item $u(\gamma_j(1))=u(x_j(D))$,
\item $\forall t\in [0,1]$, $\exists z\in \mathring{D}$ such that $u(\gamma_j(t))=u(z)$ and
\item $\gamma_j$ is discontinuous at a finite number of points $0 < t_0^{(j)} < \dots < t_{s_j -1}^{(j)} <1$
\end{itemize}
with the induced type $II$ reductions that use $\iota_{m}^{(j)-}= \underset{t \rightarrow t_m^{(j)-}}{\lim} \gamma_j(t)$, $0 \leq m \leq s_j -2$, and $\iota_{m}^{(j)+}= \underset{t \rightarrow t_{m-1}^{(j)+}}{\lim} \gamma_j(t)$, $1 \leq m \leq s_j -1$.

\item Perform a reduction of type $III$ over the source.
\end{enumerate}

Denote the resulting reduced trajectory configuration by $(r(u),r(C),r_*(P|_C))$ and notice that it is again simple in the sense of the generalization of definition \ref{simp}. Again, a nonsimple trajectory configuration of $\mathcal{U}\text{niv}$ might be generally reduced in countably many different ways, modulo the choice of the new nodal points and incident lines.

We will then consider $\mathcal{U}\text{niv}^{red} = \{ (r(u),r(C),r_*(P|_C)) | r(u):(r(C),\partial r(C)) \rightarrow (M,L) \hspace{0.1cm} \text{generalized reduced Floer} \\ \text{with respect to} \hspace{0.1cm} r_*(P_C), Ind(\ddel_{r(u)}) \leq -(\ell -2) +1 -2\} \rightarrow \mathcal{P}\text{ert}$, the space of every possible generalized reduced trajectory configurations.

As before, one can use standard perturbation arguments to show that for a generic $\mathcal{P}\text{ert}^{gen} \subset \mathcal{P}\text{ert}$, then for every simple $(r(u),r(C),r_*(P|_C)) \in \mathcal{U}\text{niv}^{red}$ above $\mathcal{P}\text{ert}^{gen}$, we get an exact sequence of operators

\begin{diagram} \label{redu_sequ_2}
         L^{m,p}_0 (r(C), Tr(C), T \partial r(C)) & \rTo &&& L^{m,p}(r(C),r(u)^*TM,r(u)^*TL) &&& \rTo && \frac{L^{m,p}(r(C),r(u)^*TM,r(u)^*TL)}{L^{m,p}_0 (r(C), Tr(C), T \partial r(C))} \\
         \dTo^{\ddel_{r(C)}} &&&& \dTo^{\ddel_{r(u)}} &&&&& \dTo^{\ddel_{r(u)} / \ddel_{r(C)}} \\
         L^{m-1,p}_\pi (r(C),Tr(C)) && \rTo && L^{m-1,p}(r(C),r(u)^*TM) && \rTo &&& \frac{L^{m-1,p}(r(C),r(u)^*TM)}{L^{m-1,p}_\pi (r(C),Tr(C))} \\
\end{diagram}

where $\ddel_{r(u)} / \ddel_{r(C)}$ is surjective and therefore $ker(\ddel_{r(u)} / \ddel_{r(C)})$ is of dimension $Ind(\ddel_{r(u)}) - Ind(\ddel_{r(C)})$.

We argue that every trajectory configuration $(u,C,P_p) \in \mathcal{U}\text{niv}$ above $\mathcal{P}\text{ert}^{gen}$ must be simple so we get the desired result. Indeed, if $(u,C,P_p)$ is not simple, then we can apply a reduction $r$ on it and get the reduced exact sequence \ref{redu_sequ_2}.

Now a straightforward index computation shows that \\  $Ind(\ddel_{r(C)}) \geq Ind(\ddel_{C}) +2(k - k(r(C))) - N $, where $N$ is the number of interior incidence points on $r(C)$ plus the number of complex nodes of $r(C)$. Indeed, by reconsidering the computation of lemma \ref{coker}, we see that replacing a boundary incident line segment by an interior incident line segment decreases by one the index over the source space. Also, a complex node can be seen as a connecting line of length zero being incident at interior points of two disks and therefore it decreases the index by one.

By the monotonicity hypothesis, removing $D$ decreases the index $\mu(F)$ of the boundary condition over the trajectory by at least two. Adapting proposition \ref{inde_comp} to the situation where the incidence points might be interior points results in substacting $n$ to the index for every interior incidence point and for every complex node of $r(C)$. Therefore, we get that $Ind(\ddel_{r(u)}) \leq Ind(\ddel_{u}) -2 -nN$.

Since $Ind(\ddel_{u}) \leq -(\ell -2) +1$, we must have

\begin{align*}
Ind(\ddel_{r(u)} / \ddel_{r(C)}) &= Ind(\ddel_{r(u)}) - Ind(\ddel_{r(C)})\\
 &\leq -(\ell -2) +1 -2 -nN +(\ell-2 + 2k(r(C))) +N\\
 &= 2k(r(C)) -1 -(n-1)N
\end{align*}

Note that $\ddel_{r(u)} / \ddel_{r(C)}$ must have a kernel of dimension at least $2k(r(C))$ due to the invariance of $J$ over the disks that allows the position of the interior markings over the reduced trajectory to be changed according to $2k(r(C))$ real independent parameters. Therefore, it must have a cokernel of dimension at least one, which is impossible for a simple configuration lying above $\mathcal{P}\text{ert}^{gen}$.

\end{proof}

\begin{rem}
Note that the above proof implies that the Floer trajectory configurations of index lower than $-(\ell -2)+1$ are generically type $III$ reduced, so we could have restricted to these clusters from the beginning.
\end{rem}

\chapter{Moduli of $\bullet$-clusters} \label{clks_sect}

We first describe the source spaces that will be used to define the cochain complex. They are planar trees of complex marked disks connected by metric lines and are chosen to form a moduli where the planar structure can vary. These sources can be seen as generalizations of the sources used in \cite{Fu}, \cite{Fu2}, \cite{Oh} and \cite{BC}.

\section{Constructing $\cllks$, the moduli of $\bullet$-clusters} \label{sect_klk}

First, we build a new manifold with corners $\klk^\bullet$ that will be interpreted as a moduli of marked disks with varying order on the boundary markings. As in the $\otimes$ case, the space of  $\bullet$-clusters $\cllks$ is then defined as a collar neighborhood of $\klk^\bullet$, the collar part again encoding disks connected by metric line segments. 

First, it will be convenient to see the spaces of discs as lying inside the spaces $\R\mathcal{M}_{\ell,k}$ of spheres with $\ell +1$ real and $k$ pairs of complex conjugate markings. It is useful because in that setting, the ribbon switches as described in \cite{CL} naturally occur. We recall the construction of these spaces that appears in \cite{Cey}.

Let $\mathcal{M}_{\ell+1+2k}$ be the space of complex spheres with $\ell +1 +2k$ markings denoted by $\{x_j\}_{0\leq j\leq \ell}$ and $\{z_h\}_{1\leq h \leq 2k}$. It has the structure of a complex $(\ell -2 +2k)$-manifold and has an antiholomorphic involution $\sigma_{\ell,k}$ defined as the composition of the natural complex conjugation $(\Sigma,j) \rightarrow (\Sigma,-j)$ with the transpositions $(z_h z_{h+k})$, $1\leq h \leq k$. The real locus $\R\mathcal{M}_{\ell.k} \equiv fix(\sigma_{\ell,k})$ is then a smooth real $(\ell -2 +2k)$-manifold. Also remark that permutation of the real markings gives a natural smooth action of $S_\ell$ on $\R\mathcal{M}_{\ell.k}$.

The complex double operation over the complex disks gives a natural map $\klk^{Id} = \klk \overset{\iota}{\rightarrow} \R\mathcal{M}_{\ell,k}$. The image of this map is the closure of the set of spheres represented as $x_0 = \infty, \{x_1 < \ldots < x_\ell \} \subset \R P^1 \subset \C P^1$ and $Im(z_h) > 0$ for $1 \leq h \leq k$. Moreover, for every permutation $p \in S_\ell$ one can denote by $\klk^p$ the space of marked disks with $x_0 < x_{p(1)} < \ldots < x_{p(\ell)}$ and extend $\iota$ to these disks. We refer to $p$ as an ordering on the real markings of the disks.

We will be interested in the image of the extended complex doule map $\underset{p \in S_\ell}{\bigsqcup} \klk^p \overset{\iota}{\longrightarrow} \R\mathcal{M}_{\ell,k}$. It can be seen as a partial manifold with corners tiling of $\R\mathcal{M}_{\ell,k}$ by $\ell !$ tiles, the complete tiling having $\ell ! 2^{k-1}$. $Im(\iota)$ gives a moduli in which the real marked points are no longer required to stay in a given order, but their order rather changes according to the switches proposed in \cite{CL}. The non-injective strata of $\iota$ are exactly those made of configurations having smooth components with no interior markings.

\begin{defi}
	A smooth component of a marked nodal disk (resp. real sphere) with no interior (resp. no complex-conjugate pair of) markings is called a ghost disk (resp. sphere). If  a ghost disk (resp. sphere) separates $z_i$ and $z_j$ for some pair $1 \leq i < j \leq k$, it is referred to as an internal ghost disk (resp. sphere), otherwise, it is referred to as an external ghost disk (resp. sphere).
\end{defi}

The main point is that, in general, $Im(\iota)$ is singular over the strata having at least one internal ghost sphere. For example, a neighborhood of the singular locus in the case $\ell=0$, $k=2$ is displayed in figure \ref{bul_fig01}. Next we give a local description of $Im(\iota) \subset \R \mathcal{M}_{\ell,k}$ near any point $S \in Im(\iota)$ lying in a codimension $m$ open stratum $\mathfrak{S} \subset \R \mathcal{M}_{\ell,k}$.

\begin{figure}[h] 
        \centering 
	\includegraphics[width=100mm]{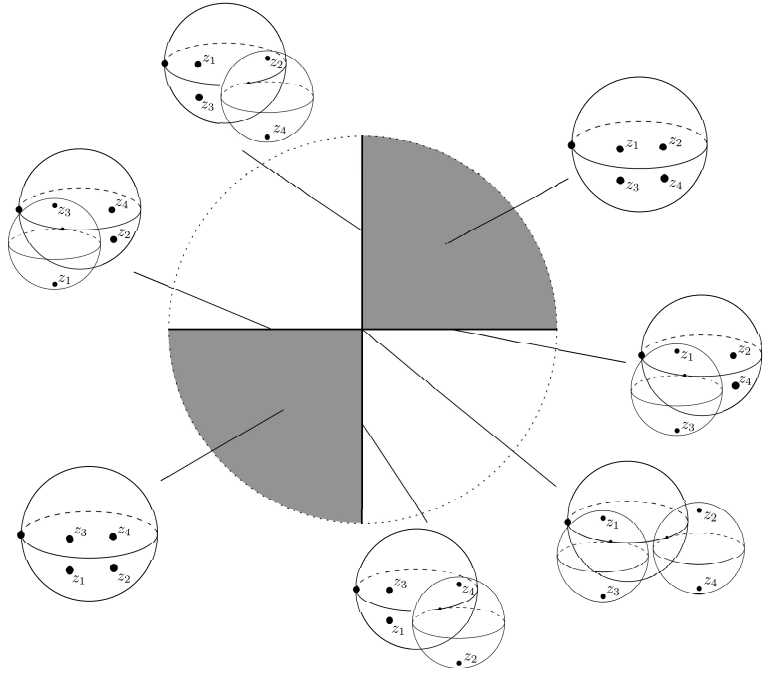}
        \caption{Singularity in $Im(\iota) \subset \R \mathcal{M}_{0,2}$} \label{bul_fig01}
\end{figure}

We can choose normal coordinates $(n_1, \ldots, n_m)$ to $\mathfrak{S}$ at $S$ corresponding to $m$ real gluing parameters, one for each real node of $S$ (see \cite{MW}, \cite{Liu}, \cite{MS}). Choose $D \in \iota^{-1}(S)$, say $D \in \klk^p$, and orient each $n_i$ coordinate so that $ \{n_i \geq 0\}_{1 \leq i \leq m}$ corresponds to $\iota(\klk^p)$. Then one can see that

\begin{lem}\label{action}
	In the $(n_1, \ldots, n_m)$ normal coordinates to $\mathfrak{S}$ at $S$, $Im(\iota) = G \cdot \R_+^m$, where $G \equiv \underset{d \text{ ghost}}{\prod} \Z/2\Z$ and the $d$ factor generator acts by changing the signs of the coordinates corresponding to the nodes on $d$.
\end{lem}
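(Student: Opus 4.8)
The plan is to analyze the local structure of $\iota$ by passing to the normalization of the nodal sphere $S$ and tracking how the complex-doubling construction interacts with the real gluing parameters. First I would fix the nodal sphere $S$ with its dual tree $T$ (a tree of $\C P^1$'s, with the $n_i$ being one real smoothing parameter per real node, i.e.\ per edge of $T$), and fix a preimage $D \in \iota^{-1}(S)$ with $D \in \klk^p$ for some ordering $p \in S_\ell$. The claim to establish is twofold: (i) $\iota(\klk^p)$ near $S$ corresponds exactly to the orthant $\{n_i \geq 0\}$ (this is the choice of orientation built into the statement, so it is essentially a normalization convention — one just checks that the real smoothing of a boundary node of a marked \emph{disk} is what the complex-double sends to the positive ray of the corresponding $n_i$); and (ii) the \emph{other} preimages of points near $S$ under $\iota$ are obtained precisely by flipping signs of the $n_i$'s along ghost components.

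The heart of the argument is (ii). I would argue as follows. A point of $\R\mathcal{M}_{\ell,k}$ near $S$ lies in $Im(\iota)$ iff it is the complex double of some marked \emph{disk} (nodal or smooth) — equivalently, iff one can consistently choose a "hemisphere" on each smooth component so that all the real markings $x_j$ lie on the common real circle, all the $z_h$ ($1 \le h \le k$) lie in the chosen hemispheres, and the choices glue across nodes. On a component carrying at least one interior marking $z_h$, the hemisphere is \emph{forced} (it must be the one containing $z_h$, since $z_h$ and its conjugate $z_{h+k}$ are distinct), and likewise the hemisphere is forced on any component lying on the path between two interior-marking-bearing components or between such a component and... actually more precisely, a component is "rigid" exactly when it is non-ghost or separates two $z$'s on distinct rigid components; the ghost components form the locus where the hemisphere choice is genuinely free, and a flip of the hemisphere on a ghost component $d$ reverses the cyclic order at each node of $d$, which in the normal coordinates is exactly the sign change of the $n_i$'s attached to $d$. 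So $Im(\iota)$ near $S$ is the union, over all admissible sign patterns, of the corresponding orthants; and the set of admissible sign patterns is the image of $G = \prod_{d \text{ ghost}} \Z/2\Z$ acting by the stated node-flips, starting from the all-$+$ orthant. That gives $Im(\iota) = G \cdot \R_+^m$.

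The main obstacle I expect is making the phrase "flipping the hemisphere on a ghost component reverses the orientation of exactly the normal coordinates at its nodes, and nothing else" rigorous in the presence of \emph{chains} of ghost components and of ghost components meeting several non-ghost ones: one must check that the $G$-action is well defined (the flips at different ghost components commute and the result depends only on the subset of flipped ghosts, not on an order), that it genuinely lands inside $Im(\iota)$ (every flipped configuration really is a complex double of a disk — one has to produce the disk, i.e.\ check the hemisphere choices remain globally consistent after the flip), and that no \emph{further} points of $\R\mathcal{M}_{\ell,k}$ near $S$ lie in $Im(\iota)$ (a point with real markings in the wrong relative position on a non-ghost component cannot be a double of a disk). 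I would handle this by an induction on the number of nodes / the depth of the tree $T$, peeling off one leaf component at a time, using Proposition \ref{neighk} (the corner charts $\psi_T$ in terms of simple-ratio coordinates) to identify the $n_i$ with the relevant cross-ratio-type parameters and to see explicitly that the sign of $n_i$ records on which side of the seam the two markings adjacent to that node sit. The $\ell=0,k=2$ picture of Figure \ref{bul_fig01} is exactly the base case that pins down the signs, and the general case is bookkeeping on the tree.
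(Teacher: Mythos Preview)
The paper does not actually prove this lemma: it is stated immediately after the sentence ``Then one can see that'' and left as an observation following from the description of the normal gluing coordinates and the choice of orientation making $\iota(\klk^p)$ the positive orthant. So there is no argument in the paper to compare yours against; your plan supplies the details the author omits.

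Your core mechanism is the right one: on each smooth component of $S$ the hemisphere is forced exactly when that component carries some $z_h$ (i.e.\ is non-ghost), and reversing the hemisphere on a ghost $d$ flips the sign of every gluing parameter at a node of $d$. The cleanest way to package the two inclusions is as a $2$-colouring problem on the dual tree $T$: a sign pattern $(\operatorname{sgn} n_1,\ldots,\operatorname{sgn} n_m)$ determines, up to a global swap, a colouring of the vertices of $T$ with adjacent vertices equal iff the edge sign is $+$; the glued sphere lies in $Im(\iota)$ iff all non-ghost vertices receive the same colour; and flipping the edges incident to a ghost $d$ is exactly flipping the colour of $d$ alone. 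This gives both directions at once and makes your proposed induction on the depth of $T$ unnecessary.

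One small correction: your parenthetical ``a point with real markings in the wrong relative position on a non-ghost component cannot be a double of a disk'' is not the operative obstruction. The $x_j$ always sit on the equator regardless of the hemisphere choice; what fails for the wrong sign pattern is that the $z_h$ on two different non-ghost components end up in \emph{opposite} global hemispheres of the glued sphere, so no single disk can have all of $z_1,\ldots,z_k$ in its interior. Replace that sentence with the hemisphere/colouring statement and the argument is complete.
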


Although it is possible to desingularize the whole $Im(\iota)$ to get an appropriate manifold with embedded corners, we decide not to do so for practical reasons. We rather partially join the tiles $\underset{p \in S_\ell}{\bigsqcup} \klk^p$ on chosen 1-corners, 2-corners and 3-corners.

First, we join the tiles along 1-corners that are transpositions of two real markings.

\begin{defi} \label{transI}
	A 1-codimensional stratum $ T^I \cong \R\mathcal{M}_{\ell-1,k} \times \R\mathcal{M}_{2,0} \subset \R\mathcal{M}_{\ell,k}$ with $k \geq 1$ is called a marking-marking transposition stratum. It is equivalent to being the fixed locus of a transposition for the natural $S_\ell$ action on $\R\mathcal{M}_{\ell,k}$.

	Then, $\klk^I \equiv \underset{p \in S_\ell}{\bigsqcup} \klk^p / \sim $, where $D_1 \sim D_2$ if they differ by a transposition over an external ghost with two real markings (see figure \ref{bul_fig11}). That is, $\iota(D_1) = \iota(D_2) \in T^{I}$, where $T^I$ is a marking-marking transposition stratum.
\end{defi}

\begin{figure}[h] 
        \centering 
	\includegraphics[width=90mm]{./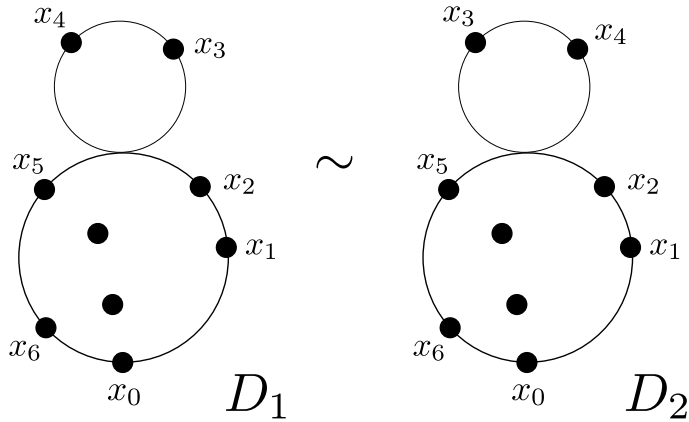}
        \caption{Disks identified in a marking-marking transposition stratum} \label{bul_fig11}
\end{figure}

\begin{lem}\label{klkI}
	$\klk^I$ is an orientable smooth manifold with embedded corners.
\end{lem}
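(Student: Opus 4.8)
The plan is to verify that the quotient $\klk^I = \bigsqcup_{p \in S_\ell}\klk^p /\!\sim$ carries a smooth manifold-with-embedded-corners structure, using the local model furnished by Lemma \ref{action}. First I would recall that each tile $\klk^p$ is, by the results of Section \ref{disks} applied verbatim after relabeling, an orientable smooth MWEC of dimension $\ell-2+2k$, and that the gluing map $\iota$ restricted to a single tile is a diffeomorphism onto $\iota(\klk^p)$. The content of the lemma is therefore entirely about what happens along the identified $1$-corners; away from marking-marking transposition strata the quotient map is injective and the structure is just that of the disjoint union.

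The key step is the local analysis near a point $S \in T^I$ lying in a marking-marking transposition stratum. By Lemma \ref{action}, in the normal coordinates $(n_1,\dots,n_m)$ to the stratum $\mathfrak S$ containing $S$, we have $Im(\iota) = G\cdot \R_+^m$ with $G = \prod_{d\text{ ghost}}\Z/2\Z$. When $S$ lies in an open codimension-one stratum $T^I$ (so $m=1$) and the single node sits on an \emph{external} ghost disk carrying two real markings, the group $G$ acts on the one normal coordinate $n_1$ by $n_1 \mapsto -n_1$, so $Im(\iota) = G\cdot\R_+ = \R$ near $S$: the two half-line pieces coming from the two tiles $\klk^{p}$ and $\klk^{p'}=\klk^{p\cdot(\text{transposition})}$ glue into a full neighborhood of $S$ in a codimension-one-smaller stratum, and the transverse slice is smooth. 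Concretely I would build a chart by composing the tile charts $\psi_{T^{max}}$ of Section \ref{disks} on each side with the reflection $n_1 \mapsto -n_1$ that realizes the identification $D_1\sim D_2$, checking that the two charts agree to all orders on the overlap (the seam $\{n_1=0\}$) because the identification is precisely the one dictated by $\iota$, which is real-analytic. This shows that near such an $S$, $\klk^I$ is a smooth manifold (no corner in the $n_1$-direction), while the remaining boundary/corner structure transverse to $T^I$ is inherited unchanged from the tiles; hence the corners remain embedded.

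For points $S$ that lie in \emph{deeper} strata but still only involve transpositions over external ghosts with two real markings (i.e. intersections of several $T^I$'s, with the relevant ghosts disjoint), the same argument applies coordinate by coordinate: $G$ acts by independent sign changes on the corresponding subset of the $n_i$'s, each such coordinate becomes a smooth (non-corner) coordinate after passing to the quotient, and the unaffected $n_i$'s retain their $\R_+$ (corner) character. Since the ghosts involved in distinct $T^I$'s through $S$ are mutually disjoint as components, their reflections commute and assemble into a single smooth chart. Orientability is then checked by noting that each tile is orientable (Section \ref{disks}), that the reflections gluing two tiles across a $T^I$ reverse the orientation of the one normal coordinate while the identification of the tiles also reverses the ambient orientation once (because the two orderings $p$, $p'$ differ by an odd permutation), so a coherent orientation can be chosen on $\klk^I$; equivalently, one can orient $\R\mathcal{M}_{\ell,k}$ and observe $Im(\iota)^I := \iota(\klk^I)$ inherits this orientation since the singular (non-transverse) locus has been avoided — the only strata where $\iota$ failed to be locally injective in a \emph{singular} way are the internal-ghost ones, which are untouched at this stage.

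The main obstacle I anticipate is the smoothness (as opposed to merely topological or PL) matching of charts across the seams: one must confirm that the identification $D_1\sim D_2$ over an external two-real-marking ghost, when read in the single-ratio coordinates of Section \ref{disks} (or equivalently in the $n_i$ coordinates of \cite{MW},\cite{Liu},\cite{MS}), is literally the reflection $n_i\mapsto -n_i$ to infinite order, with no lower-order correction terms, so that the glued transition map is genuinely $C^\infty$. This is where Lemma \ref{action} does the real work — it identifies $Im(\iota)$ with the \emph{exact} orbit $G\cdot\R_+^m$, not just up to homeomorphism — so the proof reduces to citing that lemma together with the explicit real-analytic form of the gluing/cross-ratio charts, and then checking that an external ghost with exactly two real markings contributes a single $\Z/2$ factor acting on a single normal coordinate. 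Once that local normal form is in hand, the global statement (smooth orientable MWEC with embedded corners) follows by the standard patching argument, the corners remaining embedded because we have only ever \emph{removed} a reflection hyperplane from a corner stratum, never created a new non-embedded face.
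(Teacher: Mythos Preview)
Your proposal is correct and follows essentially the same approach as the paper: both arguments use the local model of Lemma~\ref{action} to see that the normal coordinates corresponding to external ghosts with two real markings become full $\R$-coordinates after the identification (while the remaining normals stay $\R_+$), and both establish orientability by matching the sign of the transposition to the sign of the reflection $n_i\mapsto -n_i$. The paper treats the general codimension-$m$ case directly rather than doing $m=1$ first, and it does not linger on the $C^\infty$-matching issue you raise (taking it as implicit in the real-analytic gluing charts of \cite{MW}, \cite{Liu}, \cite{MS}), but these are expository differences only.
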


\begin{proof}
	$\underset{p \in S_\ell}{\bigsqcup} \klk^p$ is smooth with embedded corners and orientable so we need to verify smoothness only at $D \in \klk^I$ with $S = \iota(D) \in T^I$. Therefore, $S$ lies in the interior of a codimension $m$ stratum $\mathfrak{S} \cong \R \mathcal{M}_{2,0} \times \ldots \times \R \mathcal{M}_{2,0} \times \R \mathcal{M}_{\ell^{(e+1)},k^{(e+1)}} \times \ldots \times \R \mathcal{M}_{\ell^{(m+1)},k^{(m+1)}}$, where the $e$ first factors correspond to the external ghosts with two real markings and the last factors to the deformations over the other components.

	Therefore, if we choose the $(n_1, \ldots, n_m)$ normal coordinates to $\mathfrak{S}$ at $S$ so that $(n_1, \ldots, n_e)$ correspond to gluing parameters for the external ghosts with two real markings, then $(n_1, \ldots, n_m) \in \R^e \times \R_+^{m-e}$ times a chart of $\mathfrak{S}$ gives a chart of $\klk^I$ near $D$.

	Notice that this local portrait also implies embeddedness of the 1-corners, and therefore embeddedness of every corner. Indeed, every $1$-corner $F \in C_1(\klk^I)$ appears locally as $\{n_i=0\}$, for some $i > e$, so it arises as a smooth gluing of $2^e$ $1$-corners of the $\klk^p$.

	Let $\mathcal{O}$ be an orientation of $\klk^{Id}$. It is not hard to see that by choosing $(-1)^{sg(p)} \mathcal{O}$ on $\klk^p$ and pushing it forward using $\iota$, one defines an orientation of $\klk^I$. Indeed, having $\klk^{p_1}, \klk^{p_2} \subset \klk^I$ identified on one of their 1-corners means that in the normal coordinates to the tranposition strata, $\iota(\klk^{p_1})$ writes as $n_1\geq0$ and $\iota(\klk^{p_2})$ as $n_1\leq0$ while the identified 1-corners is the hypersurface $\{n_1=0\}$. Since $\iota|_{\klk^{p_1}}$ and $\iota|_{\klk^{p_2}}$ only differ by a reflection about $\{n_1=0\}$, we get that, locally,
\[
(\iota|_{\klk^{p_1}})_* (-1)^{sg(p_1)} \mathcal{O} = -(\iota|_{\klk^{p_2}})_* (-1)^{sg(p_1)} \mathcal{O} = (\iota|_{\klk^{p_2}})_* (-1)^{sg(p_2)} \mathcal{O}
\]
\end{proof}

\begin{rem}
	One could also identify the tiles over 1-corners with an arbitrary exterior ghost, but the orientability part of the proof of lemma \ref{klkI} then fails. Indeed, for $\ell=2$ and $k=1$, one then gets the whole $\R \mathcal{M}_{2,1}$ which is $T^2$ with a point blown up, the exceptional divisor representing the spheres with a quadrivalent ghost (see \cite{Cey}).
\end{rem}

Second, we identify $\klk^I$ along 2-corners being internal transpositions of a real node and a real marking. We proceed iteratively, identifying two codimension two strata associated with a given real marking and blowing up along the identification locus to remove the singularity then created.

\begin{defi} \label{transII}
	Let a node-marking transposition of $x_j$, $0\leq j \leq \ell$, be a stratum $ T^{II_j} \cong \R\mathcal{M}_{2,0} \times \R\mathcal{M}_{\ell^{(1)},k^{(1)}} \times \R\mathcal{M}_{\ell^{(2)},k^{(2)}} \subset \R\mathcal{M}_{\ell,k}$ with $k^{(1)} \geq 1$, $k^{(2)} \geq 1$ and the first factor standing for an internal ghost with two real nodes and the real marked point $x_j$. Note that for fixed $j$ two such strata are disjoint.

	Then, $\klk^{II_j} \equiv \klk^I / \sim $, where $D_1 \sim D_2$ if they differ by a node-marking transposition of $x_j$ (see figure \ref{bul_fig12}). That is, $\iota(D_1) = \iota(D_2) \in T^{II_j}$ for some node-marking transposition stratum $T^{II_j}$.

\begin{figure}[h] 
        \centering 
	\includegraphics[width=90mm]{./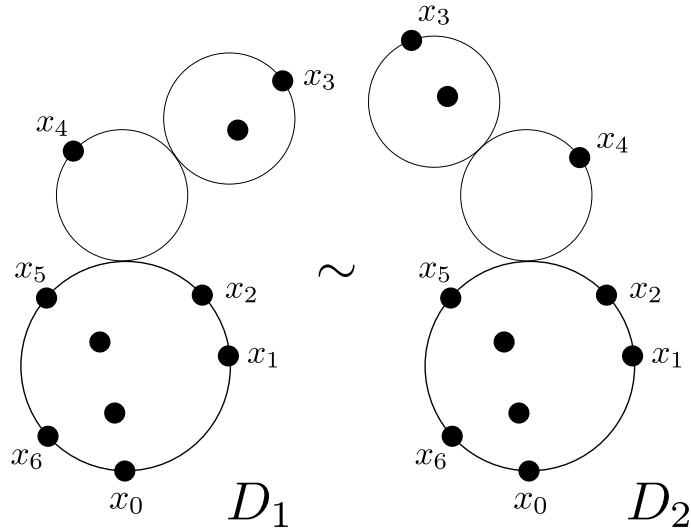}
        \caption{Disks identified in a node-marking transposition stratum} \label{bul_fig12}
\end{figure}

	Let $bu_{II_j}(\R \mathcal{M}_{\ell,k})$ and $bu_{II_j}(\klk^{II_j})$ be the blowups of $\R \mathcal{M}_{\ell,k}$ and $\klk^{II_j}$  along $\bigsqcup T^{II_j}$ and $\iota^{-1}(\bigsqcup T^{II_j})$, respectively, so that we have a commutative diagram

\begin{diagram}
	bu_{II_j}(\klk^{II_j}) && \rTo{\iota} && bu_{II_j}(\R \mathcal{M}_{\ell,k})\\
	\dTo{bd_{II_j}} &&  && \dTo{bd_{II_j}}\\
	\klk^{II_j} && \rTo{\iota} && \R \mathcal{M}_{\ell,k}\\
\end{diagram}
\end{defi}

\begin{lem}\label{klkII}
	$bu_{II_j}(\klk^{II_j})$ is an orientable smooth manifold with embedded corners.
\end{lem}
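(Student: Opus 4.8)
The plan is to follow closely the structure of the proof of Lemma \ref{klkI}: first dispose of everything away from the blow-up locus, then analyze a neighbourhood of the exceptional divisor using the normal-coordinate model of Lemma \ref{action}, and finally propagate the orientation. First I would observe that $bd_{II_j}$ is an isomorphism over the complement of $\bigsqcup T^{II_j}$ and that there $\klk^{II_j}$ coincides with $\klk^I$; hence by Lemma \ref{klkI} the space $bu_{II_j}(\klk^{II_j})$ is already an orientable smooth MWEC over that complement, and only a neighbourhood of the exceptional divisor needs attention. So fix $D$ in the preimage of one of the strata $T^{II_j}$, let $S=\iota(D)$ lie in the open part of a codimension-$m$ stratum $\mathfrak{S}\subset\R\mathcal{M}_{\ell,k}$, and choose normal coordinates $(n_1,\dots,n_m)$ as in Lemma \ref{action}, with $n_1,n_2$ the gluing parameters of the two real nodes of the internal ghost $d$ carrying $x_j$. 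By Lemma \ref{action}, $Im(\iota)$ is locally $G\cdot\R_+^m$, and the only relevant factor of $G$ is the $\Z/2\Z$ attached to $d$, acting by $(n_1,n_2)\mapsto(-n_1,-n_2)$. Thus, before blowing up, $\klk^{II_j}$ is modelled near $D$ on
\[
\big(\{n_1,n_2\ge 0\}\cup\{n_1,n_2\le 0\}\big)\times\R_+^{m-2}\times(\text{chart of }\mathfrak{S}),
\]
the two quadrants being the images of the tiles $\klk^{p_1},\klk^{p_2}$ through $D$ (with $p_1,p_2$ related by the node--marking transposition), joined along the codimension-two stratum $\{n_1=n_2=0\}$: a ``bow-tie'' times a smooth corner, singular exactly there.

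Next I would blow up along this codimension-two stratum and check the proper transform is a smooth MWEC. On a normal slice the blow-up replaces the origin of the bow-tie by the $\R P^1$ of lines through it; the proper transforms of the closed first and third quadrants attach along the common arc of nonnegative slopes, giving, in the chart $n_2=n_1t$, the region $\{t\ge 0\}\cong\R_{n_1}\times\R_{+,t}$. Hence $bu_{II_j}(\klk^{II_j})$ is locally $\R\times\R_+\times\R_+^{m-2}\times(\text{chart of }\mathfrak{S})$, a smooth manifold with corners, with the exceptional divisor appearing as the codimension-one submanifold $\{n_1=0\}$ meeting the boundary transversally (the future ``switch wall''). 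As in Lemma \ref{klkI}, embeddedness of the corners is read off this local model together with embeddedness in $\klk^I$: each old corner near $D$ is the smooth joining of finitely many corners of the $\klk^{p}$, and the new hypersurface meets the others transversally.

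For orientability I would extend the orientation of $\klk^I$ --- which on $\klk^p$ is $(-1)^{sg(p)}\mathcal{O}$ --- across the exceptional divisor, using that the local model $\R_{n_1}\times\R_{+,t}\times\cdots$ is orientable. Choosing on the blown-up piece the orientation induced from $\klk^{p_1}$ and comparing it with the one induced from $\klk^{p_2}$ reduces, exactly as in the last paragraph of the proof of Lemma \ref{klkI}, to a parity count: the pushforward from $\klk^{p_1}$ to the chart $(n_1,t,\dots)$ is orientation preserving, while that from $\klk^{p_2}$ picks up the Jacobian sign $\operatorname{sgn}(1/n_1)=-1$, so the two agree precisely when $sg(p_1)$ and $sg(p_2)$ have opposite parity, which I would verify by a direct inspection of how a node--marking transposition of $x_j$ reorders the boundary markings (together with the invariance forced by the constraints $k^{(1)},k^{(2)}\ge 1$). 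One must keep to the proper transform of the bow-tie here: the full projective blow-up of a manifold along a codimension-two submanifold is generally non-orientable because its exceptional fibre is an $\R P^1$, whereas the proper transform retains only ``half'' of that fibre and stays orientable. Finally, since for fixed $j$ the strata $T^{II_j}$ are pairwise disjoint and one only iterates over the disjoint deeper strata of the same combinatorial type, the construction is performed one stratum at a time, each step preserving the property of being an orientable smooth MWEC.

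The main obstacle is the orientation bookkeeping: making the sign-twisted orientation of Lemma \ref{klkI} extend across the exceptional divisor requires the combinatorial sign of the node--marking transposition to come out with the right parity, and one must simultaneously be careful that passing through the projective (rather than oriented) blow-up does not destroy orientability --- both points being handled by working with the explicit proper-transform model above.
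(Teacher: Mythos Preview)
Your smoothness and embeddedness arguments are essentially the paper's: the bow-tie model $\{n_1 n_2\ge 0\}$, the blow-up to $\R_{n_1}\times\R_{+,t}$, and reading off the corner structure all match (the paper keeps track of the mixed factor $\R_+^{m'}\times\R^{m-m'-2}$ rather than your $\R_+^{m-2}$, since type-$I$ identifications have already produced some $\R$-directions, but this is minor).

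The orientation argument, however, has a genuine gap. You reduce the question to whether $sg(p_1)$ and $sg(p_2)$ have opposite parity, but a node--marking transposition of $x_j$ moves $x_j$ past \emph{all} boundary markings of the adjacent component --- a cycle of length $\ell^{(2)}+1$ --- so the parity difference is $(-1)^{\ell^{(2)}}$, which is not always $-1$ (indeed $\ell^{(2)}=0$ is allowed since only $k^{(2)}\ge 1$ is required, and then $p_1=p_2$). Your Jacobian count is correct, but the comparison of $(\text{composite})_*\mathcal O$ from the two tiles is not just the sign $(-1)^{sg(p_1)-sg(p_2)}$: the two relevant $2$-corners sit at \emph{different} points of the abstract $\klk$, and relating the standard orientation at those two points already requires traversing the space. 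The paper does exactly this, but through the $1$-corner $F=\iota^{-1}(\{n_1=0\})\subset\klk^I$ rather than through the bulk: the key observation is that the type-$II_j$ identification, restricted to $F\cong K_{\ell^{(1)},k^{(1)}}\times K_{\ell^{(2)},k^{(2)}}$, becomes a type-$I$ identification on the second factor (the node now plays the role of a marking transposed with $x_j$), so Lemma~\ref{klkI} applied to that factor shows $\mathcal O_F$ extends across $\{n_1=n_2=0\}$. The outward normal $\partial/\partial n$ to $F$ is then $-\partial/\partial n_1$ on the first quadrant and $+\partial/\partial n_1$ on the third, lifting to a common $-\partial/\partial(n_1/n_2)$; together these give the orientation on the proper transform. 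Your parenthetical ``invariance forced by $k^{(1)},k^{(2)}\ge 1$'' does not rescue the parity count, since those constraints concern interior markings and say nothing about $\ell^{(2)}$.
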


\begin{proof}
	\textit{Smoothness.} We must only verify smoothness at a point $p \in bd^{-1} \circ \iota^{-1}(T^{II_j}) \subset \klk^{II_j}$ in the exceptional locus of a given internal transposition $T^{II_j}$. Say $S = \iota \circ bd(p)$ lies in the interior of a $m$-codimensional stratum $\mathfrak{S} \subset T^{II_j}$. Then, choose the normal coordinates $(n_1, n_2, n_3, \ldots, n_m)$ to $\mathfrak{S}$ at $S$, so that $n_1$ and $n_2$ correspond to the gluings over the ghost component containing $x_j$. Using these coordinates, we can describe $\klk^{II_j}$, near $p$, as $\{ n_1 n_2 \geq 0 \} \times \R_+^{m'} \times \R^{m -m' -2} \times \R^{dim(\mathfrak{S})} \subset \R^{\ell -2 +2k}$ where $\iota^{-1}(T^{II_j})$ corresponds to $\{ n_1 = n_2 = 0 \} \times \R_+^{m'} \times \R^{m -m' -2} \times \R^{dim(\mathfrak{S})}$ and $3 \leq m' \leq m$.

	Then, $p$ writes, in the blownup local chart, as $([n_1: n_2], 0, \ldots, 0)$ with, say, $n_2 \neq 0$. Thus, the coordinates $(n, \frac{n_1}{n_2}, n_3, \ldots, n_{m})$, where $n$ is normal to the exceptional divisor, times a chart of $\mathfrak{S}$ give a smooth corner chart of $bu_{II_j}(\klk^{II_j})$ near $p$.

\begin{rem}\label{iterateII}
	Furthermore, one verifies that in these new normal coordinates, $Im(\iota)$ lifts to $\R \times ((\underset{d\in ghosts \mid d\neq d_j}{\prod} \Z/2\Z) \cdot \R_+^{m-1})$ where the action is induced by that of $G = \underset{d\in \text{ ghosts}}{\prod} \Z/2\Z$ so we are back to the local model described in lemma \ref{action}.
\end{rem}

	\textit{Embeddedness of the corners. } To show that the corners of the desingularized space are still embedded, it is again sufficient to show that every 1-corner is embedded. Indeed, a 1-corner of $bu_{II_j}(\klk^{II_j})$ arises locally, in local normal coordinates above, as a lift of $\iota^{-1}(G \cdot \{n_i \geq 0, n_j=0 \mid 1 \leq i \leq m, i \neq j\})$ to the blowup, and therefore it is smoothly embedded.

	\textit{Orientability. } We are left to check that an orientation of $\klk^I$ lifts to an orientation of $bu_{II_j}(\klk^{II_j})$. It is enough to look around $p \in \partial_1(bu_{II_j}(\klk^{II_j}))$ such that $\iota \circ bd(p) \in T^{II_j}$ is an interior point, i.e. an endpoint of an exceptional fiber.

	Then, in the normal coordinates $(n_1,n_2)$ to $T^{II_j}$ at $\iota \circ bd(p)$, $Im(\iota)$ corresponds to $\{n_1 n_2 \geq0\}$. Thus, in the blownup local normal model, up to a reordering of the coordinates, $p = (n_1, [n_1:n_2]) = (0,[0: 1])$ so it lies in the lift of the $\{n_1 =0\}$ hyperplane. Remark that $\iota^{-1}(\{n_1 =0\}) \subset \klk^I$ lies in a single 1-corner $F \in C_1(\klk^I)$ that is sent by $\iota$ to a codimension 1 stratum $\R \mathcal{M}_{\ell^{(1)},k^{(1)}} \times \R \mathcal{M}_{\ell^{(2)},k^{(2)}}$ of $\R \mathcal{M}_{\ell,k}$ where, say, $x_j$ contributes to the moduli of the second factor.

	Take $\frac{\partial}{\partial n} \wedge \mathcal{O}_F$ as an orientation of $\klk^I$, where $\frac{\partial}{\partial n}$ is outward normal to $F$ and $\mathcal{O}_F$ is an orientation of $F$.

	First, notice that $\mathcal{O}_F$ pushes to an orientation of the $\{n_1 =0\}$ hyperplane because the attaching of $F$ to itself on $\{n_1 =n_2 =0\}$ is the same as the one used in the construction of $K_{\ell^{(2)},k^{(2)}}^I$. Said differently, $F$ is a cut of $K_{\ell^{(1)},k^{(1)}}^I \times K_{\ell^{(2)},k^{(2)}}^I$ along the transpositions of its real node, seen as a real marking, and one of its real marking, and $\{n_1 =n_2 =0\}$ is part of that cut locus.

	Second, since $\frac{\partial}{\partial n}$ is pushed to $-\frac{\partial}{\partial n_1}$ over $\{n_1,n_2 \geq0\}$ and to $\frac{\partial}{\partial n_1}$ over $\{n_1,n_2 \leq0\}$, it lifts to $-\frac{\partial}{\partial \frac{n_1}{n_2}}$ which is again outward to $\partial_1(bu_{II_j}(\klk^{II_j}))$ near $p$. Therefore, an orientation $\frac{\partial}{\partial n} \wedge \mathcal{O}_F$ of $\klk^I$ lifts to an orientation of $bu_{II_j}(\klk^{II_j})$.

\end{proof}

Remark \ref{iterateII} in the proof of \ref{klkII} emphasizes the fact that it is possible to iterate the identification and blowup process.

\begin{defi}
	Let $\klk^{II}$ be the space $bu_{II_\ell}( \ldots bu_{II_1}(bu_{II_0}(\klk^{II_0})^{II_1}) \ldots ^{II_\ell})$ defined by implementing the construction of definition \ref{transII} iteratively.
\end{defi}

\begin{cor}
	$\klk^{II}$ is an orientable smooth manifold with embedded corners.
\end{cor}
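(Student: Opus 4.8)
The plan is to argue by induction on $j$, using the already-established Lemma \ref{klkII} as the single inductive step, and Remark \ref{iterateII} to guarantee that the hypotheses of that step persist after each identification and blowup. Concretely, one sets $X_{-1} \equiv \klk^I$, which is an orientable smooth manifold with embedded corners by Lemma \ref{klkI}, and then for $0 \le j \le \ell$ defines $X_j = bu_{II_j}(X_{j-1}^{II_j})$, where $X_{j-1}^{II_j}$ denotes the space obtained from $X_{j-1}$ by the node-marking transposition identification of $x_j$ as in Definition \ref{transII}, and $\klk^{II} \equiv X_\ell$. It suffices to show that each $X_j$ is again an orientable smooth manifold with embedded corners, for then the corollary follows by applying Lemma \ref{klkII} $\ell+1$ times.

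First I would check that the inductive step is literally an instance of Lemma \ref{klkII}: that lemma is stated for $\klk^I$, but its proof is entirely local, depending only on (i) the ambient $\R\mathcal{M}_{\ell,k}$ having smooth corner normal coordinates along the stratum $\mathfrak{S}$, (ii) the tiling of $Im(\iota)$ near a point being given by the $G$-action of Lemma \ref{action}, and (iii) the previous space being orientable with embedded corners. Now Remark \ref{iterateII} is exactly the assertion that, after performing one such identification-and-blowup, the image $Im(\iota)$ in the new normal coordinates is again of the form $\R^{(\text{already resolved directions})} \times \big((\prod_{d \neq d_j}\Z/2\Z)\cdot\R_+^{m-1}\big)$ — i.e. the local model of Lemma \ref{action} persists, now with one fewer ghost factor to resolve. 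So the smoothness and embeddedness parts of Lemma \ref{klkII}'s proof go through verbatim for $X_{j-1}^{II_j} \rightsquigarrow X_j$, since the node-marking transposition strata for $x_j$ are disjoint (stated in Definition \ref{transII}) and each sits in the local model; the finitely many blowups (one per such stratum) can be performed independently.

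For orientability, I would repeat the argument at the end of the proof of Lemma \ref{klkII}: near an endpoint $p$ of an exceptional fiber one writes an orientation of $X_{j-1}^{II_j}$ as $\frac{\partial}{\partial n}\wedge\mathcal{O}_F$ for a suitable $1$-corner $F$, observes that $\mathcal{O}_F$ descends across the self-identification of $F$ because that identification is the same one used to build the relevant lower-dimensional $K^I$ factor, and checks that $\frac{\partial}{\partial n}$ lifts to $-\frac{\partial}{\partial (n_1/n_2)}$, which is still outward-pointing to the blown-up boundary. The only subtlety is bookkeeping: one must know that the identification $X_{j-1}\rightsquigarrow X_{j-1}^{II_j}$ used at stage $j$ is compatible with the orientation chosen at the previous stages, but this follows because at each stage the newly-identified corner is a cut of a product of lower $K^{\bullet}$-type factors along transpositions, and the sign-twisted orientations $(-1)^{sg(p)}\mathcal{O}$ from Lemma \ref{klkI} are preserved under exactly such cuts.

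The main obstacle I anticipate is purely organizational rather than conceptual: verifying that Remark \ref{iterateII}'s local model really does hold uniformly after $j$ iterations — that is, that the directions resolved at earlier stages do not interfere with the ghost $d_j$ being resolved at stage $j$, and that no new singular strata of the form ``internal ghost with a real marking'' are created by the blowups. This is where one must be careful that a blowup along $\iota^{-1}(\bigsqcup T^{II_i})$ for $i<j$ genuinely leaves the stratification of $Im(\iota)$ near the $x_j$-transposition strata in the standard $G$-orbit form, rather than introducing a more complicated toric singularity. Granting this (which Remark \ref{iterateII} asserts and which one verifies by the same normal-coordinate computation as in Lemma \ref{klkII}), the corollary is immediate by induction. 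I would also remark in passing that the resulting $X_\ell = \klk^{II}$ still may carry $3$-corner singularities coming from triple-ghost configurations, which is precisely why the third blowup step (producing $\klk^\bullet$) is needed afterward — but that is outside the scope of this corollary.
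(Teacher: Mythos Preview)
Your proposal is correct and is exactly the argument the paper intends: the paper states this corollary with no explicit proof, having just noted (immediately before the definition of $\klk^{II}$) that ``Remark \ref{iterateII} in the proof of \ref{klkII} emphasizes the fact that it is possible to iterate the identification and blowup process.'' Your write-up simply spells out this iteration in detail, which is a faithful and more careful rendering of the same idea.
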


As noticed in the orientability part of the proof of lemma \ref{klkII}, a 1-corner $F \in C_1(\klk^{II})$ with no ghost component is isomorphic to some $K_{\ell^{(1)},k^{(1)}}^I \times K_{\ell^{(2)},k^{(2)}}^I$ with $k^{(1)}, k^{(2)} \geq 1$. This is simply due to the fact that type $II$ identifications restricted to 1-corners with no ghost become type $I$ identifications.

Third, we identify $\klk^{II}$ along 3-corners being transpositions over an internal ghost connecting three nonghost disks. We proceed iteratively, identifying two corresponding codimension 3 strata and blowing up along the identification locus to remove the singularity then created.

\begin{defi} \label{transIII}
	Let a node-node transposition be a stratum $ T^{III} \cong \R\mathcal{M}_{2,0} \times \R\mathcal{M}_{\ell^{(1)},k^{(1)}} \times \R\mathcal{M}_{\ell^{(2)},k^{(2)}} \times \R\mathcal{M}_{\ell^{(3)},k^{(3)}} \subset \R\mathcal{M}_{\ell,k}$, with $k^{(1)},k^{(2},k^{(3)} \geq 1$ and the first factor standing for an internal ghost with three real nodes. Note that these strata are not mutually disjoint in general.

	Then, $\klk^{III} \equiv \klk^{II} / \sim $, where $D_1 \sim D_2$ if they differ by a node-node transposition (see figure \ref{bul_fig13}).

\begin{figure}[h] 
        \centering 
	\includegraphics[width=90mm]{./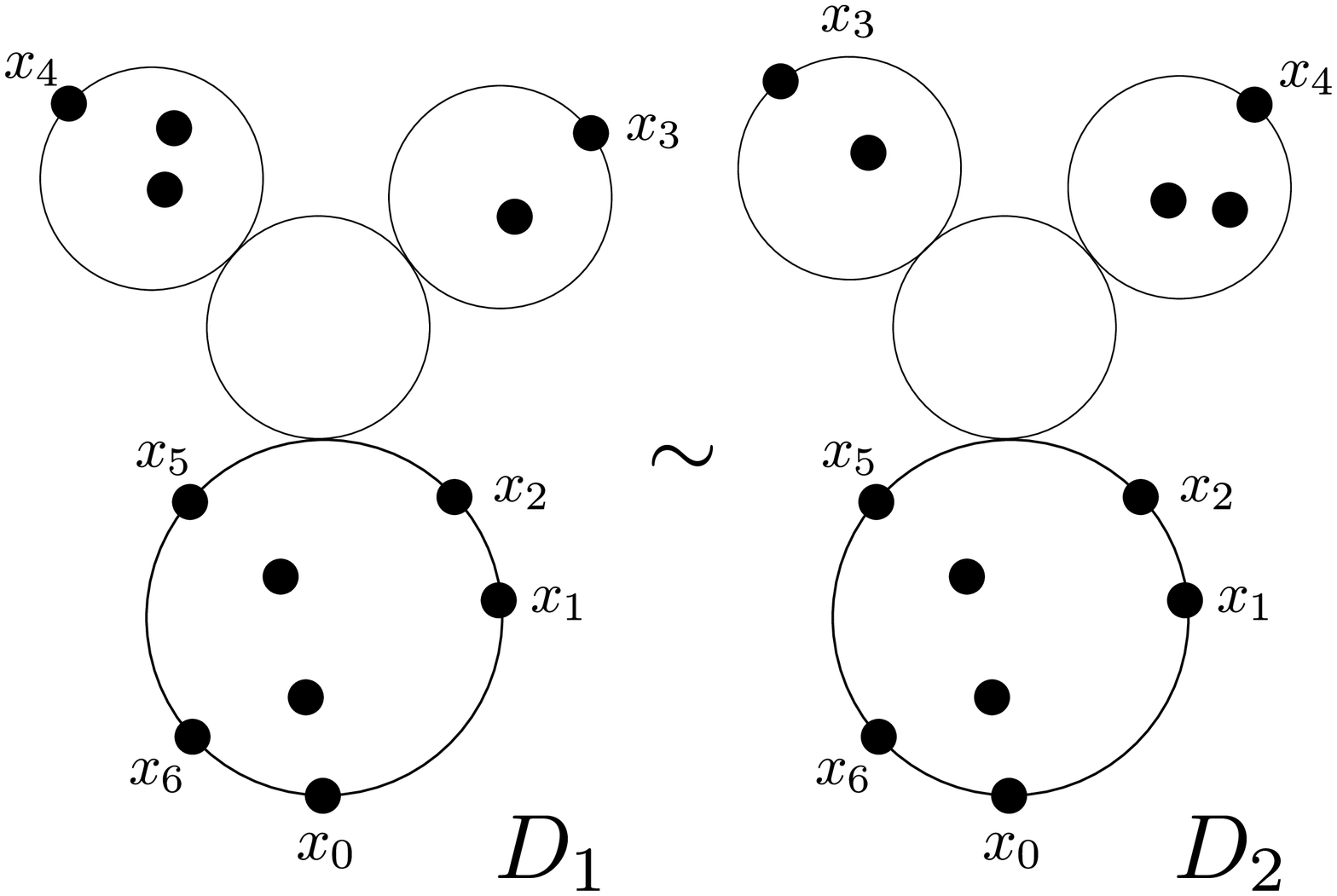}
        \caption{Disks identified in a node-node transposition stratum} \label{bul_fig13}
\end{figure}

	Given an order on the set of node-node transpositions, let $bu_{III}(\R \mathcal{M}_{\ell,k})$ and $\klk^\bullet \equiv bu_{III}(\klk^{III})$ be the blowups of $bu_{II}(\R \mathcal{M}_{\ell,k})$ and $\klk^{III}$  along $\bigcup T^{III}$ and $\iota^{-1}(\bigcup T^{III})$ with respect to this order, respectively, so that we get a commutative diagram

\begin{diagram}
	\klk^\bullet \equiv bu_{III}(\klk^{III}) && \rTo{\iota} && bu_{III}(\R \mathcal{M}_{\ell,k})\\
	\dTo{bd_{III}} &&  && \dTo{bd_{III}}\\
	\klk^{III} && \rTo{\iota} && \R \mathcal{M}_{\ell,k}\\
\end{diagram}
\end{defi}

\begin{lem}\label{klkIII}
	$\klk^\bullet \equiv bu_{III}(\klk^{III})$ is an orientable smooth manifold with embedded corners.
\end{lem}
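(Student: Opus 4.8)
The plan is to follow the same three-part scheme used for Lemma~\ref{klkII}, now applied one codimension higher and with the bookkeeping of the blowup order made explicit. First I would verify \emph{smoothness}. Let $p \in bd_{III}^{-1}\circ\iota^{-1}(T^{III}) \subset \klk^{III}$ lie over a point $S = \iota\circ bd_{III}(p)$ in the interior of a codimension $m$ stratum $\mathfrak{S}\subset T^{III}$, and choose normal coordinates $(n_1,n_2,n_3,n_4,\ldots,n_m)$ to $\mathfrak{S}$ at $S$ so that $n_1,n_2,n_3$ are the three real gluing parameters at the nodes of the internal trivalent ghost $d$. By Lemma~\ref{action}, near $S$ the set $Im(\iota)$ is the $\Z/2\Z$-orbit (the $d$-factor of $G$ acting by simultaneous sign change on $(n_1,n_2,n_3)$) of $\R_+^m$, so $\klk^{III}$ near $p$ looks like $\bigl(\{(n_1,n_2,n_3)\in\R^3 : n_1n_2n_3\geq 0 \text{ (after identification)}\}\bigr)\times\R_+^{\,m'}\times\R^{\,m-m'-3}\times\R^{\dim\mathfrak{S}}$, the node-node transposition locus being $\{n_1=n_2=n_3=0\}$. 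Blowing up this codimension-$3$ subvariety resolves the quotient singularity: on the chart where, say, $n_3\neq 0$, the coordinates $(n, n_1/n_3, n_2/n_3, n_4,\ldots,n_m)$ together with a chart of $\mathfrak{S}$ give a smooth corner chart. Here one must check that the identification $D_1\sim D_2$ across node-node transpositions is compatible with, and actually \emph{needed to make consistent}, the blowup along $\bigcup T^{III}$ performed in the chosen order; this is the analogue of Remark~\ref{iterateII}, showing that after this blowup the local picture of $Im(\iota)$ reverts to the $G'\cdot\R_+^{\,m-1}$ model of Lemma~\ref{action} with $G' = \prod_{d'\neq d}\Z/2\Z$, which is what licenses the iteration over all node-node transpositions (and makes the result independent, up to isomorphism, of the chosen order).

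Second, \emph{embeddedness of the corners}: as in the previous lemmas it suffices to treat $1$-corners, and a $1$-corner of $\klk^\bullet$ appears locally, in the normal coordinates above, as the proper transform of $\iota^{-1}(G\cdot\{n_i\geq 0, n_j = 0\})$ under the blowup, which is smoothly embedded since blowing up a smooth center preserves smooth embeddedness of transverse divisors. One also records here the structure of the generic $1$-corner: a $1$-corner with no ghost component is, by the same reasoning used after the corollary to Lemma~\ref{klkII} (type $III$ identifications restricted to a node-free $1$-corner degenerate to type $II$, hence to type $I$), isomorphic to a product $K_{\ell^{(1)},k^{(1)}}^\bullet \times K_{\ell^{(2)},k^{(2)}}^\bullet$ with $k^{(1)},k^{(2)}\geq 1$ --- the statement of the later Lemma about $\klk^\bullet$.

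Third, \emph{orientability}: I would show that the orientation of $\klk^{II}$ constructed in Lemma~\ref{klkII} (built from $(-1)^{sg(p)}\mathcal{O}$ on each tile $\klk^p$) lifts across each node-node blowup. Again it is enough to look near a point $p\in\partial_1(bu_{III}(\klk^{III}))$ with $\iota\circ bd_{III}(p)\in T^{III}$ an endpoint of an exceptional fiber. In the normal coordinates $(n_1,n_2,n_3)$ to $T^{III}$ the set $Im(\iota)$ is (the identification of) $\{n_1n_2n_3\geq 0\}$; $p$ lies in the proper transform of one of the coordinate hyperplanes, say $\{n_1=0\}$, and $\iota^{-1}(\{n_1=0\})\subset\klk^{II}$ lies in a single $1$-corner $F\in C_1(\klk^{II})$. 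Writing the orientation of $\klk^{II}$ near $F$ as $\tfrac{\partial}{\partial n}\wedge\mathcal{O}_F$ with $\tfrac{\partial}{\partial n}$ outward normal to $F$, one checks that $\mathcal{O}_F$ descends to an orientation of the $\{n_1=0\}$ wall (because the self-gluing of $F$ along $\{n_1=n_2=n_3=0\}$ matches the identifications already built into the lower $K^\bullet$'s), and that $\tfrac{\partial}{\partial n}$, which is $-\tfrac{\partial}{\partial n_1}$ on one side of the wall and $+\tfrac{\partial}{\partial n_1}$ on the other, lifts to $-\tfrac{\partial}{\partial (n_1/n_3)}$, still outward-pointing on the blowup; hence $\tfrac{\partial}{\partial n}\wedge\mathcal{O}_F$ lifts consistently.

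The main obstacle I expect is in the first step, specifically the interplay between the \emph{order} chosen on the node-node transpositions and the identifications $D_1\sim D_2$: one must be sure that after identifying along \emph{all} $T^{III}$ and blowing up in the prescribed order, the successive local models really do collapse to the clean $G'\cdot\R_+^{\,m-1}$ picture (the higher-codimension analogue of Remark~\ref{iterateII}), because the strata $T^{III}$ are not mutually disjoint, so blowups along later centers meet the exceptional divisors of earlier ones. Making this precise --- i.e.\ a careful induction showing that each blowup in turn resolves exactly one remaining $\Z/2\Z$ factor without disturbing the already-resolved structure, and that the $3$-corners are precisely the ones associated with trivalent internal ghosts joining three non-ghost disks (no further identifications being forced in higher codimension once perturbations are chosen not to depend on the moduli over more-than-trivalent ghosts) --- is the crux, and is where the bulk of the routine-but-delicate coordinate computation lives.
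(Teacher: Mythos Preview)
Your proposal is correct and follows essentially the same three-part scheme (smoothness via blowup charts, embeddedness of $1$-corners, orientability via lifting $\tfrac{\partial}{\partial n}\wedge\mathcal{O}_F$) as the paper's own proof, including the analogue of Remark~\ref{iterateII} that licenses the iteration. One small slip: in three variables the local image of $\iota$ is $\{n_1,n_2,n_3\geq 0\}\cup\{n_1,n_2,n_3\leq 0\}$, not $\{n_1n_2n_3\geq 0\}$ (these coincide only for two variables), though your description of the $\Z/2\Z$-action shows you have the right picture.
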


\begin{proof}
	The proof here uses the same arguments as the proof of lemma \ref{klkII} but we include it for completeness.
  
	\textit{Smoothness.} We must only verify smoothness at a point $p \in bd^{-1} \circ \iota^{-1}(T^{III}) \subset \klk^\bullet$ after the blowup at the transposition $T^{III}$. Say $S = \iota \circ bd(p)$ lies in the interior of a $m$-codimensional stratum $\mathfrak{S} \subset T^{III}$. Then, in the normal coordinates $(n_1, n_2, \ldots, n_m)$ to $\mathfrak{S}$ at $S$, say $n_1$, $n_2$ and $n_3$ are associated with the gluings over the special ghost component so that $T^{III}$ corresponds to $\{n_1=n_2=n_3=0\}$. 

	Then, $p$ writes, in the blownup local model, as $([n_1: n_2: n_3], 0, \ldots, 0)$ with, say, $n_3 \neq 0$. The coordinates $(n, \frac{n_1}{n_3}, \frac{n_2}{n_3}, n_4, \ldots, n_{m})$, where $n$ is normal to the exceptional divisor, times a chart of $\mathfrak{S}$ gives a smooth corner chart of $\klk^\bullet)$ near $p$.

\begin{rem}
	Again, one verifies that in these coordinates, $Im(\iota)$ lifts to $\R \times ((\underset{d\in ghosts \mid d\neq d_j}{\prod} \Z/2\Z) \cdot \R_+^{m-1})$ where the action is induced by that of $G = \underset{d\in \text{ ghosts}}{\prod} \Z/2\Z$ so we are back to the local model described in lemma \ref{action}.
\end{rem}

	\textit{Embeddedness of the corners. } To show that the corners of the desingularized space are still embedded, it is again sufficient to show that every 1-corner is embedded. Indeed, a 1-corner of the blowup at $T^{III}$ arises locally, in the local coordinates above, as a lift of $\iota^{-1}(G \cdot \{n_i \geq 0, n_j=0 \mid 1 \leq i \leq m, i \neq j\})$ to the blowup, and therefore it is smoothly embedded.

	\textit{Orientability. } We are left to check that an orientation of $\klk^{II}$ lifts to an orientation of $\klk^\bullet$. It is enough to look around $p \in \partial_1(\klk^\bullet)$ such that $\iota \circ bd(p) \in T^{III}$ is an interior point.

	Then, in the normal coordinates $(n_1,n_2,n_3)$ to $T^{III}$ at $\iota \circ bd(p)$, $Im(\iota)$ corresponds to $\{n_1,n_2,n_3 \geq0\} \bigcup \{n_1,n_2,n_3 \leq0\}$. Thus, in the blownup local normal model, up to a reordering, $p = (0,[0: p_2: p_3])$ so it lies in the local lift of the $\{n_1 =0\}$ hyperplane. Remark that $\iota^{-1}(\{n_1 =0\}) \subset \klk^{II}$ lies in a single 1-corner $F \in C_1(\klk^{II})$ and that it is sent by $\iota$ to a codimension 1 stratum $\R \mathcal{M}_{\ell^{(1)},k^{(1)}} \times \R \mathcal{M}_{\ell^{(2)}+\ell^{(3)}-1,k^{(2)}+k^{(3)}}$ of $\R \mathcal{M}_{\ell,k}$ where the second factor is attained by the gluings on the nodes corresponding to $n_2$ and $n_3$.

	Take $\frac{\partial}{\partial n} \wedge \mathcal{O}_F$ as an orientation of $\klk^{II}$, where $\frac{\partial}{\partial n}$ is outward normal to $F$ and $\mathcal{O}_F$ is an orientation of $F$.

	First, notice that $\mathcal{O}_F$ pushes to an orientation of the $\{n_1 =0\}$ hyperplane because the attaching of $F$ to itself on $\{n_1 =n_2 =n_3 =0\}$ is the same as the attaching of $K_{\ell^{(2)}+\ell^{(3)}-1,k^{(2)}+k^{(3)}}^{I}$ on an internal node-marking transposition as described in \ref{klkII}. Said differently, $F$ is a cut of $K_{\ell^{(1)},k^{(1)}}^{II} \times K_{\ell^{(2)}+\ell^{(3)}-1,k^{(2)}+k^{(3)}}^{II}$ along the transpositions of a real node and a real marking for which $\{n_1 =n_2 =n_3 =0\}$ is part of that cut locus.

	Second, since $\frac{\partial}{\partial n}$ is pushed to $-\frac{\partial}{\partial n_1}$ over $\{n_1,n_2,n_3 \geq0\}$ and to $\frac{\partial}{\partial n_1}$ over $\{n_1,n_2,n_3 \leq0\}$, it lifts to $-\frac{\partial}{\partial \frac{n_1}{n_3}}$ which is again outward to $\partial_1(\klk^\bullet)$ near $p$. Therefore, an orientation $\frac{\partial}{\partial n} \wedge \mathcal{O}_F$ of $\klk^{II}$ lifts to an orientation of $\klk^\bullet$ near $F$.

\end{proof}

As in the nonsymmetric case, one can then apply a collar neighborhood enlargement procedure to $\klk^\bullet$. However, we choose to restrict the collar procedure $col$ to the corners that do not have ghost components associated, and denote by $col^{\text{n-g}}$ this restricted procedure.

\begin{defi}
	For $k \geq 1$ and $\ell + 2k -2 \geq 1$, let $\mathcal{C}\ell_{\ell,k}^\bullet = col^{\text{n-g}}(\klk^\bullet)$ and otherwise if $\ell = 0$ and $k=1$ or $\ell = 1$ and $k=0$, $\mathcal{C}\ell_{\ell,k}^\bullet$ will be a point. 
\end{defi}

The last property we would like to check is that we get, as in the $\otimes$ case, a product structure on the corners of $\cllks$.

\begin{lem}
	Every 1-corner of $\klk^\bullet$ corresponding to nodal disks with two nonghost components is isomorphic to a product $K_{\ell^{(1)},k^{(1)}}^\bullet \times K_{\ell^{(2)},k^{(2)}}^\bullet$ where $k^{(1)}, k^{(2)} \geq 1$ and $\ell^{(1)}+ \ell^{(2)} -1 = \ell$.
\end{lem}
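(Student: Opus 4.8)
The plan is to reduce the statement about $1$-corners of $\klk^\bullet$ to the corresponding well-understood statement about $1$-corners of $\R\mathcal{M}_{\ell,k}$, using the local normal-coordinate picture established in Lemmas \ref{action}, \ref{klkII} and \ref{klkIII}. First I would recall that a $1$-corner of $\klk^\bullet$ with two nonghost components $D^{(1)}, D^{(2)}$ is, by definition of the blowup procedure, not affected by any of the type $II$ or type $III$ identifications: those only act on strata containing at least one internal ghost sphere (type $II$ needs an internal ghost with two real nodes and a marking, type $III$ an internal ghost with three real nodes), and a nodal disk with exactly two smooth nonghost components has no internal ghost. Likewise the type $I$ identifications act only on strata with an external ghost. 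Hence near a generic point $D$ of such a $1$-corner $F$, $bd_{III}$ and $bd_{II}$ are local isomorphisms and $\klk^\bullet$ looks locally like $\underset{p\in S_\ell}{\bigsqcup}\klk^p$ itself.

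Next I would identify the two nonghost components as a node-marking separation: the node of $F$ corresponds, under $\iota$, to a codimension-one stratum of $\R\mathcal{M}_{\ell,k}$ of the form $\R\mathcal{M}_{\ell^{(1)},k^{(1)}}\times\R\mathcal{M}_{\ell^{(2)},k^{(2)}}$ with $k^{(1)},k^{(2)}\geq 1$ (each side carries at least one pair of complex-conjugate markings precisely because neither side is a ghost) and $\ell^{(1)}+\ell^{(2)}-1=\ell$, $k^{(1)}+k^{(2)}=k$. As in the orientability discussion following Lemma \ref{klkII}, a $1$-corner of $\klk^\bullet$ with no ghost is obtained by gluing the $1$-corners of the various tiles $\klk^p$ along transpositions of the real marking that labels the node; since no ghost is present, each of these glued pieces is already of the form $K^\bullet_{\ell^{(1)},k^{(1)}}\times K^\bullet_{\ell^{(2)},k^{(2)}}$ by the same argument applied inductively to $K_{\ell^{(i)},k^{(i)}}$ (the $c$ lower-complexity spaces already carry the $\bullet$-structure). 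Concretely: the induced ordering on the real markings of $D^{(1)}$ and on those of $D^{(2)}$ each range over all of $S_{\ell^{(1)}}$ resp.\ $S_{\ell^{(2)}}$ as $p$ ranges over $S_\ell$ with the position of the node fixed, and the gluings over transpositions on the $D^{(1)}$-side (resp.\ $D^{(2)}$-side) are exactly the identifications defining $K^\bullet_{\ell^{(1)},k^{(1)}}$ (resp.\ $K^\bullet_{\ell^{(2)},k^{(2)}}$), while no identification mixes the two sides because that would require a ghost straddling the node. This yields the canonical isomorphism $F\cong K^\bullet_{\ell^{(1)},k^{(1)}}\times K^\bullet_{\ell^{(2)},k^{(2)}}$.

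The main obstacle I expect is bookkeeping the compatibility of the iterated blowups with this product decomposition near the boundary of $F$ itself — i.e.\ checking that when $F$ meets a higher-codimension stratum where a ghost component does appear, the blowup charts restrict correctly on the two factors. This is handled by Remark \ref{iterateII} and the analogous remark in the proof of Lemma \ref{klkIII}, which show that after each blowup $Im(\iota)$ returns to the standard local model $G\cdot\R_+^m$ with $G=\prod_{d\text{ ghost}}\Z/2\Z$, so the inductive structure is preserved; applying this on each factor separately gives the product of blowup charts. The remaining points — that the isomorphism is compatible with the collar procedure $col^{\text{n-g}}$ (immediate, since $col^{\text{n-g}}$ acts only on corners with no ghost and is defined factorwise exactly as in Section \ref{collar}) and with orientations — follow as in the $\otimes$ case (cf.\ the end of Section \ref{collar}), so I would state these briefly and refer back rather than repeat the argument.
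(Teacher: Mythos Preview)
Your argument has a genuine gap: you claim that a $1$-corner $F$ with two nonghost components is ``not affected by any of the type $II$ or type $III$ identifications,'' and that the identifications on each factor come from the ambient type $I$ alone. This is the crux of the lemma and it is not correct.

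On the factor $D^{(1)}$ (the component containing $x_0$), the $F$-node becomes one of the \emph{leaf} markings, say $x_{j_0}^{(1)}$. To build $K_{\ell^{(1)},k^{(1)}}^\bullet$ on this factor you need, among other things, the type $I$ transpositions of $x_{j_0}^{(1)}$ with an adjacent leaf $x_j$. On the ambient space this configuration is a nodal disk with three smooth pieces: $D^{(2)}$, an internal ghost carrying $x_j$ together with \emph{two} nodes (the $F$-node and a new one), and the remainder of $D^{(1)}$. That is precisely an ambient type $II_j$ stratum, not a type $I$ stratum. Likewise, the type $II$ identifications on the factors that involve the $F$-node (as a marking on $D^{(1)}$, or as the root $x_0$ on $D^{(2)}$) correspond on the ambient to an internal ghost with three nodes, i.e.\ an ambient type $III$ stratum. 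So the ambient type $II$ and $III$ identifications do act on $\overline{F}$, and they are exactly what supplies the ``missing'' type $I$ (resp.\ type $II$) identifications on the factors --- the ones in which the node plays the role of a real marking. The ambient type $I$ identifications alone only give you transpositions of pairs of \emph{original} markings on each side, which is strictly less than what $K_{\ell^{(i)},k^{(i)}}^\bullet$ requires.

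The paper's proof is precisely this bookkeeping: ambient type $II$ restricted to $F$ $\Rightarrow$ factor type $I$ involving the node; ambient type $III$ restricted to $F$ $\Rightarrow$ factor type $II$ involving the node; ambient type $III$ not involving the $F$-node $\Rightarrow$ factor type $III$. Your appeal to Remark~\ref{iterateII} and the local model $G\cdot\R_+^m$ tells you that the blowups are smooth and compatible with a product of corner charts, but it does not by itself verify that the \emph{set} of identifications performed on $F$ coincides with the set defining $K_{\ell^{(1)},k^{(1)}}^\bullet \times K_{\ell^{(2)},k^{(2)}}^\bullet$; that matching is the actual content of the lemma. Once you insert the ``shift by one'' observation (ambient type $N$ $\to$ factor type $N{-}1$ when the node is involved), the rest of your outline goes through.
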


\begin{proof}
	As noticed in the proof of lemma \ref{klkII}, type $II$ identifications provide the missing type $I$ identifications over the 1-corners, that is, the type $I$ transpositions with the node considered as a real marking. In the same fashion, type $III$ identifications restrict to the missing type $II$ identifications of the 1-corners, more precisely, the type $II$ transpositions with the node considered as a real marking.

	It remains to check that all the type $III$ identifications of the 1-corner are performed after type $III$ identifications. Indeed, the type $III$ identifications over a 1-corner are simply those coming from the global ones because the node, considered as a real marking, is not implied in the associated transpositions. Thus, the considered 1-corner of $\klk^\bullet$ has the same identifications as $K_{\ell^{(1)},k^{(1)}}^\bullet \times K_{\ell^{(2)},k^{(2)}}^\bullet$.

\end{proof}

\begin{cor}
	Every 1-corner of $\cllks$ corresponding to two nonghost components is isomorphic to a product $\mathcal{C}\ell_{\ell^{(1)},k^{(1)}}^\bullet \times \mathcal{C}\ell_{\ell^{(2)},k^{(2)}}^\bullet$ where $k^{(1)}, k^{(2)} \geq 1$ and $\ell^{(1)}+ \ell^{(2)} -1 = \ell$.
\end{cor}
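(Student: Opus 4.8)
The plan is to transport the product decomposition of a nonghost $1$-corner of $\klk^\bullet$, already established in the preceding lemma, through the collar enlargement $\cllks = col^{\text{n-g}}(\klk^\bullet)$, mimicking the argument given for the nonsymmetric case in Section \ref{collar} (the remark following the definition of $\cllkr$, which rests on Lemma \ref{smoo_lemm}).

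First I would fix a $1$-corner of $\cllks$ of the stated kind; under the identification $\cllks \cong \klk^\bullet$ it sits over a $1$-corner $F \subset \klk^\bullet$ associated to a nodal disk with two nonghost smooth components, and by the preceding lemma $F$ is canonically $K_{\ell^{(1)},k^{(1)}}^\bullet \times K_{\ell^{(2)},k^{(2)}}^\bullet$ with $k^{(1)}, k^{(2)} \geq 1$, the interior markings distributed between the two factors, and $\ell^{(1)}+\ell^{(2)}-1=\ell$, the factors being the moduli of the components with the node counted as a boundary marking on each. Next I would recall that $col^{\text{n-g}}$ is the restriction of the enlargement of Definition \ref{defi_col} to the ghost-free corners of $\klk^\bullet$, and that, just as in Lemma \ref{smoo_lemm} and Appendix \ref{appe_B}, it is a local operation near each such corner: over a neighbourhood of $K^\bullet_{\ell,k,\geq T}$ it glues in the cell $K^\bullet_{\ell,k,\geq T}\times X^{[0,1]}(T)$, these cells attaching along faces dictated by the partial order on combinatorial types. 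Then I would observe that a type $T$ meeting the open stratum of $F$ is precisely a tree grafting a type $T^{(1)}$ for the first factor onto a type $T^{(2)}$ for the second at the distinguished node, all of whose interior vertices are nonghost, so that $K^\bullet_{\ell,k,\geq T} = K^\bullet_{\ell^{(1)},k^{(1)},\geq T^{(1)}}\times K^\bullet_{\ell^{(2)},k^{(2)},\geq T^{(2)}}$ and $X^{[0,1]}(T)|_{F} = X^{[0,1]}(T^{(1)})\times X^{[0,1]}(T^{(2)})$, the labeling coordinate for the distinguished node being the collar direction normal to $F$ rather than a coordinate inside it. Hence the restriction to the locus over $F$ of the entire collar decomposition of $\cllks$ is the product of those of $K^\bullet_{\ell^{(1)},k^{(1)}}$ and $K^\bullet_{\ell^{(2)},k^{(2)}}$, which gives
\[
\mathcal{C}\ell_{\ell^{(1)},k^{(1)}}^\bullet \times \mathcal{C}\ell_{\ell^{(2)},k^{(2)}}^\bullet = col^{\text{n-g}}(K_{\ell^{(1)},k^{(1)}}^\bullet)\times col^{\text{n-g}}(K_{\ell^{(2)},k^{(2)}}^\bullet),
\]
with $k^{(1)}, k^{(2)}\geq 1$ and $\ell^{(1)}+\ell^{(2)}-1=\ell$, as claimed.

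The step I expect to be the main obstacle is not the combinatorial splitting above but checking that the smoothened charts which endow $\cllks$ with its manifold-with-embedded-corners structure restrict over $F$ to the product of the corresponding charts of the two factors. This is where one must use that the simple-ratio (cross-ratio) charts of $\klk^\bullet$ near a nonghost corner are locally those of $\klk$ and are themselves multiplicative across a nodal degeneration, so that the required compatibility is exactly the one already verified in Appendix \ref{appe_B} and carries over verbatim. Once that point is settled, the corollary follows.
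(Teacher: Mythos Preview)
Your argument is correct and in spirit matches the paper's, but the paper packages the whole verification into a single abstract observation rather than unpacking the combinatorics as you do. The paper's proof consists of just two facts: first, that for any manifolds with embedded corners $M_1,M_2$ one has $col(M_1\times M_2)\cong col(M_1)\times col(M_2)$; second, that every $1$-corner $F$ of $M$ has a corresponding $1$-corner $col(F)$ of $col(M)$. Applying these to $F=K^\bullet_{\ell^{(1)},k^{(1)}}\times K^\bullet_{\ell^{(2)},k^{(2)}}$ from the preceding lemma immediately gives the corollary.

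What you do---splitting the combinatorial types $T$ meeting the open stratum of $F$ as graftings $T^{(1)}\#T^{(2)}$, splitting the labeling cells accordingly, and then worrying about whether the smoothened cross-ratio charts respect the product---is exactly the content of the general statement $col(M_1\times M_2)\cong col(M_1)\times col(M_2)$, specialized to this case. So your proof is essentially a proof of that general fact in situ. This is fine, and arguably more transparent, but it is longer than necessary: once you recognize that the collar procedure of Definition~\ref{defi_col} and Lemma~\ref{smoo_lemm} is functorial with respect to products of MWECs (the corner poset of a product is the product of the corner posets, and the labeling spaces $X^{[0,1]}(T)$ split accordingly), the corollary is immediate and the concern you flag about chart compatibility is absorbed into that single statement.
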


\begin{proof}
	This follows from the fact that $col(M_1 \times M_2) \cong col(M_1) \times col(M_2)$ for any manifolds with embedded corners $M_1$ and $M_2$, and that any $F \in \partial_1(M)$ naturally has a corresponding 1-corner $col(F) \in \partial_1(col(M))$.
\end{proof}

Remark that $\klk^\bullet$ has a second type of 1-corners, namely those corresponding to nodal disks with one ghost component having more than 3 real special points. In fact, ignoring the corners of $\klk^\bullet$ having ghost components associated, we get this more general result:

\begin{lem}
	Every $m$-corner of $\klk^\bullet$ corresponding to nodal disks without ghost components is isomorphic to a product $K_{\ell^{(1)},k^{(1)}}^\bullet \times \ldots \times K_{\ell^{(m+1)},k^{(m+1)}}^\bullet$ \\ where $k^{(1)}, \ldots, k^{(m+1)} \geq 1$ and $\ell^{(1)}+ \ldots+ \ell^{(m+1)} -m = \ell$.
\end{lem}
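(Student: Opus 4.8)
The statement generalizes the preceding lemma (the $1$-corner case) to arbitrary $m$-corners, and the natural approach is induction on $m$, bootstrapping off the structure theory already established. First I would recall that an $m$-corner of $\klk^\bullet$ is, before the three rounds of identification/blowup, an $m$-corner of $\bigsqcup_{p\in S_\ell}\klk^p$, hence canonically a product $K_{\ell^{(1)},k^{(1)}}^{p_1}\times\cdots\times K_{\ell^{(m+1)},k^{(m+1)}}^{p_{m+1}}$ of Stasheff-type tiles, where $\ell^{(1)}+\cdots+\ell^{(m+1)}-m=\ell$ by the standard combinatorics of nodal disks (each node drops the total leaf count by one, see the discussion in Section~\ref{disks} and the $1$-corner remark in Section~\ref{collar}). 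The hypothesis that the nodal disks have \emph{no ghost components} forces $k^{(i)}\geq 1$ for every $i$, since a smooth component with no interior marking is by definition a ghost disk.

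The key point is then to show that the identifications defining $\klk^I$, $\klk^{II}$, $\klk^{\bullet}$, when restricted to such a ghost-free $m$-corner, assemble into precisely the identifications defining $K_{\ell^{(1)},k^{(1)}}^\bullet\times\cdots\times K_{\ell^{(m+1)},k^{(m+1)}}^\bullet$. This is exactly the phenomenon already exploited twice in the text: type $II$ identifications restrict on $1$-corners to the ``missing'' type $I$ identifications (with the node treated as a real marking), and type $III$ identifications restrict to the ``missing'' type $II$ identifications. I would carry this out componentwise: a transposition stratum of $\R\mathcal{M}_{\ell,k}$ meeting the interior of the open $m$-corner stratum involves a ghost component that, since the $m$-corner itself is ghost-free, must arise from degenerating one of the factors $K_{\ell^{(i)},k^{(i)}}$ further — so every identification visible near the $m$-corner is ``internal'' to a single factor. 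Transpositions of markings (or of a node regarded as a marking) within the $i$-th factor are, by the very construction of definition \ref{transI}, definition \ref{transII} and definition \ref{transIII} applied to $K_{\ell^{(i)},k^{(i)}}$, exactly the type $I$, $II$, $III$ identifications producing $K_{\ell^{(i)},k^{(i)}}^\bullet$. One must also check that the blowups performed globally, when localized near the $m$-corner, agree with the blowups performed in building each $K_{\ell^{(i)},k^{(i)}}^\bullet$; this follows from the local normal-coordinate descriptions in the proofs of lemma \ref{klkII} and lemma \ref{klkIII}, where $Im(\iota)$ is shown to lift to the local model of lemma \ref{action}, a statement which is manifestly compatible with taking products.

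The induction itself is then routine: for $m=1$ this is the preceding lemma; assuming the result for $(m-1)$-corners, any $m$-corner lies in the closure of $(m-1)$-corners, and the product decomposition extends by the compatibility just described together with the fact that $\klk^\bullet$ is a manifold with \emph{embedded} corners (established in lemma \ref{klkIII}), so the $m$-corner is the transverse intersection of $m$ of its facets and inherits the product of the decompositions of those facets. The main obstacle I anticipate is the bookkeeping at step two: one must argue carefully that the \emph{order} chosen for the type $III$ blowups globally induces (up to the irrelevant reordering that does not change the resulting smooth structure) the orders used within each factor, and that no ``cross-factor'' transposition — a ghost stratum touching two of the $K_{\ell^{(i)},k^{(i)}}$ factors at once — can occur near a ghost-free $m$-corner. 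The latter is the crux: it is true precisely because such a cross-factor ghost would have to appear as a smooth component of the nodal configuration representing the $m$-corner, contradicting ghost-freeness, but making this airtight requires a clean description of which strata of $\R\mathcal{M}_{\ell,k}$ are adjacent to the given open $m$-corner stratum.
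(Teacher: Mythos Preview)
The paper does not actually supply a proof of this lemma: it is stated immediately after the $1$-corner lemma as ``this more general result'' and then the text moves on to the question of canonicity of the isomorphisms. So there is no paper proof to compare against in any substantive sense; the author evidently regards the $m$-corner statement as a direct iteration of the $1$-corner argument.

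Your inductive plan is exactly the right way to make that iteration precise, and the mechanism you identify --- type $II$ global identifications restrict to type $I$ on a ghost-free corner (node treated as a real marking), type $III$ restrict to type $II$, and the residual type $III$ identifications on the corner are those coming from the ambient type $III$ strata not involving the node --- is precisely the content of the paper's $1$-corner proof. Your observation that no cross-factor ghost stratum can be adjacent to a ghost-free $m$-corner, because such a ghost would have to appear as a smooth component of the nodal configuration itself, is correct and is the reason the identifications factor componentwise. The concern you flag about the \emph{order} of the type $III$ blowups is real but mild: since the centers $T^{III}$ meeting a fixed ghost-free $m$-corner are, by ghost-freeness, each internal to a single factor, the blowups along distinct factors commute, and within a factor the induced order is simply inherited from the global one. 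So your proof plan is sound and is the natural fleshing-out of what the paper leaves implicit.
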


We note that, unlike in the $\otimes$ case, these isomorphisms do not seem to be canonical. More precisely, we do not know which numbering to choose on the breakings seen as leaves of the smooth component below them. We therefore proceed to some choices in order to define for every $m$-corner without ghost components $F$ of $\cllks$ an isomorphism $\Phi_F: F \rightarrow \mathcal{C}\ell_{\ell^{(1)},k^{(1)}}^\bullet \times \ldots \times \mathcal{C}\ell_{\ell^{(m+1)},k^{(m+1)}}^\bullet$.

Let $D \in \klk^\bullet$ in the lift of $\klk^p$ and $D= D^{(1)} \bigcup \ldots \bigcup D^{(m+1)}$ its natural decomposition into an element of $K_{\ell^{(1)},k^{(1)}} \times \ldots \times K_{\ell^{(m+1)},k^{(m+1)}}$ where $\ell^{(i)}>0$ and $k^{(i)}>0$, $1\leq i \leq m+1$. Consider, for every leaf (boundary special point) $x$ of $D^{(i)}$,
\begin{itemize}
\item $p(x)$ if $x$ is also a leaf of $D$,
\item the minimum of the values of $p$ on the leaves of $D$ lying above $x$. 
\end{itemize}
This defines an ordering $p^{(i)}$ on the leaves of $D^{(i)}$. Also consider the natural ordering on the interior markings of $D^{(i)}$ induced by that on the interior markings of $D$.

Thus $\Phi_F(C) \in \mathcal{C}\ell_{\ell^{(1)},k^{(1)}}^\bullet \times \ldots \times \mathcal{C}\ell_{\ell^{(m+1)},k^{(m+1)}}^\bullet$ can be defined by iterating this procedure over $i$ (see figure \ref{bul_fig02}). More generally, this procedure defines an ordering on the leaves of any subcluster $C'$ of $C$. Moreover, it is coherent in the sense that if $C''$ is a subcluster of $C'$, then the orderings on $C''$ induced by the ordering on $C'$ and by the ordering on $C$ coincide.

\begin{figure}[h] 
        \centering 
	\includegraphics[width=120mm]{./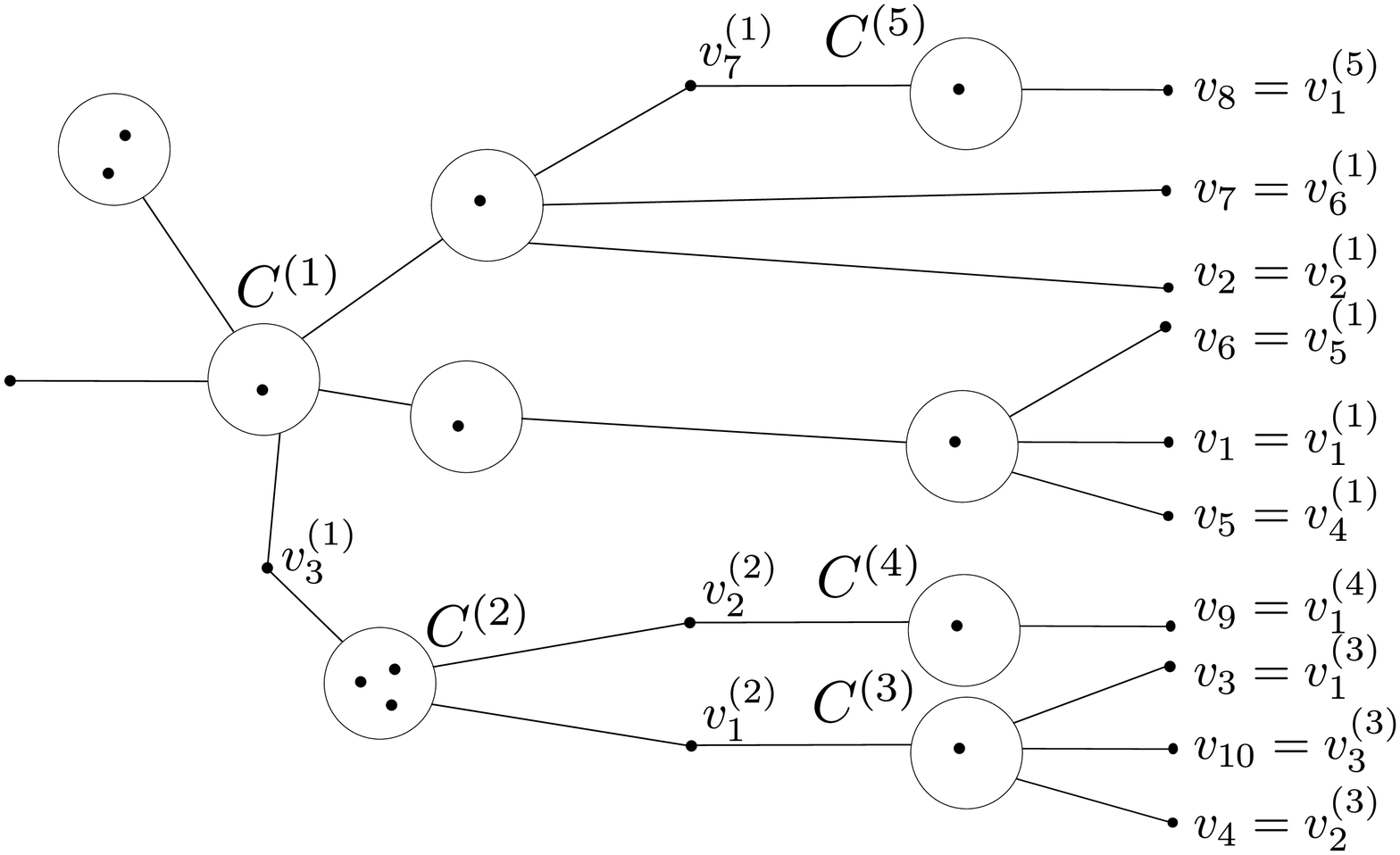}
        \caption{Product decomposition of a $\bullet$-cluster, $\ell^{(i)}>0$, $1\leq i \leq 5$} \label{bul_fig02}
\end{figure}

The above identifications will suffice to choose coherent perturbations over the Floer trajectories in the monotone setting where the perturbations will be chosen to be independent of the position of the interior markings. However, in general, we will have to generalize the above to make the identifications dependent of the position of the interior markings.

In this case, one can modify $\Phi_F$ to cover the general case where $C$ has some components without leaves, that is, $\ell^{(i)}$ can vanish for some $i$'s. Let $\ell \geq 0$, $k \geq 1$ and $\tau$ be a $(\ell,k)$-shuffle, that is, $\tau \in S_{\ell+k}$ such that $\tau(j) < \tau(j+1)$ whenever $j \neq \ell$. Now consider $\mathcal{C}\ell_{\ell,k,\tau}^\bullet = (\cllks,\tau)$ as the moduli of $\bullet$-clusters with $\ell$ leaves $\{v_j\}_{1\leq j \leq \ell}$ and $k$ interior markings $\{k_h\}_{1\leq h \leq k}$ together with the shuffle of the set $\{v_j\}_{1\leq j \leq \ell} \bigcup \{k_h\}_{1\leq h \leq k}$ defined by $\tau$.

Then proceeding exactly as above, one chooses for every $m$-corner without ghost components $F$ of $\mathcal{C}\ell_{\ell,k,\tau}^\bullet$ an isomorphism $\Phi_F: F \rightarrow \mathcal{C}\ell_{\ell^{(1)},k^{(1)},\tau^{(1)}}^\bullet \times \ldots \times \mathcal{C}\ell_{\ell^{(m+1)},k^{(m+1)},\tau^{(m+1)}}^\bullet$, the set of such identifications having the same coherency property (see figure \ref{bul_fig03}). Notice that for $\tau = Id$ and $\ell^{(i)}\geq 1$, $1 \leq i \leq m+1$, then $\tau^{(i)} = Id$, $1 \leq i \leq m+1$, and the above two identifications coincide.

\begin{figure}[h] 
        \centering 
	\includegraphics[width=120mm]{./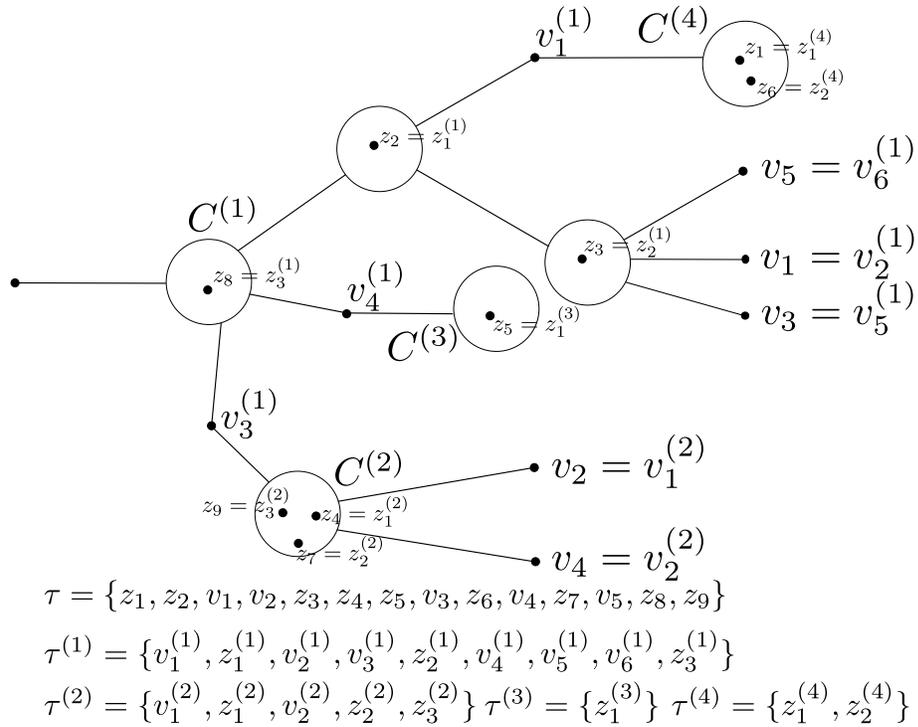}
        \caption{Product decomposition of a general $\bullet$-cluster} \label{bul_fig03}
\end{figure}


\section{Universal curves}

\begin{defi}
Let $\ulk^\bullet \overset{\pi}{\rightarrow} \klk^\bullet$ be the nodal family defined by restricting to $im(\iota)$ the universal curve over $\R \mathcal{M}_{\ell,k}$, forgetting the hemispheres containing any subset of $\{ z_h \}_{k+1 \leq h \leq 2k}$ and then pulling back the resulting under blowdown.
\end{defi}

\begin{rem}
$\ulk^\bullet \overset{\pi}{\rightarrow} \klk^\bullet$ is the pullback of the $\ulk \overset{\pi}{\rightarrow} \klk$ curves over each copy of $\klk$, except that its ghost disks are now complex doubled.
\end{rem}

We add metric lines between the components of the marked disks lying in the collar part of $\cllks$ using the tree labelings over the collar part as in the nonsymmetric case.

\begin{defi}
For $C \in \cllks$, we then define $(\pi^\bullet)^{-1}(C)$ as in the $\otimes$ case. We will refer to it as the $\bullet$-cluster, with $\ell$ leaves and $k$ interior markings, associated with $C \in \cllks$. 
\end{defi}

Taking $\ulks = \underset{C \in \cllks}{\bigcup} (\pi^\bullet)^{-1}(C)$, we get a map $\pi^\bullet: \ulks \rightarrow \cllks$ and a commutative diagram
\begin{diagram}
	\ulk^\bullet  && \rInto^{} && \ulks \\
	\dTo^{\pi} && && \dTo^{\pi^\bullet}\\
	\klk^\bullet && \rInto^{i} && \cllks\\
\end{diagram}

The result of this procedure will, as in the $\otimes$ case, produce families of metric trees with vertices replaced by complex marked disks. However, we emphasize that the $\bullet$ case is different from the former regarding several aspects: 

\begin{itemize}
\item Every ghost disk must be at distance $0$ from a disk having interior markings. This will later imply that classical Morse products are no longer considered in the differential of the $\bullet$-cluster complex.

\item In the context of $\bullet$-clusters, a configuration having two incident lines meeting on a trivalent ghost disk is glued on the one hand using an hemisphere of the ghost disk and on the other hand using the other hemisphere (see figure \ref{bul_fig01}). This will correspond to the flowline switches proposed in \cite{CL}.

\item Let $Sym_n$ stand for the symmetric group on $n$ symbols. There is a natural action of both $Sym_\ell$ and $Sym_k$ on $\cllks$ defined by changing the numbers of the leaves and of the interior markings, respectively.
\end{itemize}

\section{Coherent systems of ends}

We define coherent systems of ends for the $\bullet$-clusters as it was done for the $\otimes$-clusters, except that we require them to be independent of deformations over disk components with no interior markings, the ghost disk components. That is, for every $\ell, k \geq 0$, these half-line neighborhoods of the endpoints over $\cllks$ will again be coherent with respect to the product structure of its corners, and perturbations of Morse functions will later be allowed on their complement.

\begin{defi}
        A coherent system of ends on $\{ \ulks \}_{\ell,k \geq 0}$ is a collection of closed subsets $\mathcal{V} = \{\mathcal{V}_{\ell,k} \subset \ulks \}_{\ell,k \geq 0}$ coming from the closure of a corresponding collection of open subsets such that
\begin{enumerate}
        \item $\mathcal{V}_{0,0} = \mathcal{U}^\bullet_{0,0} \equiv \overline{\R}_-$, $\mathcal{V}_{1,0} = \mathcal{U}^\bullet_{1,0} \equiv \overline{\R}$. Otherwise if $\ell -2 +2k \geq 0$, then for every irreducible component $C^{(i)}$ of $C$, $\mathcal{V}_{\ell,k} \bigcap C^{(i)}$ is a disjoint union of closed neighborhoods of its endpoints, each being homeomorphic to $[-\infty, -R] \subsetneq \overline{\mathbb{R}}_{-}$, and closed intervals on some interior lines, each being homeomorphic to $[-\frac{\lambda'}{2}, \frac{\lambda'}{2}] \subsetneq [-\frac{\lambda}{2}, \frac{\lambda}{2}]$,

        \item for every irreducible component $C^{(i)} \in \mathcal{C}\ell^\bullet_{\ell^{(i)},k^{(i)}}$ of $C$, $\mathcal{V}_{\ell,k} \bigcap C^{(i)} = \mathcal{V}_{\ell^{(i)},k^{(i)}} \bigcap C^{(i)}$. Therefore, the neighborhoods over $C^{(i)}$ are invariant under deformations of $C \setminus C^{(i)}$, under permutations of the $z_h$'s which leave the $z^{(i)}_h$'s invariant, it is independent of $\ell$, $k$ and of the relative position of $C^{(i)}$ in $C$.

	\item for every irreducible component $C^{(i)} \in \mathcal{C}\ell^\bullet_{\ell^{(i)},k^{(i)}}$ of $C$, $\mathcal{V}_{\ell,k} \bigcap C^{(i)}$ is invariant under deformations of the ghost spheres of $C^{(i)}$ having $4$ or more boundary markings.
\end{enumerate}

\begin{rem}
One might further require the coherent system of ends to be invariant under permutation of the numbers of the leaves and of the interior marked points.
\end{rem}

\end{defi}

One can construct coherent systems of ends explicitely using the same procedure as in \ref{sect_ends}, the neighborhoods over a line only depending on its length. Therefore,

\begin{lem}
        There exists coherent systems of ends.
\end{lem}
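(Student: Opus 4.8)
The final statement to prove is the \texttt{Lemma} asserting the existence of coherent systems of ends for the $\bullet$-clusters. My plan is to mirror the explicit construction from Section~\ref{sect_ends} (the $\otimes$ case), and then check the single extra requirement that distinguishes the $\bullet$ case, namely invariance under deformations of ghost spheres with $4$ or more boundary markings. First I would fix the standard Morse model: endow the negative gradient flow lines of $h\colon (x,y)\in[0,1]^2\mapsto y^2-x^2$ with the metric induced from $\R$ by the time parametrization, and fix a radius $0<r<1$, letting $D_0(r)\subset[0,1]^2$ be the closed disk of that radius about the origin. Then, exactly as in the $\otimes$ case, for each line $l$ of a $\bullet$-cluster $C\in\cllks$ of finite length I identify $l$ isometrically with the flow line of the corresponding length and set $\mathcal{V}_{\ell,k}\cap l = D_0(r)\cap l$; if $l$ touches precisely one endpoint (breaking, leaf or root) I identify $l$ with $\{0\}\times[0,1]$ or $[0,1]\times\{0\}$ according to whether it lies above or below that endpoint, the endpoint corresponding to $(0,0)$, and again set $\mathcal{V}_{\ell,k}\cap l = D_0(r)\cap l$. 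On the base cases $\mathcal{V}_{0,0}=\overline{\R}_-$ and $\mathcal{V}_{1,0}=\overline{\R}$ are forced.

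The three conditions in the definition then have to be verified. Condition (1) follows directly from the construction, since only lines of sufficient length meet $D_0(r)$ and the intersection with each line is an interval of the prescribed shape near an endpoint or breaking. Condition (2) (coherence with respect to irreducible components) holds because the choice over each line depends only on the length of that line, which is a parameter intrinsic to the irreducible component containing it; hence it is untouched by deformations of the complement, by permutations of interior markings fixing the relevant ones, and is independent of $\ell$, $k$ and the position of the component. Condition (3) is the genuinely new point: I must check that $\mathcal{V}_{\ell,k}\cap C^{(i)}$ is invariant under deformations of the ghost spheres of $C^{(i)}$ with $4$ or more boundary markings. The key observation is that by construction of $\klk^\bullet$ (Definition~\ref{transIII} and the preceding ones) and the restricted collar procedure $col^{\text{n-g}}$, no connecting metric line is ever attached along a node lying on a ghost disk --- the collar was added only over corners with \emph{no} ghost components. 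Therefore the lengths of all the lines of $C^{(i)}$, and hence the whole datum $\mathcal{V}_{\ell,k}\cap C^{(i)}$, are unaffected when one moves the moduli parameters living on a ghost sphere; deforming such a ghost sphere does not alter any line's length, so it does not alter the end neighborhoods. This gives condition (3).

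Finally, the optional remark about invariance under permutation of the numbering of leaves and interior markings is immediate from the same length-only dependence. Putting these together yields the statement. I do not anticipate a serious obstacle here: the construction is essentially identical to the $\otimes$ case, and the only subtlety --- condition (3) --- is resolved cleanly by the fact that $col^{\text{n-g}}$ never introduces lines over ghost components, so ghost-sphere moduli simply do not enter the definition of $\mathcal{V}$. The one place to be careful is to phrase the identification of a line touching an endpoint so that it is compatible with broken lines (infinite length), exactly as the surrounding text does, and to note that for any disk of $C$ one can find a neighborhood disjoint from $\mathcal{V}_{\ell,k}$, which is what makes the ghost-invariance statement even formulable.
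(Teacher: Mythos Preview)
Your proposal is correct and matches the paper's approach: the paper simply states that one constructs coherent systems of ends exactly as in Section~\ref{sect_ends}, noting that the neighborhoods over a line depend only on its length. Your write-up is more detailed than the paper's one-line justification, in particular your explicit verification of condition~(3) via the observation that $col^{\text{n-g}}$ never introduces metric lines over ghost corners; this is a valid elaboration of what the paper leaves implicit in the phrase ``the neighborhoods over a line only depending on its length.''
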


Note that one can define a coherent system of strip-like ends $\mathcal{S} = \{\mathcal{S}_{\ell,k} \subset \mathcal{U}_{\ell,k} \}$ over the disks (and spheres) of $\ulks$ by pulling back a real coherent system of ends $S = \{S_{\ell,k} \subset U_{\ell,k}^\bullet \}$ (defined in a similar fashion as in \cite{Sei}, \cite{W1}, \cite{CM}) by the natural projection $\ulks \rightarrow U^\bullet_{\ell,k}$ that collapses the connecting lines.

\section{Coherent systems of perturbations}

As in the $\otimes$ case, we assign to every point of $\mathcal{U}^\bullet_{\ell, k, \tau}$ plus an endpoint labeling an element in a product of Banach manifolds so that this choice is coherent with respect to the product structure of the corners of $\mathcal{C}\ell^\bullet_{\ell, k, \tau}$. Again, the labeling on the leaves will later encode a choice of Morse functions over the ends.

First, fix an integer $c \geq 0$.

\begin{defi} \label{bul_lab}
        We call $\mathfrak{l}: \{1,2, \ldots, \ell \} \hookrightarrow \{1, ...,c, c+1\}$ an endpoint labeling of length $\ell$ if whenever $j_1 < j_2$ with $\mathfrak{l}(j_1))<c+1$ and $\mathfrak{l}(j_2))<c+1$, then $\mathfrak{l}(j_1) < \mathfrak{l}(j_2)$. That is, if we exclude the preimages of $c+1$, $\mathfrak{l}$ is increasing.

        We take $\mathfrak{L}$ to be the set of all the endpoint labelings, $\mathcal{C}\ell^\bullet_{\ell, k, \tau, \mathfrak{l}} \equiv (\cllks, \tau, \mathfrak{l})$ and $\mathcal{U}^\bullet_{\ell, k, \tau, \mathfrak{l}} \equiv (\ulks, \tau,\mathfrak{l})$ for $\mathfrak{l} \in \mathfrak{L}$ of length $\ell$.

\end{defi}

First, note that if $c=0$, the only possible labeling is trivial.
 
Note that a labeling $\mathfrak{l}$ of the leaves of a $\bullet$-cluster $C \in \cllks$ canonically determines an end labeling $\mathfrak{l}^{(i)}$ over each of its irreducible component $C^{(i)}$: let $v^{(i)}_a$ be the $a^{th}$ leaf of $C^{(i)}$ and $E$ the numbers of the leaves that lie above $v^{(i)}_a$ (including itself). If $E\neq \emptyset$, take $\mathfrak{l}^{(i)}(a) = min(E)$ and otherwise, take $\mathfrak{l}^{(i)}(a)=c+1$. In fact, one can use $\mathfrak{l}$ to associate in the same fashion an integer $\mathfrak{l}(l)$ to each line $l$ of $C$ (see figure \ref{bul_fig04}).

\begin{figure}[h] 
        \centering 
	\includegraphics[width=120mm]{./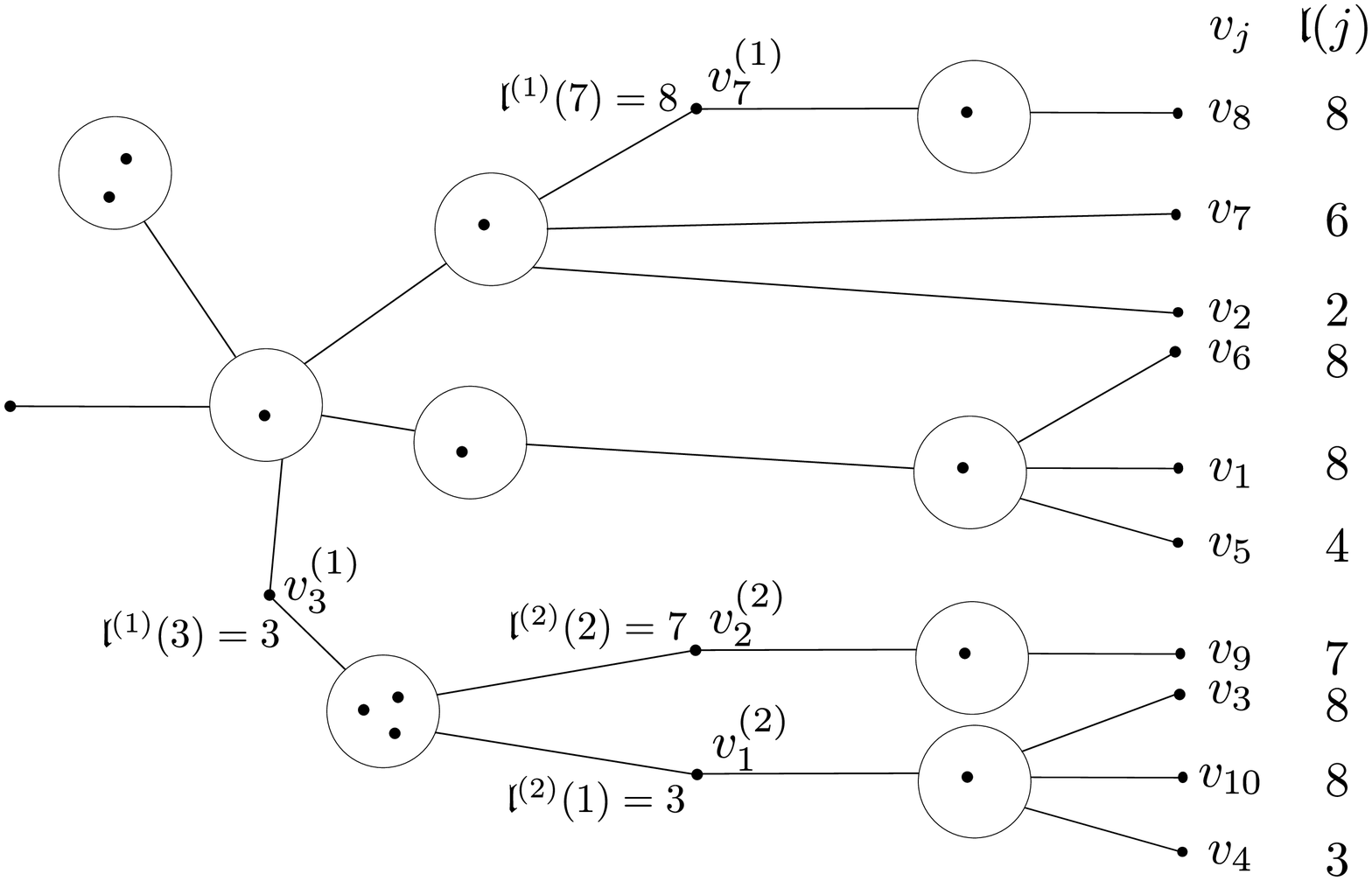}
        \caption{A labeling $\mathfrak{l}$ of the leaves of a $\bullet$-cluster $C \in \cllks$, $c=7$} \label{bul_fig04}
\end{figure}

Next, we assign to every point of $\mathcal{U}^\bullet_{\ell, k, \tau, \mathfrak{l}}$ an element in a product of Banach manifolds so that this choice is coherent with respect to the product structure of the corners of $\mathcal{C}\ell^\bullet_{\ell, k, \tau, \mathfrak{l}}$. This will later encode the choice of a Morse function plus a $\omega$-tamed almost complex structure on the target space at every point of a $\bullet$-cluster.

Let $\mathcal{M} \times \mathcal{J}$ be a product of Banach spaces with $\pi_\mathcal{M}$ and $\pi_\mathcal{J}$ the
projections on the first and second factor, respectively.

\begin{defi} \label{bul_pert}
Let $\mathcal{V}$ be a coherent system of ends and $\mathcal{S}$ be a coherent system of strip-like ends over the disks (see \cite{Sei}, \cite{W1}). A coherent system of perturbations vanishing on $\mathcal{V}$ and $\mathcal{S}$ is a collection of maps $P = \{ \mathcal{U}^\bullet_{\ell, k, \tau,\mathfrak{l}} \overset{p_{\ell,k,\tau,\mathfrak{l}}}{\rightarrow} \mathcal{M} \times \mathcal{J} \}_{\ell,k \geq 0, \tau \hspace{0.15cm}(\ell,k)-\text{shuffle}, \mathfrak{l} \in \mathfrak{L}}$ such that
\begin{enumerate}
        \item $p_{\ell,k,\tau, \mathfrak{l}}$ is piecewise smooth,

        \item $\pi_\mathcal{J} \circ p_{\ell,k,\tau, \mathfrak{l}} \equiv 0$ on $\mathcal{S}_{\ell,k,\tau}$, on the boundaries of the $\bullet$-clusters and on the real locus of ghost disk components,

        \item $\pi_\mathcal{M} \circ p_{\ell,k,\tau, \mathfrak{l}} \equiv 0$ on $\mathcal{V}_{\ell,k}$ and $\pi_\mathcal{M} \circ p_{\ell,k,\tau, \mathfrak{l}} \equiv 0$ over every line $l$ such that $\mathfrak{l}(l) \neq c+1$,

        \item $p_{\ell,k,\tau, \mathfrak{l}}$ is invariant under deformations and real involutions over ghost disk components, 

        \item for every irreducible component $(C^{(i)},\tau^{(i)})$ of $(C,\tau)$, $p_{\ell,k,\tau, \mathfrak{l}}|_{C^{(i)}} = p_{\ell^{(i)},k^{(i)},\tau^{(i)}, \mathfrak{l}^{(i)}}|_{C^{(i)}}$. Therefore, $p_{\ell,k,\tau, \mathfrak{l}}$ invariant over changes of $(C,\tau)$ which preserve $(C^{(i)},\tau^{(i)})$. 

\end{enumerate}

\end{defi}

As in the $\otimes$ case, there are no obtsructions to such a choice.

\begin{lem}
        Given coherent systems of ends $\mathcal{V}$ and $\mathcal{S}$, there exists coherent systems of perturbations vanishing on $\mathcal{V}$ and $\mathcal{S}$.
\end{lem}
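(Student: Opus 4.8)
The plan is to mimic the inductive construction of coherent perturbations carried out in the $\otimes$ case (the argument preceding Lemma \ref{rest_bair}), the genuinely new points being the extra symmetry and invariance demands of Definition \ref{bul_pert}: invariance under permutations of the leaves and of the interior markings, invariance under deformations and real involutions over ghost disk components, and the vanishing of $\pi_\mathcal{J}$ on the real loci of those ghost components.

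First I would induct on $p = \ell - 2 + 2k$. For $p<0$ Definition \ref{bul_pert} forces the trivial perturbation; for $p=0$ the relevant moduli $\mathcal{C}\ell^\bullet_{0,1}$ and $\mathcal{C}\ell^\bullet_{2,0}$ are points, over which, for each shuffle $\tau$ and endpoint labeling $\mathfrak{l}$, one simply picks any map $p_{\ell,k,\tau,\mathfrak{l}}:\mathcal{U}^\bullet_{\ell,k,\tau,\mathfrak{l}}\to\mathcal{M}\times\mathcal{J}$ satisfying conditions (1)--(4) — e.g. the zero datum over the real disks, extended over the complex-doubled ghost hemispheres in an involution-invariant way and over the lines in accordance with condition (3); condition (5) is vacuous at this stage.

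For the inductive step, assuming a coherent choice on all strata of dimension $<p$, I would first fix the perturbation over the corners $C(\mathcal{C}\ell^\bullet_{\ell,k,\tau,\mathfrak{l}})$. Using the (non-canonical but coherently chosen) product identifications $\Phi_F$ built in Section \ref{sect_klk}, any boundary point decomposes into irreducible components of strictly smaller dimension, and condition (5) then dictates $p_{\ell,k,\tau,\mathfrak{l}}|_{C^{(i)}} = p_{\ell^{(i)},k^{(i)},\tau^{(i)},\mathfrak{l}^{(i)}}|_{C^{(i)}}$; the coherence of the $\Phi_F$ under nested decompositions makes this well defined, and conditions (1)--(4) are inherited from the inductive data (here one uses that the $col^{\text{n-g}}$ collars are attached only along corners without ghost components, so the ghost structure — hence conditions (2) and (4) — is constant along the collar directions). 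Then I would extend over all of $\mathcal{C}\ell^\bullet_{\ell,k,\tau,\mathfrak{l}}$ exactly as in the $\otimes$ case: choose $0<\epsilon<1$, the collar neighborhoods $\nu^{[0,\epsilon[}(\mathcal{C}\ell^\bullet_{\ell,k,\geq T})$ coming from the charts of Definition \ref{defi_col}, and a coherent family of cutoff functions $\beta_{\ell,k,\geq T}$; interpreting a point of such a neighborhood as a linear gluing with large gluing parameter transports the datum from the glued lower-dimensional pieces, and multiplying by $\prod_T\beta_{\ell,k,\geq T}$ yields an extension supported in an arbitrarily small neighborhood of $C(\mathcal{C}\ell^\bullet_{\ell,k,\tau,\mathfrak{l}})$. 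Averaging the $\beta_T$ over the finite actions of $Sym_\ell$ and $Sym_k$ and over the ghost involutions keeps the required invariances, and as in Lemma \ref{rest_bair} the resulting extension is a continuous linear right inverse of the restriction map — so generic choices over low strata extend to generic choices, as needed in Chapter \ref{tran_floe}.

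The step I expect to be the main obstacle is checking that the non-canonical identifications $\Phi_F$ interact correctly with the ghost-invariance conditions and with the blowups $bu_{II}$, $bu_{III}$ used to define $\klk^\bullet$: one must confirm that the orderings induced on a sub-subcluster from $C$ and from an intermediate subcluster agree (the coherence property of Section \ref{sect_klk}) and that a perturbation independent of the ghost moduli and of the real involutions there descends consistently through those blowups, so that the corner prescription is genuinely well defined before the extension step.
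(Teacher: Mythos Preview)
Your approach is correct and is exactly what the paper intends: the paper's own ``proof'' is the single sentence ``As in the $\otimes$ case, there are no obstructions to such a choice,'' so you are simply spelling out the inductive extension argument of Lemma~\ref{rest_bair} while keeping track of the extra invariance conditions of Definition~\ref{bul_pert}. Two minor over-shoots: Definition~\ref{bul_pert} does not actually require $Sym_\ell\times Sym_k$ invariance (that was only an optional remark for the ends), so the averaging step is unnecessary; and in the $\bullet$ setting the base case $(\ell,k)=(2,0)$ does not arise, since the nontrivial $\bullet$-moduli all have $k\geq 1$.
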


Moreover, again, a Baire subset of perturbations over the boundary components extends to a Baire subset of perturbation over the whole moduli.


\section{Orientations on $\{\cllks\}_{\ell,k \geq 0}$}

As before, identify the complex marked disks with upper hemispheres of real marked spheres and take $\frac{\partial}{\partial x_j}$, $1\leq j \leq \ell$, and $\mathcal{O}_{z_h} = \frac{\partial}{\partial Re(z_h)} \wedge \frac{\partial}{\partial Im(z_h)} $, $0\leq h \leq k$, to be defined by the corresponding variations of the positions of the markings.

For $k \geq 1$, let $\mathcal{O}_\ell = \frac{\partial}{\partial x_1} \wedge \ldots \wedge
\frac{\partial}{\partial x_\ell}$. Then the proofs of lemmata \ref{klkI}, \ref{klkII} and \ref{klkIII} show that $\mathcal{O}_{\ell,k} = \mathcal{O}_\ell \wedge \mathcal{O}_{z_2} \wedge \ldots \wedge \mathcal{O}_{z_k}$ on $\klk$ defines an orientation on $\klk^\bullet$ that restricts as $(-1)^p \iota_* \mathcal{O}_{\ell,k}= (-1)^p \frac{\partial}{\partial x_{p(1)}} \wedge \ldots \wedge \frac{\partial}{\partial x_{p(\ell)}} \wedge \bigwedge_{h=2}^{k} \mathcal{O}_{z_h} =  \frac{\partial}{\partial x_{1}} \wedge \ldots \wedge \frac{\partial}{\partial x_{\ell}} \wedge \bigwedge_{h=2}^{k} \mathcal{O}_{z_h}$ on the disks having ordering $p$. We extend it to an orientation $\mathcal{O}^\bullet_{\ell,k}$ on $\cllks$.

For $k = 0$ and $\ell = 1$, we take $\mathcal{O}_{\ell,k}^\bullet = \frac{\partial}{\partial t}$ to be the field that generates the positive (away from the root) translation over $\overline{\R}$.

Next we will compare the product reference orientation on a cluster with one breaking with that induced by the reference orientation of its glued family.

Let $C =  C^{(1)} \bigcup C^{(2)}$ be a concatenation with $C^{(1)} \in \mathcal{C}\ell_{\ell^{(1)},k^{(1)}}^\bullet$, $C^{(2)} \in \mathcal{C}\ell_{\ell^{(2)},k^{(2)}}^\bullet$, $\ell^{(2)}\geq 1$, smooth and $p$ be an ordering of its leaves. Then the ordering $p$ on the leaves of $C$ naturally induces an ordering $p^{(2)}$ on the leaves of $C^{(2)}$. By the convention of section \ref{sect_klk}, we consider on the root of $C^{(2)}$, seen as a leaf of $C^{(1)}$, the minimal value $p_{min}$ of $p$ on the leaves of $C^{(2)}$ so that $C^{(1)}$ also inherits an ordering $p^{(1)}$ of its leaves (see figure \ref{bul_fig05}).

\begin{figure}[h] 
        \centering 
	\includegraphics[width=120mm]{./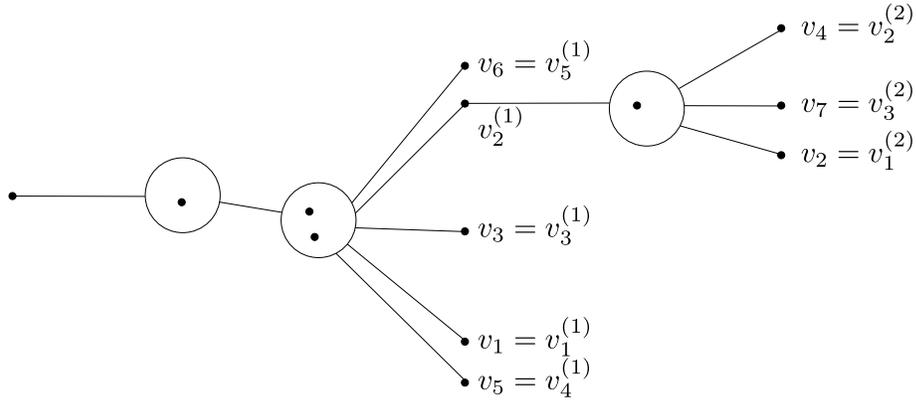}
        \caption{Orderings on a concatenation } \label{bul_fig05}
\end{figure}

For convenience, we write $p= \sigma \circ p^{(1)} \circ p^{(2)}$, where $p^{(2)}$ orders the leaves of $C^{(2)}$, acting trivially on the leaves of $C^{(1)}$, $p^{(1)}$ orders the leaves of $C^{(1)}$, shifting the leaves of $C^{(2)}$ as $x_{p_{min}}^{(1)}$, and $\sigma$ shuffles up some leaves of $C^{(2)}$ through those of $C^{(1)}$ (see figure \ref{bul_fig06}). Note that $\sigma$ is nontrivial exactly when the leaves of $C^{(2)}$ have nonconsecutive numbers and that in any case, $\sigma(j)= j$ for $j \leq p_{min}$.

\begin{figure}[h] 
        \centering 
	\includegraphics[width=120mm]{./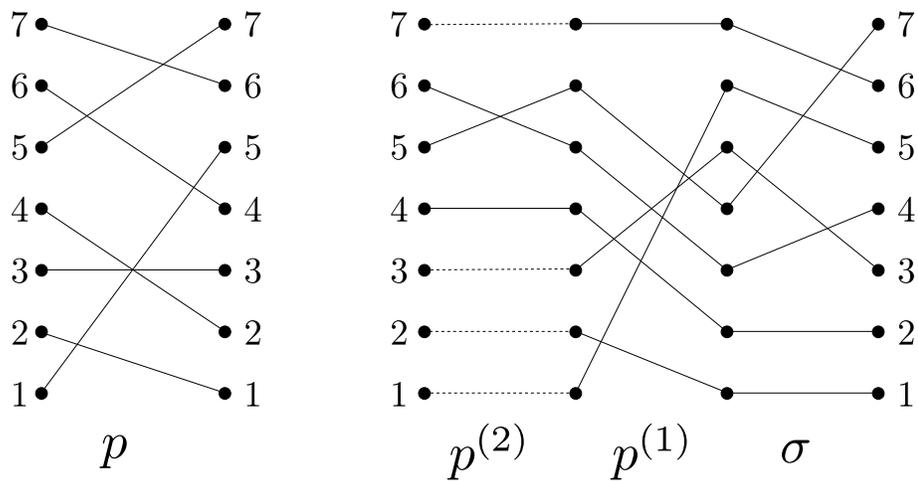}
        \caption{Decomposition of the ordering on a concatenation } \label{bul_fig06}
\end{figure}

We can therefore consider $C$ as an element of the product $\mathcal{C}\ell_{\ell^{(1)},k^{(1)}}^\bullet \times \mathcal{C}\ell_{\ell^{(2)},k^{(2)}}^\bullet$. Then, as in lemma \ref{ori}, we get

\begin{prp} \label{bul_out}
	Let $\ell^{(1)} \geq 1$, $\ell^{(2)} \geq 0$ and $k^{(1)}, k^{(2)} \geq 0$ such that $(\ell^{(2)},k^{(2)})\neq (0,0)$. Then let $\ell = \ell^{(1)} + \ell^{(2)} -1$, $k = k^{(1)} + k^{(2)}$, $C^{(1)} \in \mathcal{C}\ell_{\ell^{(1)},k^{(1)}}^\bullet$, $C^{(2)} \in \mathcal{C}\ell_{\ell^{(2)},k^{(2)}}^\bullet$ smooth, $C =  C^{(1)} \bigsqcup C^{(2)} \big{/}_{v^{(1)}_{p_{min}} \sim v^{(2)}_0}$, that is, $C$ is the concatenation of $C^{(1)}$ and $C^{(2)}$ on the $p_{min}^{th}$ leaf of $C^{(1)}$.  Let $p$ be an ordering of the $\ell$ leaves of $C$ such that $p_{min}$ is the minimal value of $p$ on the leaves of $C^{(2)}$ and take $\sigma$ to be the associated shuffle. Then

\[
	\partial_{\ddel_C} \mathcal{O}^\bullet_{\ell,k} = (-1)^{(\ell^{(1)} - p_{min})\ell^{(2)} + (p_{min} -1)} (-1)^{\sigma} \mathcal{O}^\bullet_{\ell^{(1)},k^{(1)}} \wedge
\mathcal{O}^\bullet_{\ell^{(2)},k^{(2)}}.
\]
\end{prp}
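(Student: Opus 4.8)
\textbf{Proof strategy for Proposition \ref{bul_out}.}

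The plan is to reduce the symmetric case to the nonsymmetric computation already carried out in Lemma \ref{out}, keeping careful track of the ordering data through the identifications of section \ref{sect_klk}. First I would fix the ordering $p$ and invoke the decomposition $p = \sigma \circ p^{(1)} \circ p^{(2)}$ introduced above, so that the concatenated cluster $C$, viewed inside the lift of $\mathcal{C}\ell_{\ell,k}^{p}$, is an element of the product $\mathcal{C}\ell_{\ell^{(1)},k^{(1)}}^\bullet \times \mathcal{C}\ell_{\ell^{(2)},k^{(2)}}^\bullet$ with the induced orderings $p^{(1)}$ on $C^{(1)}$ and $p^{(2)}$ on $C^{(2)}$. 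The key point established in section \ref{sect_klk} is that $\mathcal{O}^\bullet_{\ell,k}$ restricts on the disks with ordering $p$ to $(-1)^{p}\iota_*\mathcal{O}_{\ell,k}$, which by the computation there equals the ``straightened'' form $\frac{\partial}{\partial x_1}\wedge\ldots\wedge\frac{\partial}{\partial x_\ell}\wedge\bigwedge_{h=2}^k \mathcal{O}_{z_h}$; similarly $\mathcal{O}^\bullet_{\ell^{(i)},k^{(i)}}$ restricts appropriately on $C^{(i)}$. So the whole orientation comparison takes place, after transport by $\iota$, entirely inside the single tile $\klk^{Id}$ and its product decomposition — which is exactly the setting of Lemma \ref{out}.

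Next I would run the linear-gluing argument verbatim as in the proof of Lemma \ref{out}: form the one-parameter family $\#_\rho C \to ]0,\infty]$ via linear gluing at the concatenation point, use the short exact sequence of operators (\ref{seq3}) to transfer orientations, and read off the induced outward-normal orientation on the glued family. Because we are now working on the tile $\klk^{Id}$ with the $\iota$-straightened orientations, the computation produces precisely the sign $(-1)^{(\ell^{(1)} - p_{min})\ell^{(2)} + (p_{min}-1)}$, with $p_{min}$ playing the role that the leaf index $j$ played in Lemma \ref{out} (the $j$th leaf of $C^{(1)}$ is now the $p_{min}$th leaf in the ordering $p$). The cases $\ell^{(1)}=1$, $\ell^{(2)}=1$, or $\ell^{(1)}=\ell^{(2)}=1$ with vanishing $k^{(i)}$ are handled exactly as the corresponding cases in Lemma \ref{out}, since those arguments only concern the $\overline{\R}$ factor and are insensitive to the ordering.

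The remaining ingredient — and this is the step I expect to be the main obstacle — is accounting for the discrepancy between the ``straightened'' product orientation on $\mathcal{C}\ell_{\ell^{(1)},k^{(1)}}^\bullet \times \mathcal{C}\ell_{\ell^{(2)},k^{(2)}}^\bullet$ and the actual $\mathcal{O}^\bullet_{\ell^{(1)},k^{(1)}} \wedge \mathcal{O}^\bullet_{\ell^{(2)},k^{(2)}}$, once the leaves of $C^{(2)}$ are shuffled into their true positions among the leaves of $C^{(1)}$. Here the shuffle $\sigma$ enters: reordering the basis vectors $\frac{\partial}{\partial x_j}$ of $C^{(2)}$ from their consecutive (post-$p^{(1)}$) positions to their actual $p$-positions is a permutation of the one-dimensional factors of $\mathcal{O}_{\ell^{(2)}}$, contributing exactly the sign $(-1)^{\sigma}$ (the sign of $\sigma$ as a permutation), while leaving the interior-marking factors $\mathcal{O}_{z_h}$ and the $C^{(1)}$-factors untouched since $\sigma$ fixes everything of index $\le p_{min}$ and only moves the $\ell^{(2)}$ leaf-coordinates past some leaf-coordinates of $C^{(1)}$. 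I would verify this by writing out both sides of $(-1)^p = (-1)^{\sigma}(-1)^{p^{(1)}}(-1)^{p^{(2)}}$ and observing that the extra transpositions recorded in $\sigma$ are precisely those moving $C^{(2)}$'s leaves through $C^{(1)}$'s; combining this $(-1)^\sigma$ with the $(-1)^{(\ell^{(1)}-p_{min})\ell^{(2)}+(p_{min}-1)}$ from the gluing computation yields the stated formula. Since $\sigma$, $p_{min}$, $\ell^{(1)}$ and $\ell^{(2)}$ are all constant along the homotopy class, the identity is well-defined and the proof concludes.
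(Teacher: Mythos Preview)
Your proposal is correct and follows essentially the same approach as the paper: reduce to Lemma \ref{out} for the case of trivial $\sigma$, then track the extra $(-1)^\sigma$ coming from shuffling the leaf coordinates of $C^{(2)}$ through those of $C^{(1)}$. The paper carries this out as a single explicit wedge-product computation in coordinates near the $1$-corner (effectively re-running the proof of Lemma \ref{out} with the indices $x_{\sigma(j)}$ in place of $x_j$ and then undoing $\sigma$ at the end), whereas you factor the argument into two steps, but the content is the same.
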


\begin{proof}
Proposition \ref{ori} shows that the formula holds whenever $\sigma$ is trivial, that is, the leaves of $C^{(2)}$ have successive numbers according to $p$. It remains to show that it still holds when $p$, and thus $\sigma$, shuffles nontrivially the leaves of $C^{(2)}$ and the leaves of $C^{(1)}$ that are not breakings. This can happen only when $\ell^{(1)} \geq 2$ and $\ell^{(2)} \geq 2$.

        In this case, $C$ is a stable $\bullet$-cluster with one breaking, and therefore lies in a $1$-corner $F$ of $\cllks$. We compute using coordinates in a neighborhood $F$:

\begin{align*}
& \frac{\partial}{\partial n_F} \wedge (-1)^{ (\ell^{(1)} -p_{min}) \ell^{(2)}+ (p_{min}-1)} (-1)^{\sigma} \mathcal{O}^\bullet_{\ell^{(1)},k^{(1)}} \wedge \mathcal{O}^\bullet_{\ell^{(2)},k^{(2)}} \\
&= \bigwedge_{h=2}^{k^{(1)}} \mathcal{O}_{z^{(1)}_h} \wedge \bigwedge_{h=2}^{k^{(2)}} \mathcal{O}_{z^{(2)}_h} \wedge (-1)^{ (\ell^{(1)} - p_{min}) \ell^{(2)}+ (p_{min}-1)} (-1)^{\sigma} \frac{\partial}{\partial n_F} \wedge \cO_{\ell^{(1)}} \wedge \cO_{\ell^{(2)}}\\
&= \bigwedge_{h=2}^{k^{(1)}} \mathcal{O}_{z^{(1)}_h} \wedge \bigwedge_{h=2}^{k^{(2)}} \mathcal{O}_{z^{(2)}_h} \wedge (-1)^{ (\ell^{(1)} - p_{min}) \ell^{(2)}+ (p_{min}-1)} (-1)^{\sigma}\\
& \frac{\partial}{\partial n_F} \wedge \frac{\partial}{\partial x_1^{(1)}} \wedge \ldots \wedge \frac{\partial}{\partial x_{p_{min}}^{(1)}} \wedge \ldots \wedge \frac{\partial}{\partial x_{\ell^{(1)}}^{(1)}} \wedge \frac{\partial}{\partial x_1^{(2)}} \wedge \ldots \wedge
\frac{\partial}{\partial x_{\ell^{(2)}}^{(2)}}\\
&= \bigwedge_{h=2}^{k^{(1)}} \mathcal{O}_{z^{(1)}_h} \wedge \bigwedge_{h=2}^{k^{(2)}} \mathcal{O}_{z^{(2)}_h} \wedge (-1)^{ (\ell^{(1)} - p_{min}) \ell^{(2)}+ (p_{min}-1)} (-1)^{\sigma}\\
&- \frac{\partial}{\partial Im(z_1^{(2)})} \wedge \frac{\partial}{\partial x_{1}} \wedge \ldots \wedge \frac{\partial}{\partial x_{p_{min}-1}} \wedge \frac{\partial}{\partial Re(z_1^{(2)})} \wedge \frac{\partial}{\partial x_{\sigma(p_{min}+\ell^{(2)})}} \wedge \ldots \wedge
\frac{\partial}{\partial x_{\sigma(\ell^{(2)}+\ell^{(1)}-1)}} \wedge \\
& \frac{\partial}{\partial x_{\sigma(p_{min})}} \wedge \ldots \wedge
\frac{\partial}{\partial x_{\sigma(p_{min}+\ell^{(2)}-1)}}\\
&= \bigwedge_{h=2}^{k^{(1)}} \mathcal{O}_{z^{(1)}_h} \wedge \bigwedge_{h=1}^{k^{(2)}} \mathcal{O}_{z^{(2)}_h} \wedge (-1)^{\sigma} \frac{\partial}{\partial x_{1}} \wedge \ldots \wedge \frac{\partial}{\partial x_{p_{min}-1}} \wedge \frac{\partial}{\partial x_{\sigma(p_{min})}} \wedge \ldots \wedge
\frac{\partial}{\partial x_{\sigma(p_{min}+\ell^{(2)}-1)}} \wedge \\
& \frac{\partial}{\partial x_{\sigma(p_{min}+\ell^{(2)})}} \wedge \ldots \wedge
\frac{\partial}{\partial x_{\sigma(\ell^{(2)}+\ell^{(1)}-1)}} \\
&= \bigwedge_{h=2}^{k^{(1)}} \mathcal{O}_{z^{(1)}_h} \wedge \bigwedge_{h=1}^{k^{(2)}} \mathcal{O}_{z^{(2)}_h} \wedge \frac{\partial}{\partial x_{1}} \wedge \ldots \wedge \frac{\partial}{\partial x_{p_{min}-1}} \wedge \\
& \frac{\partial}{\partial x_{p_{min}}} \wedge \ldots \wedge \frac{\partial}{\partial x_{p_{min}+\ell^{(2)}-1}} \wedge \frac{\partial}{\partial x_{p_{min}+\ell^{(2)}}} \wedge \ldots \wedge
\frac{\partial}{\partial x_{\ell^{(2)}+\ell^{(1)}-1}} \\
&= \bigwedge_{h=1}^{k} \mathcal{O}_{z_h} \wedge \cO_{\ell} \\
&= \cO_{\ell,k}^\bullet. \\
\end{align*}

\end{proof}

\appendix
\chapter{Smoothing $(\cllkr)^{PDIFF}$ charts} \label{appe_A}

The goal of this technical section is to establish lemma \ref{smoo_lemm} without assuming that $\klk$ is a MWEC, but rather decomposing neighborhoods of the corners using the simple ratio charts on $\klk$.

\begin{lemnn}[\ref{smoo_lemm}]
There exists a piecewise smooth isomorphism
\[
(\cllkr)^{PDIFF} = col(\klk) \rightarrow \klk
\]
sending $\mathcal{C}\ell_{\ell,k,T}^\otimes$ to $K_{\ell,k,T}$ for every combinatorial type $T$.
\end{lemnn}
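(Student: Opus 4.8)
The strategy is to build the isomorphism locally, chart by chart, over the simple-ratio coordinate charts $\psi_{T^{max}}$ of $\klk$ described in Section \ref{disks}, and then check that the locally defined smoothings glue. I would start from a maximal tree $T^{max}$ with its simple-ratio chart $\psi_{T^{max}}: K_{\ell,k,\geq T^{max}} \times X^{[0,\epsilon[}(T^{max}) \to \nu(K_{\ell,k, T^{max}})$; here $K_{\ell,k,\geq T^{max}}$ is a point so the chart is just $X^{[0,\epsilon[}(T^{max}) \cong [0,\epsilon[^{\,|T^{max}|}$. On the other side, $col(\klk)$ restricted to this neighborhood is, by Definition \ref{defi_col}, the union over $T \leq T^{max}$ of cells $K_{\ell,k,\geq T} \times X^{[0,1]}(T)$, glued along the ``label $=1$'' faces. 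I would exhibit an explicit piecewise-smooth homeomorphism between $[0,\epsilon[^{\,|T^{max}|}$ (the simple-ratio orthant) and this union of cells: subdivide the orthant $[0,\epsilon[^{\,n}$ into the $2^n$-ish sub-boxes cut out by the hyperplanes $\{X(e) = \epsilon/2\}$ (or some fixed threshold), reparametrize each sub-box linearly so that the ``collar directions'' become the $[0,1]$-labels on the corresponding subtree, and match these up with the cells $K_{\ell,k,\geq T} \times X^{[0,1]}(T)$. The face $\{X(e)=\epsilon/2\}$ being approached from the ``small'' side corresponds to the node persisting (smaller tree $T$), while the ``large'' side corresponds to the node being resolved with a finite collar length — exactly the identification $X^{(2)}|_{T^{(1)}} = X^{(1)}$ of Definition \ref{defi_col}.

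Next I would treat non-maximal trees $T$ by the restriction-of-labelings mechanism already used in Section \ref{disks} to construct $\psi_T$ from $\psi_{T^{max}}$ (see also Proposition 6.2 and Corollary 6.5 of \cite{MW}): the smoothing defined on the $\psi_{T^{max}}$-chart descends to the boundary strata because both the simple-ratio coordinates and the collar-cell structure are compatible with contracting interior edges. Concretely, the key bookkeeping point is that for $T^{(1)} < T^{(2)}$ the cell $K_{\ell,k,\geq T^{(1)}} \times X^{[0,1]}(T^{(1)})$ sits inside $\overline{K_{\ell,k,\geq T^{(2)}} \times X^{[0,1]}(T^{(2)})}$ exactly as the locus where the extra labels equal $1$, and on the simple-ratio side this is the locus where the corresponding simple ratios hit the threshold $\epsilon/2$; so the two decompositions agree stratum by stratum. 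I would then verify that the locally constructed maps agree on overlaps of the $\nu(K_{\ell,k,T^{max}})$ — here I can use that different maximal trees sharing a common corner differ only by the choice of simple-ratio normalization, and the threshold-subdivision construction is symmetric enough (or can be made so with a partition-of-unity-type interpolation in the piecewise-smooth category) that the glued map is well defined. Finally, by construction the map sends $\mathcal{C}\ell_{\ell,k,T}^\otimes$ (the type-$T$ stratum of $col(\klk)$, i.e. the locus of clusters whose disk part has type $T$ and all collar lengths zero or $+\infty$ in the appropriate sense) to $K_{\ell,k,T}$.

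\textbf{Main obstacle.} The genuinely delicate part is not the existence of the local homeomorphism — that is a combinatorial repackaging of an orthant — but (i) checking that it is \emph{piecewise smooth} with the subdivision walls being the only non-smooth loci, and in particular that the reparametrizations on adjacent sub-boxes are smoothly compatible away from the walls, and (ii) the \emph{coherence across different maximal trees}: a corner $K_{\ell,k,T}$ can be the face of many maximal trees, each giving a priori different simple-ratio charts, and one must ensure the threshold-subdivision construction produces the \emph{same} cell decomposition of $col(\klk)$ regardless. I expect to handle (ii) by noting that the collar decomposition of Definition \ref{defi_col} is intrinsic (it refers only to the poset of trees and the labelings), so any chart-wise construction that is faithful to it automatically glues; the real work is phrasing the simple-ratio subdivision so that this faithfulness is manifest. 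Because only a piecewise-smooth (PDIFF) isomorphism is claimed, I do not need to worry about smoothing the walls themselves — that refinement is exactly what Lemma \ref{smoo_lemm}'s MWEC-based proof and Appendix \ref{appe_B} address separately.
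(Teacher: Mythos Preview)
Your proposal is correct and follows essentially the same route as the paper's Appendix~\ref{appe_A} proof: subdivide the simple-ratio orthant $X^{[0,\epsilon[}(T^{max})$ near a maximal corner by threshold hyperplanes $\{X(e)=\text{const}\}$, identify each of the resulting $2^{|E^{int}(T^{max})|}$ sub-boxes with a cell $K_{\ell,k,\geq T}\times X^{[0,1]}(T)$, and then check that the identifications glue via the restriction-of-labelings mechanism. The paper carries this out with threshold $\epsilon$ rather than $\epsilon/2$ and makes the reparametrization explicit as the shift $\chi(X)(l)=\epsilon+X(l)$ (your ``linear reparametrization''), which immediately exhibits the correspondence between the stratum $\{X|_T \equiv 0\}$ and the face $X_T^{\epsilon}(T^{max})\cap V_{T_0}$; otherwise the arguments coincide.
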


We begin by defining an identification near any maximal corner:

\begin{lem}
For every maximal combinatorial type $T^{max}$, there is a piecewise smooth isomorphism from a neighborhood of $K_{\ell,k,T^{max}} \times X^{[0,1]}(T^{max}) \subset col(\klk)$ to a neighborhood $V$ of $K_{\ell,k,T^{max}} \in \klk$, or equivalently, to a neighborhood $V$ of $X^{0}(T^{max}) \in X^{\R_+}(T^{max})$.
\end{lem}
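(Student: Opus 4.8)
The plan is to construct the identification explicitly in terms of the simple ratio coordinates already used to describe $\nu(K_{\ell,k,T^{max}})$, following \cite{MW}. Recall that near a maximal corner the chart $\psi_{T^{max}}$ identifies $\nu(K_{\ell,k,T^{max}})$ with a neighborhood $V$ of $X^0(T^{max})$ in $X^{\R_+}(T^{max})$ via labels $X(l) = \Delta_{v_a}/\Delta_{v_b}$, so that the stratum $K_{\ell,k,T^{max}}$ itself is the single point $X \equiv 0$ (all labels zero). On the collar side, $col(\klk)$ near this corner is by Definition \ref{defi_col} the union of the pieces $K_{\ell,k,\geq T} \times X^{[0,1]}(T)$ for $T \leq T^{max}$, glued by extending labelings with $1$'s; concretely, since every interior edge of such a $T$ is an interior edge of $T^{max}$, a point of this neighborhood of $col(\klk)$ is recorded by a function $Y: E^{int}(T^{max}) \to [0,1]$ together with a choice of which edges are "collar" edges (those where $Y < 1$, landing in a genuine collar cell) versus "disk" edges (those where $Y=1$, recording a nodal degeneration of the underlying disk). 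The target side $X^{\R_+}(T^{max})$ similarly records a function $X: E^{int}(T^{max}) \to \R_+$ where $X(l)=0$ means a node.

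First I would fix, for each interior edge independently, a piecewise smooth homeomorphism $\phi: [0,1] \to [0,1]$ (after rescaling $V$ to have all labels in $[0,1[$) that is smooth and strictly monotone on $]0,1[$, sends $0 \mapsto 0$ and $1 \mapsto 1$, and — this is the key design choice — is flat to infinite order at, and only at, the endpoint $1$ so that gluing two such charts across a collar cell produces a $C^\infty$ (indeed piecewise smooth in the sense of \cite{J}) transition. Then the map $col(\klk) \supset \bigsqcup_T K_{\ell,k,\geq T}\times X^{[0,1]}(T) \to V$ is defined edgewise: an edge carrying collar parameter $\lambda \in [0,1]$ in a collar cell is sent to the label $\phi(\lambda)$ only if... — more precisely I would send a point $(D, X_{coll})$, where $D$ lies in the closure $K_{\ell,k,\geq T}$ with its own simple ratios $X_D(l)$ on the edges $l \notin E^{int}(T)$ and $X_{coll}(l) \in [0,1]$ on $l \in E^{int}(T)$, to the labeling whose value on $l \in E^{int}(T)$ is $\phi(X_{coll}(l))\cdot(\text{appropriate product of the }X_D\text{ on the adjacent contracted edges})$ — this is exactly the collapsing rule for labelings (the $X^{(2)}|_{T^{(1)}}$ of the Definition before Proposition \ref{neighk}), which guarantees that the formula is independent of which cell's closure one uses to represent a given point, hence is well defined on the quotient.

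The verification then splits into three routine checks: (i) the map is a bijection onto $V$, which holds because the collapsing rule for labelings is invertible given the stratum, exactly as in the unquilted MWEC case and Corollary 6.5 of \cite{MW}; (ii) it is piecewise smooth with piecewise smooth inverse, which is where the infinite-order flatness of $\phi$ at $1$ is used, ensuring the charts on adjacent cells of $col(\klk)$ patch to charts in the sense of \cite{J}; (iii) it carries $\mathcal{C}\ell_{\ell,k,T}^\otimes$ (the locus where a prescribed set of collar parameters is $0$ and the rest of the stratum is $T$) to $K_{\ell,k,T}$ (the locus of labelings vanishing on the corresponding edges), which is immediate from the formula since $\phi(0)=0$. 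I expect the main obstacle to be (ii): one must check that the transition maps between the collar description on two overlapping pieces $K_{\ell,k,\geq T^{(1)}}\times X^{[0,1]}(T^{(1)})$ and $K_{\ell,k,\geq T^{(2)}}\times X^{[0,1]}(T^{(2)})$ — which overlap along the subcell where the extra labels equal $1$ — are genuinely piecewise smooth and not merely continuous, and that the resulting global object satisfies the manifold-with-embedded-corners axioms of \cite{J} (in particular that the faces so produced are embedded). This is handled by the flatness condition together with the observation, inherited from the unquilted MWEC structure of $\klk$, that the face poset of $col(\klk)$ near $T^{max}$ is exactly that of $V$; the smaller corners are then obtained by restricting labelings as in Proposition \ref{neighk}, and the global statement follows by patching the neighborhoods of all maximal corners, since their union covers a neighborhood of all of $\klk \setminus \mathring{\klk}$ and the interior is untouched.
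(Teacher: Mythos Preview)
Your edgewise map is not well-defined on the quotient $col(\klk)$. Take a point $(D, X^{(2)})$ in the cell $K_{\ell,k,\geq T^{(2)}} \times X^{[0,1]}(T^{(2)})$ with $X^{(2)}(l) = 1$ on an edge $l \in E^{int}(T^{(2)}) \setminus E^{int}(T^{(1)})$. Under $\sim$ this is identified with $(D, X^{(1)})$ in the cell for the contracted tree $T^{(1)}$; there $l$ is no longer a collar edge but a disk edge of $D$, and since $D \in K_{\ell,k,\geq T^{(2)}}$ still has a node at $l$, its simple ratio is $X_D(l) = 0$. Your formula assigns label $\phi(1) = 1$ in the first representation and label $0$ in the second, so the map does not descend. (The ``product of the $X_D$ on adjacent contracted edges'' you invoke is the balanced collapsing rule from the \emph{quilted} setting; in the unquilted Appendix~A case the reduction $X^{(2)}|_{T^{(1)}}(l) = X^{(2)}(l)$ is a bare restriction and no product appears, so it cannot rescue the consistency.)

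The paper's resolution is not a single edgewise formula but a decomposition of $V$ into cells $V_T$ matching the cells of $col(\klk)$. The maximal piece $V_{T^{max}} = X^{[0,\epsilon]}(T^{max})$ receives the pure collar cube; the complementary piece $V_{T_0} = \overline{(\bigcup_{|T|=1} X_T^{[0,\epsilon]}(T^{max}))^c}$ receives a copy of $\klk$ via the \emph{shift} $\chi(X)(l) = \epsilon + X(l)$; and the intermediate $V_T$ are products of these two mechanisms over the components of $T^{max} \setminus T$. The shift is precisely the ingredient your construction is missing: it pushes disk labels into $[\epsilon,\cdot)$ while collar labels occupy $[0,\epsilon]$, so a collar edge with parameter $1$ (label $\epsilon$) now matches a disk edge carrying a node (label $\epsilon + 0 = \epsilon$), and the cells glue as in Definition~\ref{defi_col}.

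A secondary issue: asking $\phi$ to be flat to infinite order at $1$ would make $\phi^{-1}$ non-smooth there, so your map would be at best a homeomorphism with piecewise smooth forward direction only. The lemma only asks for a piecewise smooth isomorphism, and an affine $\phi$ (which is what the paper's $\chi$ amounts to) already does the job without this complication.
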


That is, we will decompose a neighborhood $V$ of $X^{0}(T^{max}) \in X^{\R_+}(T^{max})$ into $V= \underset{T\leq T^{max}}{\bigcup} V_T \bigcap V$ where $V_T \bigcap V$ is isomorphic to a neighborhood of $K_{\ell,k,T^{max}} \times X^{[0,1]}(T)$ in $K_{\ell,k,\geq T} \times X^{[0,1]}(T)$. This will be based on the labeling reduction operation.

\begin{proof}
{\bf 1) Building the $V_{T^{max}}$ cell.} Set $0 < \epsilon \leq 1$ and $V \supset X^{[0,\epsilon]}(T^{max})$ an open neighborhood of $X^{[0,\epsilon]}(T^{max})$ small enough so that it is contained in the $\psi_{T^{max}}$ chart. Note that, canonically, $X^{[0,\epsilon]}(T^{max}) \cong X^{[0,1]}(T^{max})$ and thus $K_{\ell,k,T^{max}} \times X^{[0,1]}(T^{max})$ gets identified with the cell $V_{T^{max}} = X^{[0,\epsilon]}(T^{max}) \subset V$.

{\bf 2) Building a neighborhood of the $\geq T$ corner.} For any $T \leq T^{max}$, recall that a labeling $X$ on $T^{max}$ determines one on $T$, denoted by $X|_{T}$, by simply taking $X|_{T}(l) = X(l)$. For $l \in E(T)$, let $X^{[0,\epsilon]}(T)$ be the labelings on $T$ taking values in $[0,\epsilon]$. Denote by $ X_T^{[0,\epsilon]}(T^{max}) = |_T^{-1}(X^{[0,\epsilon]}(T)) \subset X^{\R_+}(T^{max})$ the labelings on $T^{max}$ that restric to $T \leq T^{max}$ in this range. Near $K_{\ell,k,T^{max}}$, $X_T^{[0,\epsilon]}(T^{max})$ is as a neighborhood of the corner $K_{\ell,k,\geq T}$. Remark that from this definition, $T^{(1)} < T^{(2)} \leq T^{max}$ implies that $X_{T^{(2)}}^{[0,\epsilon]}(T^{max}) \subset X_{T^{(1)}}^{[0,\epsilon]}(T^{max})$ since $(X|_{T^{(2)}})|_{T^{(1)}} = X|_{T^{(1)}}$.

{\bf 3) Building the $V_{T_0}$ cell.} Now set $V_{T_0} = \overline{( \underset{|T| = 1}{\bigcup} X_T^{[0,\epsilon]}(T^{max}) )^c}$. Then $V_{T_0} \bigcap V$ could be thought of as a cell of $V$ that is complementary to the above corner neighborhoods. Note that the labeling $X^\epsilon(T^{max})$ (taking value $\epsilon$ on each edge of $T^{max}$) belongs to $V_{T_0}$ since the sequence $\{X^{\epsilon+\frac{1}{n}}(T^{max})\}_{n\in \N}$ lies in $( \underset{|T| = 1}{\bigcup} X_T^{[0,\epsilon]}(T^{max}) )^c$.

More generally, $X_T^{[0,\epsilon]}(T^{max}) \bigcap V_{T_0} = X_T^{\epsilon}(T^{max}) \bigcap V_{T_0}$, the labelings (of $V_{T_0}$) reducing to $T$ with values $\epsilon$. This is clear when $|T|=1$, and otherwise notice that $V_{T_0} = \overline{( \underset{T}{\bigcup} X_T^{[0,\epsilon]}(T^{max}) )^c}$. $X_T^{\epsilon}(T^{max}) \bigcap V_{T_0} \neq \emptyset$ since it contains $X^\epsilon(T^{max})$, but in fact, is seen to be isomorphic to $K_{\ell,k,\geq T}$ near $K_{\ell,k, T^{max}}$. 

\begin{lem} \label{lemm_retr_1}
There is an isomorphism from a neighborhood of $X^{0}(T^{max})$ in $X^{\R_+}(T^{max})$ to a neighborhood of  $X^\epsilon(T^{max})$ in $V_{T_0}$.
\end{lem}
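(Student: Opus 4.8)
The plan is to make everything explicit in the simple-ratio coordinates of $\psi_{T^{max}}$ and to observe that, near $X^\epsilon(T^{max})$, the cell $V_{T_0}$ is nothing but a translated orthant. First I would fix coordinates: write $l_1,\dots,l_m$ for the interior edges of $T^{max}$, so that $m:=|E^{int}(T^{max})|=\ell-2+2k$, and recall (from the discussion preceding Remark \ref{rema_sphe}) that after extracting real parts the chart $\psi_{T^{max}}$ identifies a neighborhood of $X^0(T^{max})$ with a neighborhood of the origin in $X^{\R_+}(T^{max})\cong[0,\infty)^m$. Let $\pi_i\colon X^{\R_+}(T^{max})\to[0,\infty)$ be the $i$-th coordinate, $\pi_i(X)=X(l_i)$. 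The combinatorial input is that the trees $T$ with $|T|=1$ and $T\le T^{max}$ are precisely the $m$ trees $T_i$ obtained from $T^{max}$ by contracting every interior edge except $l_i$. By the very definition of restriction of labelings, $X|_{T_i}$ is the labeling of the single edge of $T_i$ with value $X(l_i)=\pi_i(X)$, and since $\psi_{T_i}$ is constructed from $\psi_{T^{max}}$ via exactly this restriction (Proposition 6.2 and Corollary 6.5 of \cite{MW}), one gets, as subsets of $\nu(K_{\ell,k,T^{max}})$,
\[
X_{T_i}^{[0,\epsilon]}(T^{max})\;=\;|_{T_i}^{-1}\!\big(X^{[0,\epsilon]}(T_i)\big)\;=\;\pi_i^{-1}\big([0,\epsilon]\big).
\]

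Granting this, I would compute $V_{T_0}$ directly: the union $\bigcup_{|T|=1}X_T^{[0,\epsilon]}(T^{max})=\bigcup_{i=1}^m\pi_i^{-1}([0,\epsilon])$ is the closed set $\{X:\min_i\pi_i(X)\le\epsilon\}$, hence its complement in $V$ is the open set $\{X\in V:\pi_i(X)>\epsilon\text{ for all }i\}$ and
\[
V_{T_0}\cap V\;=\;\{\,X\in\overline V\,:\,\pi_i(X)\ge\epsilon\ \text{for}\ 1\le i\le m\,\},
\]
a translated closed orthant whose unique maximal-codimension corner point is $X^\epsilon(T^{max})=(\epsilon,\dots,\epsilon)$. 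Choosing a small $\delta>0$ and a smooth increasing diffeomorphism $\rho\colon[0,\delta)\xrightarrow{\sim}[\epsilon,\epsilon+\delta)$ with $\rho(0)=\epsilon$, the map $r$ defined in the $\psi_{T^{max}}$-chart by $\pi_i(r(X))=\rho(\pi_i(X))$ for all $i$ is then a piecewise smooth isomorphism of manifolds with embedded corners from the neighborhood $X^{[0,\delta)}(T^{max})$ of $X^0(T^{max})$ onto a neighborhood of $X^\epsilon(T^{max})$ in $V_{T_0}$, carrying the corner stratum $\{\pi_i(X)=0:i\in S\}$ to $\{\pi_i(X)=\epsilon:i\in S\}$ for every $S\subseteq\{1,\dots,m\}$. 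Shrinking $V$, $\delta$ and (if needed) $\epsilon$ so that all of this happens inside the $\psi_{T^{max}}$-chart yields the asserted isomorphism.

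The one point requiring care — and the main obstacle — is the displayed identification $X_{T_i}^{[0,\epsilon]}(T^{max})=\pi_i^{-1}([0,\epsilon])$: it is transparent combinatorially, but it must be checked as an equality of subsets of the geometric moduli, i.e. that the simple-ratio charts $\psi_{T_i}$ and $\psi_{T^{max}}$ really are intertwined by the restriction of labelings over all of $\nu(K_{\ell,k,T^{max}})$, not merely to first order at $X^0(T^{max})$. This is exactly the content of Proposition 6.2 and Corollary 6.5 of \cite{MW}, transported to the present disk-with-interior-markings situation, where one must first pass to the real-part identification of the a priori complex labels before the coordinatewise map $r$ is meaningful as a map of manifolds with embedded corners. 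Everything else is elementary book-keeping with orthants.
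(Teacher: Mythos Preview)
Your proof is correct and takes essentially the same approach as the paper: the paper simply defines $\chi(X)(l)=\epsilon+X(l)$, which is precisely your coordinatewise map with $\rho(t)=\epsilon+t$. Your closing worry is unnecessary here --- in the unquilted setting the restriction of labelings is defined by $X|_T(l)=X(l)$ (Definition before Proposition~\ref{neighk}), so $X_{T_i}^{[0,\epsilon]}(T^{max})=\pi_i^{-1}([0,\epsilon])$ is tautological in the labeling space $X^{\R_+}(T^{max})$ itself and needs no appeal to \cite{MW} or to geometric compatibility of charts.
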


From proposition \ref{neighk}, we get that

\begin{cor} \label{coro_retr_1}
There is an isomorphism from a neighborhood of $K_{\ell,k,T^{max}}$ in $\klk$ to a neighborhood of $X^\epsilon(T^{max})$ in $V_{T_0}$.
\end{cor}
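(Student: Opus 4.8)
The plan is to obtain the isomorphism as the composite of the corner chart $\psi_{T^{max}}$ of Proposition \ref{neighk} with the isomorphism constructed in Lemma \ref{lemm_retr_1}.

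First I would record that because $T^{max}$ is maximal, every smooth component of the associated nodal disk is rigid, so $K_{\ell,k,T^{max}}$ is a single point and $K_{\ell,k,T^{max}}\times X^{\R_+}(T^{max})$ is canonically identified with $X^{\R_+}(T^{max})$. Under this identification, Proposition \ref{neighk}, together with the remark that one may shrink the chart so that it is already defined and an isomorphism on $X^{[0,\epsilon[}(T^{max})$, tells us that $\psi_{T^{max}}$ restricts to an isomorphism from a neighborhood $W_0\supseteq X^{[0,\epsilon[}(T^{max})$ of $X^{0}(T^{max})$ in $X^{\R_+}(T^{max})$ onto the neighborhood $\nu(K_{\ell,k,T^{max}})$ of $K_{\ell,k,T^{max}}$ in $\klk$; hence $\psi_{T^{max}}^{-1}$ is an isomorphism from a neighborhood of $K_{\ell,k,T^{max}}$ in $\klk$ onto $W_0$.

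Next I would invoke Lemma \ref{lemm_retr_1} to produce an isomorphism $\Theta$ from a neighborhood $W_1$ of $X^{0}(T^{max})$ in $X^{\R_+}(T^{max})$ onto a neighborhood of $X^{\epsilon}(T^{max})$ in $V_{T_0}$, then pick an open set $W\subseteq W_0\cap W_1$ containing $X^{0}(T^{max})$ and form the composite $\Theta\circ\psi_{T^{max}}^{-1}$ restricted to $\psi_{T^{max}}(W)$. This is a piecewise smooth isomorphism from the neighborhood $\psi_{T^{max}}(W)$ of $K_{\ell,k,T^{max}}$ in $\klk$ onto the neighborhood $\Theta(W)$ of $X^{\epsilon}(T^{max})$ in $V_{T_0}$, which is the assertion.

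The only delicate point — and it is not really an obstacle — is the bookkeeping of the domains: one must verify that $W_0$ and $W_1$ can be taken to contain a common neighborhood of $X^{0}(T^{max})$ and that restricting both maps to that common neighborhood still yields honest open neighborhoods of $K_{\ell,k,T^{max}}$ and of $X^{\epsilon}(T^{max})$ on the two sides. Since every map involved is a homeomorphism, shrinking the source neighborhood shrinks the two images correspondingly, so this is immediate; the piecewise smooth compatibility is automatic as well, $\psi_{T^{max}}$ being smooth and $\Theta$ being the piecewise smooth isomorphism of Lemma \ref{lemm_retr_1}.
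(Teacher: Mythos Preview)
Your argument is correct and is exactly the paper's approach: the corollary is deduced from Lemma \ref{lemm_retr_1} by precomposing with the corner chart $\psi_{T^{max}}$ of Proposition \ref{neighk}, and the paper indicates this in one line (``From proposition \ref{neighk}, we get that''). The only minor remark is that the map $\chi$ of Lemma \ref{lemm_retr_1} is in fact smooth (it is $X(l)\mapsto \epsilon+X(l)$), so ``piecewise smooth'' is an unnecessary hedge here.
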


\begin{proof}
Define a smooth map
\begin{diagram}
X^{\R_+}(T^{max}) && \rTo{\chi} && V_{T_0} \\
X && \rMapsto && \chi(X)(l)= \epsilon + X(l).
\end{diagram}

It is obvious that this map has the desired properties. It is not hard to see that the labelings on $T^{max}$ having $0$ values exactly on $T$ (corresponding to $K_{\ell,k,\geq T}$) correspond under $\chi$ to $X^{[0,\epsilon]}_T(T^{max}) \bigcap V_{T_0} = X^{\epsilon}_T(T^{max}) \bigcap V_{T_0}$: Let $X \in X^{\R_+}(T^{max})$ that is $0$ over $T$ and nonzero over $T^{max} \backslash T$, then an edge $l \in E(T)$ gets label $\epsilon$.

\end{proof}

{\bf 4) Building the $V_{T}$ cell.} Given $T \leq T^{max}$, let $\underset{i}{\bigcup} T^{(i)} =T^{max} \backslash T$ be the decomposition into connected maximal subtrees. Define $V_{T^{(i)}_0} \subset X^{\R_+}(T^{(i)})$ as in step $3$, so it is made of labelings $X^{(i)}$ over every component subtree $T^{(i)}$ of $T^{max}\backslash T$ such that $X^{(i)}$ does not restrict to a $[0,\epsilon]$ labeling on any of the subtrees of $T^{(i)}$. Lemma \ref{lemm_retr_1} shows that near $K_{\ell^{(i)},k^{(i)}, T^{(i)}}$, the $\chi^{(i)}$ maps will retract the $K_{\ell^{(i)},k^{(i)},\geq T^{(i)}_0}$ onto $V_{T^{i}_0}$. Consequently, near $K_{\ell,k,T^{max}}$, $\underset{i}{\prod} \chi^{(i)}$ retracts the $K_{\ell,k,\geq T} = \underset{i}{\prod} K_{\ell^{(i)},k^{(i)},\geq T^{(i)}_0}$ corner onto $\underset{i}{\prod} V_{T^{(i)}_0}$. As in the proof of \ref{lemm_retr_1}, the labelings $X$ with value $0$ on $T$ and restricting to $\epsilon$-labelings over some subtrees of the $X^{(i)}$ constitute a boundary stratum of $\underset{i}{\prod} V_{T^{(i)}_0}$.

Combining the above labels with labels over $T$, $\underset{i}{\prod} V_{T^{(i)}_0} \times X^{\R_+}(T)$ naturally sits in $X^{\R_+}(T^{max})$ and we can therefore define $V_{T} = (\underset{i}{\prod} V_{T^{(i)}_0} \times X^{\R_+}(T)) \bigcap X_T^{[0,\epsilon]}(T^{max})$. Through the restriction map associated with $T$, $V_{T}$ is seen to be isomorphic to $\underset{i}{\prod} V_{T^{(i)}_0} \times X^{[0,\epsilon^{N_l}]}(T)$. Therefore, $V_{T}$ is isomorphic to a neighborhood of $K_{\ell,k,T^{max}} \times X^{[0,1]}(T)$ in $K_{\ell,k,\geq T} \times X^{[0,1]}(T)$.

{\bf 5) Attaching the $V_{T}$ cells.} 

As seen above, for every $T < T' \leq T^{max}$, let $T' \backslash T = \underset{i}{\bigcup} T^{(i)} \backslash T^{'(i)}$, then from step $4$,
\[
(\underset{i}{\prod}(X_{T^{(i)} \backslash T^{'(i)}}^\epsilon(T^{(i)}) \bigcap V_{T^{(i)}_0}) \times X^{\R_+}(T)) \bigcap X_T^{[0,\epsilon]}(T^{max})
\]
is a boundary stratum of $V_{T}$. It coincides with the boundary stratum 
\[
(\underset{i}{\prod} V_{T^{'(i)}_0} \times  X^{\R_+}(T')) \bigcap X_{T'}^{[0,\epsilon]}(T^{max}) \bigcap X_{T^{(i)} \backslash T^{'(i)}}^\epsilon(T^{(i)})
\]
of $V_{T'} = (\underset{i}{\prod} V_{T^{'(i)}_0} \times X^{\R_+}(T')) \bigcap X_{T'}^{[0,\epsilon]}(T^{max})$. More simply, those are labelings $[0,\epsilon]$-reducing on $T$ such that in the complement of $T$, they reduce to $\epsilon$-labelings exactly on $T'\backslash T$. It is not hard to see that those correspond under the $\chi$ maps to the same identifications as in the definition of $col(\klk)$.

\end{proof}

Now that we can set piecewise smooth identifications between $(\cllkr)^{PDIFF}$ and $\klk$ in disjoints neighborhoods of the maximal corner, there is no obstruction to make them fit together using the $\psi_T$ charts from a neighborhood of $K_{\ell,k, T} \times \{0\} = K_{\ell,k, T} \times X^{0}(T)$ in $K_{\ell,k, T} \times X^{\R_+}(T)$ to a neighborhood $\nu(K_{\ell,k, T})$ of $K_{\ell,k, T}$ in $\klk$.

\chapter{Smoothing $(\qcllkr)^{PDIFF}$ charts} \label{appe_B}

Using techniques very similar to those of appendix \ref{appe_A}, we want to establish the following result:

\begin{lemnn}[\ref{qsmoo_lemm}]
There exists a piecewise smooth isomorphism
\[
(\qcllkr)^{PDIFF} = col(\qklk) \rightarrow \qklk
\]
sending $\mathcal{QC}\ell_{\ell,k,T}^\otimes$ to $Q_{\ell,k,T}$ for every combinatorial type $T$.
\end{lemnn}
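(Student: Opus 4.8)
The plan is to mimic the strategy of appendix \ref{appe_A} almost verbatim, replacing simple ratios by the balanced simple ratios of \cite{MW} and keeping track of the colored/painted structure. First I would fix a maximal colored planar tree $T^{max}$ and recall from proposition \ref{neigh} (and corollary 10.6 of \cite{MW}) that a Zariski neighborhood of $Q_{\ell,k,T^{max}}$ in $\qklk$ is modelled on the space $X^{\R_+}(T^{max})$ of \emph{balanced} labelings of the interior edges of $T^{max}$ with values in $\R_+$, the node of an edge $l$ corresponding to the label $X(l)=0$. The only structural difference with the unquilted case is the balancing constraint, which is linear in the logarithmic coordinates and hence respects the piecewise smooth category; in particular the restriction map $X\mapsto X|_T$ on balanced labelings is still well defined for $T\leq T^{max}$ via the formula
\[
X^{(2)}|_{T^{(1)}}(l) = X^{(2)}(l)\cdot \prod_{l'} X^{(2)}(l') \prod_{l''} X^{(2)}(l'')
\]
recalled before proposition \ref{neigh}. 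So all the cut-and-paste manipulations of appendix \ref{appe_A} go through on the level of balanced labelings.

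Next I would reproduce, step by step, the five steps of the proof of lemma \ref{smoo_lemm} in appendix \ref{appe_A}, working inside a neighborhood $V\supset X^{[0,\epsilon]}(T^{max})$ contained in the $\psi_{T^{max}}$ Zariski chart. Step 1 builds the cell $V_{T^{max}}=X^{[0,\epsilon]}(T^{max})\cong Q_{\ell,k,T^{max}}\times X^{[0,1]}(T^{max})$. Step 2 introduces, for $T\leq T^{max}$, the sets $X_T^{[0,\epsilon]}(T^{max})=|_T^{-1}(X^{[0,\epsilon]}(T))$, which model neighborhoods of the corner $Q_{\ell,k,\geq T}$ and are still nested: $T^{(1)}<T^{(2)}$ gives $X_{T^{(2)}}^{[0,\epsilon]}(T^{max})\subset X_{T^{(1)}}^{[0,\epsilon]}(T^{max})$. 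Step 3 defines the complementary cell $V_{T_0}=\overline{(\bigcup_{|T|=1}X_T^{[0,\epsilon]}(T^{max}))^c}$ and the smooth retraction $\chi(X)(l)=\epsilon+X(l)$ (well defined on balanced labelings since adding the constant $\epsilon$ preserves a balancing condition once we work in logarithmic coordinates, or more simply since $\chi$ maps the model $X^{\R_+}$ into $V_{T_0}$ compatibly), giving the analogue of lemma \ref{lemm_retr_1} and corollary \ref{coro_retr_1}. Steps 4 and 5 then build the cells $V_T$ as $(\prod_i V_{T^{(i)}_0}\times X^{\R_+}(T))\cap X_T^{[0,\epsilon]}(T^{max})$, identify them with neighborhoods of $Q_{\ell,k,T^{max}}\times X^{[0,1]}(T)$ in $Q_{\ell,k,\geq T}\times X^{[0,1]}(T)$, and check that the attaching maps along common boundary strata are exactly those of the relation defining $col(\qklk)$. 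Having done this near every maximal corner, I would glue the local identifications using the charts $\psi_T$ from proposition \ref{neigh}, noting that sending $\mathcal{QC}\ell_{\ell,k,T}^\otimes$ to $Q_{\ell,k,T}$ is forced by construction, and that the bivalent colored vertices (the seam) are carried along untouched since they contribute a labeled edge in exactly the same way as an unquilted bivalent vertex.

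The main obstacle I expect is bookkeeping the balancing condition through the retraction and product decompositions: in appendix \ref{appe_A} the cells $V_T$ decompose as honest products $\prod_i V_{T^{(i)}_0}\times X^{[0,\epsilon^{N_l}]}(T)$, but here the balanced labeling of $T^{max}$ does not split as a product over the subtrees $T^{(i)}$ of $T^{max}\setminus T$ together with a free labeling of $T$ — the balancing couples the label on the edge just below a colored vertex with the labels on the painted subtree above it. So one must verify that the restriction map $|_T$, followed by the $\chi^{(i)}$ retractions on the non-painted subtrees and the appropriate balanced-collapse on the painted one, still yields a piecewise smooth isomorphism of $V_T$ onto the model neighborhood; this is essentially the content of proposition 6.2 and corollary 6.5 of \cite{MW} adapted to the $\R_+$-valued balanced setting, so no genuinely new difficulty arises, but the formulas are bulkier. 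One also has to observe that $\qklk$ is only a singular manifold with embedded corners, so the target of the identification is the singular MWEC structure of definition of $\qcllkr$ rather than a smooth one; this is harmless because the whole argument is carried out chart-by-chart in the balanced simple-ratio coordinates, where the singularities are the standard toric ones and the piecewise smooth structure is unambiguous. Since this is a purely technical transcription, I would simply state that the remaining steps are identical to those of appendix \ref{appe_A} mutatis mutandis and omit the repeated details.
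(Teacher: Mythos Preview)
Your overall five-step outline matches the paper's appendix \ref{appe_B}, but there is a genuine gap at Step~3: the retraction $\chi(X)(l)=\epsilon+X(l)$ does \emph{not} preserve the balancing condition, and your parenthetical justification is incorrect. Balancing requires that for every vertex $v^-$ below the coloring the product $\prod_l X(l)$ along any simple path from a colored vertex down to $v^-$ be independent of the colored vertex; if two such paths carry labels $(a,b)$ and $(c,d)$ with $ab=cd$, then $(a+\epsilon)(b+\epsilon)=(c+\epsilon)(d+\epsilon)$ forces $a+b=c+d$, which is an extra constraint. In logarithmic coordinates the balancing is linear, but $X(l)\mapsto\epsilon+X(l)$ is not a translation in those coordinates, so linearity buys you nothing. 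The paper explicitly remarks that the constant labeling $X^\epsilon(T^{max})$ is not balanced, which already shows your $\chi$ fails at $X=0$.

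The paper's fix has two ingredients you are missing. First, it introduces edge weights $M_l$ (depending on the depth of $l$ below the coloring) so that $\sum_{l\in T_v}M_l=1$ along every root-to-colored-vertex path, and replaces the cube $X^{[0,\epsilon]}(T^{max})$ by the weighted box $X^{[0,\epsilon^{M_l}]}(T^{max})$; the labeling $l\mapsto\epsilon^{M_l}$ is then balanced by construction. Second, for edges below the coloring it uses a substantially more elaborate formula for $\chi$, of the shape
\[
\chi(X)(l)=(\epsilon^{M_l}+X(l)^2)\,\frac{\epsilon^{M_{l'}}}{\epsilon^{M_{l'}}+X(l')^2}\,\frac{\epsilon^{M_{l''}}}{\epsilon^{M_{l''}}+X(l'')^2}\,(1+\delta_{l''}Y),
\]
designed so that the telescoping of the rational factors along any path from a colored vertex to the root gives the common value $\epsilon(1+Y)$, where $Y$ is the product of the original labels along that path. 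One then has to check separately that $\chi$ is well defined (balanced), injective, lands in $V_{T_0}$, and sends the $0$-locus over $T$ to $X_T^{\epsilon}(T^{max})\cap V_{T_0}$; each of these requires a short case analysis according to the position of the edge relative to the coloring. This is the ``bulkier formulas'' you anticipated, but it is not optional bookkeeping: without it Step~3 simply fails, and Steps~4--5, which rely on the product $\prod_i\chi^{(i)}$, collapse with it.
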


The main difficulty is that we must now keep track of the balancing condition on the labelings on the combinatorial trees of the quilted disks. However, we can still reduce a balanced labeling on a given tree to a balanced labeling on a smaller tree, and this will be enough to decompose neighborhoods of the boundary components of $\qklk$.

We begin by defining an identification near any maximal corner:

\begin{lem}
For every maximal combinatorial type $T^{max}$, there is a piecewise smooth isomorphism from a neighborhood of $Q_{\ell,k,T^{max}} \times X^{[0,1]}(T^{max}) \subset col(\qklk)$ to a neighborhood $V$ of $Q_{\ell,k,T^{max}} \in \qklk$, or equivalently, to a neighborhood $V$ of $X^{0}(T^{max}) \in X^{\R_+}(T^{max})$.
\end{lem}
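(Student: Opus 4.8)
The plan is to transcribe, step for step, the proof of the corresponding lemma of appendix \ref{appe_A} (the smoothing of $(\cllkr)^{PDIFF}$ charts), replacing the simple-ratio charts $\psi_{T}$ of proposition \ref{neighk} by the balanced simple-ratio charts of proposition \ref{neigh}, and the spaces of labelings by the spaces of \emph{balanced} labelings on the colored tree $T$. Fix a maximal colored planar tree $T^{max}$. \textbf{Step 1:} choose $0<\epsilon\le 1$ and a Zariski neighborhood $V$ of $X^{[0,\epsilon]}(T^{max})$ contained in the chart $\psi_{T^{max}}$; since $X^{[0,\epsilon]}(T^{max})\cong X^{[0,1]}(T^{max})$ canonically, this identifies $Q_{\ell,k,T^{max}}\times X^{[0,1]}(T^{max})$ with the cell $V_{T^{max}}=X^{[0,\epsilon]}(T^{max})\subset V$. \textbf{Step 2:} for $T\le T^{max}$ use the balancing-corrected restriction $X\mapsto X|_{T}$, $X|_{T}(l)=X(l)\cdot\prod_{l'}X(l')\prod_{l''}X(l'')$ recalled before proposition \ref{neigh}, to set $X^{[0,\epsilon]}_{T}(T^{max})=|_{T}^{-1}(X^{[0,\epsilon]}(T))$; near $Q_{\ell,k,T^{max}}$ this is a neighborhood of the boundary stratum $Q_{\ell,k,\ge T}$, and the nesting $X^{[0,\epsilon]}_{T^{(2)}}(T^{max})\subset X^{[0,\epsilon]}_{T^{(1)}}(T^{max})$ for $T^{(1)}<T^{(2)}$ still holds because the balanced restriction is transitive, $(X|_{T^{(2)}})|_{T^{(1)}}=X|_{T^{(1)}}$ (part of the content of corollary 6.5 of \cite{MW}).

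\textbf{Steps 3--4:} define the complementary cell $V_{T_0}=\overline{(\bigcup_{|T|=1}X^{[0,\epsilon]}_{T}(T^{max}))^{c}}$ and, for general $T$ with $T^{max}\setminus T=\bigsqcup_{i}T^{(i)}$, set $V_{T}=(\prod_{i}V_{T^{(i)}_{0}}\times X^{\R_+}(T))\cap X^{[0,\epsilon]}_{T}(T^{max})$, where now every factor stands for balanced labelings and the product sits inside $X^{\R_+}(T^{max})$ via the balancing identities. To retract the corner $Q_{\ell,k,\ge T}\cong\prod_{i}Q_{\ell^{(i)},k^{(i)},\ge T^{(i)}_{0}}$ onto $\prod_{i}V_{T^{(i)}_{0}}$ one needs a balanced analogue of the shift $\chi(X)(l)=\epsilon+X(l)$ of lemma \ref{lemm_retr_1} and corollary \ref{coro_retr_1}. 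I would pass to logarithmic coordinates, in which balanced labelings on $T^{max}$ form a linear subspace of $\R^{E^{int}(T^{max})}$, and use a linear retraction onto the interior cell there; concretely this amounts to a weighted multiplicative perturbation $\chi(X)(l)=\epsilon^{\,w(l)}X(l)$ with exponents $w(l)$ chosen so that the exponent sums are constant along the simple paths feeding each colored vertex (equivalently, so that the seam height, computed as the product of labels below the colored vertices, is scaled by a uniform power of $\epsilon$, as in \cite{MW}). With such a $\chi$ the equalities $X^{[0,\epsilon]}_{T}(T^{max})\cap V_{T_0}=X^{\epsilon}_{T}(T^{max})\cap V_{T_0}$ and $V_{T}\cong\prod_{i}V_{T^{(i)}_{0}}\times X^{[0,\epsilon']}(T)$ follow exactly as in appendix \ref{appe_A}, so $V_{T}$ is a neighborhood of $Q_{\ell,k,T^{max}}\times X^{[0,1]}(T)$ in $Q_{\ell,k,\ge T}\times X^{[0,1]}(T)$. \textbf{Step 5:} check that the strata along which $V_{T}<V_{T'}$ overlap correspond, under the $\chi$ maps, to the gluing relation defining $col(\qklk)$ (built in analogy with \ref{defi_col}); this is the same bookkeeping as in appendix \ref{appe_A} carried out on colored trees, with no reshuffling to worry about since $T^{max}$ is a fixed planar colored tree. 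Finally one globalizes by gluing the local models near the various maximal corners through the charts $\psi_{T}$ of proposition \ref{neigh} over neighborhoods of $Q_{\ell,k,T}\times\{0\}$, the transition data being expressed in balanced simple ratios.

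\textbf{The main obstacle}, and essentially the only place where genuinely new work is required, is producing the balancing-preserving replacement for the shift map $\chi$ of corollary \ref{coro_retr_1}: the naive additive formula $\epsilon+X(l)$ destroys the balancing identities at the colored vertices, so one has to verify that the logarithmic-linear (equivalently weighted multiplicative) version both still retracts $Q_{\ell,k,\ge T}$ onto the interior cell $V_{T_0}$ and matches the $col(\qklk)$ identifications. The toric singularities of $\qklk$ themselves create no extra difficulty, because the entire argument takes place inside a single chart $\psi_{T^{max}}$, where the balanced labelings form an honest affine (in log coordinates, linear) space.
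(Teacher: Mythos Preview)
Your five–step outline mirrors the paper's proof exactly, and you have correctly identified that the only genuinely new ingredient is the balancing–preserving analogue of the shift map $\chi$ of lemma~\ref{lemm_retr_1}. The gap is precisely there: your proposed map $\chi(X)(l)=\epsilon^{w(l)}X(l)$ cannot play the role required. In lemma~\ref{lemm_retr_1} the map $\chi$ must send a neighborhood of the zero labeling $X^{0}(T^{max})$ onto a neighborhood of $X^{\epsilon}(T^{max})$ inside $V_{T_0}$; in particular $\chi(X^{0})=X^{\epsilon}$. A purely multiplicative perturbation fixes the origin, $\chi(X^{0})=X^{0}$, so it never leaves the corner. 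In logarithmic coordinates the corner sits at $-\infty$, and no linear (or affine) map of $\R^{E^{int}(T^{max})}$ moves $-\infty$ to a finite point, so passing to log coordinates does not rescue the idea. The same objection applies to the factorwise maps $\chi^{(i)}$ in step~4.

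The paper's construction is neither additive nor multiplicative but a hybrid designed to telescope. First one introduces weights $M_l$ on the edges of $T^{max}$ (depending only on the depth of $l$ below the colored level) with the property that along any simple path $T_v$ from a colored vertex $v$ to the root one has $\sum_{l\in T_v}M_l=1$; the reference labeling $X^{\epsilon}(T^{max})$ is then taken to assign $\epsilon^{M_l}$ to $l$, which is balanced by construction. The map $\chi$ keeps the additive shift $\epsilon+X(l)$ on edges above the coloring (where no balancing is imposed), while on an edge $l$ below the coloring it sets
\[
\chi(X)(l)=\bigl(\epsilon^{M_l}+X(l)^2\bigr)\,\frac{\epsilon^{M_{l'}}}{\epsilon^{M_{l'}}+X(l')^2}\,\frac{\epsilon^{M_{l''}}}{\epsilon^{M_{l''}}+X(l'')^2}\,(1+\delta_{l''}Y),
\]
with $l'$ the edge just below $l$, $l''$ the edge just above $l$ that meets a colored vertex (if any), and $Y$ the product of the original labels along $T_v$. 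The point of the rational factors is that in the product along $T_v$ they telescope, leaving $\chi(X)|_{T_v}\equiv\epsilon(1+Y)$; this is manifestly independent of $v$, so $\chi(X)$ is balanced, and it equals $\epsilon$ when $X=X^{0}$. Injectivity is then read off edge by edge from the root upward. With this $\chi$ in hand, your steps~3--5 go through verbatim as in appendix~\ref{appe_A}, replacing $\epsilon$ by $\epsilon^{N_l}$ with $N_l=M_l+\sum_{l'}M_{l'}+\sum_{l''}M_{l''}$ in the definition of $X^{[0,\epsilon]}(T)$.
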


That is, we will decompose, as in the unquilted case, a neighborhood $V$ of $X^{0}(T^{max}) \in X^{\R_+}(T^{max})$ into $V= \underset{T\leq T^{max}}{\bigcup} V_T \bigcap V$ where $V_T \bigcap V$ is isomorphic to a neighborhood of $Q_{\ell,k,T^{max}} \times X^{[0,1]}(T)$ in $Q_{\ell,k,\geq T} \times X^{[0,1]}(T)$. This will be based on the balanced labeling reduction operation.

We first set, for every edge $l \in E(T^{max})$ below the colored vertices of $T^{max}$, $b_l$ to be the number of (finite) edges below $l$, including itself. 
Say $M_l = \frac{1}{2^{b_l}}$ if $l$ is below the colored vertices without touching one of them, $M_l = \frac{1}{2^{b_l-1}}$ if it is just below a colored vertex, and $M_l = 1$ if $l$ is above the colorings. If $T_v$ is a linear subtree from the root to a colored vertex $v$, is not hard to see from this definition that $\underset{l \in T_v}{\sum} M_l =1$.

\begin{proof}
{\bf 1) Building the $V_{T^{max}}$ cell.} Set $0 < \epsilon \leq 1$ and $V \supset X^{[0,\epsilon^{M_l}]}(T^{max})$ an open neighborhood of $X^{[0,\epsilon^{M_l}]}(T^{max})$ small enough so that it is contained in the $\psi_{T^{max}}$ chart. Note that, canonically, $X^{[0,\epsilon^{M_l}]}(T^{max}) \cong X^{[0,1]}(T^{max})$ and thus $Q_{\ell,k,T^{max}} \times X^{[0,1]}(T^{max})$ gets identified with the cell $V_{T^{max}} = X^{[0,\epsilon^{M_l}]}(T^{max}) \subset V$. To simplify notation, we will refer to $X^{[0,\epsilon^{M_l}]}(T^{max})$ as $X^{[0,\epsilon]}(T^{max})$.

Remark that unlike in the preceding section, putting $\epsilon$ labels on every edge of $T^{max}$ does not generate a balanced labeling, as does assigning $\epsilon^{M_l}$ labels.

{\bf 2) Building a neighborhood of the $\geq T$ corner.} For any colored tree $T \leq T^{max}$, recall that a labeling $X$ on $T^{max}$ determines one on $T$, denoted by $X|_{T}$, by taking $X|_{T}(l) = X(l)\cdot \prod_{l'} X(l') \prod_{l''} X(l'')$, where the product is over contracted edges $l'$ below $l$ that are connected to $l$ by contracted edges only or contracted edges $l''$ connecting $l$ to a colored vertex through contracted edges only.

For $l \in E(T)$, let $N_l = M_{l} + \underset{l'}{\sum} M_{l'} + \underset{l''}{\sum} M_{l''}$  and $X^{[0,\epsilon]}(T)$ be the labelings on $T$ taking values in $[0,\epsilon^{N_l}]$ over $l$. Denote by $ X_T^{[0,\epsilon]}(T^{max}) = |_T^{-1}(X^{[0,\epsilon]}(T)) \subset X^{\R_+}(T^{max})$ the labelings on $T^{max}$ that restrict to $T \leq T^{max}$ in this range. Near $Q_{\ell,k,T^{max}}$, $X_T^{[0,\epsilon]}(T^{max})$ is as a neighborhood of the corner $Q_{\ell,k,\geq T}$.

Remark that from this definition, $T^{(1)} < T^{(2)} \leq T^{max}$ implies that $X_{T^{(2)}}^{[0,\epsilon]}(T^{max}) \subset X_{T^{(1)}}^{[0,\epsilon]}(T^{max})$ since $(X|_{T^{(2)}})|_{T^{(1)}} = X|_{T^{(1)}}$.

{\bf 3) Building the $V_{T_0}$ cell.} Now set $V_{T_0} = \overline{( \underset{|T| = 1}{\bigcup} X_T^{[0,\epsilon]}(T^{max}) )^c}$. Then $V_{T_0} \bigcap V$ could be thought of as a cell of $V$ that is complementary to the above corner neighborhoods. Note that the labeling $X^\epsilon(T^{max})$ (taking value $\epsilon^{M_l}$ on $l \in E(T^{max})$) belongs to $V_{T_0}$ since the sequence $\{X^{\epsilon+\frac{1}{n}}(T^{max})\}_{n\in \N}$ lies in $( \underset{|T| = 1}{\bigcup} X_T^{[0,\epsilon]}(T^{max}) )^c$.

More generally, $X_T^{[0,\epsilon]}(T^{max}) \bigcap V_{T_0} = X_T^{\epsilon}(T^{max}) \bigcap V_{T_0}$, the labelings (of $V_{T_0}$) reducing to $T$ with values $\epsilon^{N_l}$. This is clear when $|T|=1$, and otherwise notice that $V_{T_0} = \overline{( \underset{T}{\bigcup} X_T^{[0,\epsilon]}(T^{max}) )^c}$. $X_T^{\epsilon}(T^{max}) \bigcap V_{T_0} \neq \emptyset$ since it contains $X^\epsilon(T^{max})$, but in fact, is seen to be isomorphic to $K_{\ell,k,\geq T}$ near $K_{\ell,k, T^{max}}$. 

\begin{lem} \label{lemm_retr}
There is an isomorphism from a neighborhood of $X^{0}(T^{max})$ in $X^{\R_+}(T^{max})$ to a neighborhood of  $X^\epsilon(T^{max})$ in $V_{T_0}$.
\end{lem}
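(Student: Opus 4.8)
The plan is to follow the proof of Lemma \ref{lemm_retr_1} in Appendix \ref{appe_A} step for step, the only genuinely new ingredient being the replacement of the coordinatewise translation $\chi(X)(l)=\epsilon+X(l)$ by a retraction adapted to the balancing condition that cuts out $X^{\R_+}(T^{max})$. What is needed of $\chi$ downstream is just two properties: $\chi$ carries the corner labeling $X^0(T^{max})$ to $X^\epsilon(T^{max})$, and it carries a labeling vanishing exactly on a subtree $T\le T^{max}$ to the critical stratum $X^\epsilon_T(T^{max})\cap V_{T_0}$. In the quilted setting the scalar $\epsilon$ is replaced throughout by the balanced labeling $X^\epsilon(T^{max})$ whose $l$-th label is $\epsilon^{M_l}$, with the weights $M_l$ fixed just before the statement, and the retraction $\chi$ must be a piecewise smooth self-embedding of a neighbourhood of $X^0(T^{max})$ in $X^{\R_+}(T^{max})$, landing again among balanced labelings, that pushes this neighbourhood inward so that $X^0(T^{max})\mapsto X^\epsilon(T^{max})$ and the image is the part of a neighbourhood of $X^\epsilon(T^{max})$ lying in the complementary cell $V_{T_0}$.

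The first point to verify is the weight bookkeeping that makes $X^\epsilon(T^{max})$ balanced and pins down $M_l$: along the path $P(v,v^-)$ from a colored vertex $v$ to a lower vertex $v^-$ the product of the labels of $X^\epsilon(T^{max})$ is $\epsilon^{\sum_{l\in P(v,v^-)}M_l}$, and the identity $\sum_{l\in T_v}M_l=1$ on linear subtrees to colored vertices, after subtracting the contribution of the segment below $v^-$, shows $\sum_{l\in P(v,v^-)}M_l$ depends only on $v^-$; hence $X^\epsilon(T^{max})$ satisfies the balancing relations, and the two sides of each such relation carry equal total $M$-weight, which is exactly what is used to see that $\chi$ preserves balancing. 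The second point is the computation parallel to the one closing the proof of Corollary \ref{coro_retr_1}: if $X$ vanishes precisely over $T$ then, for $l\in E(T)$, the reduced label $X|_T(l)$ is obtained by multiplying $X(l)$ by the monomial in the contracted edges feeding into $l$, whose $M$-weights together with $M_l$ add up to $N_l$; consequently $\chi(X)$ reduces over $T$ to the critical values $\epsilon^{N_l}$ and stays off every other corner, i.e. $\chi(X)\in X^\epsilon_T(T^{max})\cap V_{T_0}$, and these are precisely the identifications used to glue $col(\qklk)$.

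Granting $\chi$ and these two properties, the remainder is a transcription of Appendix \ref{appe_A}: $\chi$ is a piecewise smooth isomorphism from a neighbourhood of $X^0(T^{max})$ onto a neighbourhood of $X^\epsilon(T^{max})$ in $V_{T_0}$, the induced identifications of boundary strata agree with the gluings in $col(\qklk)$, and one finally lets the local models assemble through the charts $\psi_T$ of Proposition \ref{neigh}, obtaining Lemma \ref{qsmoo_lemm}. I expect the real obstacle to be the construction of $\chi$ itself: because $X^{\R_+}(T^{max})$ is toric and possibly singular at $X^0(T^{max})$, one cannot simply translate a chosen system of free edge-labels --- the answer would depend on the choice --- so $\chi$ must be defined intrinsically, for instance through the reduction operators $X\mapsto X|_{T}$ and the weights $N_l$, and shown to be well defined and to have the two properties above; this is precisely where the normalization $M_l$ (and hence $N_l$), rather than any other, is forced.
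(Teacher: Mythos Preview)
Your proposal correctly identifies the architecture of the argument and the properties that the retraction $\chi$ must have: it should send $X^0(T^{max})$ to $X^\epsilon(T^{max})$, preserve the balancing relations, land in $V_{T_0}$, and carry the locus $\{X|_T=0\}$ to $X^\epsilon_T(T^{max})\cap V_{T_0}$. You also correctly flag that the naive coordinatewise translation $X(l)\mapsto \epsilon^{M_l}+X(l)$ fails to respect balancing, and that this is where the real work lies.

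The gap is that you never actually construct $\chi$. You write ``Granting $\chi$ and these two properties\ldots'' and then, in your final paragraph, explicitly acknowledge that the construction of $\chi$ is ``the real obstacle'' and only gesture at defining it ``for instance through the reduction operators $X\mapsto X|_T$ and the weights $N_l$.'' But the entire content of the lemma \emph{is} the construction and verification of $\chi$; once you have it, the statement is immediate. So what you have written is a correct specification of the problem, not a proof.

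For comparison, the paper does not proceed through reduction operators or any intrinsic characterisation. It writes down an explicit edge-by-edge formula: above the coloring $\chi(X)(l)=\epsilon+X(l)$ as in the unquilted case, while below the coloring
\[
\chi(X)(l)=\bigl(\epsilon^{M_l}+X(l)^2\bigr)\,\frac{\epsilon^{M_{l'}}}{\epsilon^{M_{l'}}+X(l')^2}\,\frac{\epsilon^{M_{l''}}}{\epsilon^{M_{l''}}+X(l'')^2}\,(1+\delta_{l''}Y),
\]
where $l'$ is the edge just below $l$, $l''$ is the edge just above $l$ touching a colored vertex (the corresponding factor is absent if no such edge exists), and $Y$ is the product of labels along the path from a colored vertex to the root. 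The telescoping of the $(\epsilon^{M}+X^2)$ factors along any path from a colored vertex down to $v^-$ is what forces the balancing: the product collapses to $\epsilon(1+Y)$, independent of the choice of colored vertex. The same telescoping, applied to a labeling vanishing exactly on $T$, yields the value $\epsilon^{N_l}$ on each $l\in E(T)$. Injectivity is checked inductively from the root upward. This explicit formula, and the telescoping mechanism built into it, is the missing idea in your proposal.
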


From proposition \ref{neighk}, we get that

\begin{cor} \label{coro_retr}
There is an isomorphism from a neighborhood of $Q_{\ell,k,T^{max}}$ in $\qklk$ to a neighborhood of $X^\epsilon(T^{max})$ in $V_{T_0}$.
\end{cor}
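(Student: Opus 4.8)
The plan is to obtain Corollary \ref{coro_retr} as an immediate composition of two isomorphisms already at hand: the single-ratio chart $\psi_{T^{max}}$ furnished (in the quilted setting) by Proposition \ref{neigh}, and the translation-type isomorphism $\chi$ constructed in Lemma \ref{lemm_retr}. The first preliminary point I would record is that for a maximal colored tree $T^{max}$ every smooth component of the corresponding nodal quilted disk is rigid — it is a disk with three boundary markings, a disk with one boundary marking and one interior marking, or a quilted disk with one boundary marking, as noted just before the definition of the simple-ratio coordinates $X(l) = \Delta_{v_a}/\Delta_{v_b}$. Hence $Q_{\ell,k,T^{max}}$ is a single point, and the local model $Q_{\ell,k,T^{max}} \times X^{\R_+}(T^{max})$ of Proposition \ref{neigh}, restricted to the real locus $\qklk$, collapses to $X^{\R_+}(T^{max})$ itself; that is, $\psi_{T^{max}}$ is an isomorphism from a neighborhood of $X^{0}(T^{max})$ in $X^{\R_+}(T^{max})$ onto a neighborhood $\nu(Q_{\ell,k,T^{max}})$ of $Q_{\ell,k,T^{max}}$ in $\qklk$, where, as in the discussion following Proposition \ref{neigh}, the complex balanced labels attached to $D \in \nu(Q_{\ell,k,T^{max}})$ are identified with $\R_+$-valued balanced labels by extracting real parts on a sufficiently small neighborhood.

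Granting this, the corollary follows by composition: Lemma \ref{lemm_retr} supplies an isomorphism $\chi$ from a neighborhood of $X^{0}(T^{max})$ in $X^{\R_+}(T^{max})$ onto a neighborhood of $X^\epsilon(T^{max})$ in $V_{T_0}$, and $\chi \circ \psi_{T^{max}}^{-1}$ is then the asserted isomorphism from a neighborhood of $Q_{\ell,k,T^{max}}$ in $\qklk$ onto a neighborhood of $X^\epsilon(T^{max})$ in $V_{T_0}$. As in step 1 of the proof of the preceding lemma, I would fix $0 < \epsilon \leq 1$ first, then choose the ambient open set contained in the $\psi_{T^{max}}$-chart, and finally shrink all three neighborhoods consistently so that every map in the composition is defined on the relevant domain; this is entirely parallel to the unquilted Corollary \ref{coro_retr_1} of Appendix \ref{appe_A}.

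The one genuinely delicate point — and the step I expect to be the main obstacle — is verifying that Proposition \ref{neigh}, stated over $\C$ in terms of Zariski neighborhoods and without any reality or balancing bookkeeping, does restrict to the honest smooth chart on $\qklk$ parametrized by $X^{\R_+}(T^{max})$ that the argument requires. Concretely, one must check that the simple-ratio coordinates $X(l) = \Delta_{v_a}/\Delta_{v_b}$, subject to the balancing constraint (the product of the labels along a simple path from a colored vertex down to a vertex below the colorings is path-independent), genuinely parametrize a neighborhood of the real point $Q_{\ell,k,T^{max}}$ once real parts are taken. This is exactly the bookkeeping that the weights $M_l$ and $N_l$ in the proof of Lemma \ref{lemm_retr} are engineered to handle, and the verification runs as in the unquilted case of Appendix \ref{appe_A}, with "labeling" replaced throughout by "balanced labeling" and the plain restriction $X \mapsto X|_T$ replaced by the weighted restriction of the quilted setting; no new analytic input beyond Proposition \ref{neigh} and Lemma \ref{lemm_retr} is needed.
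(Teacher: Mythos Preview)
Your proposal is correct and follows essentially the same route as the paper: the corollary is obtained by composing the simple-ratio chart $\psi_{T^{max}}$ near the maximal corner with the isomorphism $\chi$ of Lemma \ref{lemm_retr}, exactly as you describe. The paper itself treats this as a one-line consequence (``From proposition \ref{neighk}, we get that\ldots'') and does not spell out the composition or the point that $Q_{\ell,k,T^{max}}$ is a single point; your additional bookkeeping about restricting Proposition \ref{neigh} to the real balanced locus is reasonable caution but not something the paper elaborates on.
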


\begin{proof}
Define a smooth map
\begin{diagram}
X^{\R_+}(T^{max}) && \rTo{\chi} &&&& V_{T_0} \\
X && \rMapsto &&&& \chi(X)(l)= \begin{cases} \epsilon + X(l) &\mbox{if } l \hspace{0.15cm} \text{above color} \\ (\epsilon^{M_l} + X(l)^2) \frac{\epsilon^{M_{l'}}}{\epsilon^{M_{l'}} + X(l')^2 } \frac{\epsilon^{M_{l''}}}{\epsilon^{M_{l''}} + X(l'')^2} (1+ \delta_{l''}Y) &\mbox{if }  l \hspace{0.15cm} \text{below color} \end{cases}
\end{diagram}
where $l'$ (resp. $l''$) is the edge just below $l$ (resp. just above $l$ and that touches a colored vertex), put $X(l')=0$ (resp. $X(l'')=0$) if this quantity is not defined, $\delta_{l''}=1$ if $l''$ is just above $l$ and that touches a colored vertex, $0$ otherwise and $Y$ is the product of the labels over the linear subtree $T_v$ from a colored vertex $v$ to the root. 

First, remark that $\chi(X^0) = X^{\epsilon^{M_l}}$.

$\chi$ is well defined: It is straightforward to see that $\chi(X)$ is again balanced, with the product of the labels from a colored vertex to the root being equal to $1+Y$.

Now suppose $\chi(X) \notin V_{T_0}$, then $\chi(X)|_T$ must lie in $X_T^{[0,\epsilon^{N_l}[}(T)$ for some $T \leq T^{max}$. Since $X_{T^{(2)}}^{[0,\epsilon^{N_l}[}(T^{max}) \subset X_{T^{(1)}}^{[0,\epsilon^{N_l}[}(T^{max})$ whenever $T^{(1)} < T^{(2)}$, we can assume that $|T|=1$. This implies that on the only edge $l \in E(T)$, we must have
\[
\chi(X)|_T(l) = \begin{cases} \chi(X)(l) < \epsilon &\mbox{if } l \hspace{0.15cm} \text{above color} \\ \chi(X)(l) \cdot \prod_{l'} \chi(X)(l') \cdot \prod_{l''} \chi(X)(l'') < \epsilon^{N_l} = \epsilon &\mbox{if }  l \hspace{0.15cm} \text{below color} \end{cases}
\]
where the products are over the edges $l'$ below $l$ and $l''$ above $l$ but below the colorings. In the first case, this amounts to $\epsilon + X(l) < \epsilon$ while in the second case, it means $\epsilon (1+Y) < \epsilon$. Both are contradictions since $X(l) \in \R_+$, $Y \in \R_+$.

$\chi$ is injective: $\chi(X_1)=\chi(X_2)$ directly implies that $X_1(l)=X_2(l)$ if $l$ touches the root edge and $X_1(l')=X_2(l')$ implies $X_1(l)=X_2(l)$ when $l$ is just above $l'$. Therefore, $X_1 = X_2$.


It is not hard to see that the labelings on $T^{max}$ having $0$ values exactly on $T$ (corresponding to $K_{\ell,k,\geq T}$) correspond under $\chi$ to $X^{[0,\epsilon]}_T(T^{max}) \bigcap V_{T_0} = X^{\epsilon}_T(T^{max}) \bigcap V_{T_0}$: Let $X \in X^{\R_+}(T^{max})$ that is $0$ over $T$ and nonzero over $T^{max} \backslash T$,
\begin{itemize}
\item then an edge $l \in E(T)$ above the colorings gets label $\epsilon$ so  $X|_T(l) = \epsilon$, otherwise
\item an edge $l \in E(T)$ just above $l' \in E(T)$ gets label $\epsilon$. Therefore, $X|_T(l) = \epsilon$,
\item an edge $l \in E(T)$ not directly under an edge having a colored vertex, and just above $l_1' \in E(T^{max} \backslash T)$ gets label $\epsilon^{M_l} \frac{\epsilon^{M_{l'}}}{\epsilon^{M_{l_1'}} + X(l_1')^2}$, while $l_1'$ just above $l_2' \in E(T^{max} \backslash T)$ gets label $(\epsilon^{M_{l_1'}} + X(l_1')^2) \frac{\epsilon^{M_{l_2'}}}{\epsilon^{M_{l_2'}} + X(l_2')^2}$ and so on until we reach $l_j' \in E(T^{max} \backslash T)$ just above an edge of $T$, so it gets label $\epsilon^{M_{l_j'}}+ X(l_j')^2$. Therefore, $X|_T(l) = \epsilon^{N_l}$,
\item an edge $l \in E(T)$ directly under a contracted edge $l''$ having a colored vertex, and just above $l_1' \in E(T^{max} \backslash T)$ gets label $\epsilon^{M_l} \frac{\epsilon^{M_{l'}}}{\epsilon^{M_{l_1'}} + X(l_1')^2} \frac{\epsilon^{M_{l''}}}{\epsilon^{M_{l''}} + X(l'')^2}$, while $l_1'$ just above $l_2' \in E(T^{max} \backslash T)$ gets label $(\epsilon^{M_{l_1'}} + X(l_1')^2) \frac{\epsilon^{M_{l_2'}}}{\epsilon^{M_{l_2'}} + X(l_2')^2}$ and so on until we reach $l_j' \in E(T^{max} \backslash T)$ just above an edge of $T$, so it gets label $\epsilon^{M_{l_j'}}+ X(l_j')^2$. Furthermore, $l''$ gets label $\epsilon^{M_{l''}} +X(l'')^2$. Therefore, $X|_T(l) = \epsilon^{N_l}$.

\end{itemize}

\end{proof}


{\bf 4) Building the $V_{T}$ cell.} Given $T \leq T^{max}$, let $\underset{i}{\bigcup} T^{(i)} =T^{max} \backslash T$ be the decomposition into connected maximal subtrees. Define $V_{T^{(i)}_0} \subset X^{\R_+}(T^{(i)})$ as in step $3$, so it is made of labelings $X^{(i)}$ over every component subtree $T^{(i)}$ of $T^{max}\backslash T$ such that $X^{(i)}$ does not restrict to a $[0,\epsilon]$-labeling on any of the subtrees of $T^{(i)}$. Lemma \ref{lemm_retr} shows that near $K_{\ell^{(i)},k^{(i)}, T^{(i)}}$, the $\chi^{(i)}$ maps will retract the $K_{\ell^{(i)},k^{(i)},\geq T^{(i)}_0}$ onto $V_{T^{i}_0}$. Consequently, near $K_{\ell,k,T^{max}}$, $\underset{i}{\prod} \chi^{(i)}$ retracts the $K_{\ell,k,\geq T} = \underset{i}{\prod} K_{\ell^{(i)},k^{(i)},\geq T^{(i)}_0}$ corner onto $\underset{i}{\prod} V_{T^{(i)}_0}$. As in the proof of \ref{lemm_retr}, the labelings $X$ with value $0$ on $T$ and restricting to $\epsilon$-labelings over some subtrees of the $X^{(i)}$ constitute a boundary stratum of $\underset{i}{\prod} V_{T^{(i)}_0}$.

Combining the above labels with labels over $T$, $\underset{i}{\prod} V_{T^{(i)}_0} \times X^{\R_+}(T)$ naturally sits in $X^{\R_+}(T^{max})$ and we can therefore define $V_{T} = (\underset{i}{\prod} V_{T^{(i)}_0} \times X^{\R_+}(T)) \bigcap X_T^{[0,\epsilon]}(T^{max})$. Through the restriction map associated with $T$, $V_{T}$ is seen to be isomorphic to $\underset{i}{\prod} V_{T^{(i)}_0} \times X^{[0,\epsilon^{N_l}]}(T)$. Therefore, $V_{T}$ is isomorphic to a neighborhood of $K_{\ell,k,T^{max}} \times X^{[0,1]}(T)$ in $K_{\ell,k,\geq T} \times X^{[0,1]}(T)$.

{\bf 5) Attaching the $V_{T}$ cells.} 

As seen above, for every $T < T' \leq T^{max}$, let $T' \backslash T = \underset{i}{\bigcup} T^{(i)} \backslash T^{'(i)}$, then from step $4$,
\[
(\underset{i}{\prod}(X_{T^{(i)} \backslash T^{'(i)}}^\epsilon(T^{(i)}) \bigcap V_{T^{(i)}_0}) \times X^{\R_+}(T)) \bigcap X_T^{[0,\epsilon]}(T^{max})
\]
is a boundary stratum of $V_{T}$. It coincides with the boundary stratum 
\[
(\underset{i}{\prod} V_{T^{'(i)}_0} \times  X^{\R_+}(T')) \bigcap X_{T'}^{[0,\epsilon]}(T^{max}) \bigcap X_{T^{(i)} \backslash T^{'(i)}}^\epsilon(T^{(i)})
\]
of $V_{T'} = (\underset{i}{\prod} V_{T^{'(i)}_0} \times X^{\R_+}(T')) \bigcap X_{T'}^{[0,\epsilon]}(T^{max})$. More simply, those are labelings $[0,\epsilon]$-reducing on $T$ such that in the complement of $T$, they reduce to $\epsilon$-labelings exactly on $T'\backslash T$. It is not hard to see that those correspond under the $\chi$ maps to the same identifications as in the definition of $col(\klk)$.

\end{proof}

Now that we can set piecewise smooth identifications between $(\cllkr)^{PDIFF}$ and $\klk$ in disjoints neighborhoods of the maximal corner, we need to extend these globally. This is again possible using the $\psi_T$ charts from a neighborhood of $K_{\ell,k, T} \times \{0\} = K_{\ell,k, T} \times X^{0}(T)$ in $K_{\ell,k, T} \times X^{\R_+}(T)$ to a neighborhood $\nu(K_{\ell,k, T})$ of $K_{\ell,k, T}$ in $\klk$.

\setcounter{chapter}{3}

\vspace{0.2cm}
{\sc \small \noindent Département de mathématiques et de statistique, Université de Montréal, Montréal, QC H3C 3J7}\\
{\sc \small \noindent Department of Mathematics, Columbia University, New York, NY 10027}
{\small {\em E-mail address:} charest@math.columbia.edu}


%
%

\end{document}